\documentclass[11pt]{amsart}

\usepackage{amsmath,amssymb,amsthm}
\usepackage{verbatim}

\usepackage{xurl}
\usepackage[psdextra]{hyperref}
\usepackage[capitalize]{cleveref}
\usepackage{aliascnt}

\usepackage{enumerate}
\usepackage{enumitem}
\usepackage{physics}
\usepackage{todonotes}
\definecolor{crimson}{HTML}{C8102E}
\definecolor{stopgreen}{HTML}{00A14B}
\presetkeys{todonotes}{inline,color=yellow}{}

\usepackage[normalem]{ulem}

\usepackage[OT2,T1]{fontenc}
\usepackage{microtype}
\DeclareSymbolFont{cyrletters}{OT2}{wncyr}{m}{n}
\DeclareMathSymbol{\sha}{\mathalpha}{cyrletters}{"58}

\usepackage{tikz}
\usetikzlibrary{cd,decorations.markings}

\usepackage{graphicx}

\usepackage{outlines}
\usepackage{stmaryrd}
\usepackage{mathrsfs}
\usepackage{expl3}

\makeatletter
\DeclareFontFamily{OMX}{MnSymbolE}{}
\DeclareSymbolFont{MnLargeSymbols}{OMX}{MnSymbolE}{m}{n}
\SetSymbolFont{MnLargeSymbols}{bold}{OMX}{MnSymbolE}{b}{n}
\DeclareFontShape{OMX}{MnSymbolE}{m}{n}{
    <-6>  MnSymbolE5
   <6-7>  MnSymbolE6
   <7-8>  MnSymbolE7
   <8-9>  MnSymbolE8
   <9-10> MnSymbolE9
  <10-12> MnSymbolE10
  <12->   MnSymbolE12
}{}
\DeclareFontShape{OMX}{MnSymbolE}{b}{n}{
    <-6>  MnSymbolE-Bold5
   <6-7>  MnSymbolE-Bold6
   <7-8>  MnSymbolE-Bold7
   <8-9>  MnSymbolE-Bold8
   <9-10> MnSymbolE-Bold9
  <10-12> MnSymbolE-Bold10
  <12->   MnSymbolE-Bold12
}{}

\let\llangle\@undefined
\let\rrangle\@undefined
\DeclareMathDelimiter{\llangle}{\mathopen}{MnLargeSymbols}{'164}{MnLargeSymbols}{'164}
\DeclareMathDelimiter{\rrangle}{\mathclose}{MnLargeSymbols}{'171}{MnLargeSymbols}{'171}
\makeatother

\renewcommand{\phi}{\varphi}

\newcommand{\ovl}{\overline}

\providecommand{\ul}{\underline}
\renewcommand{\tilde}{\widetilde}
\renewcommand{\hat}{\widehat}

\ExplSyntaxOn
\cs_new_protected:Npn \preamble_new_operator:nn #1#2
  { \cs_new:cpn {#1} { \operatorname{#2} } }
\cs_new_protected:Npn \preamble_set_operator:nn #1#2
  { \cs_set:cpn {#1} { \operatorname{#2} } }
\cs_new_protected:Npn \preamble_new_font_alias:nnn #1#2#3
  { \cs_new:cpn {#1#2} { #3 {#2} } }

\clist_map_inline:nn
  {
    ord,Gal,Spec,Hom,SL,SO,GL,Isom,Ind,Cl,Pic,End,Sym,Sel,Tor,codim,Br,Th,Lind,Id,id,
    PSL,Aut,PGL,pr,Rad,Proj,depth,Ext,gr,nil,red,rad,Vect,MT,Hdg,Sch,Frob,Ver,Fr,fr,
    Vol,Div,Supp,cl,Sw,Coim,Gr,Ker,Coker,coker,Cov,Mod,Lie,Stab,acl,dcl,Mult,eq,ACF,
    Kr,Ded,Av,PD,st,Ch,Var,Swan,val,rk,Der,SU,SH,Ann,Cone,sel,tp,stp,supp,tor,gp,
    disc,Cn,Jac,Vis,WC,Comp,Frac,Func,Perv,Sp,diag,Nrd,an,Hilb,Cliff,Ad,Diff,sing,
    Out,Bl,rec,Inn,Loc,Coh,Aff,colim,coim,Ob,Sub,Def,alg,nr,un,ad,Fl,ab,fl,Ric,loc,
    sm,Mor,aut,Hol,An,Arg,BGL,BU,BO,full,om,Sing,St,Lk
  }
  { \preamble_new_operator:nn {#1} {#1} }

\preamble_new_operator:nn {Char} {char}
\preamble_new_operator:nn {Maxspec} {MaxSpec}
\preamble_new_operator:nn {ext} {Ext}
\preamble_new_operator:nn {height} {ht}
\preamble_new_operator:nn {vol} {Vol}
\preamble_new_operator:nn {rM} {r-M}
\preamble_new_operator:nn {im} {im}
\preamble_new_operator:nn {Sh} {SH}
\preamble_new_operator:nn {cone} {Cone}
\preamble_new_operator:nn {defect} {def}
\preamble_new_operator:nn {resid} {res}
\preamble_new_operator:nn {Graff} {GrAff}

\preamble_set_operator:nn {Form} {Form}
\preamble_set_operator:nn {div} {div}
\preamble_set_operator:nn {ker} {Ker}
\preamble_set_operator:nn {Im} {Im}
\preamble_set_operator:nn {df}{def}
\ExplSyntaxOff

\newcommand{\SHom}{\mathcal{H}\kern -.5pt\mathit{om}}
\newcommand{\shom}{\mathcal{H}\kern -.5pt\mathit{om}}
\newcommand{\sext}{\mathcal{E}\kern -.5pt\mathit{xt}}

\newcommand{\inj}{\hookrightarrow}

\newcommand{\xr}{\xrightarrow}

\renewcommand{\subset}{\subseteq}
\renewcommand{\supset}{\supseteq}

\ExplSyntaxOn
\clist_map_inline:nn {A,B,C,D,E,F,G,H,I,J,K,L,M,N,O,P,Q,R,S,T,U,V,W,X,Y,Z}
  {
    \preamble_new_font_alias:nnn {b}  {#1} {\mathbf}
    \preamble_new_font_alias:nnn {bb} {#1} {\mathbb}
    \preamble_new_font_alias:nnn {s}  {#1} {\mathscr}
    \preamble_new_font_alias:nnn {mc} {#1} {\mathcal}
  }
\clist_map_inline:nn {a,b,c,d,e,f,g,h,i,j,k,l,m,n,o,p,q,r,s,t,u,v,w,x,y,z}
  { \preamble_new_font_alias:nnn {mf} {#1} {\mathfrak} }
\ExplSyntaxOff

\catcode`,\active

\catcode`\,12

\newtheorem{theorem}{Theorem}[section]

\crefname{theorem}{Theorem}{Theorems}
\Crefname{theorem}{Theorem}{Theorems}
\crefname{thm}{Theorem}{Theorems}
\Crefname{thm}{Theorem}{Theorems}

\newaliascnt{proposition}{theorem}
\newtheorem{proposition}[proposition]{Proposition}
\aliascntresetthe{proposition}
\crefname{proposition}{Proposition}{Propositions}
\Crefname{proposition}{Proposition}{Propositions}

\newaliascnt{prop}{theorem}

\aliascntresetthe{prop}
\crefname{prop}{Proposition}{Propositions}
\Crefname{prop}{Proposition}{Propositions}

\newaliascnt{lemma}{theorem}
\newtheorem{lemma}[lemma]{Lemma}
\aliascntresetthe{lemma}
\crefname{lemma}{Lemma}{Lemmas}
\Crefname{lemma}{Lemma}{Lemmas}

\newaliascnt{lem}{theorem}

\aliascntresetthe{lem}
\crefname{lem}{Lemma}{Lemmas}
\Crefname{lem}{Lemma}{Lemmas}

\newtheorem*{lemma*}{Lemma}

\newaliascnt{claim}{theorem}

\aliascntresetthe{claim}
\crefname{claim}{Claim}{Claims}
\Crefname{claim}{Claim}{Claims}

\newaliascnt{corollary}{theorem}
\newtheorem{corollary}[corollary]{Corollary}
\aliascntresetthe{corollary}
\crefname{corollary}{Corollary}{Corollaries}
\Crefname{corollary}{Corollary}{Corollaries}

\newaliascnt{conjecture}{theorem}

\aliascntresetthe{conjecture}
\crefname{conjecture}{Conjecture}{Conjectures}
\Crefname{conjecture}{Conjecture}{Conjectures}

\newtheorem{innerimportantthm}{Theorem}
\newenvironment{importantthm}[1]
  {\renewcommand\theinnerimportantthm{#1}\begin{innerimportantthm}}
  {\end{innerimportantthm}}
\crefname{innerimportantthm}{Theorem}{Theorems}
\Crefname{innerimportantthm}{Theorem}{Theorems}

\newtheorem{innerimportantcor}{Corollary}

\crefname{innerimportantcor}{Corollary}{Corollaries}
\Crefname{innerimportantcor}{Corollary}{Corollaries}

\theoremstyle{definition}
\newaliascnt{definition}{theorem}
\newtheorem{definition}[definition]{Definition}
\aliascntresetthe{definition}
\crefname{definition}{Definition}{Definitions}
\Crefname{definition}{Definition}{Definitions}

\newaliascnt{nondefinition}{theorem}

\aliascntresetthe{nondefinition}
\crefname{nondefinition}{Non-Definition}{Non-Definitions}
\Crefname{nondefinition}{Non-Definition}{Non-Definitions}

\newaliascnt{exercise}{theorem}

\aliascntresetthe{exercise}
\crefname{exercise}{Exercise}{Exercises}
\Crefname{exercise}{Exercise}{Exercises}

\newaliascnt{example}{theorem}
\newtheorem{example}[example]{Example}
\aliascntresetthe{example}
\crefname{example}{Example}{Examples}
\Crefname{example}{Example}{Examples}

\newaliascnt{remark}{theorem}
\newtheorem{remark}[remark]{Remark}
\aliascntresetthe{remark}
\crefname{remark}{Remark}{Remarks}
\Crefname{remark}{Remark}{Remarks}

\newaliascnt{observation}{theorem}

\aliascntresetthe{observation}
\crefname{observation}{Observation}{Observations}
\Crefname{observation}{Observation}{Observations}

\newaliascnt{question}{theorem}
\newtheorem{question}[question]{Question}
\aliascntresetthe{question}
\crefname{question}{Question}{Questions}
\Crefname{question}{Question}{Questions}

\newaliascnt{problem}{theorem}

\aliascntresetthe{problem}
\crefname{problem}{Problem}{Problems}
\Crefname{problem}{Problem}{Problems}

\numberwithin{equation}{section}

\usepackage[style=alphabetic]{biblatex}
\begin{document}

\pagestyle{plain}

\title{Complex curves in o-minimal geometry}
\author{Spencer Dembner}
\date{July 9, 2026}

\begin{abstract}
    There has recently been considerable progress relating o-minimality to complex analytic geometry. Yet almost nothing is known about coherent cohomology or the classification of vector bundles, even for curves. In $\mathbb{R}_{\mathrm{an}}$ and similar structures, we show that cohomology of noncompact curves is concentrated entirely at punctures. As an application, we compute the cohomology of the structure sheaf on the affine line and describe a connection to Diophantine approximation. Finally, we use similar techniques to characterize which definable Riemann surfaces have definable compactifications. The proofs are based on a careful analysis of boundary behavior for definable holomorphic functions.
\end{abstract}

\maketitle

\section{Introduction}

Let $S$ be an o-minimal structure on $\bbR$. Peterzil and Starchenko \cite{PeterzilStarchenkoTameComplexAnalysis} developed a theory of \emph{definable} complex analysis, by replacing holomorphic functions with $S$-definable ones. Building on their work, Bakker-Brunebarbe-Tsimerman \cite{BakkerBrunebarbeTsimerman} introduced definable analytic spaces and definable coherent sheaves, proved definable analogues of Oka's coherence theorem and of GAGA, and used them to resolve a longstanding conjecture of Griffiths about the images of period maps in Hodge theory.

It is tempting to define the \emph{cohomology} of a definable sheaf by simply taking derived functors of the definable global sections functor. But this has hardly been considered for coherent sheaves, because it appears completely pathological: In the usual structure $\bbR_{\an, \exp}$ even the unit disk $\bbD$ has nontrivial cohomology, and in \emph{any} o-minimal structure the punctured disk $\bbD^*$ does. This diverges sharply from ordinary complex geometry, since coherent cohomology on noncompact Riemann surfaces always vanishes.

Even so, in this paper we show that in many o-minimal structures, the cohomology of curves is well-behaved. Call $S$ \emph{branching} if, roughly speaking, all singularities of one-variable definable holomorphic functions are branch points. Examples, as we will show later, include $\bbR_{\an}$, $\bbR_{\an}^K$, and $\bbR_{\an^*}$.

\begin{importantthm}{A}\label{thm:Vanishing}
    Let $S$ be a branching o-minimal structure that contains $\bbR_{\an}$, let $Y$ be an irreducible compact complex curve, and let $X \subsetneq Y$ be a noncompact definable open subset whose normalization has no punctures. Then:
    \begin{enumerate}
        \item Every definable coherent sheaf on $X$ has vanishing higher cohomology.
        \item Every definable vector bundle on $X$ is definably trivial.
    \end{enumerate}
\end{importantthm}

\begin{corollary}\label{cor:CartanA}
    With the same hypotheses, every definable coherent sheaf on $X$ is generated by global sections.
\end{corollary}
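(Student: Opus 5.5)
The plan is to imitate the classical proof of Cartan's Theorem A from Theorem B, using the vanishing in \cref{thm:Vanishing}(1) applied to twisted ideal-sheaf sequences. Let $\mathcal{F}$ be a definable coherent sheaf on $X$ and fix a point $x \in X$. We need definable global sections of $\mathcal{F}$ whose germs generate the stalk $\mathcal{F}_x$.

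First, I reduce to the case where $X$ is normal, hence smooth, since $X$ is a curve. Let $\nu \colon \tilde X \to X$ be the normalization. It is a finite definable map, so $\nu_*$ is exact on definable coherent sheaves. By the hypotheses of \cref{thm:Vanishing}, $\tilde X$ is again an admissible curve. It is cleaner, however, to work directly on $X$.

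Let $\mathfrak{m}_x \subset \mathcal{O}_X$ be the definable ideal sheaf of the point $x$. It is definable coherent, by the definable Oka coherence theorem of Bakker--Brunebarbe--Tsimerman. The sheaf $\mathfrak{m}_x\mathcal{F}$ is then also definable coherent, and we have the short exact sequence
\[
0 \to \mathfrak{m}_x \mathcal{F} \to \mathcal{F} \to \mathcal{F}/\mathfrak{m}_x\mathcal{F} \to 0 .
\]
The quotient is a skyscraper sheaf at $x$ with stalk $\mathcal{F}_x/\mathfrak{m}_x\mathcal{F}_x$.

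Applying \cref{thm:Vanishing}(1) to $\mathfrak{m}_x\mathcal{F}$ gives $H^1(X, \mathfrak{m}_x\mathcal{F}) = 0$. The long exact sequence of definable cohomology then shows that
\[
H^0(X, \mathcal{F}) \to \mathcal{F}_x/\mathfrak{m}_x\mathcal{F}_x
\]
is surjective. The stalk $\mathcal{F}_x$ is a finitely generated module over the local ring $\mathcal{O}_{X,x}$. This local ring is the ordinary local ring of convergent power series: the stalks of the definable structure sheaf agree with the analytic ones, since germs of holomorphic functions are definable in $\bbR_{\an}$. Nakayama's lemma therefore shows that global sections lifting a basis of $\mathcal{F}_x/\mathfrak{m}_x\mathcal{F}_x$ generate $\mathcal{F}_x$.

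Two points need care. First, I must confirm that the long exact sequence for definable cohomology is available. This holds because definable cohomology is defined as the derived functor of definable global sections on the category of definable sheaves, and definable coherent sheaves form an abelian subcategory. Second, the statement presumably means that $\mathcal{F}$ is generated by global sections at every point. A single finite set of sections may not suffice, but the corollary only asks that the evaluation map $H^0(X,\mathcal{F})\otimes\mathcal{O}_X \to \mathcal{F}$ be surjective on stalks, and the pointwise argument gives exactly that.

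The main obstacle is the identification of stalks: I need that $\mathfrak{m}_x\mathcal{F}$ is definable coherent and that the definable stalk $\mathcal{F}_x$ is a finitely generated module over a local ring, so that Nakayama applies. I would take both from the coherence theory of Bakker--Brunebarbe--Tsimerman, where definable coherent sheaves are locally finitely presented over $\mathcal{O}_X$ on definable opens. The remainder is the formal argument above.
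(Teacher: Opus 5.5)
Your pointwise argument is correct and matches the core of the paper's proof. Vanishing of $H^1(X,\mathfrak{m}_x\mcF)$ makes $H^0(X,\mcF)\to\mcF_x/\mathfrak{m}_x\mcF_x$ surjective, and Nakayama lifts this to generation of $\mcF_x$ at each ordinary point $x$.

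The gap is your last step, where you declare that surjectivity on stalks at ordinary points is all the corollary asks for, and that finitely many sections may not suffice. For definable sheaves that is not enough. A morphism on the definable site is surjective only if each section lies locally in the image on a \emph{finite} definable cover; equivalently, it must be surjective on all stalks of the o-minimal spectrum, not just the ordinary ones. Your argument gives, for each $x$, a definable neighborhood on which finitely many global sections generate. On noncompact $X$ these neighborhoods form an infinite cover, which is not a covering of the definable site. The principle that exactness can be tested at ordinary points \cite[Corollary~2.40]{BakkerBrunebarbeTsimerman} concerns morphisms of coherent sheaves, and $H^0(X,\mcF)\otimes\mcO_X\to\mcF$ is not one. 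Ordinary stalks really can miss things. On $(0,1)$, take the subsheaf of the constant sheaf $\bbZ$ consisting of sections that vanish on some $U\cap(0,\varepsilon)$. It has the same ordinary stalks as $\bbZ$ but is a proper subsheaf.

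The missing ingredient is the Noetherian property, which is exactly how the paper finishes. Coherent subsheaves of $\mcF$ satisfy the ascending chain condition \cite[Proposition~2.46]{BakkerBrunebarbeTsimerman}. For finite $S\subset H^0(X,\mcF)$, let $\mcG_S$ be the image of $\mcO_X^{S}\to\mcF$. These form a directed family of coherent subsheaves, and because definable covers are finite, their union is the image of the evaluation map. By the chain condition some $\mcG=\mcG_{S_0}$ is maximal. Your pointwise result then gives $\mcG_x=\mcF_x$ at every ordinary $x$, so the coherent sheaf $\mcF/\mcG$ vanishes by Corollary~2.40.

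The paper phrases this as an induction. If $g\colon\mcO^m\to\mcF$ is not surjective, then some ordinary $g_x$ is not. The surjection $H^0(X,\mcF)\to H^0(X,\mcF\otimes\bbC_x)$ then supplies a global section that strictly enlarges the image. In particular, contrary to your remark, finitely many global sections always suffice here. This finite surjection $\mcO^m\to\mcF$ is also what is needed afterwards, e.g.\ for the global presentations in the remark following the corollary.
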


\begin{remark}
    As we will see in \cref{sec:Comments}, the hypotheses of \cref{thm:Vanishing} are sharp: If the structure is not branching, or does not contain $\bbR_{\an}$, or if $X$ has punctures, then one cannot expect cohomology to vanish.
\end{remark}

Using \cref{thm:Vanishing}, we can compute the cohomology of the definable structure sheaf on a punctured Riemann surface $X$, when the structure is $S = \bbR_{\an}^K$. Recall that for $K \subset \bbR$ a subfield, $\bbR_{\an}^K$ is the extension of $\bbR_{\an}$ generated by the power functions $x^r$ for $r \in K$. It turns out that the size of $H^1(X, \mcO)$ depends on a Diophantine approximation property of $K$, which we state precisely in \cref{sec:StructureSheaf}.

\begin{importantthm}{B}\label{thm:computation}
    Let $S = \bbR_{\an}^K$, for some subfield $K \subset \bbR$. Let $X \subset Y$ be a noncompact definable open subset of the compact Riemann surface $Y$, with punctures $p_1, \ldots, p_n$.
    \begin{enumerate}
        \item There is an injective comparison map
        \[ \eta \colon (\bbC\{z\}[z^{-1}])^n \to H^1(X, \mcO), \]
        where the left-hand side represents convergent Laurent series.
        \item If $K \subset L \cap \bbR$, where $L$ is one of the following, then $\eta$ is an isomorphism:
        \begin{itemize}
            \item The field $\ovl{\bbQ}$ of algebraic numbers.
            \item Any field $\bbQ(e^{\alpha_1}, \ldots, e^{\alpha_m})$, where the $\alpha_i$ are algebraic.
            \item The field $\bbQ(\pi)$.
        \end{itemize}
        \item Let $K = \bbQ(t_1, \ldots, t_m)$ for some $t \in \bbR^m$. The set of $t$ for which $\eta$ is an isomorphism has full measure.
        \item Nonetheless, there exist fields $K = \bbQ(t)$ for which $\eta$ is not an isomorphism on $\bbA^1$.
    \end{enumerate}
\end{importantthm}

For example, the theorem applies to the affine line as a once-punctured subset of $\bbP^1$.

One could justifiably wonder how general these theorems are: Concretely, given a definable Riemann surface $X$, does it definably embed as an open subset of a compact Riemann surface $Y$? For ordinary Riemann surfaces, the answer is essentially yes: By a theorem of Stout \cite[Theorem~8.1]{StoutRiemannSurfaces}, every finite-type Riemann surface embeds into a compact one, and we can even require its complement to be a union of points and disks.

In the definable setting, things are more complicated. We will see that there exist ``twisted forms'' of the punctured disk $\bbD^*$, whose puncture cannot be definably filled due to a monodromy obstruction. There is also a subtler obstruction, coming from the fact that definable holomorphic functions (usually) extend continuously to the boundary: There could be an end of $X^{\an}$ which analytically is a disk, but which definably ``appears'' as a puncture, and such an end cannot be definably compactified.

To measure this obstruction, we construct a definable space $\hat{X}$ called the \emph{intrinsic closure} of $X$. The space $\hat{X}$ is compact Hausdorff, contains $X$ as a dense open subset, and heuristically represents how the closure $\ovl X \subset Y$ ``should'' look, from the point of view of the definable structure on $X$. Thus, we call an isolated point of $\hat{X} \setminus X$ a \emph{definable puncture}. Our final theorem shows that definable punctures form the only obstacle to definable compactification.

\begin{importantthm}{C}\label{thm:compactification}
    Suppose $S$ is branching and contains $\bbR_{\an}$, and let $X$ be a definable Riemann surface. If $X$ has no definable punctures, then $X$ definably embeds into a compact Riemann surface $Y$. We can require that $\partial X \subset Y$ be a finite union of simple closed curves, and that $X$ is connected at every boundary point.
\end{importantthm}

If $X \subset Y$ is already embedded, then definable punctures and punctures are the same, and it follows that there exists a ``tame'' definable embedding.

\begin{corollary}\label{cor:TameEmbeddability}
    Suppose that $S$ is branching and contains $\bbR_{\an}$. Let $Y$ be a compact Riemann surface, and let $U \subset Y$ be any definable open subset. Then there exists a definable embedding $U \inj Y'$, with $Y'$ another compact Riemann surface, such that $\partial U$ is a union of points and simple closed curves, and such that $U$ is connected at every boundary point.
\end{corollary}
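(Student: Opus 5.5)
The plan is to reduce \cref{cor:TameEmbeddability} to \cref{thm:compactification}, applied to $U$ with its punctures removed, and then to fill the punctures back in.

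\textbf{Step 1: reduce to finitely many components.} Since $U \subset Y$ is definable, it has finitely many connected components. Each component is a definable open subset of $Y$. It therefore suffices to treat a connected $U$, then take $Y'$ to be the disjoint union of the resulting compact surfaces; a disjoint union of connected compact Riemann surfaces is still compact. If $U = Y$ there is nothing to prove, so assume $U \subsetneq Y$.

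\textbf{Step 2: remove the punctures.} The boundary $\partial U \subset Y$ is a definable closed set of dimension at most one, so it has finitely many isolated points $p_1, \ldots, p_n$. These are exactly the punctures of $U$. Set
\[ U^\circ = U \cup \{p_1, \ldots, p_n\}, \]
which is again a definable open subset of $Y$, now without punctures.

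\textbf{Step 3: apply \cref{thm:compactification} to $U^\circ$.} The claim to verify is that $U^\circ$ has no definable punctures, using the remark after \cref{thm:compactification} that for embedded surfaces definable punctures coincide with punctures. Concretely, the inclusion $U^\circ \subset Y$ should induce a continuous definable surjection $\hat{U^\circ} \to \overline{U^\circ}$. This surjection should be finite-to-one over boundary points, since definable holomorphic functions extend continuously to the boundary and the branching hypothesis gives local finiteness of branches.

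An isolated point of $\hat{U^\circ} \setminus U^\circ$ would then map to a point $q \in \partial U^\circ$. Because every boundary point of $U^\circ$ is a non-isolated boundary point, $q$ lies on a one-dimensional piece of $\partial U^\circ$. I expect this to force the end of $U^\circ$ near $q$ to be a half-disk rather than a punctured disk, contradicting isolation.

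\textbf{Step 4: build the tame embedding.} Given the claim, \cref{thm:compactification} yields a definable embedding $U^\circ \inj Y'$ with $\partial U^\circ$ a finite union of simple closed curves and $U^\circ$ connected at every boundary point. Restricting this embedding to $U = U^\circ \setminus \{p_i\}$ gives a definable embedding $U \inj Y'$. Its boundary is $\partial U^\circ \cup \{p_i\}$, a union of points and simple closed curves. Connectedness at each curve point is inherited from $U^\circ$, and at each $p_i$ it holds because a punctured disk is connected.

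\textbf{Main obstacle.} The hard part is the claim in Step 3: that a definable puncture of an embedded $X \subset Y$ is an honest puncture in $Y$. This requires a precise comparison of $\hat X$ with $\overline X$. One must rule out an end that accumulates on a non-isolated set in $Y$ yet collapses to a single point in $\hat X$. I would first try to use the boundary-extension results: the holomorphic coordinate of $Y$ near $q$, restricted to $X$, is a bounded definable holomorphic function. Its continuous extension to $\hat X$ separates distinct points of $\overline X$, so the map $\hat X \to \overline X$ is well-defined. The collapsing scenario would then force a fiber to contain a whole arc, which should be impossible.
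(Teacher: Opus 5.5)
Your architecture is the paper's: fill in the punctures to get $V = U \cup \{p_i\}$, show that $V$ has no definable punctures, apply \cref{thm:compactification}, and delete the $p_i$ again. Your Step~4 bookkeeping is fine. The gap is that the one nontrivial step, that $V$ has no definable punctures, is left at ``I expect'' and ``should be impossible''. You also cannot fall back on the remark after \cref{thm:compactification}, because this corollary is where that remark gets justified.

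Moreover, the obstacle you isolate is not the real one. Continuity of $\hat i \colon \hat V \to Y$ already prevents a single point of $\hat V$ from covering an arc of $\ovl V$. What must be shown is the converse: boundary arcs of $V$ in $Y$ near $y = \hat i(v)$ must lift to points of $\hat\partial V$ that accumulate at $v$ itself, and not merely at some other point of the fiber over $y$. (Finiteness of that fiber comes from o-minimal cell decomposition as in \cref{lem:ConnectednessAtZero}; the branching hypothesis plays no role in this step.)

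The missing argument is short. Given $v \in \hat\partial V$, the point $y = \hat i(v)$ lies in $\partial V$. Near $y$, the set $V$ is a finite union of pieces connected at $y$. Since $y$ is not a puncture, none of them contains a punctured neighborhood, so by \cref{lem:ConnectednessAtZero} each piece is a radial interval $(f,g)$.

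Because $v$ lies in the closure of $W \cap V$ for any neighborhood $W$ of $y$, some such interval $W_1$ has $v$ in its closure in $\hat V$. Its boundary arc $C$, given by $f$, is a curve germ in $\partial V$ ending at $y$. Now compute $\hat V$ from a boundary-tame cover having $W_1$ as a member (split into two radial intervals if $f = g$). Refine it so that $C$ meets the closure of no overlap $W_{1i}$; this is legitimate by part~(3) of \cref{prop:IntrinsicClosureProperties}.

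Then the points of $C$ are identified with nothing, so $C$ injects into $\hat\partial V$. The quotient map $\ovl{W_1} \to \hat V$ is continuous, and $y$ is the only point of $\ovl{W_1}$ over $y$. Hence it sends $y$ to $v$, and the images of the points of $C$ accumulate at $v$, so $v$ is not isolated. This is exactly what the paper's proof does. Without it, your proposal only reduces the corollary to an unproved claim.
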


\begin{remark}
    Can we require, as in the analytic setting, that the complement of $U$ be a union of points and disks? In general we cannot, because the definable Riemann mapping theorem is false: Most definable simply connected domains are not definably biholomorphic to the disk. We currently have no characterization of the domains which \emph{are} definably biholomorphic to the disk, though Kaiser \cite{KaiserRiemannMapping} has proven some results about a related problem.
\end{remark}

\subsection{Outline}

In \cref{sec:DefinableTopology}, we outline the proof of \cref{thm:Vanishing}, review definable sheaf cohomology, and discuss some properties of coverings which will be needed in the sequel. In \cref{sec:BoundaryHolomorphic}, we study how definable holomorphic functions behave near boundary points, and introduce the class of branching structures. \cref{sec:CohomologyVanishing} largely completes the proof of \cref{thm:Vanishing}. In \cref{sec:Comments}, we prove \cref{cor:CartanA}, discuss other implications of \cref{thm:Vanishing}, and demonstrate that its hypotheses are sharp. In \cref{sec:StructureSheaf}, we prove \cref{thm:computation}. Finally, in \cref{sec:Compactification}, we define the intrinsic closure, deduce its most important properties and prove \cref{thm:compactification}.

\subsection{Acknowledgments}

We thank Ravi Vakil, Jacob Tsimerman, Daniel Kim, Sergei Starchenko, Ben Church, Zoe Morgan, Adam Melrod, Patrick Brosnan, Benny Zak, and Gal Binyamini for a number of interesting and useful conversations. While working on this paper the author was supported by an NSF Graduate Research Fellowship, under grant DGE-1656518.

\subsection*{Conventions}

From \cref{sec:CohomologyVanishing} onward we adopt the following convention: \emph{All geometric objects are definable by default.} This includes spaces, open subsets, holomorphic functions, sheaves, covers, sections, cocycles and cohomology classes. In particular, sheaves $\mcF$ on $X$ are always assumed definable, and $H^i(X, \mcF)$ always denotes definable sheaf cohomology. The exceptions will be explicitly labeled as ``ordinary'' or ``analytic'' to avoid confusion.

\section{Some definable topology}\label{sec:DefinableTopology}

Here is an impressionistic outline for the proof of \cref{thm:Vanishing}:

\begin{enumerate}
    \item Cohomology classes are represented by \v{C}ech cocycles. Fix a cocycle $s$ with transition functions $s_I$.
    \item For a fixed boundary point $x \in \partial X$, use properties of the structure $S$ to show that the $s_I$ extend globally near $x$. It follows that $s$ is trivial near $x$.
    \item Therefore, there exists a cover $(V_i)$ of $\partial X$ such that $s$ is trivial on $(V_i \cap X)$. We will see that $s$ extends holomorphically past almost all points of $\partial X \cap V_i$.
    \item We can refine $(V_i)$ to eliminate the finitely many singular points, making the $s_I$ overconvergent. Thus, $s$ extends past the boundary.
    \item Since $S$ contains $\bbR_{\an}$, any trivialization of $s$ on a larger domain restricts definably to $X$.
\end{enumerate}

In this section, we isolate the purely topological aspects of this argument, since they also play a role in \cref{sec:Compactification}. First, we review the definition and basic properties of o-minimal sheaf cohomology.

\subsection{Definable sheaf cohomology}

Given a definable space $X$, we would like to define sheaves of definable continuous functions on $X$. In general, definable functions do \emph{not} satisfy gluing for the Euclidean topology: For example, in $\bbR_{\an}$, every analytic function is ``locally'' definable for the Euclidean topology, but not globally. On the other hand, suppose the cover $(U_i)$ is a finite cover by definable open sets. Then definable functions \emph{do} satisfy the sheaf condition, simply because a finite union of definable sets is definable. In this section we explain this notion of a definable sheaf, the corresponding notion of sheaf cohomology, and some of its basic properties.

O-minimal sheaves were first considered by Pillay \cite{PillaySheaves}, and o-minimal sheaf cohomology was defined by Edmundo, Jones, and Peatfield \cite{EdmundoJonesPeatfield}. Their results are stated only for definable sets $X \subset \bbR^n$, and a priori this is not quite enough for us: We want to consider for example Riemann surfaces given by charts. Thus we have the following definition of van den Dries.

\begin{definition}[{\cite[Chapter~10]{vandendriestame}}]
    A \emph{definable topological space} is an ordinary topological space $X$ with a finite definable atlas: There exist finitely many definable sets $X_i \subset \bbR^{N_i}$, definable open subsets $X_{ij}$, and definable homeomorphisms
    \[X_{ij} \to X_{ji}  \]
    satisfying a cocycle condition.
\end{definition}

We will only need to consider definable spaces $X$ which are locally compact and Hausdorff, and by \cite[Theorem~10.1.8]{vandendriestame}, every such $X$ definably embeds as a subset of $\bbR^n$. Therefore, the results of \cite{EdmundoJonesPeatfield} hold in our setting as well.

\begin{definition}
    The definable site $\ul X$ of $X$ is the site whose objects are definable open subsets of $X$, whose morphisms are inclusions, and whose coverings are generated by finite definable open covers.
\end{definition}

\begin{definition}
    A definable sheaf $\mcF$ on $X$ is a sheaf on the definable site $\ul X$.
\end{definition}

Though definable sheaves a priori represent sheaves on a site, they can equivalently be viewed as sheaves on a certain topological space.

\begin{proposition}[{\cite[Prop.~3.2]{EdmundoJonesPeatfield}}]
    There is a functor
    \[X \mapsto \tilde{X}  \]
    from definable spaces to ordinary spaces, called the \emph{o-minimal spectrum}, with the following properties:
    \begin{enumerate}
        \item Given definable open subsets $U_i$ of $X$, $(U_i)$ is a definable cover of $X$ if and only if $(\tilde{U}_i)$ is a cover of $\tilde{X}$.
        \item Definable sheaves on $X$ are equivalent to sheaves on $\tilde{X}$.
    \end{enumerate}
\end{proposition}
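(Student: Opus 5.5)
We realize $\tilde{X}$ as a space of types, following the standard construction of the o-minimal spectrum (Coste–Roy, Pillay, Edmundo–Jones–Peatfield). Since $X$ is locally compact Hausdorff, the remark above lets us assume $X \subset \bbR^n$ is a definable set. We define $\tilde{X}$ to be the set of ultrafilters $\alpha$ on the Boolean algebra of definable subsets of $X$; equivalently, complete $n$-types over $\bbR$ concentrating on $X$. For a definable open $U \subset X$ we put $\tilde{U} = \{\alpha : U \in \alpha\}$. The sets $\tilde{U}$ are closed under finite intersections, since $\widetilde{U \cap V} = \tilde{U} \cap \tilde{V}$, so they form a basis of a topology on $\tilde{X}$. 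A definable continuous map $f \colon X \to Y$ induces $\tilde f(\alpha) = \{B : f^{-1}(B) \in \alpha\}$. This is continuous because $\tilde f^{-1}(\tilde V) = \widetilde{f^{-1}(V)}$, and the construction is clearly functorial.

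\textbf{Step 1: property (1).} If $(U_i)$ is a finite definable cover of $X$, then every ultrafilter contains $X = \bigcup U_i$, hence contains some $U_i$, so the $\tilde{U}_i$ cover $\tilde X$. The converse is the key point, and it is where the main obstacle lies. Suppose the $\tilde{U}_i$ cover $\tilde X$ for a possibly infinite family of definable opens with $X = \bigcup U_i$. We must show that a finite subfamily already covers. This is compactness: $\tilde X$ is quasi-compact in the constructible (Stone) topology, which is finer than ours. Concretely, if no finite union covers, then the sets $X \setminus U_i$ have the finite intersection property. They therefore extend to an ultrafilter $\alpha$ lying in no $\tilde U_i$, a contradiction. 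So the coverings of the definable site, namely finite definable covers, correspond exactly to the open covers of $\tilde X$ by basic opens.

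\textbf{Step 2: property (2).} Since $\{\tilde U\}$ is a basis of $\tilde X$ closed under intersection, sheaves on $\tilde X$ are equivalent to sheaves on this basis, by the usual comparison lemma. By Step 1, every cover of a basic open $\tilde U$ by basic opens has a finite subcover coming from a definable cover of $U$. So the sheaf condition on the basis for arbitrary covers reduces to the finite case, and the basis with its induced coverings is exactly the site $\ul X$. We also need $U \mapsto \tilde U$ to be injective and inclusion-reflecting, so that the two posets agree. This holds because if $U \not\subset V$, then the definable set $U \setminus V$ is nonempty and lies in some ultrafilter. Composing the restriction functor with the comparison lemma gives the equivalence $\mathrm{Sh}(\ul X) \simeq \mathrm{Sh}(\tilde X)$.

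Finally, we check functoriality of the equivalence with respect to pullback along $\tilde f$, which follows from the identity $\tilde f^{-1}(\tilde V) = \widetilde{f^{-1}(V)}$. The only genuinely nonformal input is the compactness argument in Step 1.
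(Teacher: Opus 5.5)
Your argument is correct. The paper gives no proof of its own and simply cites Edmundo--Jones--Peatfield, and your route is essentially the standard proof behind that citation: ultrafilters (types) on the definable sets, quasi-compactness of each $\tilde U$ via the finite intersection property, the poset isomorphism $U \mapsto \tilde U$, and the comparison lemma for sheaves on a basis closed under intersections. One cosmetic point: the reduction to $X \subset \bbR^n$ is unnecessary. Ultrafilters on the Boolean algebra of (chartwise) definable subsets make sense for any definable space with a finite atlas, which is the generality the statement asks for.
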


\begin{definition}
    Let $\mcF$ be a definable (abelian) sheaf on $X$. The cohomology groups $H^i(X, \mcF)$ are given by the derived functors of the global sections functor.
\end{definition}

Note that the definition of $H^i(X, \mcF)$ depends only on the category of abelian sheaves. Thus, we have
\[ H^i(X, \mcF) = H^i(\tilde{X}, \mcF).  \]

\begin{remark}
    We adopt the following definability conventions: If $X$ is a definable space, then a sheaf on $X$ is implicitly a definable sheaf, and $H^i(X, \mcF)$ always denotes definable sheaf cohomology unless we specify otherwise. Sections, cocycles, and cohomology classes for definable sheaves are assumed to be definable unless stated otherwise. A ``cover of $X$'' is implicitly assumed to be finite and definable.
\end{remark}

Following an observation of Carral and Coste \cite[Corollary~1]{CarralCoste}, one can study sheaves on $\tilde{X}$ by restricting to the subspace of closed points, which is a compact Hausdorff space. Thus, definable sheaf cohomology inherits many favorable properties of sheaf cohomology on well-behaved spaces.

\begin{proposition}[{\cite[Proposition~4.1--4.2]{EdmundoJonesPeatfield}}]\label{prop:EdmundoJonesPeatfield}
    Let $\mcF$ be a sheaf on $X$.
    \begin{enumerate}
        \item  If $i > \dim(X)$, then we have $H^i(X, \mcF) = 0$.
        \item One can compute $H^i(X, \mcF)$ by taking \v{C}ech cohomology for the definable site.
    \end{enumerate}
\end{proposition}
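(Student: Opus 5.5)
The plan is to transfer both statements to the o-minimal spectrum $\tilde{X}$ and then use what is known about sheaves on compact Hausdorff spaces. Since $H^i(X,\mcF) = H^i(\tilde{X},\mcF)$, both claims are statements about sheaves on $\tilde{X}$.

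\textbf{Reduction to closed points.} By the observation of Carral and Coste, the subspace $\tilde{X}^{\max}$ of closed points of $\tilde{X}$ is compact Hausdorff. Every point of $\tilde{X}$ specializes to a unique closed point, which gives a retraction $r\colon \tilde{X} \to \tilde{X}^{\max}$. The first thing I would prove is that $r$ is continuous and that restriction induces an isomorphism on cohomology. The argument is that each closed point has a fundamental system of neighbourhoods which are stable under generization, so $r_*$ is exact and $r_*\mcF$ recovers $\mcF$ on the relevant opens.

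\textbf{Part (1), the dimension bound.} I would bound the covering dimension of $\tilde{X}^{\max}$ by $\dim X$, using triangulation. By the triangulation theorem we may take $X$ to be a finite union of open simplices of dimension at most $\dim X$. Any finite definable open cover can then be refined, after passing to a barycentric subdivision, to a cover by open stars, and in this cover any $\dim X + 2$ members have empty intersection. On a compact Hausdorff space of covering dimension at most $d$, Čech cohomology vanishes above degree $d$, and for paracompact spaces it agrees with sheaf cohomology. This gives $H^i = 0$ for $i > \dim X$.

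\textbf{Part (2), Čech computation.} I would use the standard Cartan--Leray criterion: Čech cohomology on a site computes derived functor cohomology whenever the covers are cofinal and the coverings are quasi-compact. Here every definable cover is finite, so $\tilde{X}$ and all the $\tilde{U}$ are quasi-compact, and finite intersections of definable opens are again definable. By the standard spectral sequence argument, $\check{H}^i$ commutes with filtered colimits. It follows that the Čech presheaves $\check{\mathcal{H}}^q(\mcF)$ are locally zero for $q > 0$, so the Čech-to-derived spectral sequence degenerates.

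\textbf{Main obstacle.} The step I expect to be hardest is the dimension count in part (1). It requires that open-star refinements actually be cofinal among definable covers, and this needs triangulations compatible with an arbitrary finite family of definable open sets. I would obtain this from the triangulation theorem applied to the whole family simultaneously.
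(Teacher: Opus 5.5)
Your reduction to closed points and your dimension count in part (1) are sound in outline. The gap is in part (2). The criterion you invoke there is false, even on spectral spaces. As you state it, \v{C}ech cohomology computes derived-functor cohomology as soon as coverings are finite (quasi-compact) and finite intersections exist. For a counterexample, let $P=\{a,b,c,d,e,f\}$ with $a,b<c,d<e,f$, topologized by down-sets. This is a finite $T_0$, hence spectral, space whose order complex is $S^0 * S^0 * S^0\cong S^2$, so $H^2(P,\bbZ)\cong\bbZ$. But every open set containing $e$ contains $U_e=\{a,b,c,d,e\}$, and similarly for $f$. So the two-element cover $\{U_e,U_f\}$ refines every cover, and $\check H^2(P,\bbZ)=0$.

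Your spectral-sequence step fails for the same reason. The presheaves $U\mapsto H^q(U,\mcF)$ are locally zero for $q>0$ on \emph{every} site, and this only kills the column $E_2^{0,q}$. That gives $\check H^1\cong H^1$ and $\check H^2\hookrightarrow H^2$, nothing more. Degeneration needs $\check H^p(X,\underline{H}^q(\mcF))=0$ for all $p$, which is a paracompactness-type input. Commutation with filtered colimits does not supply it.

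That input is the normality of $\tilde X$, which you already set up. Since $\tilde X$ is quasi-compact and $\tilde X^{\max}$ is Hausdorff, the continuous retraction $r$ is a closed map. Also, $r^{-1}(x)$ is the set of generizations of $x$, so it lies in every open neighbourhood of $x$. Together these show that at each closed point $x$ the sets $r^{-1}(V)$, with $V$ open in $\tilde X^{\max}$, form a neighbourhood basis. This fact, not stability under generization (which every open set has), is what gives $(r_*\mcF)_x=\mcF_x$, exactness of $r_*$, and $H^i(\tilde X,\mcF)\cong H^i(\tilde X^{\max},r_*\mcF)$.

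By compactness of $\tilde X^{\max}$, covers of the form $(r^{-1}(V_j))$ are cofinal among open covers of $\tilde X$. So are finite covers by constructible opens $\tilde U$. For the former, $\check C^\bullet((r^{-1}V_j),\mcF)=\check C^\bullet((V_j),r_*\mcF)$. Hence definable \v{C}ech cohomology of $\mcF$ equals \v{C}ech cohomology of $r_*\mcF$ on the compact Hausdorff space $\tilde X^{\max}$. By Godement this agrees with sheaf cohomology there, and hence with $H^i(X,\mcF)$. This is the argument behind the paper's appeal to Carral--Coste and Edmundo--Jones--Peatfield, and it is the missing step in your part (2).
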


One also has the usual Mayer--Vietoris sequence.

\begin{proposition}\label{prop:MayerVietoris}
    Let $X = U \cup V$, where $U, V$ are definable open sets, and let $\mcF$ be an abelian sheaf on $X$. We have a long exact sequence
    \[
    \begin{aligned}
    \cdots \to H^n(X, \mcF)
    &\to H^n(U, \mcF) \oplus H^n(V, \mcF) \\
    &\to H^n(U \cap V, \mcF)
    \to H^{n+1}(X, \mcF) \to \cdots .
    \end{aligned}
    \]
\end{proposition}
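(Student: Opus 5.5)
The plan is to derive the Mayer--Vietoris sequence by transporting the problem to the o-minimal spectrum and using the standard sheaf-theoretic argument there. By the properties of the o-minimal spectrum quoted above, definable sheaves on $X$ correspond to ordinary sheaves on $\tilde X$, and $H^i(X,\mcF) = H^i(\tilde X,\mcF)$. Likewise $H^i(U,\mcF)=H^i(\tilde U,\mcF|_{\tilde U})$, and similarly for $V$ and $U\cap V$. Here $\tilde U,\tilde V$ are open subsets of $\tilde X$. The first property of the spectrum shows that $\tilde U \cup \tilde V = \tilde X$, because $(U,V)$ is a finite definable cover of $X$. One must also identify $\widetilde{U\cap V}$ with $\tilde U\cap\tilde V$. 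I would check this directly from the construction of $\tilde X$ as a space of ultrafilters (types) of definable sets, where $\tilde W$ consists of the types containing $W$. A type contains $U\cap V$ exactly when it contains both $U$ and $V$.

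With this, the statement reduces to the ordinary Mayer--Vietoris sequence for an open cover $\tilde X=\tilde U\cup\tilde V$ of a topological space. The proof I would give is standard. Take an injective (or merely flasque) resolution $\mcF\to\mcI^\bullet$ on $\tilde X$; restrictions of injectives to open subsets remain injective, hence acyclic. For each $k$ one then has the sequence
\[ 0\to \mcI^k(\tilde X)\to \mcI^k(\tilde U)\oplus\mcI^k(\tilde V)\to \mcI^k(\tilde U\cap\tilde V)\to 0. \]
Left exactness is the sheaf axiom. Surjectivity on the right holds because $\mcI^k$ is flasque, so any section over $\tilde U\cap\tilde V$ extends to $\tilde U$. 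The long exact sequence in cohomology of this short exact sequence of complexes is exactly the claimed sequence.

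Alternatively, one can avoid the spectrum in the second step and argue on the definable site itself. There, injective sheaves restrict to injectives, because restriction to the slice site over a definable open has an exact left adjoint, extension by zero. Surjectivity would follow by showing injective definable sheaves are flasque, using the injection $j_!\,\mathbb{Z}_W\to\mathbb{Z}_X$ for $W$ a definable open. The main point requiring care is this flasqueness/acyclicity of restrictions, together with compatibility of intersections with the spectrum. Passing through $\tilde X$ makes both routine, so I expect no serious obstacle beyond verifying $\widetilde{U\cap V}=\tilde U\cap\tilde V$ from the definition.
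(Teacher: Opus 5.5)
Your proposal is correct and follows the same route as the paper: you pass to the o-minimal spectrum $\tilde X$, where definable sheaf cohomology becomes ordinary sheaf cohomology, and invoke the topological Mayer--Vietoris sequence there. You also supply details the paper leaves implicit: $\tilde U\cup\tilde V=\tilde X$, $\widetilde{U\cap V}=\tilde U\cap\tilde V$, and the standard injective/flasque-resolution argument.
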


\begin{proof}
    The Mayer--Vietoris sequence holds for sheaf cohomology on any topological space, though not any site. Since definable sheaf cohomology equates to sheaf cohomology on $\tilde{X}$, we conclude. \qedhere
\end{proof}

\subsection{Boundary covers}

Let $Z$ be a definable compact Hausdorff space of dimension 2; later we will take $Z$ to be a compact complex curve $Y$, or the intrinsic closure $\hat{X}$ defined in \cref{sec:Compactification}. Let $X \subset Z$ be a definable open subset. We are interested in properties which hold ``locally along the boundary'' --- for example, that a cocycle is locally trivial near every boundary point. In particular, we are interested in extending a property such as triviality that holds near the boundary, \emph{across} the boundary. In this section we prove a series of abstract covering lemmas in this direction.

Our crucial tool is the o-minimal triangulation theorem of van den Dries.

\begin{theorem}[{\cite[Theorem~8.2.9]{vandendriestame}}]\label{thm:Triangulation}
    Let $B_1, \ldots, B_n$ be definable subsets of $Z$. Then there is a definable triangulation of $Z$ compatible with the $B_i$, in the sense that each is a union of (open) simplices.
\end{theorem}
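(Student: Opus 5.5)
The plan is to reduce to the Euclidean case and then run the classical inductive construction of van den Dries. Since $Z$ is compact Hausdorff and definable, \cite[Theorem~10.1.8]{vandendriestame} gives a definable embedding of $Z$ into some $\bbR^n$. Its image is closed and bounded, and the $B_i$ go to definable subsets of it. So it suffices to prove the following: for a closed bounded definable $A \subset \bbR^n$ and definable $B_1, \ldots, B_k \subset A$, there is a finite simplicial complex $K$ and a definable homeomorphism $\Phi \colon |K| \to A$ mapping each open simplex onto a set that lies inside or outside each $B_i$. The triangulation on $Z$ is then pulled back along the embedding.

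I would prove the Euclidean statement by induction on $n$; the case $n = 1$ is immediate from o-minimality, using finitely many points and intervals. For the inductive step, first apply a definable (in fact linear) change of coordinates putting the finitely many sets $A, B_1, \ldots, B_k$ in \emph{good direction}. This means that for every lower-dimensional definable set in the (finite) list, each fiber of the projection $\pi \colon \bbR^n \to \bbR^{n-1}$ is finite. Next take a cell decomposition of $\bbR^n$ compatible with $A$ and the $B_i$. Over each cell $C$ of the base, this produces finitely many definable continuous functions $f_1 < \cdots < f_m$ on $C$ whose graphs and bands partition $\pi^{-1}(C)$. By induction, triangulate $\pi(A) \subset \bbR^{n-1}$ compatibly with all the base cells, via $\phi \colon |L| \to \pi(A)$.

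The main obstacle is lifting this triangulation. Pull back each $f_j$ to a function $f_j \circ \phi$ on each open simplex of $L$. We need these functions to extend continuously to the closed simplices, and on each open simplex the extended functions must be either identically equal or strictly ordered. Continuity of the extension is what good direction buys: boundedness of $A$ plus finiteness of vertical fibers of the frontiers prevents oscillation. Equalities that occur only on faces are handled by passing to a barycentric subdivision of $L$. I expect this to be the real work, and I would first try to prove it via the curve selection lemma, checking limits along definable arcs.

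Once the $f_j$ are extended and ordered, each region $\{(x,t) : x \in \sigma,\ f_j(x) \le t \le f_{j+1}(x)\}$ over a closed simplex $\sigma$ is mapped homeomorphically onto a prism over $\sigma$. The map is fiberwise affine, sending the graphs to two affine "levels". Subdivide these prisms into simplices in the standard way, by ordering vertices and coning. The pieces glue across adjacent simplices of $L$ because the construction is canonical on common faces. This gives the desired $\Phi$, compatible with $A$ and all $B_i$ since each open simplex lies in a single cell.
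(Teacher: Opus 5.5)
The paper gives no proof of its own here; it simply quotes van den Dries. Your skeleton (embed $Z$ as a compact subset of $\bbR^n$, good direction, cell decomposition, triangulate the base by induction, lift) is the standard one. The lifting step, however, has a genuine gap.

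First, a preliminary correction. What must be put in good direction is the closed set $F=\partial A\cup\bigcup_i\partial B_i$ of boundaries in $\bbR^n$; this set has dimension $<n$ even when $A$ or some $B_i$ does not. The only functions you may lift are the ones whose graphs make up $F$ over each base cell. For those, $\overline{\Gamma(f\circ\phi)}\subset F$ has finite fibres. Because an open simplex is convex, the set of limit values at a point of its closure is connected, hence a single point. Curve selection alone gives limits along arcs, but not that these limits agree. An arbitrary cell-decomposition function need not extend at all. For example, with $A=[-1,1]^3$, a decomposition compatible with $A$ may contain $x_2/(2x_1)$ over the wedge $\{0<x_1<1,\ |x_2|<x_1\}$. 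Its graph lies in the interior of $A$, and it has no limit at the origin. Discarding such functions is harmless: a band between consecutive $F$-levels is connected and disjoint from $F$, so it lies inside or outside each $B_i$.

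The more serious problem is your claim that the prism pieces glue because the construction is canonical on common faces. Since $F$ is closed, the extension of a level over $\sigma$ does restrict to a level over each face $\tau$. But $\tau$ may carry further levels that are not limits of levels over $\sigma$.

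Take $A$ the closed unit disk and $B_1=\{0\}$. Then $0$ is a base vertex carrying three levels $-1,0,1$, while the adjacent edges carry only two. Your prism over $[0,\tfrac12]$ therefore contains $\{0\}\times[-1,1]$ as a single edge. The interior of that edge meets $B_1$ without lying in it, and the edge clashes with the lift over the vertex $0$. Likewise, the fibrewise affine maps over $[0,\tfrac12]$ and over $\{0\}$ disagree on that segment. So neither the simplices nor the homeomorphisms glue.

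The lift over $\overline\sigma$ has to be built relative to the lift already constructed over $\partial\sigma$. One way to do this:
\begin{enumerate}
  \item After barycentric subdivision, each simplex $\sigma$ has a distinguished vertex $b$, the barycenter of the simplex of $L$ containing $\sigma$. The levels over $b$ are exactly the values of the levels over $\sigma$.
  \item Cone each region between consecutive levels over $\overline\sigma$ from the midpoint of its vertical segment over $b$. The cone goes onto the two graph simplices and onto the already-triangulated region over the face opposite $b$.
  \item Define the homeomorphism radially inside the convex fibrewise-affine model, so that it restricts to the maps already given on the faces.
\end{enumerate}
Without some such relative construction, the induction on the base does not close.
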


\begin{definition}
    When $K$ is a triangulated space, we will use the following notation:
\begin{enumerate}
    \item $K_n$ is the set of $n$-simplices in $K$.
    \item For a simplex $\sigma$, the \emph{open star} $\St_K(\sigma)$ is the union of interiors over all simplices $\tau$ containing $\sigma$. It is a triangulated open subset of $K$. If $X$ is any triangulated subset of $K$, we let
    \[ \St_X(\sigma) = \St_K(\sigma) \cap X. \]
\end{enumerate}
\end{definition}

\begin{definition}
    \leavevmode\par
    \begin{enumerate}
        \item If $x \in \partial X$, then a \emph{boundary neighborhood} of $x$ is an intersection $U \cap X$, where $U$ is a definable neighborhood of $x$ in $Z$.
        \item A \emph{boundary collar} $C \subset X$ is an intersection $V \cap X$, where $V$ is a definable neighborhood of $\partial X$ in $Z$.
        \item Let $(V_i)$ be a cover for $\partial X$ in $Z$. Then its restriction $(V_i \cap X)$ is a \emph{boundary cover}.
    \end{enumerate}
\end{definition}

\begin{lemma}\label{lem:NoTripleIntersect}
    \leavevmode\par
    \begin{enumerate}
        \item If $(U_i)$ is a boundary cover of $X$, then it has a refinement whose closures $\ovl{U_i}$ have no triple intersections. Indeed, if $U_i = V_i \cap X$ for a cover $(V_i)$ of $\partial X$, then we can assume that $(\ovl{V_i})$ has no triple intersections.
        \item If $(U_i)$ is any cover of $X$, then it has a refinement with no triple intersections near $\partial X$.
    \end{enumerate}
\end{lemma}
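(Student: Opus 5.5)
The approach is to use the triangulation theorem (\cref{thm:Triangulation}) and then replace the given cover by open stars of vertices of a barycentric subdivision. These have the classical property that the stars of three vertices meet only if the vertices span a simplex. Since $\partial X$ has dimension at most $1$, no relevant simplex has three vertices lying on $\partial X$.

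For part (1), let $(V_i)$ be a finite definable cover of $\partial X$ in $Z$ with $U_i = V_i \cap X$. Since $\partial X$ is closed, hence compact, we may shrink $V := \bigcup V_i$ and assume everything lies in a compact region. By \cref{thm:Triangulation}, choose a definable triangulation $K$ of $Z$ compatible with $X$, $\partial X$ and the $V_i$. Passing to iterated barycentric subdivisions, which remain definable, we may assume the following: for every vertex $v \in \partial X$, the closed star $\ovl{\St_K(v)}$ lies in some $V_i$.

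This uses the Lebesgue number lemma on the compact set $\partial X$. Concretely, since $\dim Z = 2$ and the triangulation is finite, mesh can be made small by subdivision. Alternatively, one can argue combinatorially, using compatibility with the $V_i$ after two subdivisions. Indeed, after one barycentric subdivision of a triangulation compatible with an open set $V_i$, the closed star of a vertex lying in $V_i$ is contained in $V_i$. I expect to use this combinatorial version, to avoid metric issues on an abstract definable space.

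Now take a second barycentric subdivision $K'$. Let $W_v = \St_{K'}(v)$ for vertices $v$ of $K'$ lying on $\partial X$, which is a subcomplex. Each $\ovl{W_v}$ lies in some $V_{i(v)}$, and the $W_v$ cover $\partial X$ because $\partial X$ is a union of open simplices of $K'$, each having a vertex on $\partial X$.

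The key claim is that $\ovl{W_u} \cap \ovl{W_v} \cap \ovl{W_w} = \emptyset$ for distinct boundary vertices $u, v, w$. Closed stars in a barycentric subdivision $K'$ of a complex $L$ meet only if the vertices $u,v,w$ all lie in a common closed simplex of $K'$... but closed stars also meet via shared neighbors, so we need a second subdivision. The standard fact is that, in the first barycentric subdivision of a complex, the closed stars of vertices $u_0,\dots,u_k$ which are barycenters of simplices of the original complex intersect only if those simplices form a chain. Here $u,v,w$ lie in the subcomplex triangulating $\partial X$, which is $1$-dimensional, so its simplices have dimension $\le 1$ and chains have length at most $2$. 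Hence no triple intersection exists. I anticipate the main obstacle to be verifying this chain criterion carefully for closed stars rather than open ones, which is why the construction first subdivides once, then takes closed stars of barycenters in the next subdivision. Finally set $V'_v = W_v$ and $U'_v = W_v \cap X$, indexed to refine $(U_i)$.

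For part (2), given any finite definable cover $(U_i)$ of $X$, triangulate $Z$ compatibly with $X$, $\partial X$ and the $U_i$, and apply part (1) to obtain the boundary pieces $W_v \cap X$. Each such piece must lie in a single $U_i$. This can fail because the $U_i$ need not extend across $\partial X$. So instead I refine only near the boundary, using open stars in $X$ of simplices whose closure meets $\partial X$ in a vertex or edge. After subdivision, each of these lies in some $U_i$ by compatibility. The same chain argument then shows that triple intersections can only occur away from a collar. Together with the interior part $X \setminus \bigcup \ovl{W_v}$ intersected with the $U_i$, this gives a refinement with no triple intersections in a boundary collar.
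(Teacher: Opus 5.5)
\textbf{Part (1).} Your outline matches the paper's: open stars of boundary vertices, in a subdivision where no $2$-simplex has all its vertices on the $1$-dimensional $\partial X$. The step you use to control closures, however, is false.

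Let $e$ be an edge of $\partial X$ with endpoints $p,q$. After barycentric subdivision, the closed edges $[p,b(e)]$ and $[b(e),q]$ lie in $\ovl{W_p}$ and $\ovl{W_q}$ respectively. Hence $b(e)\in \ovl{W_p}\cap\ovl{W_{b(e)}}\cap\ovl{W_q}$. This happens in every barycentric subdivision once $\partial X$ contains an edge, so subdividing further cannot fix it.

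The ``standard fact'' you quote holds only for \emph{open} stars. The open stars of $b(\sigma_0),\dots,b(\sigma_k)$ meet iff these vertices span a simplex, iff the $\sigma_i$ form a chain. Closed stars of $b(\sigma)$ and $b(\tau)$ already meet whenever $\sigma,\tau$ share a face or lie in a common simplex. Your claim that after one subdivision the closed star of a vertex of $V_i$ lies in $V_i$ also fails, for $b(\sigma)$ when a face of $\sigma$ leaves $V_i$. You do not need it, though: by compatibility, the open star of any vertex in $V_i$ lies in $V_i$.

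The missing idea is the paper's final shrinking step:
\begin{enumerate}
\item The open stars $\St_Z(x)$, $x\in(\partial X)_0$, refine $(V_i)$, cover $\partial X$, and have no triple intersections by the open-star criterion.
\item Since $\partial X$ is compact, shrink this finite cover to definable $W'_x\Subset\St_Z(x)$ still covering $\partial X$.
\item Then $\ovl{W'_x}\subset\St_Z(x)$, so the closures have no triple intersections.
\end{enumerate}

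\textbf{Part (2).} Your construction is not pinned down, and both readings of ``open stars in $X$ of simplices whose closure meets $\partial X$'' fail.
\begin{enumerate}
\item If the simplices lie on $\partial X$, their stars need not lie in a single $U_i$, however fine the subdivision. Several $U_i$ can approach a boundary point from different sectors.
\item If they are simplices of $X$, their stars have triple intersections touching $\partial X$. Take a triangle $\tau\subset X$ with a vertex $p\in\partial X$ and two edges $\epsilon_1,\epsilon_2$ through $p$ lying in $X$. The stars of $\tau,\epsilon_1,\epsilon_2$ all contain the open simplex $\tau$, whose closure contains $p$.
\end{enumerate}
Separately, adjoining the sets $U_i\setminus\bigcup\ovl{W_v}$ can leave points of $X$ uncovered: those in some $\ovl{W_v}$ but in no $W_u$.

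What works, and what the paper does, is to use only interior \emph{vertices}. After one barycentric subdivision compatible with $X$, $\partial X$ and the $U_i$, every open simplex of $X$ has a vertex in $X$. So the open stars $\St_Z(x)$, $x\in X_0$, cover $X$ and refine $(U_i)$. A triple intersection $\St_Z(x_1)\cap\St_Z(x_2)\cap\St_Z(x_3)$ is at most the open $2$-simplex $\{x_1,x_2,x_3\}$. Its closure is a compact subset of $X$, since all of its faces touch a vertex in $X$. These finitely many closed triangles therefore miss a neighborhood of $\partial X$.
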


\begin{figure}[ht]
  \centering
  \includegraphics[width=\linewidth]{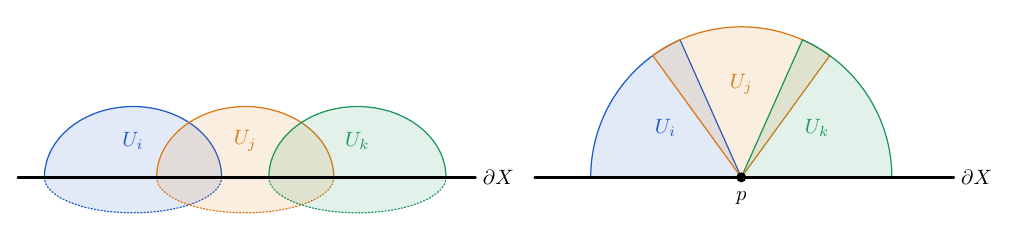}
  \caption{Two-ply refinement (1) of a boundary cover, and (2) of an arbitrary cover restricted near the boundary.}
  \label{fig:two-ply-covers}
\end{figure}

\begin{proof}
    By \cref{thm:Triangulation}, we reduce to proving the analogous facts when $Z$ is a finite simplicial complex of dimension 2, $X$ is a triangulated open subset, and $\partial X$ is a closed one-dimensional subcomplex. After subdividing, we can assume that there is no 2-simplex $\sigma$ whose vertex set is contained in $\partial X$.

    To prove fact (1), fix a cover $(V_i)$ extending $(U_i)$, and assume the triangulation is compatible with $(V_i)$. In this case the cover
    \[ \{\St_Z(x): x \in (\partial X)_0  \} \]
    is a refinement of $(V_i)$ covering $\partial X$. It has no triple intersections, by the subdivision condition above. By compactness of $\partial X$, after shrinking we can assume the closures have no triple intersections either.

    To prove (2), triangulate compatibly with the $(U_i)$, and consider
    \[ \{\St_Z(x): x \in X_0 \}. \]
    This is a cover of $X$ refining $(U_i)$. The triple intersection $\St_Z(x_1) \cap \St_Z(x_2) \cap \St_Z(x_3)$ contains at most the 2-simplex $\{x_1, x_2, x_3 \}$, and since the $x_i$ are interior points, no triple intersection meets the boundary. \qedhere
\end{proof}
\begin{lemma}\label{lem:FiniteOmissionsLemma}
    Let $(U_i)$ be a boundary cover of $X$, and for each $i \neq j$ let $E_{ij} \subset \partial X \cap \partial U_{ij}$ be a finite set. Then there exists a refinement $(U_i')$ with the following properties (all closures in $Z$):
    \begin{enumerate}
        \item We have $\ovl{U_{ij}'} \subset U_{ij} \cup \partial X$.
        \item The set $E_{ij}$ is disjoint from $\ovl{U_{ij}'}$.
    \end{enumerate}
\end{lemma}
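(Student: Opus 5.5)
We will argue with a triangulation, in the same way as \cref{lem:NoTripleIntersect}. Write $U_i = V_i \cap X$, where $(V_i)$ is a finite definable open cover of $\partial X$ in $Z$. Let $E = \bigcup_{i\neq j} E_{ij}$, which is a finite subset of $\partial X$.

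\textbf{Triangulation.} Apply \cref{thm:Triangulation} to get a definable triangulation of $Z$ compatible with $X$, $\partial X$, all the $V_i$ and $V_{ij}$, and every point of $E$ taken as a singleton. Each point of $E$ is then a vertex. Subdivide once barycentrically, which changes nothing in the compatibility. Two things follow: no $2$-simplex has all its vertices in $\partial X$, and every closed star $\ovl{\St_Z(x)}$ of a vertex $x \in (\partial X)_0$ lies inside some $V_i$. The second point is standard: after a barycentric subdivision, the closed star of a vertex lies in the open star of that vertex in the old triangulation, and the old open star of a point of $\partial X$ is contained in some $V_i$ containing that point, because the triangulation is compatible with the $V_i$.

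\textbf{The refinement.} We want a small neighbourhood of $\partial X$ in which the only way to reach $\partial X \cap \partial U_{ij}$ is along edges we have chosen. Take the open stars $\St_Z(x)$ for $x \in (\partial X)_0$, and shrink them slightly. In barycentric coordinates, replace $\St_Z(x)$ by
\[ W_x = \{ y : t_x(y) > \tfrac12 \ \text{or}\ y \in \St_{\partial X}(x)\text{-adjacent region near } \partial X\}. \]
The cleanest concrete choice is $W_x = \{y \in \St_Z(x) : t_x(y) > \tfrac13\}$. This is still a cover of $\partial X$, because each point of $\partial X$ lies in a closed vertex or in an open edge of $\partial X$, and on such an edge the largest of the two coordinates is at least $\tfrac12$. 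For each $x$, choose an index $i(x)$ with $\ovl{\St_Z(x)} \subset V_{i(x)}$. Then set
\[ V_i' = \bigcup_{i(x)=i} W_x, \qquad U_i' = V_i' \cap X. \]

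\textbf{Verifying the two properties.} The closure $\ovl{U'_{ij}}$ lies in the union of the sets $\ovl{W_x} \cap \ovl{W_y}$ with $i(x)=i$ and $i(y)=j$. This union is contained in $V_i \cap V_j$. Points of $\ovl{U'_{ij}}$ that lie in $Z \setminus \partial X$ lie in $\ovl{X}$, and hence in $X$. So we get $\ovl{U'_{ij}} \subset (V_{ij} \cap X) \cup \partial X = U_{ij}\cup\partial X$, which is property (1).

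For property (2), take $e \in E_{ij}$. Since $e$ is a vertex, $t_e(e)=1$ and $t_x(e)=0$ for every other vertex $x$. So $e \notin \ovl{W_x}$ for $x \neq e$, because $\ovl{W_x}$ requires $t_x \geq \tfrac13$. Hence $e$ lies only in $\ovl{W_e}$, and therefore in the closure of only one $V'_{i(e)}$. It follows that $e \notin \ovl{U'_{ij}}$ whenever $i \neq j$, as (2) requires.

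\textbf{Main obstacle.} The hard part is the bookkeeping that makes $(U_i')$ a genuine refinement covering all of $\partial X$ while the closures of pairwise intersections avoid the vertices in $E$. The barycentric-coordinate shrinking handles this. The lemma does not demand that $\ovl{U'_{ij}}$ avoid the remaining points of $\partial X \setminus U_{ij}$: the statement allows all of $\partial X$. So no further subtlety is needed beyond checking that the shrunken sets still cover $\partial X$.
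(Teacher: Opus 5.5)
\textbf{One false claim, easily repaired.} Your construction works, but one supporting claim is false as written. After one barycentric subdivision $K'$ of $K$, the vertices of $\partial X$ include the barycentres $\hat e$ of old edges $e \subset \partial X$. Your justification (the closed new star lies inside the old open star of the same vertex) only covers old vertices. For $\hat e$ the claim fails. The closed star $\ovl{\St_{K'}(\hat e)}$ contains both endpoints $v,w$ of $e$, via the new edges $[v,\hat e]$ and $[w,\hat e]$. Nothing in \cref{thm:Triangulation} prevents $v \in V_1 \setminus V_2$ and $w \in V_2 \setminus V_1$ with no other $V_i$ containing both, e.g.\ when $V_1 \cap V_2 \cap \partial X$ is the single open edge $e$.

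You never need the full closed star, though. You only use $\ovl{W_x} \subset V_{i(x)}$, and that holds. Since $t_x$ is continuous, $\ovl{W_x} \subset \{t_x \geq \tfrac13\} \subset \St_{K'}(x) \subset \St_K(\tau)$, where $\tau \subset \partial X$ is the open simplex of $K$ containing $x$. Moreover $\St_K(\tau) \subset V_i$ for any $V_i \supset \tau$, because $V_i$ is open and a union of open simplices of $K$.

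In fact you can drop the subdivision entirely. The property that no $2$-simplex has all its vertices in $\partial X$ is never used. For any compatible triangulation, $\ovl{W_x} \subset \St_Z(x) \subset V_i$ whenever $x \in V_i$. With this repair, and with the garbled first definition of $W_x$ deleted, your verification of the covering property, of (1), and of (2) goes through.

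\textbf{Comparison with the paper.} The paper gets (1) by the same shrinking $V_i' \Subset V_i$, but handles (2) by local excision, one point at a time. For $x \in E_{ij}$ it picks $k$ with $x \in V_k$ and a definable neighbourhood $R_x$ with $\ovl{R_x} \subset V_k$. It then deletes $\ovl{R_x}$ from every $U_a$ with $a \neq k$, and inducts over the finitely many points. Both proofs rest on the same mechanism: after refinement, each point of $E$ lies in the closure of only one cover element. The paper's version needs no triangulation. Yours builds the refinement in one stroke from shrunken vertex stars, in the spirit of \cref{lem:NoTripleIntersect} and \cref{lem:CoverExtension}. It also gives a slightly stronger conclusion: every vertex of $\partial X$, not just the points of $E$, lies in the closure of exactly one $V_k'$.
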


\begin{figure}[ht]
  \centering
  \includegraphics[width=0.78\linewidth]{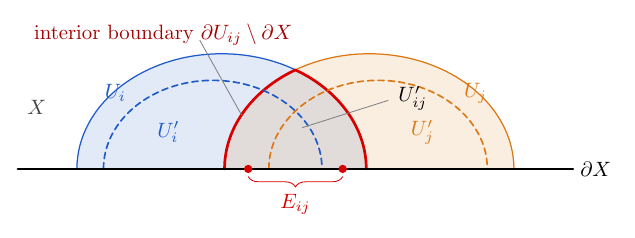}
  \caption{The proof of \cref{lem:FiniteOmissionsLemma}.}
  \label{fig:finite-omissions}
\end{figure}

In other words, by refining the cover we can shrink $U_{ij}$ away from all interior boundary points, and finitely many problematic points of $\partial X$.

Here and throughout the paper, it will often be useful to have the following shorthand: $A$ is \emph{compactly contained in} $B$, written $A \Subset B$, if $\ovl{A} \subset B$ is compact.

\begin{proof}
    Let $(U_i)$ be our boundary cover, and let $(V_i)$ be a cover of $\partial X$ extending it. By compactness of $\partial X$, we can replace each $V_i$ by some $V_i' \Subset V_i$ and obtain a refined cover. The restriction $V_i' \cap X$ then satisfies condition (1). For condition (2), pick some $x \in E_{ij}$. Since $(U_i)$ forms a boundary cover, choose some $V_k \ni x$. There exists a definable neighborhood $R_x \ni x$ with $\ovl{R_x} \subset V_k$. We simply define $U_a' = U_a \setminus \ovl{R_x}$, for all $a \neq k$. The resulting refinement is still a boundary cover, and now we have $x \notin \ovl{U_{ij}'}$. Continuing by induction, we conclude. \qedhere
\end{proof}

The next result will allow us to extend a cocycle past the boundary, given that we can extend its transition functions.

\begin{lemma}\label{lem:CoverExtension}
    Let $(U_i)$ be a boundary cover, and for each $i, j$ let $A_{ij}$ be a definable open neighborhood in $Z$ of $\ovl{U_{ij}}$. After refining $(U_i)$, it arises from a cover $(V_i)$ with $V_i \cap V_j \subset A_{ij}$.
\end{lemma}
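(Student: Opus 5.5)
The plan is to first shrink the cover so that closures are controlled, and then cut the extending sets $V_i$ down so that their pairwise intersections lie inside the prescribed neighborhoods $A_{ij}$.

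\textbf{Step 1 (normalizing the cover).} Fix a cover $(V_i)$ of $\partial X$ in $Z$ with $U_i = V_i \cap X$. As in the proof of \cref{lem:FiniteOmissionsLemma}, use compactness of $\partial X$ to replace each $V_i$ by some $V_i' \Subset V_i$ that still covers $\partial X$. Writing $U_i' = V_i' \cap X$, we get $\ovl{U_{ij}'} \subset \ovl{U_{ij}} \subset A_{ij}$. So it suffices to prove the statement for the refined cover, which we rename $(U_i)$, $(V_i)$. We may now assume $\ovl{V_i}$ is compact and $\ovl{V_i} \cap X$ behaves well with respect to $U_{ij}$.

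\textbf{Step 2 (the key inclusion on the boundary).} Consider the compact set
\[ K_{ij} = \ovl{V_i} \cap \ovl{V_j} \cap \partial X. \]
The main point is that $K_{ij} \subset A_{ij}$. I would arrange this by shrinking before taking closures. Choose a second cover $(W_i)$ of $\partial X$ with $W_i \Subset V_i$. Then every point $x \in \ovl{W_i} \cap \ovl{W_j} \cap \partial X$ lies in $V_i \cap V_j$.

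I then need $x \in \ovl{U_{ij}}$, i.e.\ that $V_i \cap V_j \cap X$ accumulates at $x$. Since $x$ is a boundary point, every neighborhood of $x$ meets $X$, and $V_i \cap V_j$ is such a neighborhood. Hence $x \in \ovl{(V_i\cap V_j)\cap X} = \ovl{U_{ij}} \subset A_{ij}$. This is the step where the boundary hypothesis is genuinely used, and I expect it to be the main subtlety: one must work with open $V_i \cap V_j$ containing $x$, rather than merely with closures.

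\textbf{Step 3 (cutting down near the boundary).} For each pair $i \neq j$, the compact set $\ovl{W_i} \cap \ovl{W_j} \cap \partial X$ lies in the open set $A_{ij}$. By a definable compactness (tube) argument, pick a definable open neighborhood $N$ of $\partial X$ in $Z$ such that
\[ \ovl{W_i} \cap \ovl{W_j} \cap \ovl N \subset A_{ij} \]
for all $i,j$. Concretely, the closed set $\ovl{W_i} \cap \ovl{W_j} \setminus A_{ij}$ is compact and disjoint from $\partial X$, so take $N$ to be the complement of the finite union of these sets.

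Now set $V_i'' = W_i \cap N$. These still cover $\partial X$, and they satisfy $V_i'' \cap V_j'' \subset A_{ij}$. Their restrictions $V_i'' \cap X$ refine $(U_i)$, and the refined cover arises from $(V_i'')$.

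\textbf{Step 4 (definability).} All sets used are obtained from definable sets by finitely many Boolean operations, closures, and interiors, so each is definable. Finiteness of the index set keeps $N$ definable. The case $i=j$ is harmless: take $A_{ii}$ to contain $\ovl{U_i}$ and include it in the same construction.
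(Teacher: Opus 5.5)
Your proof is correct, but it takes a different route from the paper's.

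\textbf{The paper's route.} The paper triangulates $Z$ compatibly with the $V_i$ and the $A_{ij}$. It subdivides so that any simplex $\{x,y\}$ with both vertices on $\partial X$ is an edge of $\partial X$. It then replaces $V_i$ by the union of the open stars $\St_Z(x)$ over boundary vertices $x \in V_i$. A simplex of $V_i' \cap V_j'$ then contains an edge $\{x,y\}$ of $\partial X$ with $x \in V_i$ and $y \in V_j$. This edge is a face of a $2$-simplex of $U_{ij}$, so it lies in $\ovl{U_{ij}} \subset A_{ij}$. Since the triangulation is compatible with $A_{ij}$, the whole simplex lies in $A_{ij}$. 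The subdivision condition is what handles the fact that $x$ and $y$ need not both lie in $V_i \cap V_j$.

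\textbf{Your route.} You sidestep that issue by shrinking first. With $W_i \Subset V_i$, any point of $\ovl{W_i} \cap \ovl{W_j} \cap \partial X$ lies in the open set $V_i \cap V_j$, hence in $\ovl{U_{ij}}$. Removing the compact set $\bigcup (\ovl{W_i} \cap \ovl{W_j} \setminus A_{ij})$, which misses $\partial X$, then finishes the proof.

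\textbf{Comparison.} Your version is more elementary and more general. It uses only compactness of $Z$, the definable shrinking step (which the paper also uses in \cref{lem:FiniteOmissionsLemma}), and $\partial X \subset \ovl X$. It needs no triangulation and no dimension hypothesis. The paper's version produces a cover by stars, in the same combinatorial style as \cref{lem:NoTripleIntersect}. The statement does not require this, though, and the inclusion $V_i \cap V_j \subset A_{ij}$ survives any further shrinking, such as the no-triple-intersection refinement used in the application.

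\textbf{Two small points.} Step 1 is redundant given the shrinking in Step 2. Also, with $N = Z \setminus \bigcup(\ovl{W_i} \cap \ovl{W_j} \setminus A_{ij})$ you obtain $\ovl{W_i} \cap \ovl{W_j} \cap N \subset A_{ij}$, not the stated version with $\ovl N$. Only the former is used, so this does not affect the argument.
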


\begin{figure}[ht]
  \centering
  \includegraphics[width=0.7\linewidth]{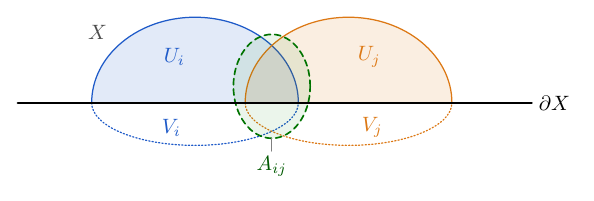}
  \caption{The proof of \cref{lem:CoverExtension}.}
  \label{fig:cover-extension}
\end{figure}

\begin{proof}
    Let $(V_i)$ be any cover extending $(U_i)$. Choose a triangulation of $Z$ compatible with everything including the $A_{ij}$, and subdivide so that the following holds: If $\{x, y\}$ is a simplex with $x, y \in \partial X$, then $\{x, y\} \in (\partial X)_1$. The sets
    \[ V_i' = \bigcup_{x \in (\partial X \cap V_i)_0} \St_Z(x) \]
    form a refined cover of $\partial X$. If a simplex $A$ is contained in $V_i' \cap V_j'$, then we have $A \supset \{ x, y \}$ for some $x \in \partial X \cap V_i, y \in \partial X \cap V_j$. It follows that $\{x, y \} \in (\partial X)_1$, so there is a simplex $A' \subset X$ that has the face $\{x, y \}$. We have $A' \in (U_i \cap U_j)_2$, so $\{x, y \} \in \ovl{U_{ij}}$. It follows that $A \in (A_{ij})_2$. \qedhere
\end{proof}

\subsection{Boundary-connectedness}

In this subsection, we briefly introduce an important technical hypothesis.

\begin{definition}
    The definable open subset $U \subset Z$ is \emph{boundary-connected} if every $z \in \partial U$ has a basis of neighborhoods $V$ such that $V \cap U$ is connected.
\end{definition}

The importance of this hypothesis is that it rules out ``spurious'' boundary behaviors. For example, as we will see in the next section, if $U$ is boundary-connected then every definable holomorphic $f \colon U \to \bbC$ extends continuously to its boundary. Otherwise, there are simple counterexamples: Take $U$ to be the complement of the real line and take the following function:
\[ f(z) = \begin{cases} 1 & \Im(z) > 0 \\ 0 & \Im(z) < 0 \end{cases} \]

\begin{figure}[ht]
  \centering
  \includegraphics[width=\linewidth]{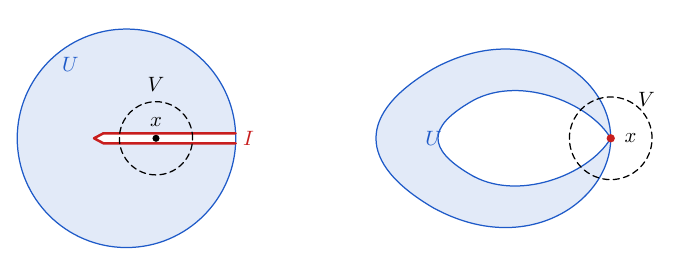}
  \caption{Failures of boundary-connectedness.}
  \label{fig:bc-failures}
\end{figure}

\begin{lemma}\label{lem:SimplicialCharacterizationofBoundaryConnectedness}
    Suppose $Z$ is triangulated compatibly with $U$. Then $U$ is boundary-connected if and only if, for all simplices $\sigma$ of $\partial U$, the ``punctured star''
    \[ \St_U(\sigma) \setminus \sigma \]
    is connected.
\end{lemma}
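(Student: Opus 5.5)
The plan is to reduce boundary-connectedness to a statement about the combinatorial neighborhoods of simplices, using that open stars form a neighborhood basis up to subdivision. After barycentric subdivision (which preserves compatibility with $U$, and hence with $\partial U$, which is a union of simplices), the sets $\St_Z(\sigma)$ for $\sigma$ ranging over simplices containing a point $z$ in their closure give arbitrarily small neighborhoods once we subdivide further. The key local fact I will use is that, for $z$ in the interior of a simplex $\sigma$ of $\partial U$, the open star $\St_Z(\sigma)$ is definably homeomorphic to a cone: it deformation-retracts (radially, along straight lines in each simplex $\tau \supset \sigma$) onto $\sigma$. This gives, for each sufficiently small ball-like neighborhood $W$ of $z$ built from the star (e.g.\ the part of $\St_Z(\sigma)$ within distance $\epsilon$ of $z$ in barycentric coordinates), a homeomorphism
\[ W \cap U \;\cong\; (\St_U(\sigma)\setminus \sigma)\text{-type set} \]
which is compatible with the radial structure. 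Concretely, $W \cap U$ is a union over simplices $\tau \supset \sigma$ with $\tau \subset U$ of ``truncated'' pieces of $\tau$, and these pieces are glued exactly as the open simplices of $\St_U(\sigma)\setminus\sigma$ are glued along shared faces not lying in $\partial U$.

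For the ``if'' direction: fix $z \in \partial U$ and let $\sigma$ be the unique simplex with $z$ in its interior; $\sigma \subset \partial U$ by compatibility. Given a neighborhood $N$ of $z$, choose $\epsilon$ so that the truncated star $W_\epsilon$ lies in $N$. Each piece $W_\epsilon \cap \tau$ (for $\tau \subset U$, $\tau \supset \sigma$) is convex, hence connected, and two pieces for $\tau, \tau'$ sharing a common face $\rho \supset \sigma$ with $\rho \subset U$ meet in $W_\epsilon \cap \rho \ne \emptyset$. So the adjacency graph of pieces in $W_\epsilon \cap U$ is the same as the adjacency graph of open simplices of $\St_U(\sigma)\setminus\sigma$ (note $\sigma \not\subset U$, so the simplices in $\St_U(\sigma)$ are exactly those $\tau \supsetneq \sigma$ in $U$). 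Connectedness of the latter therefore gives connectedness of $W_\epsilon \cap U$, yielding the required basis.

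For the ``only if'' direction: suppose $\St_U(\sigma)\setminus\sigma$ is disconnected for some simplex $\sigma$ of $\partial U$, and pick $z$ in the interior of $\sigma$. The decomposition of the adjacency graph into $\ge 2$ components gives, for every small $\epsilon$, a splitting of $W_\epsilon \cap U$ into two nonempty relatively clopen pieces, each having $z$ in its closure. For an arbitrary small neighborhood $V$ of $z$ with $V \subset W_\epsilon$, $V \cap U$ meets both pieces (since $z$ is in the closure of each), so $V \cap U$ is disconnected; hence no basis of connected intersections exists. The main obstacle I expect is making the identification of the local structure near $z$ rigorous, in particular checking that pieces of distinct simplices in $U$ can only be glued through faces containing $\sigma$ (faces not containing $\sigma$ stay away from $z$, so are excluded by small $\epsilon$), and that the chosen $W_\epsilon$ genuinely form a neighborhood basis in $Z$; I would handle this by working in barycentric coordinates on the finite complex, where both facts are elementary.
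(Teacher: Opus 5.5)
Your argument is correct, but it is organized differently from the paper's.

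\textbf{The paper's route.} The paper splits by the dimension of the simplex carrying the boundary point, using that $Z$ is $2$-dimensional.
\begin{itemize}
\item For an edge $I\subset\partial U$: near an interior point of $I$, the set $U$ is the disjoint union of the open triangles of $U$ having $I$ as a face. So connectedness there means there is exactly one such triangle.
\item For a vertex $x$: the paper passes to the link. Connectedness of $\Lk(x,U)$, equivalently of $\St_U(x)$, is preserved under subdivision, so iterated stars supply the neighborhood basis.
\end{itemize}

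\textbf{Your route.} You treat every carrier simplex $\sigma$ uniformly. For $z$ interior to $\sigma$, the truncated stars $W_\epsilon$ (small Euclidean balls in the geometric realization, lying inside $\St_Z(\sigma)$) form a neighborhood basis. Each piece $W_\epsilon\cap\tau$ is nonempty and convex. The connectivity of $W_\epsilon\cap U$ is then read off from the face-incidence graph on $\{\tau\supsetneq\sigma : \tau\subset U\}$, and the same graph governs the connectivity of $\St_U(\sigma)$.

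\textbf{Comparison.} Your version is dimension-free, needs no subdivision, and makes both directions explicit. In particular, your observation that each graph component accumulates at $z$ is exactly what rules out any connected neighborhood basis; in the paper the converse at vertices is only implicit. The paper's version is shorter, and it isolates the edge criterion (a unique adjacent $2$-simplex), which is used again in \cref{prop:BoundaryAgreement}.

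\textbf{Cosmetic points.}
\begin{itemize}
\item Your opening remarks about barycentric subdivision and the cone/deformation-retraction homeomorphism are never used and can be dropped. Subdividing would in any case change which simplices the hypothesis refers to.
\item Distinct open simplices never literally meet. The adjacency you need is that $W_\epsilon\cap\rho$ lies in the closure of $W_\epsilon\cap\tau$ whenever $\rho$ is a face of $\tau$, which holds because $W_\epsilon$ is open. Conversely, there is no adjacency when neither simplex is a face of the other, since then $\ovl{\tau}\cap\rho=\emptyset$.
\end{itemize}
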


\begin{proof}
    First, suppose that $I \in (\partial U)_1$ is a 1-simplex. Near every $i \in I$, we can write $U$ as the disjoint union of 2-simplices $\sigma_i$ having $I$ as a face. Thus, $U$ is connected at $i$ if and only if there is a unique such face, which is precisely the given hypothesis. Second, suppose that $x \in (\partial U)_0$ is a vertex. The set $\St_U(x)$ is connected if and only if the link $\Lk(x,U)$ is connected, and this property is preserved by simplicial refinements. \qedhere
\end{proof}

\begin{proposition}\label{prop:BoundaryAgreement}
    Let $X \subset Z$ be definable, open and boundary-connected, and let $U \subset X$ be definable open. Near all but finitely many shared boundary points $x \in \partial U \cap \partial X$, we have $U = X$.
\end{proposition}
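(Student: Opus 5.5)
We triangulate $Z$ compatibly with $X$ and $U$ using \cref{thm:Triangulation}, and then read the statement off the simplicial structure, exactly as in \cref{lem:SimplicialCharacterizationofBoundaryConnectedness}. In such a triangulation $X$, $U$, $\partial X$, $\partial U$ and $X \setminus U$ are all unions of open simplices. The sets $\partial X$ and $\partial U \cap \partial X$ have dimension at most one, so they consist of finitely many vertices together with finitely many open $1$-simplices. The finitely many exceptional points will be the vertices lying in $\partial U \cap \partial X$. It therefore suffices to show that for every open $1$-simplex $I \subset \partial U \cap \partial X$ and every point $x \in I$, the sets $U$ and $X$ agree on a neighborhood of $x$.

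Fix such an $I$ and $x \in I$. Near $x$, the set $X$ is the union of the open $2$-simplices of $X$ having $I$ as a face, together with the open $1$-simplices between them. The open $1$-simplices issuing from $x$'s edge are not relevant here, because near an interior point of $I$ the only simplices present are $I$ itself and the $2$-simplices and $1$-simplices containing $I$; and a $1$-simplex cannot contain $I$ except $I$ itself. By boundary-connectedness and the argument of \cref{lem:SimplicialCharacterizationofBoundaryConnectedness}, there is exactly one $2$-simplex $\sigma$ of $X$ with face $I$. Hence there is a neighborhood $W$ of $x$ with
\[ W \cap X = W \cap \sigma . \]

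Now $U$ is a union of open simplices contained in $X$. Since $x \in \partial U$, the set $U$ meets every neighborhood of $x$. The only simplex of $X$ meeting $W$ is $\sigma$, so $U$ must contain $\sigma$. Therefore
\[ W \cap U = W \cap \sigma = W \cap X , \]
which is the claim at $x$.

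The main point needing care is the local picture at a point of an open boundary $1$-simplex: one has to check that the only simplices of the triangulation of $Z$ meeting a small neighborhood of $x$ are $I$ and the $2$-simplices having $I$ as a face. This follows from the fact that the open stars of the simplices of a simplicial complex form neighborhoods, so near $x$ only $\St_Z(I)$ is visible. Combined with the uniqueness of $\sigma$, this completes the argument, with the exceptional finite set consisting of the vertices of $\partial U \cap \partial X$.
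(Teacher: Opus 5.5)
Your proof is correct and follows the paper's argument: you triangulate compatibly with $X$ and $U$, use \cref{lem:SimplicialCharacterizationofBoundaryConnectedness} to get a unique $2$-simplex $\sigma$ of $X$ adjacent to each boundary $1$-simplex $I$, note that $U$ must contain $\sigma$ because $x \in \partial U$, and take the vertices as the finite exceptional set. The remark about ``open $1$-simplices between them'' can simply be dropped: near an interior point of $I$, the only simplices are $I$ itself and the $2$-simplices having $I$ as a face, and $I$ is disjoint from $X$.
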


\begin{proof}
    Fix a triangulation of $Z$ compatible with $X, U$, and their boundaries. By \cref{lem:SimplicialCharacterizationofBoundaryConnectedness}, if $I \subset \partial X$ is a 1-simplex, then there is a unique 2-simplex $\sigma \in X_2$ with $I \subset \partial \sigma$. If $I \subset \partial U$, then there must also exist a 2-simplex $\tau \subset U$ with $I \subset \partial \tau$, and therefore $\sigma = \tau$. This shows that $U = X$ near every $x \in \partial X \cap \partial U$, except at vertices. \qedhere
\end{proof}

\begin{lemma}\label{lem:StarsAreBoundaryConnected}
    If $\sigma \in Z_n$ is a simplex, then the star $\St_Z(\sigma)$ is boundary-connected.
\end{lemma}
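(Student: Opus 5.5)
We will verify the simplicial criterion of \cref{lem:SimplicialCharacterizationofBoundaryConnectedness} directly. Here $Z$ is a triangulated compact space of dimension $2$ and $U = \St_Z(\sigma)$ is a triangulated open subset, so the lemma applies. For each simplex $\tau$ of $\partial U$ we must show that the punctured star $\St_U(\tau) \setminus \tau$ is connected. This set is the union of the open simplices $\rho$ with $\tau \subsetneq \rho$ and $\rho \in U$, that is, $\rho \supseteq \sigma$.

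The first step is to describe $\partial U$ combinatorially. The closure of $\St_Z(\sigma)$ is the closed star, namely the union of the closed simplices $\ovl{\rho}$ with $\rho \supseteq \sigma$. The boundary consists of the open faces $\tau$ of such $\rho$ with $\tau \not\supseteq \sigma$. Fix such a $\tau$. The simplices of $U$ that have $\tau$ as a proper face are exactly the simplices containing $\tau \cup \sigma$, i.e.\ the simplices containing the join $\tau * \sigma$, provided $\tau \cup \sigma$ spans a simplex. Denote this simplex by $\mu$. Then
\[ \St_U(\tau) \setminus \tau = \St_Z(\mu), \]
since every simplex containing $\tau$ and lying in $U$ contains $\mu$, and conversely. (Here I use $\tau \cap U = \emptyset$, which holds because $\tau \not\supseteq \sigma$.)

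The second step is to show that an open star $\St_Z(\mu)$ is connected. It contains the open simplex $\mu$, which is connected. Every other open simplex $\rho$ in the star has $\mu$ as a face, so $\ovl{\rho} \cap \St_Z(\mu)$ is a connected set containing both $\rho$ and $\mu$: it is the union of $\rho$ with the faces of $\rho$ that contain $\mu$, and it is star-shaped toward the barycenter of $\mu$ in the simplex. The star is therefore a union of connected sets each meeting the connected set $\mu$, so it is connected. The lemma then gives boundary-connectedness.

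The main obstacle is bookkeeping, namely confirming that the lemma's criterion genuinely covers all boundary simplices. The lemma as stated only handles $1$-simplices and vertices of $\partial U$, which is fine since $\dim Z = 2$ and $\partial U$ has dimension at most $1$. I also need to make sure that the case where $\tau \cup \sigma$ spans no simplex cannot occur: $\tau$ is a face of some $\rho \supseteq \sigma$, so $\tau \cup \sigma \subseteq \rho$ spans a face of $\rho$. If the lemma's proof implicitly requires a compatible subdivision, I note that boundary-connectedness is a topological property. The criterion can then be checked on the given triangulation, which is already compatible with $U$.
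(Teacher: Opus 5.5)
Your proposal is correct and follows essentially the same route as the paper: reduce via \cref{lem:SimplicialCharacterizationofBoundaryConnectedness} to boundary simplices $\tau$, identify the punctured star $\St_U(\tau)\setminus\tau$ with the open star $\St_Z(\mu)$ of the simplex $\mu$ spanned by $\tau\cup\sigma$, and conclude because open stars are connected. You also spell out two points the paper takes for granted: that $\tau\cup\sigma$ spans a simplex because $\tau$ is a face of some $\rho\supseteq\sigma$, and that open stars are connected.
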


\begin{proof}
    By \cref{lem:SimplicialCharacterizationofBoundaryConnectedness} it's enough to show the following: If $\tau \in (\partial \St_Z(\sigma))_m$, then
    \[ \St_Z(\sigma) \cap \St_Z(\tau) \setminus \tau \]
    is connected. If the intersection is nonempty, then it follows that $\gamma = \tau \cup \sigma$ is a simplex (here we are identifying simplices with their vertex sets). We have $\gamma \neq \tau$, because $\tau$ is not contained in $\St_Z(\sigma)$. It follows that
    \[ \St_Z(\sigma) \cap \St_Z(\tau) \setminus \tau = \St_Z(\gamma), \]
    and the result is immediate since stars are connected. \qedhere
\end{proof}

Suppose now that $Z = Y$ is a definable compact Riemann surface (for example, if $S$ contains $\bbR_{\an}$, then every compact Riemann surface is definable). In this case we can be more explicit about boundary-connectedness. Note that if $U \subset \bbC$ is open, the definition of boundary-connectedness is relative to $\bbP^1$ since $\bbC$ is not compact.

\begin{lemma}\label{lem:ConnectednessAtZero}
    Let $U \subset Y$ be definable, open, and connected at $z \in \partial U$. Then either $U$ contains a punctured neighborhood, or there is a definable holomorphic chart taking $z$ to $0$ and taking $U$ to
    \[ (f,g) = \left\{z: \Arg(z) \in (f(|z|), g(|z|)) \right\}, \]
    where $f, g \colon (0, \varepsilon) \to S^1$ are definable continuous functions. If $U$ is boundary-connected, then $f \neq g$.
\end{lemma}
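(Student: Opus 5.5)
Fix a definable holomorphic chart $\phi$ of $Y$ near $z$ with $\phi(z)=0$. Since $S$ contains $\bbR_{\an}$, the restriction of a local biholomorphism to a relatively compact disk is definable, so we may assume $\phi$ identifies a definable neighborhood of $z$ with a small disk $\bbD_r$. Then $W=\phi(U\cap \phi^{-1}(\bbD_r))$ is a definable open subset of $\bbC$ with $0\in\partial W$, and the task is to describe $W$ near $0$ in polar coordinates. I will pass to the definable map $W\setminus\{0\}\to (0,r)\times S^1$, $w\mapsto(|w|,\Arg w)$, a definable homeomorphism onto its image, and study the fibers $W_\rho=\{\theta: \rho e^{i\theta}\in W\}\subset S^1$ for $\rho\in(0,r)$.

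By o-minimality (cell decomposition, applied after cutting $S^1$ at a point, or \cref{thm:Triangulation} in polar coordinates), after shrinking $r$ there are finitely many definable continuous functions $h_1<\cdots<h_k\colon(0,r)\to S^1$, in a fixed cyclic order with pairwise distinct values, such that each $W_\rho$ is a union of some of the open arcs between consecutive $h_j(\rho)$, possibly together with some of the points $h_j(\rho)$, and the combinatorial pattern is independent of $\rho$. Since $W$ is open, this pattern is a fixed union of arcs together with the boundary points shared by two included arcs. If $k=0$, then either $W_\rho=S^1$ for all small $\rho$, so $U$ contains a punctured neighborhood, or $W_\rho=\emptyset$, which is impossible as $0\in\ovl W$. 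Otherwise $W$ near $0$ is a finite disjoint union of sectors $\{\Arg\in(f_m(|w|),g_m(|w|))\}$, where the $f_m,g_m$ are among the $h_j$ and adjacent included arcs have been merged. If every arc is included, we are back in the punctured-neighborhood case.

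The crucial step is to show that there is exactly one sector; this is where connectedness at $z$ enters, and it is the main obstacle. Distinct sectors are separated by arcs not contained in $W$, which become closed sectors $\{f'(|w|)\le\Arg\le g'(|w|)\}$ disjoint from $W$ (possibly degenerate, when $f'=g'$, i.e.\ a single curve). So for a small ball $B_\varepsilon$, the set $W\cap B_\varepsilon$ splits into as many disjoint relatively open pieces as there are sectors. Any smaller neighborhood $V\subset B_\varepsilon$ of $0$ meets each sector, since each sector accumulates at $0$ because the $h_j$ are defined on all of $(0,r)$. Hence $V\cap W$ is disconnected whenever there are two or more sectors, contradicting the existence of a neighborhood basis with $V\cap U$ connected. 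Therefore there is a single sector $(f,g)$, where we allow $f=g$ to mean the complement of a single curve $\{\Arg=f(|w|)\}$; this gives the stated form.

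Finally, suppose $U$ is boundary-connected and $f=g$, so $W$ is a slit disk around the curve $\Gamma=\{\rho e^{if(\rho)}\}$. Points of $\Gamma$ near $0$ lie in $\partial U$. At such a point, small neighborhoods meet $W$ on both sides of $\Gamma$ in two components, since $\Gamma$ is a definable arc and hence locally separates by the cell structure above. This contradicts boundary-connectedness at those points. Equivalently, by \cref{lem:SimplicialCharacterizationofBoundaryConnectedness}, a 1-simplex of $\Gamma$ would be a face of two 2-simplices of $U$. Hence $f\neq g$.
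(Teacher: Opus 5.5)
Your proposal is correct and follows essentially the same route as the paper: pass to polar coordinates, use cell decomposition to write $U$ near $z$ as a finite disjoint union of radial sectors, use connectedness at $z$ to force a single sector, and use the failure of boundary-connectedness along the curve $\{\Arg = f(|w|)\}$ to rule out $f = g$. One small remark: your appeal to $\bbR_{\an}$ is unnecessary and is not among the lemma's hypotheses, since $Y$ is assumed definable and so a definable chart near $z$ is available directly.
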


\begin{proof}
    Assume $U$ contains no punctured neighborhood. Restricting to a neighborhood of $z$, we can identify $z$ with $0$ and $U$ with an open subset of the punctured unit disk. The map
    \[ \bbD^* \to (0,1) \times S^1, z \mapsto (|z|, z / |z|) \]
    is definable. Taking cell decomposition in the new coordinates, we get a decomposition
    \[U = \bigcup_{i=1}^n (f_i, g_i), \text{with } (f_i, g_i) = \{(r, \theta): \theta \in (f_i(r), g_i(r))\} \]
    into disjoint radial intervals, none of which equals $\bbD^*$. Since $U$ is connected near $0$, we have $n = 1$. Finally, if $f = g$ then $U$ will not be boundary-connected at the points $(r, f(r))$. \qedhere
\end{proof}

\section{Boundary behavior of holomorphic functions}\label{sec:BoundaryHolomorphic}

Let $f \colon U \to \bbC$ be a definable holomorphic function of one variable. If $z \in \partial U$ is an isolated singularity, then $f$ is not essential at $z$. But at a general point $z \in \partial U$, its behavior can be more subtle: For example, $e^{1 / z}$ has an essential singularity and is $\bbR_{\an, \exp}$-definable on an open set near zero, though not on a punctured neighborhood. The proofs of our main theorems will require a detailed understanding of such phenomena, because they proceed essentially by reduction to the boundary.

In this section, we survey the boundary properties of definable holomorphic functions. As we will indicate, many of these results are due to Peterzil--Starchenko and Wilkie, while others appear to be original. We also introduce the class of branching structures which form the context for the rest of the paper, and prove some basic properties.

\subsection{Continuity at the boundary}

The first observation we will need is the following, which slightly adapts a theorem of Peterzil and Starchenko. It says definable holomorphic functions extend continuously to their boundary unless there is a topological obstruction.

\begin{proposition}[cf. {\cite[Theorem~2.57]{PeterzilStarchenkoExpansions}}]\label{prop:ctsext}
    Let $U \subset \bbP^1$ be a definable open set connected at $x \in \partial U$, and let $f \colon U \to Y$ be a definable holomorphic map to a definable compact Riemann surface (for example, $\bbP^1$). Then $f$ has a unique limiting value at $x$. If $U$ is boundary-connected, then there exists a definable continuous extension
    \[ \ovl f \colon \ovl{U} \to Y.  \]
\end{proposition}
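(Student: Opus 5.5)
We want to show that a definable holomorphic $f\colon U\to Y$ has a unique limit at a point $x\in\partial U$ where $U$ is connected, and that boundary-connectedness upgrades this to a continuous extension to $\ovl U$. The plan is to use the cell structure near $x$ from \cref{lem:ConnectednessAtZero}, then the o-minimal curve selection lemma and a Phragmén--Lindelöf/Lindelöf type argument. After a definable chart we may take $x=0$.

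\textbf{Case 1: $U$ contains a punctured neighborhood.} Then $f$ is a definable holomorphic map on a punctured disk. First suppose $Y=\bbP^1$. Definability rules out an essential singularity: by Casorati--Weierstrass (or Picard), near an essential singularity some fiber $f^{-1}(w)$ would be infinite and discrete, contradicting o-minimality. So $f$ is meromorphic at $0$ and has a limit in $\bbP^1$. For general $Y$, we compose with a definable meromorphic function $h$ on $Y$; such $h$ exist because $Y$ is algebraic, and when $S$ contains $\bbR_{\an}$ they are definable. Then $h\circ f$ has a limit. Choosing finitely many such $h$ that separate points, for instance an embedding $Y\inj\bbP^N$ and coordinate ratios, shows that $f$ has a limit.

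\textbf{Case 2: otherwise.} By \cref{lem:ConnectednessAtZero}, near $0$ we have $U=(\phi,\psi)$, a sector-like region between two definable curves. Limits along definable curves exist: for any definable curve $\gamma\colon(0,\varepsilon)\to U$ with $\gamma(t)\to0$, $f\circ\gamma$ is definable into the compact $Y$, so it has a limit by o-minimality. Next we show that the set $L$ of cluster values of $f$ at $0$ is a single point. By the definable curve selection lemma applied to the definable set $\{(z,f(z))\}$, every cluster value is the limit along some definable curve. Compare two curves $\gamma_1,\gamma_2$ with limits $a\neq b$. Since $U$ is connected at $0$, after shrinking, the region $W$ of $U$ between $\gamma_1$ and $\gamma_2$ is a definable Jordan-type region near $0$. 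Composing with a local coordinate on $Y$ near $a$ and a Möbius map, we may assume $f$ is bounded on $W$; to arrange this, use definability to show $f(W)$ omits an open set near $0$. Indeed, the fibres $f^{-1}(w)\cap W$ are finite of uniformly bounded cardinality, so by o-minimality $f(W\cap B_r)$ shrinks to a definable set of dimension at most $2$ whose closure is $L$. Applying Lindelöf's theorem to the bounded function $f$ on the simply connected region $W$, mapped conformally to a half-disk, limits along the two boundary arcs are forced to agree, so $a=b$.

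I expect the main obstacle to be making this Lindelöf step rigorous: obtaining boundedness and knowing that $\gamma_i$ correspond to boundary curves approaching a single prime end. The fallback is Peterzil--Starchenko's argument that $L$ is a definable connected subset of $Y$ of dimension at most $1$. If $L$ had positive dimension, the curve family $f(\{|z|=r\}\cap U)$ would sweep out an arc, and the argument principle would give infinitely many preimages of a point in the 2-dimensional "interior" swept region, contradicting uniform finiteness of fibres.

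\textbf{Continuous extension.} If $U$ is boundary-connected, each $x\in\partial U$ is a connected point, so $\ovl f(x)$ is defined by the unique limit. The extension is definable because its graph is the closure of the graph of $f$ intersected over $\ovl U$, which is definable. For continuity at $x$, take $x_n\to x$ in $\partial U$ and choose $u_n\in U$ near $x_n$ with $f(u_n)$ near $\ovl f(x_n)$. Then $u_n\to x$, so $f(u_n)\to\ovl f(x)$, and hence $\ovl f(x_n)\to\ovl f(x)$.
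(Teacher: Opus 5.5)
Your argument takes a different route from the paper's. Its skeleton (curve selection, then Lindel\"of on the Jordan region $W$ between two definable arcs) does work over $\bbR$. The prime-end worry you raise is harmless: writing $\gamma_1,\gamma_2$ in polar form as in \cref{lem:ConnectednessAtZero} makes $W$ a Jordan domain, so Carath\'eodory applies. Three steps still need repair as written.

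(i) The omission step, as written, proves nothing. Every subset of $Y$ has dimension at most $2$, and $\bigcap_r f(W\cap B_r)$ is empty. The fact you need is $\dim L\le 1$. This follows from the frontier inequality $\dim(\ovl{\Gamma(f)}\setminus\Gamma(f))<\dim\Gamma(f)=2$, looking at the fibre over $x$. Then the decreasing compact sets $\ovl{f(W\cap B_r)}$ have intersection contained in $L\neq Y$, so one of them misses an open set.

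(ii) For $Y\neq\bbP^1$, a local coordinate near $a$ does not bound $f$, because $f(W\cap B_r)$ need not lie in a single chart. Compose instead with a meromorphic $h$ on $Y$ whose only pole lies in the omitted open set and with $h(a)\neq h(b)$; such an $h$ exists by Riemann--Roch. It need not be definable, since Lindel\"of is purely analytic.

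(iii) Case~1 invokes $\bbR_{\an}$, which the statement does not assume. You can avoid this in either of two ways. Note that $h\circ f$ has finite fibres because $f$ does, so Picard applies without definability of $h$. Or drop Case~1 entirely: the Lindel\"of argument applies verbatim to a region between two arcs in a punctured disk.

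By comparison, the paper never confronts global issues on $Y$. It takes a cluster value $y$ and a chart $V_y$, covers $f^{-1}(V_y)$ near $x$ by finitely many pieces connected at $x$, and applies Peterzil--Starchenko's Theorem~2.57 to each piece. So the cluster set $\Gamma_x$ meets $V_y$ in a finite set, hence $\Gamma_x$ is discrete. By o-minimality it is then finite, and being connected (Peterzil--Starchenko, Fact~2.2) it is a point. This chart-by-chart reduction handles arbitrary $Y$ and the punctured case uniformly, with no case split or boundedness argument, but it outsources the analytic core to Peterzil--Starchenko. Your version is self-contained, at the cost of archimedean tools (Picard, Carath\'eodory, Lindel\"of) and the bookkeeping above. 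Your treatment of the continuous extension under boundary-connectedness is fine and matches the paper's remark that the first claim suffices.
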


\begin{remark}
    By o-minimality, a definable \emph{continuous} function $U \to Y'$ already extends to all but finitely many boundary points. The real content of the theorem is that when $f$ is holomorphic, its set of limiting values at $z \in \partial U$ is always finite.
\end{remark}

\begin{proof}
    It is enough to prove the first claim, so pick some $x \in \partial U$. By compactness there exists a limit point $(x,y)$ of $\Gamma(f)$. Let $V_y \subset Y$ be a definable open chart containing $y$, and let $(U_i)$ be a cover of $f^{-1}(V_y)$. Refining the cover, we may assume that each $U_i$ is connected at $x$. It follows by \cite[Theorem~2.57]{PeterzilStarchenkoExpansions} that
    \[ f \colon U_i \to V_y \]
    attains a unique limit point over $x$. If we let $\Gamma_x$ be the set of limiting values over $x$, then it follows that $\Gamma_x \cap V_y$ is finite, and hence that $\Gamma_x$ is discrete. By o-minimality, $\Gamma_x$ must be finite, and by \cite[Fact~2.2]{PeterzilStarchenkoExpansions}, the set $\Gamma_x$ is connected. Thus, $\Gamma_x$ is a singleton. \qedhere
\end{proof}

\subsection{Univalence}

Recall that a holomorphic function is \emph{univalent} if it is injective, or equivalently a biholomorphism onto its image. It will be important, particularly in \cref{sec:Compactification}, to understand which definable holomorphic functions are univalent. In this section we consider a definable holomorphic $f \colon U \to \bbC$, with $0 \in \partial U$, and give a criterion for $f$ to be univalent near zero. As in the previous section, we allow $S$ to be any o-minimal structure.

\begin{lemma}\label{lem:UnivalenceCriterion}
    Suppose that $U$ is a definable open set which is connected near $0 \in \partial U$, and let $f \colon U \to \bbC$ be a definable nonconstant holomorphic function with $\lim_{z \to 0} f(z) = 0$. Shrink so $f$ is nonvanishing, and define
    \[ g \colon U \to S^1, \quad g(z) = \frac{f(z)}{|f(z)|}. \]
    \begin{enumerate}
        \item For all sufficiently small $r$, the function $g_r(\lambda) = g(r \lambda)$ is strictly monotone.
        \item If $g_r$ is injective for small $r>0$, then $f$ is univalent near $0$.
        \item Fix some $N > 2 \pi$, and suppose that for all small $r >0$, the variation of $g_r$ is greater than $N$. Then $f(U)$ contains a punctured neighborhood near zero.
    \end{enumerate}
\end{lemma}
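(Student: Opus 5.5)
By \cref{lem:ConnectednessAtZero}, either $U$ contains a punctured neighborhood of $0$, or, after a definable holomorphic change of chart, $U$ is a radial interval $(a,b)$ near $0$. In both cases, for small $r$ the slice $U_r = \{\lambda \in S^1 : r\lambda \in U\}$ is a single arc (or all of $S^1$). So $g_r$ is a map from an arc to $S^1$, and I will lift it to a real-valued angle function $\theta_r = \Arg f(r\lambda)$ along the arc.

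For (1), I will use the Cauchy--Riemann equations in polar coordinates. Write $\log f = \log|f| + i\theta$ locally. Then $\partial_\phi \theta = r\,\partial_r \log|f|$, where $\lambda = e^{i\phi}$. So $g_r$ is locally strictly monotone exactly when $\partial_r \log|f| \neq 0$ along the slice. This quantity is real-analytic and definable in $(r,\phi)$. Its zero set $W$ is definable, and it is not open, since otherwise $|f|$ would be locally constant in $r$ and hence $f$ would be constant. So $\dim W \le 1$. By o-minimality (cell decomposition in the coordinates $(r,\phi)$), for small $r$ the set $W$ meets each slice in finitely many points, or else $W$ contains arcs of circles $|z| = r$ for a finite set of $r$ only. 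Near $0$ these can be avoided, so each slice meets $W$ in finitely many points.

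I expect the main obstacle to be ruling out sign changes of $\partial_\phi\theta$ at these finitely many points. Such a point $p$ would be a local extremum of $\theta$ on the circle; by o-minimality, for small $r$ these form finitely many definable curves $\gamma_j(r)$ approaching $0$. My plan is to use $\lim f = 0$ together with the identity $\partial_\phi\theta = r\,\partial_r\log|f|$. Along a curve of critical points, the level set of $\theta$ through $\gamma_j(r)$ branches in a saddle, by harmonicity of $\theta$. I would then use the maximum principle for the harmonic function $\theta$ on the region between two consecutive such curves, which is definable and has a finite winding count, to derive a contradiction with $f \to 0$. Roughly, a harmonic conjugate pair $(\log|f|,\theta)$ with $\log|f| \to -\infty$ at the vertex forces $\theta$ to be monotone on small circles. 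If this fails, the fallback is to prove (1) only away from finitely many critical curves and modify (2) and (3) accordingly.

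For (2), I assume $g_r$ is injective. Suppose $f(z_1) = f(z_2)$ with $z_1 \ne z_2$ near $0$. I would apply the argument principle on the region $\{r_1<|z|<r_2\}\cap U$, with boundary arcs in $\partial U$ where $f$ extends continuously by \cref{prop:ctsext}. Strict monotonicity and injectivity of $g_r$ mean that the image of each circle slice is an arc of $f$-argument of length less than $2\pi$. The curves $f(\{|z|=r\}\cap U)$ are then nested, monotonically decreasing in modulus by (1) and $f\to 0$, which gives a degree-one covering onto the image and hence univalence.

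For (3), each $f(\{|z|=r\})$ winds through total argument greater than $N > 2\pi$. Together with $|f| \to 0$, a winding-number argument on the annular sector shows that every small $w \ne 0$ is attained: the image of the boundary of the sector winds around $w$ a nonzero number of times.
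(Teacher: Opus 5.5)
There is a genuine gap in part (1), and parts (2) and (3) depend on it. Your reduction is right: $\partial_\phi\theta = r\,\partial_r\log|f| = \Re\bigl(zf'(z)/f(z)\bigr)$. So your set $W$ is the paper's $Z=\{\Im(izf'/f)=0\}$, and by o-minimality its points on small circles lie on finitely many definable curves tending to $0$. But you never rule these curves out.

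\begin{itemize}
\item The \emph{saddle} picture is wrong. Where $\partial_\phi\theta=0$ but $f'\neq 0$, one has $\partial_r\theta\neq 0$, so the level set of $\theta$ is smooth there, merely tangent to the circle.
\item The heuristic that a conjugate pair with $\log|f|\to-\infty$ forces monotonicity on small circles fails without definability. On the sector $|\arg z|<\pi/4$, the function $f(z)=z\exp(c\sin(k\log z))$ tends to $0$. Yet $\partial_\phi\theta=1+ck\cos(k\log r)\cosh(k\phi)$ changes sign on infinitely many circles accumulating at $0$ when $c,k>0$ and $ck>1$.
\end{itemize}

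So the contradiction must come from the asymptotics at $0$ of a definable critical curve. Your maximum-principle sketch does not supply this, and your fallback would not prove the lemma as stated.

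The missing idea, which is the paper's argument, is a computation along a single curve. Suppose a definable curve $\gamma\to 0$ lies in $Z$. Since $f\to 0$, write $f(\gamma(x))=\int_0^x f'(\gamma(t))\gamma'(t)\,dt$. By o-minimality, $\arg f'(\gamma(t))\to\beta$ and $\arg\gamma'(t)\to\alpha$, and also $\arg\gamma(t)\to\alpha$. Hence $\arg f(\gamma(x))\to\alpha+\beta$, and so $\arg\bigl(\gamma f'(\gamma)/f(\gamma)\bigr)\to 0$. But on $Z$ this argument equals $\pm\pi/2$. By curve selection, $Z$ does not accumulate at $0$, so for small $r$ the map $g_r$ has no critical points at all.

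Parts (2) and (3) are also only sketched, and their key claims are unjustified.

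In (2), \emph{nested} does not follow from (1) and $f\to 0$. Via your identity, these control $|f|$ along rays. Comparing points of equal argument on different circles instead requires each level set $C_\lambda=g^{-1}(\lambda)$ to be a single arc near $0$. The paper gets exactly this from injectivity of $g_r$: two arcs of $C_\lambda$ reaching $0$ would both meet every small circle. It then uses that, with $f'\neq 0$, $f$ maps each arc of $C_\lambda$ injectively into the ray $\lambda\bbR_{>0}$.

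In (3), since $f$ has no zeros, $f(\partial R)$ has winding number $0$ about $0$ (when defined). So the variation exceeding $N$ along the outer arc is cancelled by the rest of $\partial R$. A nonzero winding number about a small $w$ would need a separation argument, which you do not give. The paper argues by contradiction instead. If $f(U)$ contains no punctured neighborhood, its complement contains a definable curve germ $C$ at $0$ with convergent argument. Then $D(0,\delta)\setminus\ovl{C}$ carries a branch of $\log$ of imaginary width less than $N$. Since $f(U\cap D(0,\varepsilon))\subset D(0,\delta)\setminus\ovl{C}$, this contradicts the variation hypothesis on $g_r$.
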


\begin{proof}
    Note that the intersection $U \cap \partial D(0,r)$ is connected for sufficiently small $r$, by \cref{lem:ConnectednessAtZero}.

    \textbf{(1)}: Let us define
    \[ Z = \left\{z: \Im\left(\frac{ i z f'(z)}{f(z)}\right) = 0 \right\}. \]
    It's enough to show that $Z$ does not accumulate at zero. Indeed, it follows for small $r$ that $g_r$ has no critical points and is strictly monotone. Suppose instead that there exists a definable curve germ $\gamma(t) \subset Z$ tending to $0$. We have
    \[ f(\gamma(x)) = \int_0^x f'(\gamma(t)) \gamma'(t) \, dt. \]
    As $t \to 0$, the arguments of $f'(\gamma(t))$ and $\gamma'(t)$ converge, without loss of generality both to zero. Thus, the argument of the right-hand side converges to zero. But since $\gamma \subset Z$, the argument of the left-hand side converges to $\pm \pi / 2$, and we have a contradiction.

    \textbf{(2)}: Since $f'$ has finitely many zeros, we may shrink to assume that $f$ is nonvanishing, in which case $g'$ is nonvanishing as well. Under this assumption, the set $C_{\lambda} = g^{-1}(\lambda)$ is a finite union of smooth curves, each mapping injectively to $\lambda \bbR_{>0}$. To show injectivity, it suffices to show $C_{\lambda}$ is connected near zero. If not, $C_{\lambda}$ contains two curve arcs $C_1, C_2$, and any sufficiently small circle $\partial D(0,r)$ meets both $C_i$. But this contradicts injectivity of $g_r$.

    \textbf{(3)}: If $f(U)$ contains no punctured neighborhood, it omits a definable curve germ $C$ near zero. Since $C$ is definable, its argument converges near zero: Thus, we can choose some $\delta > 0$ and a branch of $\log$, defined on $D(0, \delta) \setminus \ovl{C}$, whose imaginary width is less than $N$. Since $\lim_{z \to 0} f(z) = 0$, we have
    \[ f^{-1}(D(0,\delta) \setminus \ovl{C}) \supset D(0, \varepsilon) \cap U, \]
    for some $\varepsilon >0$. It follows that $\log \circ f$ has imaginary width less than $N$ near the origin. But this contradicts our assumption on $g_r$.\qedhere
\end{proof}

\begin{corollary}\label{cor:CurveOmittingUnivalence}
    Let $U$ be a definable open set connected near $0 \in \partial U$, and let $f \colon U \to \bbC$ be a nonconstant definable holomorphic function. If $f(U)$ does not contain a punctured neighborhood of $f(0)$, then $f$ is univalent near zero.
\end{corollary}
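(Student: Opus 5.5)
We deduce this from \cref{lem:UnivalenceCriterion}, after translating so that the target point is the origin. By \cref{prop:ctsext}, applied with $Y = \bbP^1$ and using that $U$ is connected at $0$, the function $f$ has a unique limiting value at $0$; call it $f(0)$. If $f(0) = \infty$, we replace $f$ by $1/f$. This is still definable and holomorphic after shrinking $U$ away from the finitely many zeros of $f$ near $0$, its image omits a punctured neighborhood of $0$ exactly when $f(U)$ omits one of $\infty$, and univalence is preserved. Otherwise we replace $f$ by $f - f(0)$. In either case we may assume $\lim_{z \to 0} f(z) = 0$. We then shrink $U$ so that $f$ is nonvanishing, keeping $U$ connected near $0$, and form $g = f/|f|$ and the restrictions $g_r$ as in the lemma.

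By part (1) of \cref{lem:UnivalenceCriterion}, $g_r$ is strictly monotone for all small $r$. Here $g_r$ is defined on the arc $\{\lambda : r\lambda \in U\}$, which is connected for small $r$ by \cref{lem:ConnectednessAtZero}. A strictly monotone map from an arc into $S^1$ is injective precisely when its total variation (the change of a continuous lift of the argument) is at most $2\pi$, with a little care at the endpoint value $2\pi$. The variation $V(r)$ of $g_r$ is a definable function of $r$, so by o-minimality it has a limit in $[0,\infty]$ as $r \to 0$. This gives two cases.

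\textbf{Case 1: $V(r) \le 2\pi$ eventually.} If $V(r) < 2\pi$ for small $r$, then $g_r$ is injective and part (2) of the lemma gives univalence near $0$. The borderline case where $V(r) = 2\pi$ exactly needs care, because then $g_r$ takes one value twice, at the endpoints of the arc. I would handle it by noting that the endpoints lie outside the open arc, so $g_r$ restricted to the open arc is still injective, and then running the argument of part (2) on the open arc.

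\textbf{Case 2: $V(r) > 2\pi$ along a sequence $r \to 0$.} By definability this holds for all small $r$. Since $V$ is definable, we can choose a constant $N$ with $2\pi < N < \liminf_{r\to 0} V(r)$. Part (3) of the lemma then says that $f(U)$ contains a punctured neighborhood of $0$, contradicting the hypothesis. Hence Case 2 cannot occur and the corollary follows.

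The main obstacle I expect is the boundary bookkeeping between Case 1 and Case 2. One must make sure that a limiting variation equal to $2\pi$ does not slip through both cases, and that the variation is well defined on a possibly full-circle arc. If the borderline case resists the argument above, the fallback is to reuse the proof of part (3) directly. Omitting a punctured neighborhood means omitting a definable curve germ near $0$, and on the complement of that germ a branch of $\log$ has imaginary width at most $2\pi$. This forces $\log\circ f$ into a strip of width less than $2\pi$, from which injectivity of $g_r$, and hence univalence by part (2), follows.
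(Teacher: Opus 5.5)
\textbf{There is a genuine gap in Case 2.} From $V(r) > 2\pi$ for all small $r$ you conclude that you can pick $N$ with $2\pi < N < \liminf_{r\to 0} V(r)$. Definability only tells you that $\lim_{r\to 0} V(r)$ exists. That limit may equal $2\pi$ while $V(r) > 2\pi$ for every small $r$, and then part (3) of \cref{lem:UnivalenceCriterion} says nothing.

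Nor can any bookkeeping rule this regime out, because it is compatible with the hypotheses. Here is a semialgebraic example.
\begin{itemize}
\item Let $U = D(0,\delta)\setminus[0,\delta)$, let $h(z) = z - z^{3/2}$ with the branch $\arg z \in (0,2\pi)$, and let $f = h + i h^2$.
\item For small $\delta$, $h$ is univalent with $\arg h \in (0,2\pi)$. To see this, note that in the coordinate $w = \log z$, $\log h = w + \log(1 - e^{w/2})$ is a small perturbation of the identity on a convex half-strip, with imaginary part $0$ and $2\pi$ on the two horizontal edges.
\item $h$ tends to $r - r^{3/2}$ as $\arg z \to 0$ and to $r + r^{3/2}$ as $\arg z \to 2\pi$.
\item Since $w \mapsto w + i w^2$ is univalent on $D(0,1/2)$, $f$ is univalent, and $f(U)$ omits the curve germ $\{\rho + i\rho^2\}$.
\item Along $|z| = r$, the continuous argument of $f$ is $\arg h + \arg(1 + i h)$. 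It runs from $\arctan(r - r^{3/2})$ to $2\pi + \arctan(r + r^{3/2})$.
\end{itemize}
So $V(r) > 2\pi$ for all small $r$ while $V(r) \to 2\pi$. Case 2 does occur under the hypotheses of the corollary, and $g_r$ is never injective.

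Injectivity of $g_r$ is sufficient but not necessary for univalence, so the strategy ``show $g_r$ injective, then apply part (2)'' cannot work in general. Your fallback fails at the same point. On $D(0,\delta)\setminus\ovl C$, a continuous branch of $\arg$ has range of width $2\pi$ plus the oscillation of the argument of $C$, not at most $2\pi$. That slack is exactly why part (3) requires a fixed $N > 2\pi$.

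The missing idea is to bypass injectivity of $g_r$ and prove directly what the proof of part (2) actually uses: the level sets $C_\lambda = g^{-1}(\lambda)$ are connected near $0$. The paper does this with a winding argument that needs only part (1).
\begin{enumerate}
\item Suppose $C_\lambda$ has two components $C_1, C_2$.
\item Take an arc $A = [a_1, a_2]$ of a small circle inside $U$ with $a_i \in C_i$ and $|f(a_1)| \le |f(a_2)|$. Take a piece $B \subset C_2$ whose image runs radially from $f(a_2)$ back to $f(a_1)$.
\item The argument of $f$ is strictly monotone along $A$ and constant along $B$, so the loop $f(A \cup B)$ has nonzero winding number about $0$.
\item But this loop lies in $f(U) \cap D(0,\delta) \subset D(0,\delta)\setminus\ovl C$, which is simply connected and does not contain $0$. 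This is a contradiction.
\end{enumerate}
Part (3) is never needed.
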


\begin{figure}[ht]
  \centering
  \includegraphics[width=\linewidth]{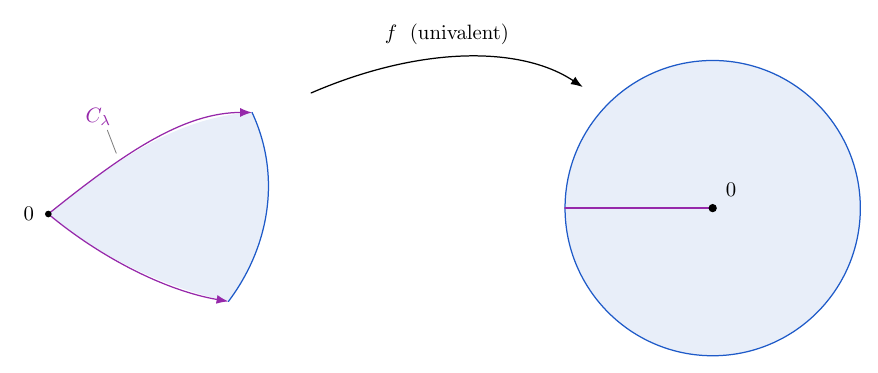}
  \caption{A definable holomorphic function $f$ whose image omits a branch cut is univalent near zero.}
  \label{fig:univalence}
\end{figure}

\begin{proof}
    Suppose without loss of generality that $f(0) = 0$ (here we mean the continuous limiting value, which exists by \cref{prop:ctsext}). As we saw while proving \cref{lem:UnivalenceCriterion}, it is enough to show that $C_{\lambda}$ is connected near zero. Suppose not: Then it contains two curve components $C_1, C_2$. Since $\lim_{z \to 0} f(z) = 0$, the image of each $C_i$ is a ray $(0, t)$ by connectedness.

    Choose a nontrivial circular arc
    \[ A = [a_1, a_2] \subset \partial D(0,r) \cap U, \]
    with endpoints $a_i \in C_i$, and assume without loss of generality that $|f(a_2)| \geq |f(a_1)|$. If $B \subset C_2$ is a segment whose image connects $f(a_2)$ to $f(a_1)$, then $f(A \cup B)$ is a loop based at $f(a_1)$, and part (1) of \cref{lem:UnivalenceCriterion} shows it has nontrivial monodromy around the origin. But since $f(U)$ contains no punctured neighborhood, it must omit a curve germ $C$, and the complement of $C$ is simply connected. \qedhere
\end{proof}

\begin{lemma}\label{lem:AffineArgument}
    Let $f \colon U \to \bbC$ be a nonconstant definable holomorphic function near zero, where $U$ is a radial sector. Define, for suitable $\lambda \in S^1$, the function
    \[ h(\lambda) = \lim_{r \to 0}  \frac{f(r \lambda)}{|f(r \lambda)|}. \]
    Then we have $h(\lambda) = \lambda^r \alpha$, for some $r \in \bbR$ and $\alpha \in S^1$.
\end{lemma}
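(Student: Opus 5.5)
The plan is to pass to logarithmic polar coordinates, where the argument of $f$ becomes a harmonic function. Its boundary values as $r \to 0$ are the argument of $h$, and I will show they must have vanishing second derivative. Shrink $U$ so that it is a sector $\{r\lambda : 0<r<\varepsilon,\ \lambda \in A\}$ over an open arc $A \subset S^1$, and so that $f$ is nonvanishing (its zeros are finite, hence do not accumulate at $0$). Since the sector is simply connected, fix a continuous branch $v(r,\theta) = \arg f(re^{i\theta})$. Then $v$ is the imaginary part of the holomorphic function $\log f$. Writing $t = \log r$, it is harmonic in $(t,\theta)$, so
\[ (r\partial_r)^2 v + \partial_\theta^2 v = 0 . \]
By o-minimality the limit defining $h(\lambda)$ exists for all but finitely many $\lambda$, since $g = f/|f|$ is a definable function into the compact set $S^1$. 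After removing finitely many points and shrinking $A$ to a subarc, we may assume $h$ is defined on all of $A$. For each fixed $\theta$, the map $r \mapsto v(r,\theta)$ is continuous and converges modulo $2\pi$, hence converges in $\bbR$ to a lift $H(\theta)$ of $\arg h(e^{i\theta})$.

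Next I would obtain uniform control. By \cref{lem:UnivalenceCriterion}(1), for all small $r$ the function $\theta \mapsto v(r,\theta)$ is strictly monotone, and after replacing $f$ by $\bar{f}$ if necessary, in a fixed direction. Fix a compact subarc $[\theta_1,\theta_2] \subset A$. Then $v(r,\theta)$ lies between $v(r,\theta_1)$ and $v(r,\theta_2)$, and both converge. So $v$ is uniformly bounded on $(0,\delta) \times [\theta_1,\theta_2]$. In particular $H$ is monotone and bounded on compact subarcs, and $v(r,\cdot) \to H$ pointwise.

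The main step is to show $H'' = 0$ in the sense of distributions. Take a test function $\phi \in C_c^\infty((\theta_1,\theta_2))$ and set $G(t) = \int v(e^t,\theta)\phi(\theta)\,d\theta$. Integrating the harmonic equation against $\phi$ by parts in $\theta$ gives
\[ G''(t) = -\int v(e^t,\theta)\,\phi''(\theta)\,d\theta . \]
By dominated convergence, using the uniform bound from the previous paragraph, $G(t) \to \int H\phi$ as $t \to -\infty$. Likewise $G''(t) \to L := -\int H \phi''$. If $L \neq 0$, then $G''$ is eventually bounded away from zero with a fixed sign, so $G(t)$ grows at least quadratically as $t \to -\infty$, contradicting the convergence of $G$. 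Hence $\int H\phi'' = 0$ for all $\phi$. By Weyl's lemma in one variable, $H$ agrees almost everywhere with an affine function $r\theta + c$. Being monotone, $H$ coincides with this affine function away from its countably many jumps. Since $H$ is a pointwise limit of a definable family, it is continuous off a finite set, so in fact $H = r\theta + c$ on the whole arc. Exponentiating gives $h(\lambda) = \lambda^r \alpha$ with $\alpha = e^{ic}$.

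I expect the main obstacle to be this uniformity: justifying the passage of the limit through the integral, and ruling out a non-affine $H$ that is hidden by exceptional points. This is exactly why I invoke the monotonicity of \cref{lem:UnivalenceCriterion}(1). Note that this route avoids any need for $G$ itself to be definable, which matters because integrals of definable functions need not be definable.
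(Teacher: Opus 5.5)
Your argument is correct, and it takes a genuinely different route from the paper's.

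\textbf{The paper's route.} The paper chooses the affine function $\ell(z) = r\Arg(z) + \log\alpha$ whose limits along the two edge rays of the sector match those of $u = \Im\log f$. It notes that $u - \ell$ is a bounded harmonic function; boundedness of $u$ comes directly from o-minimality, since $f/|f|$ has uniformly bounded winding on circular arcs. It then concludes with a harmonic-measure estimate: $u - \ell$ is small on the edges near $0$, and the harmonic measure of the outer arc tends to $0$ at the vertex. This proves the stronger uniform statement that $u(z) - \ell(z) \to 0$ as $z \to 0$ throughout the sector, and it needs no normalization of $f$.

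\textbf{Your route.} You average against test functions in logarithmic polar coordinates, which turns harmonicity into the ODE $G'' = -\int v\,\phi''$. You then use the elementary fact that a function with a finite limit at $-\infty$ cannot have a second derivative with a nonzero limit. This gives exactly the ray-wise statement and avoids any Poisson representation or edge behavior. The price is that you import the monotonicity of \cref{lem:UnivalenceCriterion}(1) to get the uniform bound.

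\textbf{A repair the import needs.} \cref{lem:UnivalenceCriterion} assumes $\lim_{z \to 0} f(z) = 0$, and without that hypothesis monotonicity can fail. For example, take $f(z) = 1+z$ on a sector containing the direction $i$. Then $\partial_\theta \arg f(re^{i\theta}) = (r\cos\theta + r^2)/|1+z|^2$ changes sign at $\cos\theta = -r$ for every small $r$. So first use \cref{prop:ctsext} to get a limiting value $c \in \bbP^1$. If $c \in \bbC\setminus\{0\}$, then $h$ is constant. If $c = \infty$, replace $f$ by $1/f$, which replaces $h$ by $\bar h$ and preserves the affine form. Only the case $c = 0$ needs your argument.

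\textbf{Two smaller points.} First, do not shrink $A$ to a subarc. For each fixed $\lambda$, the map $r \mapsto f(r\lambda)/|f(r\lambda)|$ is a definable curve in the compact set $S^1$, so the limit exists along every ray. Proving the formula only on a proper subarc would be weaker than what \cref{prop:BranchingExtension} later uses. Second, you do not need definability to handle exceptional points, and in a general structure your branch $v$ need not be definable anyway. If a monotone $H$ agrees almost everywhere with an affine function $a$, then $H(\theta^-) = H(\theta^+) = a(\theta)$ at every $\theta$, so monotonicity forces $H = a$ everywhere.
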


\begin{proof}
    By o-minimality, the function $h$ is well-defined since limits along rays always exist. If we restrict so $f$ is nonvanishing and let $u = \Im(\log(f))$, then our goal equivalently is to show that
    \[ \lim_{z \to 0} (u(z) - \ell(z)) = 0, \]
    for some function $\ell(z) = r \Arg(z) + \log(\alpha)$. Observe that $\ell$ and $u$ are both harmonic, so $v:= u - \ell$ is as well. By o-minimality, the function $u$ is bounded near zero, or else $f / |f|$ would have unbounded variation.

    Let us write the boundary of $U$ as
    \[ \partial U = L_1 \cup L_2 \cup A, \]
    where $L_1, L_2$ are the boundary rays and $A$ is a radial arc along $\partial D(0, R)$. The limit of $u$ is well-defined along $L_1, L_2$. Thus, choosing $\ell$ appropriately, we can assume that $v(z)$ converges to zero along $L_1, L_2$. Let $\varepsilon > 0$. Then after shrinking $R$, we can assume $|v(z)| \leq \varepsilon$ along the $L_i$.

    Since $v$ is bounded and harmonic, for $z \in U$ we have
    $$v(z) = \int_{\partial U} v(x) d \mu,  $$
    where $\mu = \mu(x, \partial U)$ is the harmonic measure. Since $0$ is a boundary point not in $\ovl{A}$, as $z \to 0$ the harmonic measure of $A$ converges to zero. We then have $\limsup_{z \to 0} |v(z)| \leq 2 \varepsilon$. Since $\varepsilon$ was arbitrary, we conclude. \qedhere
\end{proof}

Our univalence criterion has the following application, which may be of independent interest.

\begin{proposition}\label{prop:Univalence}
    Let $f \colon U \to \bbC$ be a nonconstant definable holomorphic function, where $U$ is a definable open neighborhood of $(0, \varepsilon)$.

    \begin{enumerate}
        \item There exists a definable open $V \subset U$, containing a smaller interval $(0, \delta)$, on which $f$ is univalent.
        \item If $U$ has positive angular width around the positive reals, then we may assume $V$ does as well.
    \end{enumerate}
\end{proposition}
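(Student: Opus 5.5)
Since univalence is invariant under translation, we may first reduce to a limiting value of zero. By \cref{prop:ctsext}, applied to $U$ near $0$ (after shrinking to a neighborhood that is connected at $0$, e.g.\ a thin definable region around $(0,\varepsilon)$), $f$ has a limiting value at $0$. Replacing $f$ by $f - f(0)$, we may assume $\lim_{z\to 0} f(z) = 0$. We then shrink so that $f$ is nonvanishing and $f'$ is nonvanishing near the interval; this is possible since the zeros of $f$ and of $f'$ are discrete and definable, hence finite near $0$. The plan is to feed $f$ into \cref{lem:UnivalenceCriterion}(2) on a suitable subdomain $V$: it suffices to choose $V$ connected at $0$ such that the angular function $g_r(\lambda) = g(r\lambda)$ is injective on $V \cap \partial D(0,r)$ for all small $r$.

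\textbf{Part (1).} Work with the argument function $u = \Im \log f$, which is defined on a simply connected neighborhood of $(0,\varepsilon)$. Along the real segment it is definable, so the ratio $\frac{d}{dr}$ of $u$ in the angular direction, namely $\Im(i z f'(z)/f(z))$, has a definable germ at $0$. We take
\[ V = \{ r e^{i\theta} : |\theta| < \phi(r) \} \]
for a definable function $\phi > 0$, chosen small enough that $V \subset U$ and that the total variation of $\theta \mapsto u(re^{i\theta})$ over $|\theta|<\phi(r)$ is less than $2\pi$. This is possible because $u$ is continuous at each point $r > 0$, so a sufficiently small $\phi(r)$ works pointwise, and definable choice makes $\phi$ definable. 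The set $V$ is connected at $0$, and \cref{lem:UnivalenceCriterion}(1) says that $g_r$ is strictly monotone for small $r$. Strict monotonicity together with variation less than $2\pi$ gives injectivity of $g_r$, and then \cref{lem:UnivalenceCriterion}(2) yields that $f$ is univalent on $V$ near $0$. Shrinking $V$ to radii below some $\delta$ completes part (1).

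\textbf{Part (2).} Now $U$ contains a sector $\{|\theta| < \theta_0\}$ near $0$. We apply \cref{lem:AffineArgument} on this sector: the limiting angle satisfies $h(\lambda) = \lambda^{s}\alpha$, so the asymptotic variation of $g_r$ across a subsector of angular width $w$ is $|s| w$. We choose $w > 0$ with $|s| w < 2\pi$. The main obstacle is upgrading this statement about limits along rays to a uniform bound: we need the variation of $g_r$ on the subsector to be less than $2\pi$ for all small $r$, not merely in the limit. We plan to obtain this from the harmonic-measure argument in the proof of \cref{lem:AffineArgument}, which gives $u(z) - s\Arg(z) - \log\alpha \to 0$ uniformly as $z \to 0$ in the sector. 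Combined with monotonicity from \cref{lem:UnivalenceCriterion}(1), the variation of $g_r$ on the arc equals $|u(re^{iw/2}) - u(re^{-iw/2})|$, which tends to $|s|w < 2\pi$. Hence $g_r$ is injective for small $r$, and \cref{lem:UnivalenceCriterion}(2) gives univalence on a subsector of positive angular width.
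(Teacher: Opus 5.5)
Your proposal is correct and is essentially the paper's argument. You normalize so that $f \to 0$ (if the limit from \cref{prop:ctsext} is $\infty$, use $1/f$ rather than $f - f(0)$), shrink angularly around the positive axis until $g_r$ is injective, and conclude with \cref{lem:UnivalenceCriterion}(1)--(2). In part (2), positive angular width comes from \cref{lem:AffineArgument}, or more precisely from the uniform convergence $u - \ell \to 0$ established in its proof; you make this uniformity explicit, whereas the paper only cites continuity of $h$.

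The one difference is how $V$ is cut out. The paper takes (the relevant component of) $g^{-1}(V')$ for a small arc $V'$ around $\lim_{t \to 0^+} g(t)$, which is automatically definable. Your use of definable choice needs the ``total variation $< 2\pi$'' condition to be definable. That is true but takes an argument. It is more simply avoided by requiring instead that $g$ map each arc into the open half-circle centered at $g(r)$.
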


\begin{proof}
    First we prove (1), following the notation of \cref{lem:UnivalenceCriterion}. We may assume without loss of generality that $f(z) \to 0$ as $z \to 0$. By o-minimality, $g(t)$ converges as $t \to 0$, without loss of generality to $1$. Therefore, if $V' \subset S^1$ is a small definable neighborhood of $1$, then $g^{-1}(V')$ contains some $(0, \delta)$, and we conclude by applying the lemma.

    Now, we prove (2). By \cref{lem:AffineArgument}, the function
    \[ h(\lambda) = \lim_{r \to 0} g(r \lambda) \]
    is continuous. Therefore, if $V'$ is a neighborhood as above, then $h^{-1}(V')$ is open. We conclude that $V$ has positive angular width near zero, as desired. \qedhere
\end{proof}

\subsection{Extension past the boundary}

We now shift gears to a different kind of question: To what extent can we extend $f$ holomorphically past the boundary to a larger domain? If
\[ f \colon \bbD \to \bbC \]
is holomorphic and not definable, then it could be singular at every boundary point; for example the function
\[ \sum_n z^{2^n} \]
exhibits this behavior. However, if $f$ is $S$-definable and the $S$-definable sets are piecewise analytic (analytic cell decomposition), then as Wilkie first observed, essential boundaries are impossible. Indeed, a weaker property than analytic cell decomposition suffices.

\begin{definition}
    The structure $S$ is \emph{unary-analytic} if every definable function $f \colon (a,b) \to \bbR$ is piecewise analytic, where $- \infty \leq a < b \leq \infty$.
\end{definition}

\begin{lemma}
    Let $S$ be unary-analytic. Then any 1-cell $C \subset \bbR^n$ is piecewise analytic, and any definable function $f \colon C \to \bbR$ is piecewise analytic.
\end{lemma}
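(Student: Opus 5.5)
The plan is to reduce everything to the one-variable hypothesis via coordinate projections and cell decomposition. A $1$-cell $C \subset \bbR^n$ is, after a permutation of coordinates, the graph of a definable continuous map $\phi = (\phi_2, \ldots, \phi_n) \colon (a,b) \to \bbR^{n-1}$ over an interval in the first coordinate. Here, as usual, the cell decomposition theorem lets us assume the projection to some coordinate is injective on $C$; for a general $1$-cell the cell is already a graph over the first coordinate after the standard ordering. Each component $\phi_k \colon (a,b) \to \bbR$ is a unary definable function, so by unary-analyticity it is analytic off a finite set $F_k \subset (a,b)$. Setting $F = \bigcup_k F_k$, the map $t \mapsto (t, \phi(t))$ is an analytic embedding on each component of $(a,b) \setminus F$. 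Hence $C$ is a finite union of analytic arcs together with finitely many points, which is what piecewise analytic means here.

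For the second claim, take $f \colon C \to \bbR$ definable and pull it back along the parametrization. The composite $g(t) = f(t, \phi(t))$ is a definable function $(a,b) \to \bbR$, so it is analytic off a finite set $G$. On each component of $(a,b) \setminus (F \cup G)$, both the parametrization and $g$ are analytic. Since $\pi_1|_C$ is an analytic diffeomorphism onto its image there, with inverse $t \mapsto (t,\phi(t))$, the function $f = g \circ \pi_1|_C$ is analytic on the corresponding open sub-arc of $C$. That gives piecewise analyticity of $f$. If one prefers, $f$ is also the restriction of the analytic function $g \circ \pi_1$ defined on a neighborhood.

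The only point needing care is the reduction to a graph over a coordinate. A $1$-cell in the inductive definition of cells is built as a graph of continuous definable functions over a lower cell. Unwinding the inductive definition, the first coordinate where the cell is an interval (rather than a graph) is unique, and all other coordinates are definable continuous functions of that one. So after permuting coordinates, $C$ is the graph of $\phi$ over an open interval, with $a, b$ possibly infinite. This is allowed in the unary-analytic definition, so no compactness is needed.

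I expect no real obstacle. The mildly delicate step is just this bookkeeping: checking that every $1$-cell is such a graph with all other coordinates being unary definable functions of the free coordinate, so that the hypothesis applies coordinatewise.
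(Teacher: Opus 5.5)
Your proof is correct and is essentially the paper's argument. The paper inducts on $n$, using that a 1-cell in $\bbR^n$ is either $\{z\}\times(a,b)$ or the graph of a continuous definable $g$ over a 1-cell in $\bbR^{n-1}$. You instead unwind that recursion at once into the parametrization $t\mapsto(t,\phi(t))$ over the free coordinate, and apply unary-analyticity to each $\phi_k$ and to $f\circ(\id,\phi)$.

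One small correction: the claim in your first paragraph that a 1-cell is ``already a graph over the first coordinate'' fails in general (e.g.\ $\{0\}\times(a,b)\subset\bbR^2$), and no appeal to cell decomposition is needed. Your third paragraph supplies the correct reduction via the unique free coordinate.
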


\begin{proof}
    Work by induction on $n$. The case $n = 1$ is immediate from the definition, so suppose $n > 1$. By the definition of a cell, $C$ is equal to either $\{z \} \times (a,b)$, for $z \in \bbR^{n-1}$, or $\Gamma(g)$, where $C' \subset \bbR^{n-1}$ is a 1-cell and $g \colon C' \to \bbR$ is a definable continuous function. First suppose that $C = \{z \} \times (a,b)$. Then $C$ is trivially analytic, and $f$, which reduces to a function $(a,b) \to \bbR$, is analytic by definition.

    Otherwise, suppose that $C = \Gamma(g)$. By induction, $C'$ is analytic and $g$ is piecewise analytic, so $\Gamma(g)$ is clearly analytic. The function $f \circ (\id, g)$ is piecewise analytic by induction, and we have
    \[ f = (f \circ (\id, g)) \circ \pi, \]
    so we conclude. \qedhere
\end{proof}

The following is a slight generalization of \cite[Theorem~1]{WilkieMittagLeffler}.

\begin{proposition}\label{prop:GenericHolExtension}
    Assume $S$ is unary-analytic, and let $U \subset \bbP^1$ be a definable, boundary-connected open set. Then a definable holomorphic function \mbox{$f \colon U \to \bbC$} has at most finitely many boundary singularities.
\end{proposition}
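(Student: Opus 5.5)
We reduce to a statement about boundary arcs. We first triangulate $\bbP^1$ compatibly with $U$ using \cref{thm:Triangulation}. Then $\partial U$ is a finite union of vertices and open $1$-simplices. There are finitely many vertices, so it suffices to show that along each open $1$-simplex $I\subset \partial U$, $f$ extends holomorphically across all but finitely many points of $I$. By \cref{lem:SimplicialCharacterizationofBoundaryConnectedness}, each such $I$ is the face of a unique $2$-simplex of $U$, so near each point of $I$ the set $U$ lies on exactly one side of $I$. By \cref{prop:ctsext}, $f$ extends to a definable continuous function $\ovl f$ on $\ovl U$, and in particular to $I$.

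Next we use unary-analyticity. The $1$-cell $I$ is piecewise analytic by the lemma above, so after removing finitely many points, $I$ is a finite union of real-analytic arcs. We apply a biholomorphism on each arc. A real-analytic arc is locally the image of an interval in $\bbR$ under a map $\gamma$ which extends to a holomorphic injection on a neighbourhood. After shrinking, we may therefore assume $I=(a,b)\subset\bbR$ and that $U$ lies locally in the upper half-plane near $I$. The only subtle point is definability of the straightening chart. The germ $\gamma$ is not obviously definable, but we only need the real-analytic statement in the end, so it suffices to work on small pieces where the chart is a restriction of a real-analytic map. Alternatively, we can avoid charts and apply the next step directly to the real and imaginary parts of $\ovl f\circ\gamma$, where $\gamma$ is a definable analytic parametrization. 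The boundary values $\ovl f|_{I}$, namely $\Re\ovl f$ and $\Im \ovl f$ as definable functions of the real parameter, are piecewise analytic by unary-analyticity. Removing finitely many more points, they are real-analytic on each subinterval $J$.

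Then we apply Schwarz reflection. On a subinterval $J$, the boundary function $\phi=\ovl f|_J$ is real analytic, so it extends to a holomorphic function $\Phi$ on a complex neighbourhood $W$ of $J$. Consider $g=f-\Phi$ on $U\cap W$, which is a one-sided half-disc region near each point of $J$. The function $g$ is holomorphic, continuous up to $J$, and vanishes on $J$. By the Schwarz reflection principle applied to $\Re g$ and $\Im g$ (or directly via $g(z)$ and $\ovl{g(\bar z)}$), $g$ extends holomorphically across $J$. Hence so does $f=g+\Phi$. The reflection theorem only needs continuity up to the boundary, which \cref{prop:ctsext} supplies. Therefore $f$ extends past every point of $J$. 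Collecting the finitely many exceptional points over the finitely many simplices shows that the boundary singularities are finite.

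The main obstacle is the second step: transferring analyticity from the real parametrization to a genuinely holomorphic straightening of a curved boundary arc, so that reflection applies. If direct straightening is awkward, I would instead use the conformal invariance of reflection across analytic arcs, which is the classical Schwarz reflection across real-analytic curves. This requires neither definability nor more than continuity of $f$ up to the arc with analytic boundary values. The analyticity of the boundary values along the arc parameter comes from unary-analyticity applied to $\ovl f\circ\gamma$.
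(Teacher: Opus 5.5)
Your route is the paper's route. You extend $f$ continuously to the boundary via \cref{prop:ctsext}, and use unary-analyticity to make the boundary arc and the boundary values real-analytic away from finitely many points. You then straighten the arc by a non-definable holomorphic change of coordinates and apply Schwarz reflection to $f-\Phi$, followed by the identity theorem. The paper straightens using $z\mapsto z+i\tilde g(z)$ for $\partial U=\Gamma(g)$, which is your fallback, so the definability ``obstacle'' you flag is harmless.

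There is one case you never treat. \cref{prop:ctsext} gives a continuous extension $\ovl f\colon\ovl U\to\bbP^1$, not into $\bbC$, so boundary values can be $\infty$. For example, $f(z)=1/z$ on the upper half-plane has $\ovl f(0)=\infty$. The definable set $I\cap\ovl f^{-1}(\infty)$ is a finite union of points and open arcs. The points are harmless, but you must rule out an open sub-arc $J$ on which $\ovl f\equiv\infty$. On such a $J$ your reflection step has nothing to act on: $\Re\ovl f$ and $\Im\ovl f$ are undefined there, and there is no finite real-analytic $\phi$ to extend. As written, your argument says nothing about the points of $J$.

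The missing argument is short, and it is the extra step in the paper's proof. Since $\ovl f=\infty$ on $J$, $f$ has no zeros on $U$ near each point of $J$. So $1/f$ is holomorphic on the one-sided region and continuous up to $J$ with boundary value $0$. In your straightened coordinates, Schwarz reflection extends $1/f$ holomorphically across $J$. The extension vanishes on an arc, so by the identity theorem $1/f\equiv 0$, which is absurd. Hence $\ovl f=\infty$ at only finitely many points of each boundary simplex. Add these points to your finite exceptional set, and your argument goes through.
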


\begin{proof}
    By \cref{prop:ctsext}, there exists a definable continuous extension $\ovl f \colon \ovl{U} \to \bbP^1$. Let $\tilde{f}$ denote the boundary value along $\partial U$: Since $S$ is unary-analytic, after covering $\bbP^1$ by charts we conclude that $\tilde{f}$ is real analytic, except at finitely many points. Thus, choose a boundary point $x \in \partial U$ where $\tilde{f}$ is analytic. Discarding finitely many points and changing coordinates, we may assume near $x$ that $\partial U$ is equal to a 1-cell of the form $C = \Gamma(g)$, where
    \[ g \colon (a,b) \to \bbR \]
    is a definable real-analytic function. Extending $g$ holomorphically and applying the local change of coordinates
    \[ z \mapsto z + i \tilde{g}(z), \]
    we reduce to the case where $C = (c,d)$ is simply a real interval (note that the change of coordinates is generally not definable). Observe that $\tilde{f}$ takes the value $\infty$ only finitely many times along $(c,d)$. Otherwise, the function $1 / \tilde{f}$ vanishes along an interval. By the Schwarz reflection principle, $1 / f$ extends across the interval, and by the identity theorem it must be identically zero, giving a contradiction.

    Suppose $x \in (c,d)$ is a point where $\tilde{f}(x) \neq \infty$. Then $\tilde{f}$ has a holomorphic extension near $x$, simply by power series expansion. If we let $h$ denote the extension, then the difference $f - h$ is holomorphic and converges to zero along $(c,d)$. Therefore, by Schwarz reflection and the identity theorem, we have $f = h$. \qedhere
\end{proof}

Note that in general, there may not be a single \emph{definable} extension of $f$ covering all nonsingular boundary points.

\subsection{Branching structures}

The unary-analyticity property controls behavior of functions at ``generic'' boundary points, but as we will see this is not enough for cohomology vanishing. In $\bbR_{\an, \exp}$, for example, a definable holomorphic
\[f \colon U \to \bbC,  \]
with $U$ as above, has finitely many boundary singularities, but as remarked above those singularities can be \emph{essential}. In this section we isolate a class of structures whose boundary singularities are relatively controlled.

\begin{definition}
    Let $S$ be an o-minimal structure on $\bbR$. We say that $S$ is \emph{branching} if every definable $f \colon (0, \varepsilon) \to \bbR$ extends to a definable holomorphic function on
    \[ D(0, \delta) \setminus (-\delta, 0 ], \]
    for some $\delta > 0$.
\end{definition}

In other words, all singularities are branch singularities, and definable continuation near zero is always possible after taking a branch cut.

\begin{proposition}
    Every branching structure is polynomially bounded. In other words, definable one-variable function germs grow at most polynomially at infinity.
\end{proposition}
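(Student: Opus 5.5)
The plan is to use Miller's dichotomy: an o-minimal expansion of the real field is either polynomially bounded or defines the exponential function. So it suffices to show that a branching structure cannot define $\exp$. Alternatively, we can argue directly, without invoking Miller's theorem: if some definable germ grows faster than every polynomial at infinity, we produce a definable function on $(0,\varepsilon)$ whose holomorphic extension contradicts the fact that singularities are tame. We take the direct route through $\exp$, falling back on Miller's theorem to obtain $\exp$ from non-polynomial growth.

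The argument runs as follows. Suppose $S$ is branching but not polynomially bounded. By Miller's dichotomy, $\exp$ is definable, and hence so is the function $\phi(t) = e^{-1/t}$ on $(0,\varepsilon)$. Since $S$ is branching, $\phi$ extends to a definable holomorphic function $F$ on
\[ \Omega = D(0,\delta) \setminus (-\delta, 0]. \]
By the identity theorem, $F(z) = e^{-1/z}$ on $\Omega$, since both functions are holomorphic on the connected set $\Omega$ and agree on $(0,\min(\varepsilon,\delta))$. Here we use that $\Omega$ is connected, since it is a slit disk. Consequently $e^{-1/z}$ is definable on all of $\Omega$.

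Now we derive a contradiction from o-minimality. Restrict $F$ to the circle arc $\{re^{i\theta} : |\theta| < \pi\}$ for small $r$, or more simply to a curve approaching $0$ tangentially to the imaginary axis. Along $z = iy$ with $y \to 0^+$, we have $-1/z = i/y$, so $F(iy) = e^{i/y}$. The real part $\cos(1/y)$ is then a definable function of $y$ on $(0,\delta)$ with infinitely many zeros. This contradicts o-minimality, since the zero set must be a finite union of points and intervals, and $\cos(1/y)$ is not identically zero on any interval. (The segment $\{iy : 0 < y < \delta\}$ is contained in $\Omega$, and the map $y \mapsto \Re F(iy)$ is definable.) Hence $S$ is polynomially bounded.

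The main obstacle is simply invoking Miller's dichotomy correctly. It applies to o-minimal expansions of the real field, and the paper's structures $S$ are such expansions. Alternatively, if we want to avoid Miller's theorem, we would start from a germ $f$ with $f(t) \ge t^{-n}$ for all $n$ as $t \to 0^+$ (obtained from growth at infinity via $t \mapsto 1/t$). We would take the holomorphic extension on the slit disk and apply \cref{lem:AffineArgument} together with a Phragmén--Lindelöf-type argument to show that a definable holomorphic function on a slit disk has polynomially bounded growth in sectors. That route is harder, so I would use Miller's theorem.
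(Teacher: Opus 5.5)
Your proof is correct, and its skeleton is the same as the paper's. Miller's dichotomy yields $\exp$, so $e^{\pm 1/t}$ is definable near $0^+$. The identity theorem then forces its branching extension to be $e^{\pm 1/z}$ on the slit disk.

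The two arguments differ only in how the contradiction is extracted. The paper observes that $e^{1/z}$ is single-valued across the slit. Taking the closure of its graph therefore gives a definable holomorphic function on a punctured disk. The paper then appeals to the standard fact that definable holomorphic functions on punctured disks have no essential singularities.

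You instead restrict to the segment $\{iy : 0<y<\delta\}\subset\Omega$. There $\Re F(iy)=\cos(1/y)$ is a definable function with infinitely many isolated zeros, which directly contradicts o-minimality. This is exactly \cref{lem:expnotdefinable} applied to $-1/z$, whose imaginary part is unbounded on the slit disk.

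Your finish is more elementary and self-contained: it needs neither the gluing-across-the-slit step nor the theorem on essential singularities, only o-minimality itself. The paper's version is a one-line appeal to known definable complex analysis.
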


\begin{proof}
    By the main theorem of \cite{MillerExponentiation}, a structure which is not polynomially bounded defines the real exponential function. It follows that $e^{1 / x}$ is definable near $0$. But its extension $e^{1 / z}$ is never definable on a branched neighborhood of zero; indeed, since the singularity is isolated, by taking closures we could define the function on a punctured neighborhood, and definable functions on punctured disks never have essential singularities. \qedhere
\end{proof}

\begin{proposition}
    Branching structures are unary-analytic.
\end{proposition}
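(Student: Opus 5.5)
The plan is to show that an arbitrary definable $f \colon (a,b) \to \bbR$ is analytic away from finitely many points, by applying the branching hypothesis to the germs of $f$ at every point of $[a,b]$ and then using o-minimality to pass from local information to a finite exceptional set.

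First, I reduce to germs at a point. Fix $x_0 \in [a,b)$; the case $x_0 = -\infty$ is handled by the substitution $t \mapsto -1/t$, and right endpoints by the reflection $t \mapsto -t$. Set $\phi(t) = f(x_0 + t)$ for $t \in (0, \varepsilon)$. By the branching hypothesis, $\phi$ extends to a holomorphic function on the slit disk $D(0,\delta) \setminus (-\delta, 0]$. This slit disk contains the real interval $(0,\delta)$ in its interior, and a holomorphic function that is real on a real interval is real-analytic there. Hence $f$ is real-analytic on $(x_0, x_0+\delta)$. The symmetric argument with $t \mapsto f(x_0 - t)$ gives analyticity on some $(x_0 - \delta', x_0)$. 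So every point of $[a,b]$, including the endpoints $\pm\infty$ via the coordinate changes above, has a punctured neighborhood in $(a,b)$ on which $f$ is analytic.

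Next, I globalize. Let $A \subset (a,b)$ be the set of points near which $f$ fails to be real-analytic. Analyticity of a definable function at a point is not obviously a first-order condition, so I will not argue that $A$ is definable. Instead I use compactness. Suppose $A$ were infinite. Then $A$ would have an accumulation point $x_0$ in the compact space $[a,b] \subset [-\infty, \infty]$. But $x_0$ has a punctured neighborhood disjoint from $A$ by the previous step, which is a contradiction. Therefore $A$ is finite, and $f$ is analytic on each component of $(a,b) \setminus A$. This is exactly piecewise analyticity.

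I expect the main obstacle to be correctly handling the infinite endpoints. The branching definition only concerns germs on $(0,\varepsilon)$, so unbounded intervals must be converted by a definable, real-analytic change of variable; $t \mapsto 1/t$ suffices, since it is semialgebraic and hence definable in any o-minimal structure. The composition of analytic maps is analytic, so analyticity transfers back. It is also worth checking that the extension given by the branching hypothesis does restrict to $\phi$ on $(0,\delta)$. This is immediate from the definition, since "extends" means it agrees with $\phi$ there.
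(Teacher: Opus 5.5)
Your proof is correct and follows the paper's argument. Like the paper, you apply the branching hypothesis at every point of $[a,b] \subset [-\infty,\infty]$, using the coordinate change $t \mapsto 1/t$ at infinite endpoints, to get analyticity on punctured neighborhoods, and then use compactness to conclude that only finitely many bad points remain; your accumulation-point version of the compactness step is just a rephrasing of the paper's finite-subcover argument.
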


\begin{proof}
    Let $f \colon (a, b) \to \bbR$ be definable. Since $S$ is branching, a suitable application of the definition shows that for all $x \in [a,b]$, including endpoints, $f$ is analytic on a punctured interval of the form
    \[ (x - \varepsilon, x) \cup (x, x+\varepsilon). \]
    If $a$ or $b$ is infinite, then the claim is that $f$ is analytic on a ray $(- \infty,u)$ or $(u, \infty)$, and it follows by working in the coordinate $1 / t$. By compactness of $[a,b] \subset [- \infty, \infty]$, we obtain a finite cover by such intervals, omitting only finitely many points. \qedhere
\end{proof}

\begin{figure}[ht]
  \centering
  \includegraphics[width=0.6\linewidth]{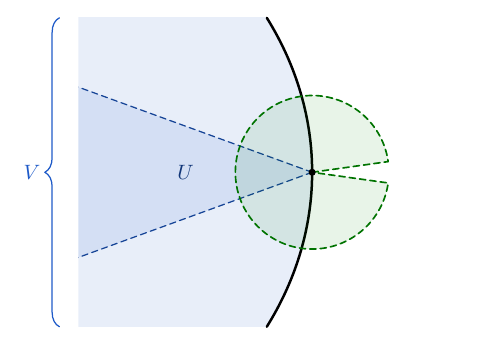}
  \caption{The proof of \cref{prop:BranchingExtension}.}
  \label{fig:extended-domain}
\end{figure}

\begin{proposition}\label{prop:BranchingExtension}
    Let $S$ be a branching structure, let $U \subset V \subset \bbC$ be definable open sets connected near $0$, and assume $V$ does not contain a punctured neighborhood of $0$. Then any definable holomorphic $f \colon U \to \bbC$ extends definably to some neighborhood
    \[ D(0, \delta) \cap V. \]
\end{proposition}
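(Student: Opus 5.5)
The plan is to reduce to a one-variable real germ along a definable curve in $U$, apply the branching hypothesis to it, and then transport the resulting extension back through a univalent change of coordinates.

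\textbf{Normal form near $0$.} By \cref{lem:ConnectednessAtZero}, after shrinking, $V$ near $0$ is a radial region $(f_V,g_V)$ in polar coordinates. Since $V$ contains no punctured neighborhood of $0$, its complement contains a definable curve germ $C$ ending at $0$. Because $U\subset V$ is also connected at $0$, $U$ is a radial region $(f_U,g_U)$ lying inside $(f_V,g_V)$. By o-minimality, all the angle functions have limits as $r\to 0$. The region $D(0,\delta)\cap V$ is then simply connected for small $\delta$. It follows that any holomorphic continuation of $f$ inside it is single-valued, and that two continuations agreeing on an open set agree everywhere (identity theorem).

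\textbf{Straightening a curve.} Choose a definable curve $\gamma\colon(0,\varepsilon)\to U$ with $|\gamma(t)|=t$ and $\gamma(t)\to 0$. Apply the branching property to the real and imaginary parts of $\gamma$ and of $f\circ\gamma$. This yields definable holomorphic functions $\Gamma$ and $F$ on a slit disc $D(0,\delta)\setminus(-\delta,0]$ extending $\gamma$ and $f\circ\gamma$. By \cref{lem:AffineArgument}, $\Gamma$ has asymptotic argument $\lambda\mapsto \lambda^{r}\alpha$. The normalization $|\gamma(t)|=t$ together with \cref{lem:UnivalenceCriterion}(1) should force $r=1$, so that $\Gamma(z)$ behaves like $cz$ in argument. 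Hence $\Gamma$ is univalent near $0$ by \cref{lem:UnivalenceCriterion}(2), and its image is a region of angular width about $2\pi$, omitting only a curve germ near the direction $\arg c+\pi$.

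Next, $F$ and $f\circ\Gamma$ agree on $(0,\delta)$, so they agree on the connected component of $\Gamma^{-1}(U)$ containing that interval. Therefore $F\circ\Gamma^{-1}$ is a definable holomorphic extension of $f$ to $\Gamma(\text{slit disc})$.

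\textbf{Covering all of $V$ (main obstacle).} The extension must reach all of $D(0,\delta)\cap V$, but the cut $\Gamma((-\delta,0])$ may pass through $V$ rather than lying in its complement. I would first choose $\gamma$ (within $U$) so that the image of the cut lies in an asymptotic direction outside the closure of the angular interval of $V$. If $U$ is too thin for this, I would instead perform two such continuations, with cuts in different directions. For example, the second continuation starts from a curve in the already-extended region, which is allowed because we now have $f$ on a region of width close to $2\pi$. The two extensions are glued on their overlap by the identity theorem, using simple connectivity of $D(0,\delta)\cap V$. Boundary-curve issues are controlled by \cref{prop:ctsext}.

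Definability holds because each piece is a composition of definable maps and the union is finite. The delicate points, where I expect most of the work to lie, are:
\begin{itemize}
  \item verifying $r=1$ for the chosen parametrization;
  \item checking that the image of the slit disc, minus its cut, contains $D(0,\delta')\cap V$ near the relevant directions; this is an angular comparison against $f_V,g_V$ using the limits supplied by o-minimality.
\end{itemize}
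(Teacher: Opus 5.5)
Your route differs from the paper's: you build a chart of angular width exactly $2\pi$ from a normalized curve and then patch several continuations together. As written, the decisive step has a genuine gap, exactly where you locate the main obstacle.

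\textbf{The covering and gluing step.} Your first option is to choose $\gamma\subset U$ so that the cut $\Gamma((-\delta,0])$ has asymptotic direction outside the angular closure of $V$. This is impossible whenever $V$ has asymptotic angular width $2\pi$, i.e.\ whenever the complement of $V$ near $0$ is a curve or a cusp (e.g.\ $V=D(0,1)\setminus(-1,0]$). In that case the cut generally runs through $V$, so $V\cap\Gamma(\text{slit disc})$ has two components near $0$. On the component containing $U$, $F\circ\Gamma^{-1}$ is the continuation of $f$ inside $V$. On the other component it is the continuation around the other side of $0$, which differs by monodromy.

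Here is a concrete instance. Take $f=\sqrt z$ on a sector around $(0,1)$ and $\gamma(t)=te^{it}$. Then $\Gamma(z)=ze^{iz}$ and $F(z)=\sqrt z\,e^{iz/2}$, and the cut is the curve $\{re^{i(\pi-r)}\}\subset V$. On the thin region between this curve and the upper edge of the slit, $F\circ\Gamma^{-1}(w)=-\sqrt w$.

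Consequently, gluing two such extensions on their overlap by the identity theorem is false as stated. Simple connectivity of $D(0,\delta)\cap V$ does not help, because each piece was continued inside $\Gamma_k(\text{slit disc})$, not inside $V$. To repair this you must:
\begin{itemize}
\item restrict each piece to the component of $V\cap\Gamma_k(\text{slit disc})$ containing its seed curve (a radial interval; intersections of such intervals are connected near $0$);
\item take each new seed curve inside the region already correctly covered;
\item check that finitely many pieces cover $V$ on both sides of its omitted curve.
\end{itemize}
None of this is carried out.

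\textbf{The univalence step.} There is a smaller gap earlier. The fact $r=1$ follows from \cref{prop:ctsext} applied to $\Gamma(z)/z$, not from \cref{lem:UnivalenceCriterion}(1). In any case, $r=1$ alone only gives angular variation $2\pi+o(1)$ along $|z|=r$. That does not make $g_r$ injective: for example, $z+z^{3/2}$ is not univalent on any slit disc. Your normalization $|\gamma(t)|=t$ does rescue the claim, but for a reason you do not give. The logarithm of $\Gamma(z)/z$ is purely imaginary on $(0,\delta)$, so by Schwarz reflection $\arg\Gamma(re^{i\theta})-\theta$ is even in $\theta$, and the variation is exactly $2\pi$.

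\textbf{How the paper avoids this.} The paper uses a single chart and no patching.
\begin{itemize}
\item It replaces $\gamma(t)$ by $\gamma(t^n)$, so that by \cref{lem:AffineArgument} the extension $\tilde\gamma$ has angular width larger than $2\pi$ and its image contains a punctured neighborhood.
\item With $C$ a curve germ omitted by $V$ and $A=D(0,\delta)\setminus\ovl C$, it takes $W$ to be the component of $\tilde\gamma^{-1}(A)$ containing the real interval.
\item Then $\tilde\gamma(W)$ contains $A$ near $0$, and $\tilde\gamma|_W$ is univalent by \cref{cor:CurveOmittingUnivalence}, since $A$ omits a curve germ.
\end{itemize}
This places the cut exactly on $C$, which is what you were trying to arrange by choosing directions. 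Then $F\circ\tilde\gamma^{-1}$ extends $f$ to all of $A\supset D(0,\delta)\cap V$ in one step, with no gluing.
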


\begin{proof}
    Fix a definable curve germ $\gamma \colon (0, \varepsilon) \to U$, with $\lim_{t \to 0} \gamma(t) = 0$. Since $S$ is polynomially bounded, it follows from the proposition in \cite{MillerExponentiation} that we have
    \[ \lim_{t \to 0} \frac{\gamma(t)}{t^s} = \lambda \in \bbC \setminus \{0 \}, \]
    for some $s > 0$. The power function $t^s$ is definable, because
    \[ t^s = \lim_{x \to 0} \frac{\gamma(xt)}{\gamma(x)}. \]
    Since $S$ is branching, there exists a definable holomorphic function $\tilde{\gamma}$ extending $\gamma$ to a slit disk, and likewise the complex power function $z^s$ is definable on a slit disk. By \cref{prop:ctsext}, their ratio converges to $\lambda$. If we define $h(\theta)$ following \cref{lem:AffineArgument}, then we must have $h(\theta) = \theta^s \alpha$ for some $\alpha$.

    Since $V$ contains no punctured neighborhood, it omits a curve germ $C$ near the origin, and we let $A$ denote $D(0, \delta) \setminus \ovl{C}$. Replacing $\gamma$ by $\gamma(t^n)$, we can assume by \cref{lem:AffineArgument} that $\tilde{\gamma}$ has arbitrarily large angular width near zero and that its image contains a punctured neighborhood. Let $W$ be the component of $\tilde{\gamma}^{-1}(A)$ containing $(0, \varepsilon)$. Near zero, $W$ is bounded by $\tilde{\gamma}^{-1}(\ovl{C})$, which is a finite union of curve germs. It follows that the boundary $\partial (\tilde{\gamma}(W))$ is contained in $\ovl{C}$ near zero. After shrinking $\delta$, it follows that
    \[ \tilde{\gamma}(W) \supset D(0, \delta') \setminus \ovl{C} \]
    for some $\delta' > 0$; otherwise, the omitted points would accumulate along a curve germ other than $C$. On the other hand, since $A$ omits a curve germ, after shrinking $\delta$ the map $\tilde{\gamma}$ becomes univalent on $W$. Shrinking $W$ and $U$ appropriately, we can assume that
    \[ \tilde{\gamma} \colon W \to D(0, \delta) \setminus \ovl{C} \]
    is a biholomorphism.

    Now suppose that $f \colon U \to \bbC$ is definable holomorphic. The inverse image $\tilde{\gamma}^{-1}(U)$ is a neighborhood of $(0,t)$ for some $t >0$. Thus, $f \circ \tilde{\gamma}$ extends to a slit disk, and in particular to $W$. Precomposing with $\tilde{\gamma}^{-1}$, we see that $f$ extends to $D(0, \delta) \setminus \ovl{C}$, and thus extends near $0$ to $V$. \qedhere
\end{proof}

As we now show, there are many examples of branching structures.

\begin{proposition}\label{prop:BranchingExamples}
    The following o-minimal structures are branching: $\bbR_{\alg}$, $\bbR_{\an}$, more generally $\bbR_{\an}^K$ for a subfield $K \subset \bbR$, and the generalized power series structure $\bbR_{\an^*}$.
\end{proposition}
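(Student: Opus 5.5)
The plan is to treat all the structures by one mechanism. First, use a known normal form for one-variable definable germs at $0^+$: each such germ is a convergent (possibly generalized) power series in powers of $x$. Second, substitute $z = re^{i\theta}$, with $\theta$ ranging over $(-\pi,\pi)$, into that series and check that the resulting function of $(r,\theta)$ is again one of the primitives of the structure. Since the definition of branching only concerns germs $f \colon (0,\varepsilon) \to \bbR$, it is enough to handle germs.

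\textbf{Step 1: normal forms.} For each structure I would quote the following descriptions of unary germs at $0^+$.
\begin{itemize}
\item For $\bbR_{\alg}$, a germ is a real Puiseux series $f(x) = \sum_k c_k x^{k/d}$ that converges for small $x$.
\item For $\bbR_{\an}$, germs are likewise convergent Puiseux series. This follows from the Łojasiewicz/Denef--van den Dries preparation theorem for subanalytic functions.
\item For $\bbR_{\an}^K$, by Miller's preparation theorem for $\bbR_{\an}^K$ (equivalently the van den Dries--Speissegger description), a germ has the form $f(x) = G(x^{r_1}, \ldots, x^{r_k})$. Here the $r_i \in K$ are positive and $G$ is a real power series converging on a neighbourhood of $0$.
\item For $\bbR_{\an^*}$, by van den Dries--Speissegger, a germ is $f(x) = \sum_\alpha a_\alpha x^\alpha$. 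This is a convergent generalized power series whose support is a well-ordered subset of $[0,\infty)$, and it converges on some $[0,\rho)$.
\end{itemize}

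\textbf{Step 2: holomorphic extension.} In every case I extend $f$ by the same formula, with $z^{s}$ defined as $|z|^s e^{is\Arg z}$ on the slit disk $D(0,\delta)\setminus(-\delta,0]$. Because $|z^s| = |z|^s \to 0$, the series still converges absolutely for small $|z|$. It is holomorphic, being a uniform limit of holomorphic functions in the $\bbR_{\an^*}$ case and a composition in the $\bbR_{\an}^K$ case.

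\textbf{Step 3: definability.} This is the real content. For $\bbR_{\an}^K$, write $z^{r_i} = |z|^{r_i}(\cos(r_i\theta) + i\sin(r_i\theta))$. The factor $|z|^{r_i}$ is a power function with exponent in $K$. The factors $\cos(r_i\theta)$ and $\sin(r_i\theta)$, for $\theta \in [-\pi,\pi]$, are restricted analytic functions. The real and imaginary parts of $G$ on a small closed polydisk are restricted analytic functions of $2k$ real variables. So the extension is $\bbR_{\an}^K$-definable, and taking $K = \bbQ$ covers $\bbR_{\an}$.

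For $\bbR_{\alg}$ I would avoid series. Let $P(x,y) = 0$ be a nonzero polynomial relation satisfied by $f$. The complex extension $w(z)$ is a branch of $P(z,w) = 0$ over the slit disk, which is simply connected once we shrink away from the discriminant locus. Its graph is then the connected component, containing the graph of $f$, of the semialgebraic set $\{(z,w) : P(z,w) = 0,\ z \in D(0,\delta)\setminus(-\delta,0]\}$. By Puiseux this component is single-valued over the slit disk, and it is semialgebraic.

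For $\bbR_{\an^*}$, expand $e^{i\alpha\theta} = \sum_n (i\alpha\theta)^n/n!$. The real and imaginary parts of $f(re^{i\theta})$ then become generalized power series in $(r,\theta)$. Their support lies in a well-ordered subset of $[0,\infty)^2$, namely the product of the support of $f$ with $\bbN$. They converge on the compact polyinterval $[0,\rho'] \times [-\pi,\pi]$, because the $\theta$-series is entire. Such series are by definition primitives of $\bbR_{\an^*}$.

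\textbf{Expected obstacle.} The main obstacle is this last step. One must check that the two-variable series genuinely satisfies the convergence and support conditions in the van den Dries--Speissegger definition, and whether the definition requires the polyinterval to be a neighbourhood of the origin in each variable. If $\theta$ must be centred at $0$ with small radius, I would cover $[-\pi+\eta,\pi-\eta]$ by finitely many small intervals $\theta_0 + [-h,h]$ and re-expand around each $\theta_0$. The slit disk is then reached by letting $\eta \to 0$, which uses the fact that $e^{i\alpha\theta}$ is bounded uniformly in $\alpha$ on $[-\pi,\pi]$. In the $\bbR_{\an}^K$ case, a milder point is to state the preparation result precisely enough that $G$ is a genuine convergent power series, rather than a unit times a monomial; if necessary I would absorb the monomial $x^{r_0}$ as an extra factor $z^{r_0}$.
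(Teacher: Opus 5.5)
Your proposal is correct and follows essentially the same route as the paper. The shared steps are: Miller's normal form $x^{r_0}F(x^{r_1},\dots,x^{r_k})$ and van den Dries--Speissegger's Theorem B; extension by the same formula on the slit disk; definability via polar coordinates; and, for $\bbR_{\an^*}$, expanding the angular factor in powers of $\theta$ to get a generalized power series with support in $\Sigma\times\bbN$. Your connected-component argument for $\bbR_{\alg}$ just makes the paper's one-line remark explicit.

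The obstacle you flag is handled in the paper by treating $\theta\ge 0$ and $\theta<0$ separately (substituting $\theta\mapsto-\theta$), so no re-expansion around other $\theta_0$ is needed. Note that the relevant norm is $\sum_\alpha |a_\alpha| r^\alpha e^{\alpha|\theta|}$, which is not bounded uniformly in $\alpha$, so ``entire in $\theta$'' is not quite the reason. You must shrink the $r$-radius until $\rho' e^{\pi}$ lies strictly inside the radius of convergence of $f$.
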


Since it will be needed in the proof, we review how $\bbR_{\an^*}$ is defined. However, the unfamiliar reader can follow the rest of the paper by simply assuming that $S = \bbR_{\an}^K$.

\begin{definition}[{\cite[p.~4377]{vandenDriesSpeisseggerGeneralizedPowerSeries}}]
    Suppose that $S_1, \ldots, S_m$ are well-ordered subsets of $[0, \infty)$ (in particular, this implies they are countable). The ring $\bbR[[X^*]]$ consists of generalized power series
    \[ F(X) = \sum_{\alpha} c_{\alpha} X^{\alpha}, \]
    whose support $\{\alpha: c_\alpha \neq 0 \}$ is contained in $S_1 \times \cdots \times S_m$. Given a polyradius $r = (r_1, \ldots, r_m)$, with $0 < r_i < \infty$, we define
    \[\norm{F}_r = \sum_\alpha |c_\alpha| r^{\alpha}. \]
    If $\norm{F}_r$ is finite, then $F$ defines a continuous function on $[0,r_1] \times \cdots \times [0,r_m]$ (nonnegative values are required to make $x^{\alpha}$ unambiguous).
\end{definition}

\begin{definition}
    The structure $\bbR_{\an^*}$ is the expansion of $\bbR_{\alg}$ by all overconvergent generalized power series. In other words, if $\norm{F}_r < \infty$, and $s_i < r_i$ for all $i$, then
    \[ F|_{[0, s_1] \times \cdots \times [0, s_m]}\]
    is definable.
\end{definition}

\begin{proof}[Proof of \Cref{prop:BranchingExamples}]
    In $\bbR_{\alg}$, every definable function germ is algebraic, and conversely every algebraic function is definable as long as its domain is definable. Thus, $\bbR_{\alg}$ is clearly branching.

    Next, consider $\bbR_{\an}^K$. If $f \colon (0, \varepsilon) \to \bbR$ is definable, then by \cite[Proposition~4.5]{MillerPowerFunctions}, there exists a convergent power series $F$, and $r_0, \ldots, r_k \in K$, with $r_i >0$ for $i \geq 1$, such that after shrinking $\varepsilon$ we have
    \[  f(x) = x^{r_0} F(x^{r_1}, \ldots, x^{r_k}). \]
    It follows by convergence that there is a holomorphic extension
    \[ g(z) = z^{r_0} F(z^{r_1}, \ldots, z^{r_k}) \]
    of $f$, valid on some branched neighborhood. Therefore, we only need to show that $g$ is $S$-definable. Since $S$ contains $\bbR_{\an}$, the analytic function $F$ is definable near zero, which means it is enough to show the power functions $z^{r_i}$ are definable.

    If $z$ lies in a branched neighborhood of $0$, then we can write
    \[ z = x+iy = r(z) e^{i \theta(z)}, \]
    where $r, \theta$ are analytic functions. The function $r = \sqrt{x^2 +y^2}$ is semialgebraic, and the function $\theta = \arctan(y / x)$ is $\bbR_{\an}$-definable. Thus, we can $S$-definably transform to polar coordinates. In polar coordinates, the power function has the form
    \[ (r, \theta) \mapsto (r^{r_i}, r_i \theta),  \]
    and since the real power function is definable it follows that $z^{r_i}$ is definable as well.

    We now handle $\bbR_{\an^*}$. By \cite[Theorem~B]{vandenDriesSpeisseggerGeneralizedPowerSeries}, every definable one-variable germ near zero has a convergent expression
    \[ f(x) = \sum_i a_i x^{\alpha_i}, \]
    where $\Sigma = (\alpha_i)$ is well-ordered. As above, we would like to show that the holomorphic expression
    \[ g(z) = \sum_i a_i z^{\alpha_i},  \]
    which converges, is definable. Equivalently, we want to show that its real and imaginary parts are. Writing $z = r e^{i \theta}$ as above, we have
    \[\Re(z^{\alpha_i}) = r^{\alpha_i} \Re(e^{ i \cdot \alpha_i \theta}) = r^{\alpha_i} \cos(\alpha_i \theta). \]
    If we let $\sum_n b_n z^n$ be the power series for cosine, then we have
    \[ \Re(g(z)) = \sum_i \sum_n (a_i b_n \alpha_i^n) r^{\alpha_i} \theta^n.  \]
    If we restrict to $\theta \geq 0$ or $\theta < 0$, this is a convergent generalized power series in the variables $r, \theta$. If $\Sigma$ is the well-ordered support of $f$, then the support of $\Re(g)$ is contained in $\Sigma \times \bbN$; it follows that $\Re(g)$ is definable near zero. The same argument applies to $\Im(g)$, and we conclude. \qedhere
\end{proof}

On the other hand, not every polynomially bounded structure is branching; for example the structure $\bbR_{\mcG}$ generated by multisummable series is not. Indeed, Theorem~12 of \cite{PadgettSpeisseggerDefinability} shows that there exist holomorphic functions on a sector near zero, which are definable on a strictly smaller sector near zero, but which do not extend even holomorphically to a branched neighborhood of the origin.

\begin{question}
    If a function $f \colon (0,\varepsilon) \to \bbR$ is definable in a branching structure, does it necessarily possess a generalized power series expansion at zero?
\end{question}

\section{Cohomology vanishing}\label{sec:CohomologyVanishing}

In this section, we largely complete the proof of \cref{thm:Vanishing}; in the smooth case we will need to use \cref{cor:TameEmbeddability}, which is proved independently in the final section. In the first two subsections, we focus on solving ``definable Cousin problems,'' i.e., showing that $H^1(X, \mcO)$ and $H^1(X, \GL_n)$ are trivial. Then, in \cref{subsec:TheoremAProof}, we deduce cohomology vanishing for all coherent sheaves.

Initially, let $S$ be any o-minimal structure containing $\bbR_{\an}$, so that all compact complex curves are definable. From now on, we adopt the convention described in the introduction: \emph{all geometric objects are definable by default.}

\subsection{Overconvergence}

Let $Y$ be a \emph{reduced, irreducible} compact complex-analytic curve, and let $X \subset Y$ be a proper open subset. We say that a section or cohomology class on $X$ is \emph{overconvergent} if it arises by restriction from some strictly larger open $X' \subset Y$ satisfying $\ovl X \subset X'$.

\begin{proposition}\label{prop:OverconvergentImpliesTrivial}
    If $\mcF$ is a coherent sheaf on $Y$, then any overconvergent $s \in H^i(X, \mcF)$, with $i \geq 1$, is trivial. If $\ovl X \neq Y$, then any overconvergent $s \in H^1(X, \GL_n)$ is trivial.
\end{proposition}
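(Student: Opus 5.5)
The plan is to reduce to classical analytic statements on an intermediate open set $X'$ with $\ovl X \subset X'$. We exploit the fact that, since $S$ contains $\bbR_{\an}$, analytic objects on a neighborhood of the compact set $\ovl X$ restrict to definable objects on $X$. Let $s \in H^i(X, \mcF)$ be the restriction of $s' \in H^i(X', \mcF)$.

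\textbf{Step 1: shrink $X'$ to a tame neighborhood.} First we may shrink $X'$. By \cref{thm:Triangulation}, triangulate $Y$ compatibly with $X$. Take $X''$ to be the union of $\ovl X$ with the open stars $\St_Y(x)$ of all vertices $x \in \ovl X$, after subdividing finely enough that $X'' \subset X'$. Then $X''$ is a definable open set with $\ovl X \subset X''$. By \cref{prop:EdmundoJonesPeatfield} we represent $s'|_{X''}$ by a \v{C}ech cocycle on a finite definable cover $(W_k)$ of $X''$. Shrinking each $W_k$ slightly, we arrange that the $W_k \cap X$ still cover $X$ and that $\ovl{W_k \cap X}$ is compactly contained in $W_k$ for the relevant members. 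Then every cochain $s_I$, restricted to $X$, is the restriction of an analytic object defined on a neighborhood of a compact subset of $Y$.

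\textbf{Step 2: trivialize analytically on a slightly larger open set.} If $\ovl X \neq Y$, the open set $X''$ can be chosen not equal to $Y$. Each connected component of the underlying analytic space of $X''$ is then a noncompact one-dimensional complex space. By the classical theorems, we have $H^i_{\an}(X''_0, \mcF) = 0$ for $i \geq 1$: noncompact curves are Stein (Behnke--Stein, extended to singular curves via normalization), and Cartan's Theorem B applies. Likewise, holomorphic vector bundles on noncompact curves are trivial, which gives $H^1_{\an}(X''_0, \GL_n) = 0$ by Grauert's Oka principle. Hence the analytic \v{C}ech cocycle $(s_I)$ becomes a coboundary $s_I = \delta(t)_I$ after a further analytic refinement, possibly an infinite one. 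In the case $\ovl X = Y$ with $X \subsetneq Y$ for a coherent sheaf, we instead use that $X \neq Y$ and argue by the same Stein property on some $X''$ with $\ovl X \subset X''\subsetneq Y$. If no such set exists, then $X$ is $Y$ minus finitely many points, hence again Stein. For the $\GL_n$ statement the hypothesis $\ovl X \neq Y$ is exactly what is needed.

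\textbf{Step 3: the main obstacle, definability of the trivialization.} The analytic trivializing cochain $t$ lives on open sets of $X''$ and need not be definable there. The key observation is that we only need $t$ on $X$. We arrange $t_k$ to be defined on a neighborhood of the compact set $\ovl{W_k \cap X}$: by compactness, finitely many analytic refinement sets suffice over $\ovl X$. The restriction of an analytic function on a neighborhood of a compact set is $\bbR_{\an}$-definable, since it is locally given by restricted analytic functions in definable charts and there are finitely many of them. Hence $t_k|_{W_k \cap X}$ is definable. The refinement sets themselves can be chosen definable, for instance as small definable coordinate disks. Thus $s$ is a definable \v{C}ech coboundary on a finite definable cover of $X$, and $s = 0$ by \cref{prop:EdmundoJonesPeatfield}. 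The same argument works multiplicatively for $\GL_n$ cocycles.

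I expect the delicate point to be the bookkeeping in Step 3. We must ensure that analytic refinements of the cover can be taken finite, definable, and compatible near $\partial X$. This uses the compactness of $\ovl X$ inside $X''$, which is exactly what overconvergence provides.
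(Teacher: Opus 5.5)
For $\ovl X \neq Y$ your argument is the paper's. You shrink to a proper open $X'' \supset \ovl X$, which is Stein and has the homotopy type of a $1$-complex, and trivialize analytically by Theorem~B and Oka--Grauert. Compactness of $\ovl X$ then gives a finite cover with compactly contained members, on which the analytic cochain is $\bbR_{\an}$-definable.

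\textbf{The gap is the case $\ovl X = Y$}, which the first assertion of the statement includes for coherent $\mcF$. Overconvergence then forces $X' = Y$, so no $X''$ with $\ovl X \subset X'' \subsetneq Y$ exists, and your fallback fails twice. First, the deduction is false: $\ovl X = Y$ does not force $Y \setminus X$ to be finite (take $X = \bbP^1 \setminus [0,1]$). Second, and more seriously, Steinness of $X$, or of $Y \setminus \{p\}$ for some $p \notin X$, only yields an analytic trivializing cochain on $X$ itself. Step 3 needs room around $\ovl X$, and with $\ovl X = Y \ni p$ there is none: near $p$ the analytic cochain may have essential singularities and need not be definable. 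Analytic Steinness cannot be the mechanism here. For $S = \bbR_{\an}$, Theorem~B shows that the definable group $H^1(Y \setminus \{p\}, \mcO)$ is nonzero even though $Y \setminus \{p\}$ is Stein.

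The missing idea is to choose a trivialization whose poles at $p$ have bounded order. The paper proceeds as follows.
\begin{enumerate}
    \item Since $s$ comes from $H^i(Y,\mcF)$, functoriality of restriction reduces the claim to showing that $H^i(Y,\mcF) \to H^i(Y\setminus\{p\},\mcF)$ is the zero map.
    \item On compact $Y$, the comparison $H^i(Y,\mcF) \to H^i_{\an}(Y,\mcF)$ is an isomorphism, by passing to finite definable covers with compactly contained refinements.
    \item By GAGA, $H^i_{\alg}(Y,\mcF) \to H^i(Y,\mcF)$ is therefore also an isomorphism.
    \item $H^i_{\alg}(Y\setminus\{p\},\mcF) = 0$ because $Y\setminus\{p\}$ is affine, and a diagram chase kills the definable restriction map.
\end{enumerate}
Alternatively, you could twist by a high multiple of an effective Cartier divisor supported at $p$ and use Serre vanishing on $Y$. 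This produces a trivializing cochain with poles of bounded order at $p$, which is definable on $Y \setminus \{p\}$.
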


\begin{proof}
    First, suppose that $\ovl X \neq Y$. By assumption $s$ arises from some $X'$ with $\ovl X \subset X'$; since $\ovl X \neq Y$, we may assume after shrinking that $X' \neq Y$ as well (remove a small closed neighborhood of some $y \notin \ovl X$). By \cite[Theorem~1]{NarasimhanSteinNormalization}, $X'$ is Stein, so we have $H^i_{\an}(X', \mcF) = 0$ for $\mcF$ a coherent sheaf on $X'$. Furthermore, $X'$ has the topological type of a 1-complex, so by the Oka principle \cite[Theorem~5.3.1]{ForstnericSteinManifolds}, we have
    \[ H^1_{\an}(X', \GL_n) =  \{1 \}. \]
    Letting $\mcF = \GL_n$ or a coherent sheaf on $Y$, suppose that $s$ is given by a \v{C}ech cocycle
    \[ (s_I) \in  \check Z^i((U_i), \mcF),  \]
    where $(U_i)$ is some cover of $X'$. We have just shown that $s_I$ is analytically a coboundary: There exists a non-definable cochain
    \[ t \in \check C^{i-1}_{\an}((U_i), \mcF) \]
    with $\partial t = s$. By compactness of $\ovl X$, there exists a refinement $(U_i')$ with $U_i' \Subset U_i$ whose union still covers $X$. The cochain $t$ is definable on $(U_i')$, since the structure contains $\bbR_{\an}$. Therefore, the restricted cocycle is a coboundary.

    Otherwise, suppose that $\ovl X = Y$ and let $\mcF$ be a coherent sheaf on $Y$. Choose some $p \in Y \setminus X$. Since restriction is functorial, it's enough to show that $H^i(Y, \mcF) \to H^i(Y \setminus \{p\}, \mcF)$ is the zero map for all $p \in Y$. By GAGA, $Y$ arises from an algebraic curve, and replacing $X$ by $Y \setminus \{p \}$, we can assume $X$ arises from an affine algebraic curve. Using \v{C}ech cocycles, define comparison maps
    $$ H^i_{\alg}(Y, \mcF) \xr{\alpha} H^i(Y, \mcF) \xr{\beta} H^i_{\an}(Y, \mcF). $$
    Given any analytic cover $(U_i)$ of $Y$, it has a finite definable refinement, and vice versa. If we also require the refinement $(U_i')$ to satisfy $\ovl{U_i'} \subset U_i$, then analytic cochains become definable cochains on the refinement since $S$ contains $\bbR_{\an}$. This shows, passing to the limit under refinement, that $\beta$ is an isomorphism. By GAGA, the composition $\beta \circ \alpha$ is an isomorphism, so it follows that $\alpha$ is an isomorphism as well. We have the following commutative diagram:
\[
\begin{tikzcd}[column sep=large, row sep=large]
    H^i_{\alg}(Y, \mcF) \arrow[r, "\cong"] \arrow[d, "\rho_{\alg}"'] &
    H^i(Y, \mcF) \arrow[r, "\cong"] \arrow[d, "\rho"] &
    H^i_{\an}(Y, \mcF) \arrow[d, "\rho_{\an}"] \\
    H^i_{\alg}(X, \mcF) \arrow[r, "\alpha_{X}"'] &
    H^i(X, \mcF) \arrow[r, "\beta_{X}"'] &
    H^i_{\an}(X, \mcF)
\end{tikzcd}
\]
Since $X$ is affine, we have $H^i_{\alg}(X, \mcF) = 0$. Chasing the diagram, we conclude that the map $\rho$ equals zero. \qedhere
\end{proof}

\begin{corollary}\label{cor:CollarVanishingSuffices}
    With the same conventions as \cref{prop:OverconvergentImpliesTrivial}, suppose that $s \in H^i(X, \mcF)$ is trivial on a ``boundary collar'' of the form $C \cap X$, where $C$ is a neighborhood of $\partial X$. Then $s$ is trivial.
\end{corollary}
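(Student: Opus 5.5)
The plan is to glue, using Mayer--Vietoris (\cref{prop:MayerVietoris}). Write $C' = C \cap X$ for the collar on which $s$ vanishes. First shrink the collar so that the rest of $X$ is compactly contained in $X$. Choose a definable neighborhood $W$ of $\partial X$ with $\ovl W \subset C$, and let
\[ U = X \setminus \ovl W \Subset X. \]
Then $X = U \cup C'$, and $U \cap C' = U \cap C$. The choice of $U$ makes its closure lie inside $X$, so sections and classes living on a neighborhood of $\ovl U$ are overconvergent for $U$.

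Next, run the Mayer--Vietoris sequence for the cover $X = U \cup C'$:
\[ H^{i-1}(U,\mcF)\oplus H^{i-1}(C',\mcF) \to H^{i-1}(U\cap C',\mcF) \xr{\delta} H^i(X,\mcF) \to H^i(U,\mcF)\oplus H^i(C',\mcF). \]
\begin{itemize}
\item The component of $s$ in $H^i(C',\mcF)$ vanishes by hypothesis.
\item The component $s|_U$ is the restriction of a class defined on $X \supset \ovl U$. So it is overconvergent for $U$, and \cref{prop:OverconvergentImpliesTrivial} gives $s|_U = 0$.
\end{itemize}
Here $U \subsetneq Y$ is a proper open subset, so the proposition applies, and if $U$ is not connected we apply it componentwise or to $U$ directly. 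Hence $s = \delta(t)$ for some $t \in H^{i-1}(U \cap C', \mcF)$.

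It remains to show that every such $t$ lifts, or at least that $\delta(t)=0$. The set $U \cap C'$ is itself an open subset of $Y$ with $\ovl{U\cap C'}$ compact. I would handle it by shrinking again.
\begin{itemize}
\item \textbf{Case $i \geq 2$.} Apply the same argument with $U$ replaced by a smaller $U_0 \Subset U$, where $X = U_0 \cup C'$ still holds. Then the connecting class $t$ can be taken on $U_0 \cap C'$, and it is the restriction of a class on $U \cap C'$, which contains a neighborhood of $\ovl{U_0} \cap \ovl{C'}$ up to boundary issues. So it is overconvergent and hence trivial by \cref{prop:OverconvergentImpliesTrivial}.
\item \textbf{Case $i = 1$, coherent or $\GL_n$.} Here $t$ is a section on $U \cap C'$, and we must write it as a difference of sections on $U$ and on $C'$. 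The cleanest route is at the cocycle level. Represent $s$ by a \v{C}ech cocycle trivialized on $C'$ by a cochain $a$. Then $s$ is cohomologous to a cocycle supported (as a difference from trivial) inside a compact region of $X$. Since $S \supset \bbR_{\an}$, $X$ lies in some slightly larger Stein open $X'$, via \cite{NarasimhanSteinNormalization} as in the proposition's proof. The modified cocycle extends by the trivial cocycle across $\partial X$ to $X'$, so it is overconvergent and therefore trivial.
\end{itemize}

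The step I expect to be the main obstacle is exactly this extension-by-trivial argument. It requires making the adjusted cocycle equal to the trivial one on a full neighborhood of $\partial X$, then defining the cover of $X'$ by adding $X' \setminus \ovl{U}$ with trivial transition data, and checking that everything stays definable, which uses that $S$ contains $\bbR_{\an}$. Concretely, after refining, the cover of $X$ consists of sets in $C'$ and sets in $U$. Modifying by $a$ makes all transition functions among the sets in $C'$ identically trivial. We then enlarge each set in $C'$ by $(C\setminus \ovl X)$-type pieces to obtain a cover of $X' = X \cup C$, on which the cocycle is still trivial near the boundary. By construction this cocycle restricts to the original class on $X$, so $s$ is overconvergent, and \cref{prop:OverconvergentImpliesTrivial} finishes the proof.
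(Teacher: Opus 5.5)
Your strategy works in outline. For $i=1$ your endgame (make the cocycle trivial on the collar, extend it by the trivial cocycle across $\partial X$, apply \cref{prop:OverconvergentImpliesTrivial}) is exactly the paper's argument; for $i\geq 2$ your Mayer--Vietoris route is genuinely different. Two steps fail as written.

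\textbf{The case $i\geq 2$.} Shrinking only $U$ does not make $t|_{U_0\cap C'}$ overconvergent. The closure $\ovl{U_0\cap C'}$ still meets the inner edge $\partial C\cap X$ of the collar, where $t$ is not defined. You must shrink the collar too. Take neighborhoods $W_2\Subset W_1\Subset W\Subset C$ of $\partial X$, and set $U=X\setminus\ovl{W_2}$, $U_0=X\setminus\ovl{W_1}$, $C_0'=W\cap X$. Then $X=U_0\cup C_0'$ and $\ovl{U_0\cap C_0'}\subset U\cap C'$. Naturality of Mayer--Vietoris gives $s=\delta_0(t|_{U_0\cap C_0'})$. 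This restriction is now overconvergent, so it vanishes by the proposition in degree $i-1\geq 1$.

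\textbf{The case $i=1$.} Enlarging each collar piece $P_\alpha$ by pieces of $C\setminus\ovl X$ never adds points of $\partial X$, so you do not get a cover of $X\cup C$. Nor can you in general choose opens $\tilde P_\alpha\subset Y$ with $\tilde P_\alpha\cap X=P_\alpha$ that cover $\partial X$. That would require each boundary point to have a neighborhood $V$ with $V\cap X$ inside a single piece. This fails if, say, $X$ is locally the upper half-plane at $0$ and the pieces are the sectors $0<\Arg z<2\pi/3$ and $\pi/3<\Arg z<\pi$.

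The missing step is a merge. Your normalized cocycle $c'$ vanishes on collar--collar overlaps, so the cocycle condition gives $c'_{\alpha\beta}=c'_{\alpha'\beta}$ on $P_\alpha\cap P_{\alpha'}\cap Q_\beta$. These sections therefore glue to a section on $C'\cap Q_\beta$ (multiplicatively for $\GL_n$). So $s$ is represented on the cover $(C',Q_1,\ldots,Q_k)$. Now replace $C'$ by $C$. Every $Q_\beta$ lies in $X$, so $C\cap Q_\beta=C'\cap Q_\beta$ and no new overlaps arise. The same cocycle therefore defines a class on $X\cup C\supset\ovl X$ restricting to $s$, which is precisely the paper's proof.

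Also drop the ``slightly larger Stein open'': none exists when $\ovl X=Y$. Just cite \cref{prop:OverconvergentImpliesTrivial}, whose GAGA branch covers that case for coherent $\mcF$.

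\textbf{A uniform fix you already have.} You noted $U\cap C'=U\cap C$, and this gives a proof with no case split. A class $t\in H^{i-1}(U\cap C',\mcF)$ is then also a class on the overlap of the cover $(U,C)$ of $X''=U\cup C\supset\ovl X$. By naturality of \cref{prop:MayerVietoris} under restriction to $X$, its connecting image in $H^i(X'',\mcF)$ restricts to $\delta(t)=s$. Hence $s$ is overconvergent and trivial. For $\GL_n$, use the single transition $t\in\GL_n(U\cap C)$ to glue the trivial bundles on $U$ and $C$.

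This is the paper's ``no new overlaps'' observation in Mayer--Vietoris form. Compared with the paper's one-line \v{C}ech argument, it does not need a representative on a cover having $C\cap X$ as a member. Such a representative is immediate for $i=1$ but not obviously available for $i\geq 2$. The price is that you first need $s|_U=0$, which you correctly obtain from overconvergence.
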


\begin{proof}
    Pick a cover of the form
    \[ C \cap X, A_1, \ldots, A_k \]
    on which $s$ is represented by the cocycle $s_I$. Replacing $C \cap X$ by $C$ introduces no new overlaps, and gives an extension of $s$ to some $X' \supset \ovl X$. Therefore, we conclude by \cref{prop:OverconvergentImpliesTrivial}. \qedhere
\end{proof}

\begin{corollary}\label{cor:HigherStructureSheafVanishing}
    If $\mcF$ is any coherent sheaf on $Y$ (for example, $\mcO$), then we have $H^i(X, \mcF) = 0$ for $i \geq 2$.
\end{corollary}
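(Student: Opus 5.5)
We want $H^i(X,\mcF)=0$ for $i\ge 2$, where $X \subsetneq Y$ is a definable open subset of the compact curve $Y$ (real dimension $2$) and $\mcF$ is coherent on $Y$. By \cref{prop:EdmundoJonesPeatfield}(1), $H^i(X,\mcF)=0$ for $i>2$, so only $i=2$ needs an argument. The plan is to use \cref{cor:CollarVanishingSuffices}: it is enough to show that every class $s\in H^2(X,\mcF)$ becomes trivial on some boundary collar $C\cap X$. The mechanism is the absence of triple intersections near the boundary.

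First, compute $s$ by \v{C}ech cohomology on a finite definable cover, which \cref{prop:EdmundoJonesPeatfield}(2) allows. By \cref{lem:NoTripleIntersect}(2), we may refine the cover $(U_i)$ of $X$ so that it has no triple intersections near $\partial X$. Concretely, in the star cover $\{\St_Z(x): x\in X_0\}$, the only triple intersections are open $2$-simplices whose vertices are all interior vertices. After the subdivision in that proof, such a simplex has closure disjoint from $\partial X$, so it lies in a compact subset of $X$.

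Next, choose a definable neighborhood $C$ of $\partial X$ in $Y$ whose closure misses the closures of all these finitely many triple intersections. One way is to take $C$ to be the union of the open stars of the boundary vertices, possibly after a further barycentric subdivision. Then restrict the cover to $C\cap X$, giving $(U_i\cap C)$. This restricted cover has no nonempty triple intersections, so its \v{C}ech $2$-cochains vanish. Hence the restriction of $s$ is zero in $\check H^2((U_i\cap C),\mcF)$, and therefore zero in $H^2(C\cap X,\mcF)$.

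Finally, \cref{cor:CollarVanishingSuffices} shows that $s$ is trivial. The hypothesis $X\ne Y$ is what makes $\partial X$ and the collar meaningful. If $\partial X=\emptyset$, then $X$ is closed and open, hence $X=Y$ by irreducibility, so this case is excluded.

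The main point requiring care is ensuring that the collar actually avoids the triple intersections. This requires that, after subdivision, no $2$-simplex with all vertices interior has a vertex or edge touching $\partial X$. Because the triangulation is compatible with $X$, a closed $2$-simplex all of whose vertices lie in the open set $X$ is contained in $X$, so it lies in $X$ and is compact. Its distance to $\partial X$ is then positive, and a small enough definable collar avoids it.
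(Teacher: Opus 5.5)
Your proposal is correct and is essentially the paper's proof: refine via \cref{lem:NoTripleIntersect}(2) so the cover has no triple intersections near $\partial X$, observe that the cocycle then restricts to zero on a boundary collar, and conclude with \cref{cor:CollarVanishingSuffices}. Your preliminary reduction to $i=2$ via \cref{prop:EdmundoJonesPeatfield} is harmless but unnecessary, since the absence of triple intersections on the collar kills the alternating \v{C}ech $i$-cochains there for every $i\ge 2$ at once.
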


\begin{proof}
    Let $s \in H^i(X, \mcF)$, and let it be represented by a cocycle $s_I$ on the cover $(U_i)$. By \cref{lem:NoTripleIntersect}, after refining $(U_i)$ we may assume that it has no triple intersections near $\partial X$. This implies that $s$ is trivial on a boundary collar, and we conclude by \cref{cor:CollarVanishingSuffices}. \qedhere
\end{proof}

\subsection{Reduction to the boundary}

Now, assume $S$ is unary-analytic and contains $\bbR_{\an}$. In this section, we show that for $H^1(X, \mcO)$, $H^1(X, \GL_n)$, triviality of cocycles is a ``local'' condition along $\partial X$. We require $Y$ to be smooth, but only on a neighborhood of $\partial X$, and as we now observe, this assumption is harmless.

\begin{proposition}\label{prop:ResolveExtraneousSingularities}
    There exists an open embedding $X \inj Y'$, where $Y'$ is a reduced irreducible compact complex-analytic curve smooth near $\partial X$.
\end{proposition}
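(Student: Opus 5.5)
The idea is to resolve only those singular points of $Y$ that lie on $\partial X$, and to leave the rest of $Y$ alone. Here $Y$ is reduced and irreducible and one-dimensional, so its singular locus $\Sigma$ is a finite set of points. Put $E = \Sigma \cap \partial X$, which is also finite. Let $\nu \colon \tilde{Y} \to Y$ be the normalization. It is a compact Riemann surface, and it is connected because $Y$ is irreducible. Over each point of $\Sigma$ it has finitely many preimages, and it restricts to a biholomorphism from $\tilde Y \setminus \nu^{-1}(\Sigma)$ onto $Y \setminus \Sigma$.

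Next I would build $Y'$ by gluing. Near each $p \in \Sigma \setminus E$ keep $Y$ unchanged. Near each $p \in E$ replace a small neighborhood by its preimage in $\tilde{Y}$. Concretely, $Y'$ is the quotient of $\tilde{Y}$ that collapses the fibers $\nu^{-1}(p)$ for $p \in \Sigma \setminus E$ and nothing else. This quotient is again a reduced compact complex space, namely the partial normalization of $Y$ at the points of $E$. It is irreducible since $\tilde{Y}$ is connected, and $\nu$ factors as $\tilde{Y} \to Y' \to Y$. Every singular point of $Y'$ lies over $\Sigma \setminus E$, so it is away from the preimage of $\partial X$. Because $S \supset \bbR_{\an}$, all of these compact analytic spaces and maps are definable.

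The embedding comes from the fact that $X$ is open and meets $\partial X$ nowhere, so $X \cap E = \emptyset$. Thus $\pi \colon Y' \to Y$ is an isomorphism over $Y \setminus E \supset X$. This gives an open embedding $X \inj Y'$, namely $\pi^{-1}|_X$, and it is definable since $\pi$ is definable and definably invertible off the finite set $E$.

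The one point I expect to need care is that the boundary of $X$ in $Y'$ is $\pi^{-1}(\partial X)$, or at least lies inside it, so that $Y'$ really is smooth near the new boundary. Since $\pi$ is proper and surjective, the closure of $X$ in $Y'$ maps into $\ovl X$. So $\partial_{Y'}X \subset \pi^{-1}(\partial X)$, and this set consists of smooth points of $Y'$ together with the finite preimage of $E$, which is also smooth by construction. That proves smoothness near $\partial_{Y'} X$. The statement is only needed up to this containment, so I would not try to identify $\partial_{Y'}X$ exactly.
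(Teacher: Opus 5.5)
Your proof is correct and is essentially the paper's argument. Both partially resolve (normalize) $Y$ only at singular points lying outside $X$, observe that the resulting map is an isomorphism over $X$, and get definability from $\bbR_{\an}$, and your verification that $\partial_{Y'}X \subset \pi^{-1}(\partial X)$ spells out a step the paper leaves implicit.

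There are two small remarks. First, the paper resolves all of $\Sing(Y)\setminus X$ rather than just $\Sing(Y)\cap\partial X$; this costs nothing and gives the stronger conclusion $\Sing(Y')\subset X$, which is what is actually invoked in the proof of Theorem~A. Second, your ``quotient of $\tilde{Y}$ collapsing fibers'' must carry the structure sheaf of $Y$ near $\Sigma\setminus E$, since a literal fiber-collapsing quotient would smooth out a cusp lying in $X$; your gluing description already does this correctly.
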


\begin{proof}
    Let $\Sigma = \Sing(Y) \cap X^c$, and let
    \[ \pi \colon \tilde{Y} \to Y \]
    be a birational proper map resolving precisely the singularities in $\Sigma$. Since $S$ contains $\bbR_{\an}$, the space $\tilde{Y}$ and the map $\pi$ are definable, since they are locally definable over each $s \in \Sigma$. But $\pi$ is an isomorphism outside $\Sigma$, so there is an embedding $X \inj \tilde{Y}$ lifting the map $X \to Y$. \qedhere
\end{proof}

We will also assume that $X$ is \emph{boundary-connected}. This assumption is a priori stronger, but as we will see further on it can be removed in the proof of the main theorem.

\begin{definition}
    The class $s \in H^i(X, \mcF)$ is \emph{boundary-trivial} if there exists a boundary cover $(U_i)$ such that $s|_{U_i}$ is trivial.
\end{definition}

\begin{proposition}\label{prop:BoundaryTrivialImpliesTrivial}
    Assume that $X$ is boundary-connected. If $s \in H^1(X, \mcO)$ is boundary-trivial, then $s$ is trivial. The same holds for $H^1(X, \GL_n)$ as long as $Y \setminus X$ is infinite.
\end{proposition}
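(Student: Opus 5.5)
We follow steps (3)--(5) of the outline in \cref{sec:DefinableTopology}. The aim is to show that $s$ becomes trivial on a boundary collar, after which \cref{cor:CollarVanishingSuffices} finishes the proof.

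\textbf{Setting up.} Let $(U_i) = (V_i \cap X)$ be a boundary cover on which $s|_{U_i}$ is trivial, with trivializations $t_i$. Thus $t_i \in \mcO(U_i)$ in the additive case, and $t_i$ is a \v{C}ech $0$-cochain of $\GL_n$-valued functions on some cover of $U_i$ in the multiplicative case. Passing to a common refinement, we may assume the restriction of $s$ to the boundary collar $C = \bigcup U_i$ is cohomologous to the cocycle
\[ g_{ij} = t_i - t_j \quad (\text{resp. } g_{ij} = t_i t_j^{-1}) \]
on the boundary cover $(U_i)$. This uses a single \v{C}ech cover of $X$ of the form $C \cap X, A_1, \dots, A_k$; note that triviality of $s$ on each $U_i$ lets us glue the local trivializations into this form. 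It then suffices to show that the cocycle $(g_{ij})$ on $(U_i)$ is a coboundary after passing to a collar. By \cref{lem:NoTripleIntersect}(1) we refine so that the closures $\ovl{V_i}$ have no triple intersections. Then $(g_{ij})$ is just a collection of functions on the pairwise overlaps, with no cocycle constraint beyond antisymmetry.

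\textbf{Extending past the boundary.} Each $g_{ij}$ is a definable holomorphic function (or $\GL_n$-valued matrix of functions) on $U_{ij}$. Refining again, using triangulation and \cref{lem:StarsAreBoundaryConnected}, we may take the $V_i$ to be stars. Then $U_{ij}$ is boundary-connected near $\partial X$, since $X$ is boundary-connected; one uses \cref{prop:BoundaryAgreement} to see that $U_{ij}$ agrees with $X$ near all but finitely many points of $\partial X \cap \partial U_{ij}$. By \cref{prop:GenericHolExtension} applied in local charts of $Y$, which is smooth near $\partial X$, each $g_{ij}$, and $\det g_{ij}^{\pm 1}$ in the $\GL_n$ case, extends holomorphically past all but finitely many points of $\partial X \cap \partial U_{ij}$. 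Let $E_{ij}$ be these finitely many bad points. By \cref{lem:FiniteOmissionsLemma} we refine so that $\ovl{U'_{ij}} \subset U_{ij} \cup \partial X$ avoids $E_{ij}$. Now $g_{ij}$ extends holomorphically, and definably after shrinking, since $S \supset \bbR_{\an}$ and we use compactly contained pieces, to an open neighborhood $A_{ij}$ of $\ovl{U'_{ij}}$ in $Y$. By \cref{lem:CoverExtension} we refine once more, so the cover arises from $(V_i')$ with $V_i' \cap V_j' \subset A_{ij}$. Thus $(g_{ij})$ defines a cocycle on the cover $(V_i')$ of a neighborhood of $\partial X$; the absence of triple intersections makes the cocycle condition vacuous.

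\textbf{Concluding.} Gluing $(V_i')$ with the interior sets $A_1, \dots, A_k$, after checking that the added overlaps $V_i' \setminus \ovl X$ meet no $A_m$, exhibits $s$ as the restriction of a class on $X' = X \cup \bigcup V_i' \supset \ovl X$. So $s$ is overconvergent, and it is trivial by \cref{prop:OverconvergentImpliesTrivial}. In the $\GL_n$ case, this proposition needs $\ovl X \neq Y$. This is where we use that $Y \setminus X$ is infinite: by o-minimality $Y \setminus X$ then has nonempty interior or contains a curve, and either way the partial extension $X'$ can be chosen to miss a point, which is what the Stein/Oka argument needs.

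\textbf{Main obstacle.} I expect the hardest step to be justifying the extension of $g_{ij}$. First, \cref{prop:GenericHolExtension} needs $U_{ij}$ boundary-connected, so the refinement must keep the overlaps boundary-connected while still avoiding the bad points. Second, the local extensions must be shown to glue into a single definable function on a neighborhood $A_{ij}$ of the compact set $\ovl{U'_{ij}} \cap \partial X$. For the gluing I would argue that local extensions agree by the identity theorem, using connectedness of $U_{ij}$ near each boundary point. For definability, it should suffice to take a finite subcover of $\ovl{U'_{ij}}\cap\partial X$ by relatively compact disks, on which the extensions are $\bbR_{\an}$-definable.
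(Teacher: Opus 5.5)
Your strategy is the paper's: boundary cocycle $(g_{ij})$, no triple intersections (\cref{lem:NoTripleIntersect}), finitely many bad boundary points (\cref{prop:BoundaryAgreement}, \cref{prop:GenericHolExtension}), excision (\cref{lem:FiniteOmissionsLemma}), extension (\cref{lem:CoverExtension}), then overconvergence. The paper first trivializes the extended class on a thin neighborhood of $\partial X$ and then calls \cref{cor:CollarVanishingSuffices}. You instead glue the extended boundary cocycle directly to the interior members of the cover, which is that corollary's proof inlined. The $\mcO$ case goes through.

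The gap is in the $\GL_n$ case, where you justify $\ovl X \neq Y$ as follows: $Y\setminus X$ has nonempty interior or contains a curve, ``and either way'' $X'$ can be chosen to miss a point. The second alternative does not work.

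\begin{itemize}
\item If $Y\setminus X$ contains a curve but has empty interior (say $X$ is $Y$ minus a closed arc), then $\ovl X = Y$.
\item Your $X'$ must contain $\ovl X$, so $X' = Y$ and it cannot miss a point.
\item The Stein/Oka argument behind the $\GL_n$ part of \cref{prop:OverconvergentImpliesTrivial} explicitly assumes $\ovl X \neq Y$, so it is unavailable.
\end{itemize}

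The missing ingredient is the boundary-connectedness hypothesis, which you never invoke at this step. Suppose $\ovl X = Y$ and $Y\setminus X = \partial X$ is infinite. In a triangulation compatible with $X$, $\partial X$ then contains a $1$-simplex $I$ at a smooth point of $Y$. Both $2$-simplices adjacent to $I$ lie in $X$, because $Y\setminus X$ has empty interior. So $\St_X(I)\setminus I$ is disconnected, and \cref{lem:SimplicialCharacterizationofBoundaryConnectedness} says $X$ is not boundary-connected. Hence $Y\setminus X$ has nonempty interior and $\ovl X\neq Y$; only then can $X'$ be shrunk to miss a point. The paper records exactly this observation right after the statement.

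A minor expository slip concerns the setup. In the additive case, $t_i$ is a $0$-cochain on a cover $(U_i\cap W_a)_a$ of $U_i$, not an element of $\mcO(U_i)$; otherwise $(g_{ij})$ would already be a coboundary on $(U_i)$. Likewise, the cover of $X$ you actually glue from is $\{U_i\}\cup\{W_a\}$, with all $W_a\subset X$ and components $t_{i,a}$ on $U_i\cap W_a$. It is not a cover with $C$ as a single member, since that would presuppose $s|_C$ is trivial.
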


Observe that if $Y \setminus X$ is infinite and $X$ is boundary-connected, then we have $\ovl X \neq Y$. Otherwise, $Y \setminus X$ would contain a slit $C$ along which boundary-connectedness fails.

\begin{proof}
    First, consider $\mcO$: Let $(U_i)$ be a boundary cover on which $s$ is trivial, and consider the corresponding cocycle $(s_{ij})$. Applying \cref{lem:NoTripleIntersect}, we may assume that $(U_i)$ has no triple intersections. Now by \cref{prop:BoundaryAgreement}, if $x \in \partial U_{ij} \cap \partial X$, then with finitely many exceptions, we have $U_{ij} = X$ near $x$. Thus, since $X$ is boundary-connected, $s_{ij}$ is singular at only finitely many points of $\partial X$ by \cref{prop:GenericHolExtension}. Using \cref{lem:FiniteOmissionsLemma}, we can shrink $(U_i)$ and assume that each $s_{ij}$ is overconvergent. Applying \cref{lem:CoverExtension}, we obtain a cover $(V_i)$ of $\partial X$ with the following properties:
    \begin{enumerate}
        \item We have $V_i \cap X = U_i$.
        \item For all $i, j$, the function $s_{ij}$ extends definably to $V_i \cap V_j$.
    \end{enumerate}
    Applying \cref{lem:NoTripleIntersect}, assume that $(V_i)$ has no triple intersections. Then $(s_{ij})$ extends to a cocycle on $(V_i)$, and thus gives a class $t \in H^1(V, \mcO)$, where $V$ is some neighborhood of $\partial X$. By compactness of $\partial X$ and \cref{prop:OverconvergentImpliesTrivial}, we can replace $V$ with a smaller neighborhood and assume $t$ is trivial. But $C:= V \cap X$ is a boundary collar and on $C$ we have $s = t$. By \cref{cor:CollarVanishingSuffices}, $s$ is trivial on $X$.

    The argument for $\GL_n$ is much the same. Indeed the transition matrix $s_{ij}$ is overconvergent, as a matrix, at all but finitely many points. The determinant of its extension may vanish, but only at a proper definable analytic set, which is therefore finite. We can therefore apply \cref{lem:FiniteOmissionsLemma} again. Since $\ovl X \neq Y$, we conclude by invoking \cref{prop:OverconvergentImpliesTrivial}. \qedhere
\end{proof}

\begin{proposition}\label{prop:H1Vanishing}
    Suppose that $S$ is a branching structure containing $\bbR_{\an}$, and that $X$ is boundary-connected without punctures. Then both $H^1(X, \mcO)$ and $H^1(X, \GL_n)$ vanish.
\end{proposition}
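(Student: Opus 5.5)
By \cref{prop:BoundaryTrivialImpliesTrivial}, it is enough to show that every class $s$ in $H^1(X, \mcO)$ or $H^1(X, \GL_n)$ is boundary-trivial. For $\GL_n$ we also need $Y \setminus X$ to be infinite. This holds here: $X$ has no punctures and $X \subsetneq Y$ is noncompact, so $\partial X$ contains a point that is not isolated in $Y \setminus X$. By o-minimality that point then lies on a curve of $Y \setminus X$.

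Boundary-triviality is local along $\partial X$. Suppose each $x \in \partial X$ has a definable neighborhood $N_x \subset Y$ with $s|_{N_x \cap X}$ trivial. Compactness of $\partial X$ then gives finitely many $N_x$ covering $\partial X$, and this is the required boundary cover. So we fix $x \in \partial X$ and a cocycle $(s_{ij})$ on a cover $(U_i)$ of $X$. Since $Y$ is smooth near $\partial X$ (\cref{prop:ResolveExtraneousSingularities}), we may take a holomorphic chart sending $x$ to $0$. Since $X$ is boundary-connected and has no puncture at $x$, \cref{lem:ConnectednessAtZero} lets us assume that near $0$ the set $X$ is a radial sector $(f,g)$ with $f \neq g$, and in particular that $X$ omits a curve germ $C$ at $0$.

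Now refine $(U_i)$ near $x$, intersecting with a small disk $D$ and triangulating as in \cref{lem:NoTripleIntersect}. After this, the members of the cover meeting $D \cap X$ and accumulating at $0$ are finitely many pieces $U_{1}, \dots, U_{m}$, each connected at $0$. The key input is \cref{prop:BranchingExtension}. The set $V = D \cap X$ is connected near $0$ and contains no punctured neighborhood of $0$. Hence each transition function $s_{ij}$ on $U_i \cap U_j$ near $0$ (a definable holomorphic function, or for $\GL_n$ a matrix of them) extends definably to $D(0,\delta) \cap X$. Because the extensions are unique by the identity theorem and $V$ is connected, the cocycle identity $s_{ij}s_{jk} = s_{ik}$ persists on $D(0,\delta)\cap X$.

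On the single open set $D(0,\delta) \cap X$, the extended functions give a trivialization: set $t_i = s_{i1}$ (resp.\ $s_{i1}$ as matrices), so that $s_{ij} = t_i - t_j$ (resp.\ $t_i t_j^{-1}$). For $\GL_n$, invertibility of the extension of $s_{i1}$ follows from the cocycle identity $s_{i1}s_{1i} = 1$ after continuation. Pieces of the cover that do not accumulate at $0$ can be excluded by shrinking $\delta$. Hence $s$ is trivial on the boundary neighborhood $D(0,\delta) \cap X$ of $x$.

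The main obstacle is making this local step rigorous. We must check that after refinement the relevant pairwise intersections are connected at $0$ and accumulate there, so that \cref{prop:BranchingExtension} applies with $U = U_{ij}$ and $V = D \cap X$. We must also check that the extended cochains are definable on a common neighborhood. I would handle this by triangulating compatibly with everything and using stars of $x$, as in \cref{lem:StarsAreBoundaryConnected}, so that all overlaps near $x$ are connected sectors. Vertices $x$ of the triangulation are the delicate case; edge points are easier, since there boundary-connectedness gives a single sector.
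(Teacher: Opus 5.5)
Your reduction to boundary-triviality via \cref{prop:BoundaryTrivialImpliesTrivial}, the local picture at $x$ (with $X$ a radial sector omitting a curve germ), and the use of \cref{prop:BranchingExtension} to extend each $s_{ij}$ to $D(0,\delta)\cap X$ all match the paper. The gap is in the final step, where you set $t_i = s_{i1}$.

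That step needs $U_{i1}$ to be nonempty near $0$ for every $i$. To get $s_{ij}=t_it_j^{-1}$ on $U_{ij}$, it also needs the relation $\tilde s_{ij}\tilde s_{j1}=\tilde s_{i1}$ among the extensions. The identity theorem only propagates this relation from a triple overlap $U_{ij1}$ that is nonempty near $0$. Neither condition holds in general. Both fail, as soon as three or more pieces accumulate at $0$, for exactly the refinement you propose. The star covers of \cref{lem:NoTripleIntersect}(2) give sectors near $0$ in which only angularly adjacent pieces meet, with no triple overlaps near $\partial X$ at all. Then $s_{i1}$ does not exist for non-adjacent $i$, and your claim that ``the cocycle identity persists'' is vacuous. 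So nothing relates the separately extended $\tilde s_{ij}$ to one another.

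The missing idea is to exploit the linear, interval-like combinatorics of the sectors near $0$: for small $r$ they are subarcs of the proper arc $X\cap\{|z|=r\}$. This also settles your worry about connectedness, since two subarcs of an arc meet in an arc, so each $U_{ij}$ is connected at $0$ or empty near $0$.

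The paper uses this by merging pieces one pair at a time. If $U_{ij}$ is nonempty near $0$, the single function $s_{ij}$ extends over $U_i\cup U_j$. Hence $s$ is trivial there: take $t_i=\tilde s_{ij}|_{U_i}$ and $t_j$ trivial, since a two-set cover imposes no cocycle condition. Moreover $U_i\cup U_j$ is again a radial interval. Replacing the pair by their union and inducting on the number of pieces ends with a single piece, which is a boundary neighborhood of $x$.

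If you prefer to keep your simultaneous extension, define $t_i$ by composing extended transition functions along a path in the nerve from $U_1$ to $U_i$. For a chain $U_1,U_2,U_3$ this gives $t_3=\tilde s_{32}\tilde s_{21}$. You must then prove path-independence. Near $0$ the nerve is that of a cover of an interval by subintervals, hence contractible. Its $2$-simplices are exactly the nonempty triple overlaps, and there your identity-theorem argument does apply.

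Your observation that $\tilde s_{ij}\tilde s_{ji}=1$ forces invertibility of the extensions is correct. It can replace the paper's shrinking to avoid zeros of the determinant.
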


\begin{proof}
    Let $s \in H^1(X, \mcF)$, where $\mcF = \mcO$ or $\GL_n$. Since $X$ is noncompact and has no punctures, $Y \setminus X$ is certainly infinite. Therefore, by \cref{prop:BoundaryTrivialImpliesTrivial}, we only need to produce, for every $x \in \partial X$, a boundary neighborhood $U_x$ on which $s$ is trivial. Working locally, we can identify $x$ with $0$ and $X$ with a boundary-connected subset of $\bbD^*$.

    Choose a cover $(U_i)$ on which $s$ is trivial. Following \cref{lem:ConnectednessAtZero}, we can refine so the $(U_i)$ are radial intervals. If $U_{ij}$ is nonempty near zero, then by \cref{prop:BranchingExtension}, the transition function $s_{ij}$ extends to a boundary neighborhood of $0$. Shrinking near $0$ we can assume $s_{ij}$ is a global section, and (in the $\GL_n$ case) that its determinant is nonvanishing. A cocycle whose transition function globalizes is manifestly trivial, so $s$ is trivial on $U_i \cup U_j$.

    Since the $U_i, U_j$ are radial intervals with nontrivial overlap near zero, their union $U_i \cup U_j$ is also a radial interval near zero. Thus, we can replace $U_i, U_j$ by $U_i \cup U_j$ in the cover and continue by induction. Eventually, we arrive at a cover such that $x \notin \partial U_{ij}$ for any $i, j$. But by local connectedness at $x$, it follows that $x$ only lies on the boundary of a single $U_i$, which means $U_i$ is an interior neighborhood of $x$. \qedhere
\end{proof}

\subsection{Proof of Theorem~A}\label{subsec:TheoremAProof}

We first dispense with the case where $X$ is nonreduced.

\begin{proposition}\label{prop:AssumeReduced}
    Let $X$ be any definable analytic space of dimension one, and let $X^{\red} \subset X$ be its reduction (cf. \cite[Section~2.7]{BakkerBrunebarbeTsimerman}). Suppose that all coherent sheaves on $X^{\red}$ are acyclic, and all vector bundles are trivial. Then the same holds for $X$ as well.
\end{proposition}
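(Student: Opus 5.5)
The plan is to filter $\mcO_X$ by powers of the nilradical $\mcN \subset \mcO_X$ and reduce everything to coherent sheaves on $X^{\red}$. Since $X$ is a definable analytic space with coherent nilradical, and definable coherent sheaves form a category in which the nilradical is locally nilpotent, quasi-compactness of $X$ in the definable site (covers are finite) gives $\mcN^m = 0$ for some $m$. For any coherent $\mcF$ on $X$, consider the finite filtration
\[ \mcF \supset \mcN\mcF \supset \mcN^2\mcF \supset \cdots \supset \mcN^m\mcF = 0. \]
Each graded piece $\mcN^k\mcF/\mcN^{k+1}\mcF$ is coherent and annihilated by $\mcN$, so it is the pushforward of a coherent sheaf on $X^{\red}$ under the closed immersion $\iota\colon X^{\red}\inj X$, which is a homeomorphism of underlying definable spaces. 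Hence its definable cohomology on $X$ agrees with that on $X^{\red}$, which vanishes in positive degrees by hypothesis. Induction on $k$ using the long exact sequences of the short exact sequences $0 \to \mcN^{k+1}\mcF \to \mcN^k\mcF \to \mcN^k\mcF/\mcN^{k+1}\mcF \to 0$ then shows that $\mcF$ is acyclic.

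For vector bundles, I would argue with nonabelian cohomology. Put $\mcO_k = \mcO_X/\mcN^{k}$, so $\mcO_1 = \mcO_{X^{\red}}$ and $\mcO_m = \mcO_X$. For $k\ge1$ there is an exact sequence of sheaves of groups
\[ 1 \to M_n(\mcN^k/\mcN^{k+1}) \to \GL_n(\mcO_{k+1}) \to \GL_n(\mcO_k) \to 1, \]
where the kernel $A\mapsto 1+A$ is abelian because $(\mcN^k)^2 \subset \mcN^{k+1}$. Surjectivity on sections holds because a lift of an invertible matrix modulo a nilpotent ideal is invertible. The kernel is a coherent $\mcO_{X^{\red}}$-module, so its $H^1$ vanishes by the first part. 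The standard exact sequence of pointed sets
\[ H^1(X, M_n(\mcN^k/\mcN^{k+1})) \to H^1(X,\GL_n(\mcO_{k+1})) \to H^1(X,\GL_n(\mcO_k)), \]
together with twisting by a cocycle (the twisted kernel is again a coherent $\mcO_{X^{\red}}$-module, namely $\mathcal{E}nd$ of the bundle tensored with the graded piece), shows that the right map has trivial kernel. Induction from $H^1(X,\GL_n(\mcO_1))=\{1\}$ gives $H^1(X,\GL_n(\mcO_X)) = \{1\}$.

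The main obstacle I expect is justifying this nonabelian exact sequence in the definable setting. It should go through either by transporting to sheaves on the o-minimal spectrum $\tilde X$, where the usual theory of nonabelian $H^1$ for sheaves of groups on a topological space applies, or by working directly with \v{C}ech cocycles, which is permitted by \cref{prop:EdmundoJonesPeatfield}. A smaller point is checking that $\mcN$ is definably coherent and nilpotent; I would cite \cite{BakkerBrunebarbeTsimerman} for this.
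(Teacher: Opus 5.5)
Your proof is correct. Its first half, filtering $\mcF$ by $\mcN^k\mcF$ and using that the pushforward along $X^{\red}\to X$ preserves cohomology, is exactly the paper's argument. For vector bundles you take a different route.

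\textbf{The paper's argument.} It does the vector bundle case in a single step, with no nonabelian cohomology. It applies the first half to the coherent $\mcO_X$-module $\mcN_X\mcE$ on $X$ itself. This makes $H^0(X,\mcE)\to H^0(X,\mcO_{X^{\red}}^n)$ surjective. Lifting a global frame of the trivial bundle $\mcE|_{X^{\red}}$ then gives a map $\mcO_X^n\to\mcE$ that is an isomorphism modulo $\mcN_X$. Locally its determinant is a unit modulo a nilpotent ideal, hence a unit, so the map is an isomorphism.

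\textbf{Your argument.} Your d\'evissage through $\GL_n(\mcO_X/\mcN^k)$ is the standard deformation-theoretic argument. It only uses acyclicity of $\mcO_{X^{\red}}$-modules, never the acyclicity of coherent sheaves on $X$. It would also carry over verbatim to torsors under other smooth group schemes. The cost is that you must set up nonabelian $H^1$ and its exact sequence of pointed sets for definable sheaves of groups, via $\tilde X$ or \v{C}ech cocycles. The paper's frame-lifting argument avoids that machinery entirely.

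\textbf{Two small remarks.}
\begin{itemize}
\item The twisting step is unnecessary. Since $H^1(X,\GL_n(\mcO_k))=\{1\}$ by induction, you only need the fiber of $H^1(X,\GL_n(\mcO_{k+1}))\to H^1(X,\GL_n(\mcO_k))$ over the base point to be trivial. Exactness at the middle term together with $H^1(X,M_n(\mcN^k/\mcN^{k+1}))=0$ already gives this. Twisting would only be needed for injectivity on all fibers.
\item Surjectivity of $\GL_n(\mcO_{k+1})\to\GL_n(\mcO_k)$ should be stated as local surjectivity of sheaves, not surjectivity on sections over arbitrary opens. Local surjectivity is all the exact sequence requires.
\end{itemize}
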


\begin{proof}
    Let $i \colon X^{\red} \to X$ be the inclusion, and let $\mcN_X$ be the nilradical, a coherent sheaf of ideals corresponding to the subspace $X^{\red}$. By \cite[Corollary~2.50]{BakkerBrunebarbeTsimerman}, $\mcN_X$ is nilpotent, so we have a finite filtration
    \[ 0 \subset \mcN^k \mcF \subset \cdots \subset \mcF, \]
    each of whose graded pieces is an $\mcO_{X^{\red}}$-module (note that the map $i$ is topologically a homeomorphism). Cohomology commutes with $i_*$ since $i$ is a closed immersion, so each graded piece is acyclic; it follows that $\mcF$ is acyclic.

    Now, let $\mcE$ be a vector bundle on $X$. By assumption, its pullback $i^* \mcE$ is trivial, so we have a short exact sequence
    \[ 0 \to \mcN_X \mcE \to \mcE \to \mcO_{X^{\red}}^n \to 0. \]
    Since $\mcN_X \mcE$ is acyclic, the sequence remains exact under $H^0$. Lifting the basis to $\mcE$, we obtain a map $\eta \colon \mcO_X^n \to \mcE$ which is an isomorphism after pulling back to $X^{\red}$. It follows, if we locally trivialize $\mcE$, that the determinant of $\eta$ is everywhere a unit mod $\mcN_X$. But this implies that the determinant is also a unit before quotienting by $\mcN_X$, since $\mcN_X$ is nilpotent. We conclude that $\eta$ is an isomorphism. \qedhere
\end{proof}

The next result will be used, in the case where $X$ is smooth, to reduce to the case of a torsion-free sheaf.

\begin{proposition}\label{prop:TorsionSheaf}
    Assume $X$ is reduced, and let $\mcF$ be any coherent sheaf on $X$. Then there is a coherent subsheaf $\mcT$, the \emph{torsion subsheaf}, such that $\mcF / \mcT$ is torsion-free. If $X$ is smooth, then $\mcF / \mcT$ is locally free.
\end{proposition}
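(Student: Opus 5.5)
We define $\mcT$ directly and check coherence locally, using the fact that definable coherence is compatible with the analytic theory. For an open $U \subset X$, let $\mcT(U)$ be the set of sections $s \in \mcF(U)$ whose support is a discrete (hence, by o-minimality, finite) subset of $U$. Equivalently, these are the sections annihilated locally by a non-zero-divisor of $\mcO_X$. Finite unions of finite sets are finite, so $\mcT$ is a definable subsheaf of $\mcF$. Since $X$ is reduced of dimension one, a local function is a non-zero-divisor exactly when it does not vanish identically on any irreducible component through the point. So $\mcT$ agrees with the usual torsion subsheaf on stalks.

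\textbf{Coherence of $\mcT$.} By \cite{BakkerBrunebarbeTsimerman}, on a definable open $U$ we have a presentation $\mcO_U^m \xr{\phi} \mcO_U^n \to \mcF|_U \to 0$. The analytic torsion subsheaf is coherent, and it is the kernel of the natural map $\mcF \to \mcF^{\vee\vee}$. Both $\mcF^{\vee} = \SHom(\mcF,\mcO)$ and $\mcF^{\vee\vee}$ are built from $\phi$ by kernels and cokernels of definable matrix maps. They are therefore definable coherent by the definable Oka coherence theorem of \cite{BakkerBrunebarbeTsimerman}, which says definable coherent sheaves form an abelian category closed under $\SHom$. Hence $\mcT = \ker(\mcF \to \mcF^{\vee\vee})$ is definable coherent. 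It remains to compare this kernel with the support definition. If a section has finite support, it is killed by a non-zero-divisor, so it maps to zero in the torsion-free sheaf $\mcF^{\vee\vee}$. Conversely, a section in the kernel is analytic torsion at each stalk, so its support is a closed analytic subset of dimension zero.

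\textbf{Torsion-freeness of the quotient.} The sheaf $\mcF/\mcT$ embeds into $\mcF^{\vee\vee}$, which is torsion-free, being a dual. So its stalks are torsion-free.

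\textbf{The smooth case.} If $X$ is smooth, each local ring $\mcO_{X,x}$ is a DVR, and a finitely generated torsion-free module over it is free. Thus $\mcF/\mcT$ has free stalks. A coherent sheaf with free stalk at $x$ is free on a neighborhood of $x$: lift a basis of the stalk to sections over a definable neighborhood and apply Nakayama together with coherence of the kernel and cokernel. This uses only definable operations, so the trivialization is definable. By compactness of the o-minimal spectrum (or simply finiteness of the definable atlas), finitely many such neighborhoods suffice, and $\mcF/\mcT$ is locally free.

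\textbf{Main obstacle.} The hardest step is ensuring that the constructions $\SHom$, double dual, and local freeness at a stalk all stay within definable coherent sheaves. This is the point at which I would lean explicitly on the coherence and stalk-comparison results of \cite{BakkerBrunebarbeTsimerman}.
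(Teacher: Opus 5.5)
Your construction of $\mcT$ and the torsion-freeness argument are essentially the paper's. Both take $\mcT = \ker(\mcF \to \mcF^{\vee\vee})$ and get coherence from coherence of $\shom$. Your finite-support description and your observation that $\mcF/\mcT$ embeds in a dual are harmless variants of the paper's appeal to analytification.

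\textbf{The gap is in the last step of the smooth case.} Lifting a basis of the stalk at $x$ does give a definable neighborhood $W_x$ on which $\mcF/\mcT$ is free. But this produces one neighborhood per ordinary point, and neither ``compactness of the o-minimal spectrum'' nor finiteness of the atlas lets you keep only finitely many. Quasi-compactness of $\tilde{X}$ applies only to families whose spectra cover $\tilde{X}$, and a family covering every ordinary point need not do so. For example, $\{(x/2,1)\}_{x \in (0,1)}$ covers $(0,1)$ but has no finite subcover, because the type at $0^+$ is missed. Your $W_x$ come from pointwise, non-uniform choices and can shrink in exactly this way. So you have not produced the finite definable trivializing cover that local freeness in the definable site requires.

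\textbf{The missing idea is to draw the candidate bases from a fixed finite set.}
\begin{enumerate}
\item Pass to a finite definable cover on which $\mcE = \mcF/\mcT$ is generated by finitely many sections $g_1, \ldots, g_N$.
\item At each point the stalk is free of rank $k$, so by Nakayama some $k$-subset of the $g_i$ is a basis of it.
\item For each of the finitely many $k$-subsets, the locus where it generates $\mcE$ is definable open, being the complement of the support of a coherent cokernel.
\item On that locus, $\mcO^k \to \mcE$ is an isomorphism, because a surjection of free rank-$k$ modules is injective and isomorphism can be checked on stalks at ordinary points.
\end{enumerate}
These finitely many loci cover $X$, and this is how the paper concludes.
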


\begin{proof}
    For any coherent sheaf, we can form the dual sheaf $\mcF^{\vee} = \shom(\mcF, \mcO)$. Since $\mcO$ is coherent, $\mcF^{\vee}$ is coherent as well. There is a canonical map $\mcF \to \mcF^{\vee \vee}$, and we define $\mcT$ to be its kernel; thus $\mcT$ is coherent. Next, recall \cite[Theorem~2.27]{BakkerBrunebarbeTsimerman} that there is an exact, faithful analytification functor
    \[ \Coh(X) \to \Coh(X^{\an}). \]
    Analytically, the above construction recovers the usual torsion sheaf $\mcT^{\an} \subset \mcF^{\an}$. Therefore, the quotient $(\mcF / \mcT)^{\an}$ is torsion-free, which implies that $\mcF / \mcT$ is as well.

    Finally, suppose that $X$ is smooth, and let $\mcE = \mcF / \mcT$. Then $\mcE^{\an}$ is locally free of rank $k$, since the local rings of $X^{\an}$ are discrete valuation rings. Passing to a cover, we may assume that $\mcE$ is locally generated. For any tuple $s_1, \ldots, s_k$ of length $k$, the set $U$ where they generate $\mcE$ is open, and $\mcE$ over $U$ is free of rank $k$. By comparing with analytic stalks we see that these $U$ form a cover, and since the cover is finite and definable, we conclude. \qedhere
\end{proof}

In the singular case we will also need the following basic observation, applied to the normalization $\nu \colon \tilde{X} \to X$. Since $S$ contains $\bbR_{\an}$, the normalization is definable.

\begin{lemma}\label{lem:FinitePushforward}
    Let $f \colon X \to Y$ be any proper map with finite fibers, where $X, Y$ are definable topological spaces. Then sheaf cohomology commutes with the pushforward $f_*$.
\end{lemma}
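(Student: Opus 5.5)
The plan is to show that the higher direct images $R^q f_* \mcF$ vanish for $q \geq 1$ and every definable sheaf $\mcF$ on $X$. The Leray spectral sequence on the definable site then gives $H^i(X, \mcF) = H^i(Y, f_* \mcF)$. Equivalently, we show that $f_*$ sends injectives, or at least $\Gamma$-acyclic resolutions, to $\Gamma(Y,-)$-acyclic objects. Since $f$ is definable, it induces a continuous map $\tilde f \colon \tilde X \to \tilde Y$ of o-minimal spectra. Definable sheaves are sheaves on these spectra, so $f_*$ and the cohomology groups are the usual topological ones for $\tilde f$. The claim therefore reduces to the vanishing of $R^q \tilde f_*$.

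\textbf{Stalks of $R^q f_*$.} The stalk of $R^q \tilde f_* \mcF$ at a point $\alpha \in \tilde Y$ is $\varinjlim_{V \ni \alpha} H^q(f^{-1}(V), \mcF)$, where $V$ ranges over definable open neighborhoods. By the Carral--Coste observation, it suffices to check this at closed points of $\tilde Y$, which correspond to points $y \in Y$; I will check whether the argument also needs non-closed points and handle them the same way via types. Fix $y$ with fiber $f^{-1}(y) = \{x_1, \ldots, x_m\}$, which is finite. Properness gives a cofinal system of neighborhoods $V$ of $y$ for which $f^{-1}(V)$ is a disjoint union $W_1 \sqcup \cdots \sqcup W_m$, with each $W_k$ running through a neighborhood basis of $x_k$. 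This is the standard tube lemma, and it holds definably because the spaces are definably Hausdorff and locally compact, hence embeddable in $\bbR^n$ by \cite[Theorem~10.1.8]{vandendriestame}. The stalk then becomes $\bigoplus_k \varinjlim_{W \ni x_k} H^q(W, \mcF)$. This reduces the problem to the following local claim: a definable cohomology class on a neighborhood of a point dies on some smaller neighborhood.

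\textbf{The local claim, which is the main obstacle.} I would prove it with \v{C}ech cohomology, using \cref{prop:EdmundoJonesPeatfield}(2). Represent $s$ by a cocycle on a finite definable cover $(U_i)$ of $W$. Triangulate compatibly with the $U_i$ by \cref{thm:Triangulation}, with $x_k$ a vertex. The open star $\St(x_k)$ is then contained in every $U_i$ that contains $x_k$. If we pick one $U_{i_0} \ni x_k$, the cover restricted to $\St(x_k) \subset U_{i_0}$ is refined by the one-element cover $\{\St(x_k)\}$, and $s$ restricted there is represented by a \v{C}ech cocycle for a single open set. Such a cocycle is trivial in degree $q \geq 1$. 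Since $H^q$ is computed as the colimit of \v{C}ech cohomology over refinements, $s$ vanishes on $\St(x_k)$, and these stars are cofinal neighborhoods. I must check that the \v{C}ech-to-derived comparison of \cite{EdmundoJonesPeatfield} applies to the open subspace $W$, which is again a definable locally compact space. Granting this, $R^q f_* \mcF = 0$ for $q \geq 1$ and the Leray argument concludes. As an alternative, one could note that $\tilde f$ is proper with finite fibers between spectral spaces and cite proper base change, but the elementary \v{C}ech route above seems safest.
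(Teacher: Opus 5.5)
Your formal reduction, via Leray and the vanishing of $R^q f_*\mcF$ for $q\ge 1$, is equivalent to what the paper does. The paper cites \cite[Corollary~2.9]{BakkerBrunebarbeTsimerman} for exactness of $f_*$ and then pushes forward flasque resolutions. So everything rests on your verification of that vanishing, and that is where the gap is.

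You assert that the closed points of $\tilde Y$ correspond to the points of $Y$. This holds only when $Y$ is definably compact. For $Y=(0,1)$, the type $0^+$ has no specialization inside $\tilde Y$, so it is a closed point. In general $\tilde Y^{\max}$ is a compact Hausdorff space strictly containing $Y$; here it is homeomorphic to $[0,1]$.

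Stalks at points of $Y$ genuinely do not detect definable sheaves. On $(0,1)$, take the sheaf $\mcG$ with $\mcG(V)=\bbZ$ if $V\supset(0,\varepsilon)$ for some $\varepsilon>0$, and $\mcG(V)=0$ otherwise. Its stalk at every $y\in(0,1)$ is zero, yet $H^0((0,1),\mcG)=\bbZ$. Said on the site: vanishing of $R^q f_*\mcF$ means every class dies on the members of some \emph{finite} definable cover of $V$. One neighborhood $V_y$ per point $y\in Y$ need not admit a finite subcover. This is not a corner case, since the paper applies the lemma to the normalization of a noncompact curve.

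At a closed point $\alpha\in\tilde Y$ not coming from $Y$, there is no fiber to which properness and the tube lemma can be applied. So handling these points ``the same way via types'' does not go through. What you actually need is the following. Given the finite cover $(U_i)$ of $f^{-1}(V)$ on which the class is represented, there should be a definable open $V'\subset V$ with $\alpha\in\widetilde{V'}$ such that $f^{-1}(V')$ is a disjoint union of definable open pieces, each contained in a single $U_i$. Proving this requires genuinely o-minimal input, such as uniform finiteness of fibers, cell decomposition, or definable trivialization of $f$ near $\alpha$. It is exactly the substance of the exactness of $f_*$ that the paper imports.

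By contrast, the step you single out as the main obstacle is immediate. A \v{C}ech class represented on $(U_i)$ restricts to zero on each $U_{i_0}$, because the restricted cover contains $U_{i_0}$ itself. No triangulation is needed.
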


\begin{proof}
    By \cite[Corollary~2.9]{BakkerBrunebarbeTsimerman}, $f_*$ is exact on the category of sheaves. One can compute cohomology using flasque resolutions and the pushforward of a flasque sheaf is flasque, so the result follows formally. \qedhere
\end{proof}

\begin{proof}[Proof of \cref{thm:Vanishing}]
    By \cref{prop:AssumeReduced}, we can assume that $X$ is reduced, and by \cref{prop:ResolveExtraneousSingularities}, we can assume $\Sing(Y) \subset X$. By \cref{cor:TameEmbeddability}, there is an embedding
    \[ \tilde{X} \to Y' \]
    which makes $\tilde{X}$ boundary-connected. Choose a boundary collar $C \subset X$ such that $\tilde{C} \to C$ is an isomorphism: Then there exists some $D \subset Y'$ with $D \cup \tilde{X} = Y'$ and $D \cap \tilde{X} = \tilde{C}$. Gluing $D$ to $X$ along $C = \tilde{C}$, we can assume that $X$ is boundary-connected. It follows, by \cref{prop:H1Vanishing}, that $H^1(X, \mcO)$ and $H^1(X, \GL_n)$ vanish.

    It remains to show that every coherent sheaf $\mcF$ is acyclic. By \cref{prop:TorsionSheaf}, we have a short exact sequence
    \[ 0 \to \mcT \to \mcF \to \mcV \to 0, \]
    where $\mcT$ is torsion and $\mcV$ is torsion-free. The support of $\mcT$ is zero-dimensional, hence finite, and thus $\mcT$ is flasque. Therefore, we reduce to the case when $\mcF$ itself is torsion-free. First suppose $X$ is smooth: Then $\mcF$ is a vector bundle, and all vector bundles are trivial so we have $\mcF \cong \mcO^n$. By \cref{prop:H1Vanishing} and \cref{cor:HigherStructureSheafVanishing}, $\mcF$ is acyclic.

    To handle the singular case, we loosely adapt an argument of Narasimhan \cite[Section~3]{NarasimhanSteinNormalization}. Let $\nu \colon \tilde{X} \to X$ be the normalization, and let $\mcE = \nu^* \mcF$. Since $\mcF$ is torsion-free, we have an exact sequence
    \[ 0 \to \mcF \to \nu_* \mcE \to \mcG \to 0, \]
    and since $\Sing(X)$ is finite, the cokernel $\mcG$ has finite support. Thus $\mcG$ is acyclic. Since $\tilde{X} \subset \tilde{Y}$ has no punctures, $\mcE$ is acyclic, and by \cref{lem:FinitePushforward}, so is $\nu_* \mcE$. By the long exact sequence in cohomology, it suffices to show that the map
    \[ \eta \colon H^0(\tilde{X}, \mcE) = H^0(X, \nu_* \mcE) \to H^0(X, \mcG) \]
    is surjective. Let $\mcI$ be the annihilator of $\mcG$, a coherent $\mcO_X$-ideal, and let $\mcJ = \mcI \cdot \mcO_{\tilde{X}}$. Then $\mcJ$ is coherent, because it is locally a finitely-generated submodule of $\mcO_{\tilde{X}}$. Furthermore, the induced map $\nu_* (\mcJ \cdot \mcE) \to \mcG$ is the zero map. It follows that $\eta$ has a factorization
    \[ H^0(X, \nu_*\mcE) \to H^0(X, \nu_*(\mcE / \mcJ \mcE)) \to H^0(X, \mcG). \]
    By acyclicity on $\tilde{X}$, the first map is a surjection. But the latter two sheaves have finite support, so the second map surjects as well, and we conclude. \qedhere
\end{proof}

\section{Comments on the vanishing theorem}\label{sec:Comments}

\subsection{Consequences of cohomology vanishing}

In the analytic setting, cohomology vanishing (Cartan's theorem~B) has a variety of consequences, and we now survey to what extent the same implications hold definably. We first show, following a classical observation of Siu \cite{SiuCartan}, that the analogue of Cartan's theorem~A holds.

\begin{proof}[Proof of \cref{cor:CartanA}]
    By \cite[Proposition~2.46]{BakkerBrunebarbeTsimerman}, coherent subsheaves of $\mcF$ satisfy the ascending chain condition. Thus, it is enough to argue by induction and show the following: Given any map $ \mcO^m \xr{g} \mcF$, either $g$ is surjective or there is a global section of $\mcF$ not in the image of $g$. By \cite[Corollary~2.40]{BakkerBrunebarbeTsimerman}, exactness of coherent sheaves can be checked using stalks at \emph{ordinary} points. Therefore, if $g$ is not surjective, there is some $x \in X$ for which $g_x$ is not surjective. On the other hand, we have a surjection of sheaves $\mcF \to \mcF \otimes \bbC_x$, and therefore, by cohomology vanishing applied to the kernel, a surjection
    \[ H^0(X, \mcF) \to H^0(X, \mcF \otimes \bbC_x) \]
    of global sections. We can therefore choose a global section not in the image of $H^0(g)$ and conclude. \qedhere
\end{proof}

\begin{remark}
    Suppose that $X^{\df}$ is the definabilization of an algebraic space $X$, and that coherent cohomology vanishes on $X^{\df}$. Using \cref{cor:CartanA}, one could then show that the functor
    $$ \Coh(X) \to \Coh(X^{\df}) $$
    is an equivalence of categories, obtaining a stronger version of the o-minimal GAGA theorem. Indeed, if $\mcF$ is a coherent sheaf we obtain a global presentation
    $$ \mcO^m \xr{f} \mcO^n \to \mcF \to 0, $$
    and the map $f$ is algebraic by o-minimal GAGA, so the cokernel $\mcF$ is algebraic as well. However, it appears that algebraic spaces \emph{never} satisfy definable coherent vanishing, at least in dimension one.
\end{remark}

\begin{remark}

For an ordinary open set $U \subset \bbC^n$, the following are equivalent characterizations of a \emph{Stein domain:}

\begin{enumerate}
    \item All coherent sheaves on $U$ are acyclic.
    \item There exists a holomorphic $f \colon U \to \bbC$ that extends to no larger domain.
    \item There exists a holomorphic closed embedding $U \to \bbC^N$, for some $N$.
\end{enumerate}

However, the corresponding equivalence for definable analytic spaces is false. Indeed, if $S$ is branching and contains $\bbR_{\an}$, then the unit disk $\bbD$ satisfies condition (1) by \cref{thm:Vanishing}. On the other hand, for a boundary-connected open subset $U \subset \bbC$, conditions (2) and (3) will usually imply that $\bbC \setminus U$ is finite. Suppose $U$ is boundary-connected, $\bbC \setminus U$ is infinite (e.g., $U$ is the unit disk), and $S$ is unary-analytic. Then (2) cannot hold because it contradicts \cref{prop:GenericHolExtension}. For condition (3), one can work in any structure and use the definable Chow theorem to see that $U$ is algebraic.
\end{remark}

\subsection{Sharpness of hypotheses}

In this section, we show by example that all the hypotheses of \cref{thm:Vanishing} are necessary. By the proof of \cref{cor:CartanA}, if all coherent sheaves on a space $X$ are acyclic, then every line bundle is generated by global sections. We will exhibit a series of line bundles with no global sections, thus falsifying both halves of the theorem.

\begin{lemma}\label{lem:expnotdefinable}
    Let $X$ be a connected definable set, and let $f \colon X \to \bbC$ be a continuous function (not necessarily definable) whose image has unbounded imaginary part. Then $e^{f(z)}$ is not definable.
\end{lemma}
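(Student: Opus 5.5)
The argument is by contradiction via o-minimal finiteness. Suppose $g(z) = e^{f(z)}$ is definable on $X$. Write $f = u + iv$ with $u = \Re f$ and $v = \Im f$. Then $g/|g| = e^{iv}$ is also definable, as a function $X \to S^1$.

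Since $X$ is connected and $v$ is continuous, $v(X)$ is a connected subset of $\bbR$. It is unbounded by hypothesis, so $v(X)$ contains a ray, say $[c, \infty)$ (the other case is symmetric). In particular, for every integer $n \geq 1$ there is a point $z_n \in X$ with $v(z_n) = c' + 2\pi n$, where $c' \geq c$ is fixed. All of these points lie in the single fiber
\[ F = \{ z \in X : g(z)/|g(z)| = e^{ic'} \}, \]
which is a definable subset of $X$.

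The main step is to show that $F$ has infinitely many definably connected components, which o-minimality forbids. For this, look at the map $v$ restricted to $F$. On $F$, $v$ takes values only in $c' + 2\pi\bbZ$, a discrete set, and $v$ is continuous. Hence $v$ is locally constant on $F$, so it is constant on each connected component of $F$. It takes infinitely many distinct values $c' + 2\pi n$, so $F$ has infinitely many connected components.

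It remains to check that components in this ordinary topological sense still give a contradiction, given that $v$ itself is not assumed definable. Every definable set has finitely many definably connected components by cell decomposition, and each of these is topologically connected, since cells are connected and the components are unions of cells joined by closure relations. So $F$ has only finitely many topological components, a contradiction. I expect this bookkeeping — that connectedness of cells gives finitely many topological components for a definable set — to be the only point requiring care. Once it is in place the argument is immediate, and it shows that $e^{f}$ cannot be definable.
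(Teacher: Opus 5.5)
Your proof is correct and is essentially the paper's argument: you pass to the definable map $g/|g| = e^{i \Im f}$, observe that a single fiber contains points at infinitely many distinct levels of $\Im f$ in a discrete set, and conclude that this fiber has infinitely many connected components, contradicting o-minimality. Your local-constancy argument just makes explicit a step the paper leaves implicit. The bookkeeping you flag is immediate, since a definable set is a finite union of connected cells and therefore has only finitely many connected components.
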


\begin{proof}
    If $e^{f(z)}$ is definable, then so is $g(z) := e^{f(z)} / |e^{f(z)}|$. We have
    \[ g^{-1}(1) = \bigcup_n (\Im f)^{-1}(2 \pi n). \]
    Since $X$ is connected and $f$ has unbounded imaginary part, infinitely many of these sets are nonempty. It follows that $g^{-1}(1)$ has infinitely many connected components, contradicting definability. \qedhere
\end{proof}

Our first counterexample is a slight modification of \cite[Example~3.2]{BakkerBrunebarbeTsimerman}. It shows that one can never remove the puncture hypothesis: The punctured disk has nontrivial line bundles in \emph{any} o-minimal structure.

\begin{proposition}\label{prop:G_mLineBundle}
    Let $S$ be any o-minimal structure, and let $X = D(0, r) \setminus \{0 \}$, where $0 < r \leq \infty$; if $r = \infty$ we have $X = \bbC \setminus  \{0 \}$. Then there exists a definable line bundle $\mcL$ on $X$ with no nonzero global sections.
\end{proposition}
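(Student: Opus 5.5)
Construct $\mcL$ by gluing two trivial bundles with a transition function that is definable on the overlap but whose would-be trivializations force exponentials of unbounded imaginary part. Cover $X$ by two definable open sets $U_1, U_2$: the slit domains $X \setminus [0,r)$ and $X \setminus (-r,0]$ (with the obvious modification for $r = \infty$). Both are simply connected and definable in any o-minimal structure. Their intersection $U_{12}$ has two components, the upper and lower half-annuli $H_+$ and $H_-$. Let the transition function $g_{12} \in \mcO^*(U_{12})$ be the locally constant function equal to $1$ on $H_+$ and to a constant $\lambda \in \bbC^*$ with $|\lambda| \neq 1$ on $H_-$, say $\lambda = 2$. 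This function is clearly definable, so it defines a definable line bundle $\mcL$. Analytically, $\mcL^{\an}$ is the flat bundle with monodromy $\lambda$, so it is analytically trivial, and one analytic frame is $z^{\alpha}$ with $e^{2\pi i \alpha} = \lambda$. Because $|\lambda| \neq 1$, $\alpha$ has nonzero imaginary part.

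A global section amounts to holomorphic functions $f_i$ on $U_i$ with $f_1 = g_{12} f_2$ on $U_{12}$. Given a nonzero definable section $(f_1, f_2)$, glue $f_1$ and $f_2$ across $H_+$ to obtain a holomorphic function $F$ on $X$ minus the negative slit. Analytic continuation of $F$ once around the origin multiplies it by $\lambda^{\pm 1}$, so $h = F \cdot z^{-\alpha}$ is a single-valued holomorphic function on $X$. Here $z^{-\alpha} = e^{-\alpha \log z}$ on the slit plane. Then $F = h \cdot z^{\alpha}$ on a slit domain.

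Next, use definability to obtain a contradiction. The section is definable on finitely many pieces, so $F$ is definable on a sector or slit domain $W$. The plan is to compare the growth of $F$ along rays. Write $\alpha = a + ib$ with $b \neq 0$; then $|z^{\alpha}| = |z|^a e^{-b \arg z}$, and $z^{\alpha}$ itself is \emph{not} definable where the argument is unbounded. The cleanest route is to pull back along the universal cover $w \mapsto e^w$ restricted to a vertical strip. Alternatively, stay on $X$ and note that $F$ extends by analytic continuation: since $F$ restricted to each slit domain is definable, the continuations $F, \lambda F, \lambda^2 F, \ldots$ after winding would all be determined by finitely many definable pieces. Concretely, $|F|$ at the two sides of the slit differ by the factor $|\lambda| = 2$. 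So $|F(te^{i\theta})|$ as $\theta \to \pm\pi$ has limits differing by a factor of $2$ for every $t$. This alone is not contradictory, so I expect this step to be the main obstacle.

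To overcome it, exploit $h$. Since $h = F z^{-\alpha}$ is single-valued holomorphic on the punctured disk and nonzero, it has a Laurent expansion. Consider $\log|F(z)| = \log|h(z)| + a\log|z| - b\arg z$ on the slit domain. Taking instead the section $F^m$ of $\mcL^m$ does not help. Instead apply \cref{lem:expnotdefinable} to the ratio $F(z)/F(\bar z)$ or similar combinations, which are definable; the goal is to isolate a factor $e^{c \log|z|\cdot i}$, i.e.\ $|z|^{i b'}$, whose imaginary exponent part $b'\log|z|$ is unbounded as $z \to 0$ or $z \to \infty$. Indeed $z^{\alpha} = e^{(a+ib)(\log|z| + i\arg z)}$ contains the oscillating factor $e^{ib\log|z|}$. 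If $h$ had only finitely many Laurent terms, I would compare $F$ with its leading behaviour $c z^{n+\alpha}$ near $0$, or near the outer boundary or $\infty$. The quotient $F/(c z^n)$ then approximates $e^{i b\log|z|}\cdot(\text{bounded})$. Its argument is therefore unbounded along the ray $(0,\varepsilon)$, while a definable function restricted to a ray has eventually convergent argument. This gives the contradiction, as in the proof of \cref{lem:expnotdefinable}. For general $h$ with an essential singularity, the definable function $F$ on a ray would already violate polynomial-type o-minimal limits. To make this rigorous, I would use \cref{prop:ctsext} to show that $F$ has a limit in $\bbP^1$ at $0$ along the sector, and then argue via the winding of $\arg F$ along small circles, as in \cref{lem:UnivalenceCriterion}.
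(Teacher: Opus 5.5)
\textbf{There is a genuine gap.} Your line bundle is fine: two slit domains with transition $1$ on $H_+$ and $2$ on $H_-$ is equivalent to the paper's three-sector cocycle. The passage to a multivalued $F$ with monodromy $2$ and a single-valued $h = F z^{-\alpha}$ is also correct, and your ray argument does dispose of the case where $h$ is meromorphic at $0$. The gap is the other case, which you flag yourself: nothing in the proposal rules out an essential singularity of $h$ at $0$. None of the suggested justifications works in an arbitrary o-minimal structure.
\begin{itemize}
\item There is no ``polynomial-type'' bound, because $S$ need not be polynomially bounded; for example, $e^{1/t}$ is definable in $\bbR_{\exp}$.
\item \cref{prop:ctsext} only gives a limit of $F$ in $\bbP^1$, which is compatible with super-polynomial growth.
\item The winding of $\arg F$ around small circles is just an integer $n$. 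It disappears when you replace $F$ by $z^{-n}F$, so it yields no contradiction.
\end{itemize}
What is needed is an argument that uses the definability of $F$ in all directions at once, not along a single ray.

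The missing idea, which is the paper's argument, is to bound $\arg F$ using \cref{lem:expnotdefinable} and then invoke removability of bounded harmonic singularities.
\begin{itemize}
\item Since $F$ is definable and, after shrinking, nonvanishing on a slit neighbourhood $W$ of $0$, a continuous branch of $\Im \log F$ is bounded on $W$; otherwise $F = e^{\log F}$ would not be definable.
\item Replace $F$ by $z^{-n}F$ so that the monodromy of $\log F$ is $w \mapsto w + \log 2$, which is real. Then $u = \Im \log F$ is a single-valued, bounded harmonic function on a punctured disk, so it extends harmonically across $0$.
\item Write $u = \Im G$ with $G$ holomorphic on the disk. Then $\log F - G$ is holomorphic with zero imaginary part, hence constant. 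So $\log F$ is single-valued, contradicting its monodromy $\log 2$.
\end{itemize}
If you prefer to keep your framework with $h$, the same boundedness of $\Im\log F$ can be fed into Schwarz--Pick. In the coordinate $\log z$, a sector becomes a half-strip, and $\log F$ maps it into a horizontal strip. This gives $|F(z)| \le C|z|^{-N}$ on slightly smaller sectors, so $h$ has at most a pole at $0$ and your ray argument finishes. Either way, some step of this kind has to be supplied.
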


\begin{proof}
    Cover $X$ by three radial sectors $U_1, U_2, U_3$ with only twofold intersections, and define $\mcL$ by $s_{12} = s_{23} = 1, s_{31} = 2$. A global section is a holomorphic function on $X$ with monodromy $2$ around zero. Analytically, we have the global section $e^{\alpha \log z}$, with $\alpha = \frac{\log 2}{2 \pi i}$.
    Suppose that $f \in H^0(X, \mcL)$ is nonzero. Its zero set is definable, hence finite, so after shrinking $r$ we can assume $f$ is nonvanishing. Since $f$ has monodromy $w \mapsto 2w$ around the origin, $\log f$ has monodromy $w \mapsto w + \log(2) + 2 \pi i n$ for some $n$. Replacing $f$ by $z^{-n} f$, we can assume its monodromy is purely real.

    It follows that $u := \Im(\log(f))$ is a single-valued, bounded, harmonic function on $\bbD^*(0,r)$, and therefore extends across the puncture. But then so does its harmonic conjugate $-\Re(\log(f))$. Since the latter function has nontrivial monodromy, we reach a contradiction. \qedhere
\end{proof}

The next counterexample, which is essentially folklore, shows that the branching hypothesis on $S$ is needed.

\begin{proposition}
    Let $S$ be $\bbR_{\an,\exp}$, or any structure that defines both the real exponential and the restricted sine function. For $t \in [-\infty, \infty)$, let
    \[ X = \{z: \Re(z) > t \}. \]
    Then there exists a definable line bundle $\mcL$ on $X$ with no nonzero global sections.
\end{proposition}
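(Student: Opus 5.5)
We construct $\mcL$ by an explicit \v{C}ech cocycle on a two-set cover whose overlap is disconnected, and then show that a global section would force an exponential of unbounded imaginary part to be definable, contradicting \cref{lem:expnotdefinable}.

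\textbf{Step 1: the cover and cocycle.} Cover $X$ by
\[ U_1 = X \cap \{\Im z > -1\}, \qquad U_2 = X \cap \{\Im z < 1\}, \]
which are definable half-plane strips. Their overlap $U_{12} = X \cap \{|\Im z| < 1\}$ is a horizontal half-strip, which is connected. With a connected overlap, a cocycle given by a definable unit would be trivial analytically, so it would be hard to rule out global sections. I would therefore use a cover with disconnected overlap. Take three sets: the upper region $U_1 = X\cap\{\Im z>0\}$ enlarged slightly, the lower region $U_2 = X \cap\{\Im z<0\}$ enlarged slightly, and a half-strip $U_3$ around the real axis. If the first two meet the third in half-strips $W_{13}, W_{23}$ and have no overlap with each other, the nerve is a tree. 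A tree nerve again gives trivial classes.

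The real source of nontriviality is instead the choice of transition function. Take the two-set cover above with the single transition function
\[ s_{12}(z) = e^{e^{z}} \quad\text{or}\quad s_{12}(z) = e^{iz}. \]
The function $e^{iz}$ is the simpler choice: on the strip $|\Im z|<1$, $\Re z > t$, it equals $e^{-y}(\cos x + i\sin x)$. This is definable as long as $\cos$ and $\sin$ are definable on all of $\bbR$, which is false. So I use $s_{12}(z) = e^{z}$ instead. On the strip, $e^z = e^x(\cos y + i \sin y)$ with $|y|<1$, which is definable using $\exp$ and the restricted sine and cosine, and it is a nonvanishing unit. The same cocycle makes sense on any boundary cover, so the construction does not depend on $t$.

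\textbf{Step 2: a global section gives a factorization of $s_{12}$.} A global section of $\mcL$ is a pair of definable holomorphic $f_i$ on $U_i$ with $f_1 = s_{12} f_2$ on $U_{12}$. Its zero set is definable, hence finite, so shrinking to $\Re z > t'$ makes both $f_i$ nonvanishing. Since each $U_i$ is simply connected, we can write $f_i = e^{g_i}$ with $g_i$ continuous (not definable). Then
\[ g_1 - g_2 = z + 2\pi i n \quad \text{on } U_{12}. \]

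\textbf{Step 3: contradiction, which is the main obstacle.} We need $\Im g_1$ or $\Im g_2$ to be unbounded on some connected definable set, so that \cref{lem:expnotdefinable} applies to $f_i = e^{g_i}$. The identity $g_1 - g_2 = z + 2\pi i n$ only controls the real parts, since it gives $\Re(g_1-g_2)=x$. So the contradiction has to come from growth. Definable functions are polynomially bounded along rays only in polynomially bounded structures, and this structure is not polynomially bounded, so some other control is needed.

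For this reason I would switch to the cocycle $s_{12}(z) = e^{e^{z}}$, or more precisely $\exp(\text{unbounded imaginary})$. Choose $s_{12}(z) = \exp(i e^{z})$. On the strip, $i e^z$ has real part $-e^x\sin y$, and $\exp(ie^z)$ is definable using $\exp$, $e^x\cos y$ and $e^x\sin y$. Then
\[ \Im(g_1 - g_2) = \Re(e^z) + 2\pi n = e^x\cos y + 2\pi n, \]
which is unbounded along the real axis. Hence either $\Im g_1$ or $\Im g_2$ is unbounded on the connected set $U_{12}$, and \cref{lem:expnotdefinable} shows that the corresponding $f_i$ is not definable. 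This contradiction shows $\mcL$ has no nonzero global sections. The points I expect to require care are checking the definability of $s_{12}$ on the strip and the shrinking argument.
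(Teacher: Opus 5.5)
Your final cocycle does not define a definable line bundle. On the overlap $U_{12} = X \cap \{|\Im z| < 1\}$ you have $\exp(ie^{z}) = \exp(-e^{x}\sin y)\, e^{i e^{x}\cos y}$. Its phase $e^{x}\cos y \geq e^{x}\cos 1$ is unbounded on this connected set, so writing it down would need the unrestricted sine. In fact \cref{lem:expnotdefinable}, applied to $ie^{z}$, shows directly that $\exp(ie^{z})$ is not definable there.

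This cannot be repaired by a cleverer choice of transition function. For \emph{any} definable unit $s_{12}$ on the connected overlap, the same lemma forces $\Im(g_1 - g_2) = \Im \log s_{12} + 2\pi n$ to be bounded on $U_{12}$. So no contradiction can ever be extracted from the identity on $U_{12}$ alone, and your Step~3 is in effect a proof that your own transition function is not definable. Your worry in Step~1 about a connected overlap is also a red herring. Every line bundle on $X$ is analytically trivial whatever the cover, and the only question is definable triviality.

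The missing idea is to use information away from the overlap. Keep $s_{12} = e^{z}$ on the two-set cover you first wrote down; the paper uses exactly this.

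\begin{itemize}
\item Because $e^{z}$ is entire, $f_1$ on $U_1$ and $e^{z}f_2$ on $U_2$ glue to a nonvanishing, non-definable holomorphic function on all of $X$.
\item Taking logarithms on the simply connected $X$, the relation $g_1 - g_2 = z + 2\pi i n$ then holds on all of $X$, not just on $U_{12}$.
\item \cref{lem:expnotdefinable} gives that $\Im g_1$ is bounded on $U_1$ and $\Im g_2$ is bounded on $U_2$.
\item Hence $h = \Im g_2$ is harmonic on $X$, bounded on $\{\Im z < 1\}$, and equal to $-\Im z + O(1)$ on $\{\Im z > -1\}$.
\item Average $h$ over the disks $D(r, r/2)$ for real $r \to +\infty$, which lie in $X$ once $r$ is large. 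The mean value property gives $h(r) = -r/(3\pi) + O(1)$, contradicting the boundedness of $h$ on the positive real axis.
\end{itemize}

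This harmonic growth argument is what replaces the polynomial bound that you correctly noted is unavailable in a non-polynomially-bounded structure.
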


\begin{remark}
    In particular, since the Cayley transform is definable, the proposition applies to the open unit disk $\bbD$.
\end{remark}

\begin{proof}
    Take a cover by the sets
    \[ U_+ = \{z \in X : \Im(z) > -1 \}, U_- = \{z \in X: \Im(z) < 1 \}. \]
    Take the transition function $z \mapsto \exp(z)$. Since the overlap has bounded imaginary part, $e^z$ is definable in this structure and we obtain a line bundle $\mcL$.

    Suppose now that $s \in H^0(X, \mcL)$ is a nonzero global section; this corresponds to a tuple $s_+ \in \mcO(U_+)$, $s_- \in \mcO(U_-)$ such that $s_+ / s_- = \exp$. Both $s_+$ and $s_-$ have finite zero sets by o-minimality, so at the cost of modifying $t$ we may assume they are nonvanishing.

    Since $\exp$ is holomorphic on $X$, it follows that $s_+, s_-$ extend holomorphically to $X$ as well, though not definably. Since all sets are simply connected, we can pick single-valued logarithms, and we have
    \[  \log(s_+) - \log(s_-) = z. \]
    By \cref{lem:expnotdefinable}, $\log(s_+)$ has bounded imaginary part on $U_+$, and $\log(s_-)$ has bounded imaginary part on $U_-$. Thus, we have a harmonic function $h(z)=  \Im(\log(s_-(z)))$ which is bounded on $U_-$ and grows linearly on $U_+$. For large $r \gg 0$, we have
    \[ h(r) =  \frac{1}{\mu(D(r, r /2))} \int_{D(r, r / 2)} h(z) dV, \]
    by the mean value property, and the right-hand side grows linearly in $r$. But this contradicts the assumption that $h$ is bounded on $U_-$. \qedhere
\end{proof}

\begin{remark}
    Similar counterexamples are also developed at greater length in a forthcoming paper of Brosnan and Melrod.
\end{remark}

Finally, we show that the structure $S$ must contain $\bbR_{\an}$. The structure $\bbR_{\alg}$ \emph{is} branching, but in itself this is not enough for cohomology vanishing.

\begin{proposition}
    Let $S = \bbR_{\alg}$. Then there exists a line bundle on the annulus
    \[ X:= \{z: 1 < |z| < 2 \} \]
    with no global sections.
\end{proposition}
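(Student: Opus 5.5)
Cover the annulus $X$ by three semialgebraic radial sectors $U_1, U_2, U_3$ with only pairwise intersections, each overlap a semialgebraic sector, exactly as in the proof of \cref{prop:G_mLineBundle}. The transition functions will be $s_{12} = s_{23} = 1$ and $s_{31} = 2$. These are constants, so they are $\bbR_{\alg}$-definable and give a definable line bundle $\mcL$ on $X$. A global section is then a definable holomorphic function on the sectors whose analytic continuation around the annulus has multiplicative monodromy $2$. Analytically such sections exist, for instance $z^{\alpha}$ with $\alpha = \frac{\log 2}{2\pi i}$. The task is to show that no such section is $\bbR_{\alg}$-definable.

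Suppose $f \in H^0(X, \mcL)$ is nonzero. On each sector $U_i$ the function $f_i$ is a semialgebraic holomorphic function, so it is algebraic: its graph lies in the zero set of a nonzero polynomial $P_i(z, w)$. The key step is to continue $f_1$ analytically around the annulus once. After one loop it becomes $2 f_1$, and since analytic continuation preserves the algebraic relation, $2f_1$ also satisfies $P_1(z, 2f_1(z)) = 0$. Iterating the loop, every $2^k f_1$ satisfies the same relation. Choose a fixed $z_0 \in U_1$ with $f_1(z_0) \neq 0$, which is possible because $f$ is nonzero and its zero set is finite. Then the values $2^k f_1(z_0)$ are infinitely many distinct roots of $P_1(z_0, w)$, a polynomial in $w$ of bounded degree. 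This forces $P_1(z_0, \cdot) \equiv 0$, and since the argument works at every $z_0$ in an open set, $P_1 \equiv 0$, a contradiction.

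The delicate point is justifying that the continuation of $f_1$ around the loop satisfies the same polynomial. I will handle it by the identity theorem. The continuation is obtained by gluing $f_1, f_2, f_3$ through the transition functions, and each $f_i$ is algebraic, so $f$ extends to a multivalued algebraic function on $X$, that is, a section of a finite branched cover given by an irreducible factor of $P_1$. Concretely, $Q(z) := P_1(z, f_1(z))$ vanishes identically on $U_1$, and analytic continuation of the identity $Q \equiv 0$ along any path in $X$ gives $P_1(z, \tilde f(z)) \equiv 0$ for the continued branch $\tilde f$.

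A cleaner alternative avoids polynomials. Algebraic functions have finite monodromy on any loop, because the continuations permute the finitely many roots of an irreducible $P(z, \cdot)$. Multiplication by $2$ has infinite order on nonzero values, which contradicts this. I expect the main (minor) obstacle to be this bookkeeping of continuation. A separate point is excluding the possibility that $f$ vanishes identically on some sector while being nonzero on another; this cannot happen, because the transition functions are nonzero constants.
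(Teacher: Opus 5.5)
Your proposal is correct and follows the paper's proof. You use the same three-sector bundle with monodromy $2$ as in \cref{prop:G_mLineBundle}, and you rule out sections by noting that semialgebraic holomorphic functions are algebraic and therefore have finite-order monodromy; your root-counting step (the identity theorem gives $P_1(z,f_i(z))=0$ on each sector, so every $2^k f_1(z_0)$ is a root of $P_1(z_0,\cdot)$) simply spells out the paper's one-line finite-monodromy remark.
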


\begin{proof}
    Define, as in \cref{prop:G_mLineBundle}, a line bundle $\mcL$ whose sections are multivalued functions, with monodromy $f(z) \mapsto 2 f(z)$ around the origin. An algebraic function has finite-order monodromy around any loop, so $\mcL$ has no global sections. \qedhere
\end{proof}

\section{Cohomology of the structure sheaf}\label{sec:StructureSheaf}

In this section, we prove \cref{thm:computation}. Our standing assumption is that $S$ is a branching structure containing $\bbR_{\an}$, and that $X$ is a noncompact open subset of the compact Riemann surface $Y$. A \emph{puncture} of $X$ is an isolated point of its complement.

\begin{proposition}\label{prop:CohomologyPunctureDirectSum}
    We have an isomorphism
    \[ H^1(X, \mcO) \cong \bigoplus_{i=1}^n H^1(\bbD^*, \mcO),  \]
    where $n$ is the number of punctures.
\end{proposition}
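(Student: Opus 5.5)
We will use Mayer--Vietoris (\cref{prop:MayerVietoris}) to excise small disks around the punctures. Let $p_1, \ldots, p_n$ be the punctures of $X$. For each $i$, choose a definable holomorphic chart around $p_i$ identifying a neighborhood with a disk, and let $D_i^* \subset X$ be a small punctured disk around $p_i$. Pick the radii small enough that the closed disks $\ovl{D_i}$ are pairwise disjoint and meet $Y \setminus X$ only at $p_i$. Set $X_0 = X \setminus \bigcup_i \ovl{D_i'}$, where $D_i' \Subset D_i$ is a slightly smaller closed disk. Then $X = X_0 \cup \bigcup_i D_i^*$, and $X_0 \cap D_i^*$ is an annulus $A_i = \{ r_i' < |z| < r_i \}$.

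The open set $X_0$ is noncompact and has no punctures. It is still noncompact, since $X$ is a proper noncompact open subset of $Y$ and $X_0$ is obtained by removing closed disks. By \cref{thm:Vanishing} (already proved, using \cref{cor:TameEmbeddability}), together with \cref{cor:HigherStructureSheafVanishing}, we therefore have $H^1(X_0, \mcO) = 0$. A subtlety arises if $X$ has no boundary other than punctures, i.e.\ $X = Y \setminus \{p_i\}$. In that case $X_0$ is the complement of finitely many closed disks, which has no punctures, so the theorem still applies. The annuli $A_i$ likewise satisfy $H^1(A_i, \mcO) = 0$. Mayer--Vietoris then gives an exact sequence
\[ H^0(X_0, \mcO) \oplus \bigoplus_i H^0(D_i^*, \mcO) \to \bigoplus_i H^0(A_i, \mcO) \to H^1(X, \mcO) \to \bigoplus_i H^1(D_i^*, \mcO) \to 0. \]
Applying Mayer--Vietoris to $D_i = D_i^* \cup (\text{small disk around } p_i)$ in the same way, each $H^1(D_i^*, \mcO)$ can be identified with the corresponding cokernel for the punctured disk. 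Here we use that the disk $D(0,r_i)$ is acyclic by \cref{thm:Vanishing}. It identifies $H^1(\bbD^*,\mcO)$ with $H^0(A,\mcO)/(H^0(D,\mcO)+H^0(\bbD^*_{\text{small}}... ))$, up to a choice of covering. A cleaner route is to compare directly with the cover of $D_i^*$ by $D_i^*$ itself.

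So the key step is showing that the connecting map $H^0(A_i,\mcO)\to H^1(X,\mcO)$ has the same kernel as for the model $\bbD^*$. Concretely, we must show that $H^0(X_0,\mcO) \to \bigoplus_i H^0(A_i,\mcO)$ can absorb the ``outer'' part of any function on $A_i$. I would do this via a Laurent-type splitting on each annulus, namely $H^0(A_i) = H^0(\{|z|<r_i\}\cap A\text{-side}) + H^0(\{|z|>r_i'\})$. This is a Cousin problem on $\bbP^1$ minus two closed disks, solved by \cref{thm:Vanishing} applied to the disk $\{|z| > r_i'\} \subset \bbP^1$ and the cover by the two sides. Then the outer piece, which is defined on a neighborhood of the circle $|z| = r_i'$ extending outward, is globalized to $X_0$. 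To do this, solve a Cousin problem on $X_0 \cup (\text{collar})$ using $H^1 = 0$ for the punctureless set obtained by gluing, with a cutoff along the annulus. This reduces $H^1(X,\mcO)$ to $\bigoplus_i$ of the quotient of $H^0(A_i)$ by functions extending inward to $D_i^*$ and outward to $\{|z|>r_i'\}\cup\{\infty\}$, which is exactly the same quotient computed for $\bbD^* \subset \bbP^1$. This globalization is the main obstacle, since $X_0$ does not contain $\{|z|>r_i'\}$. I would handle it by writing the outer piece as a section over $X_0 \cap D_i$ and using vanishing of $H^1$ on $X_0$ to correct it by a function on $X_0 \setminus \ovl{D_i}$, which is holomorphic hence harmless modulo the image.

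Finally, independence of choices and naturality follow because restricting to smaller punctured disks induces isomorphisms on $H^1(D^*, \mcO)$. This holds by the same Mayer--Vietoris argument with an annulus, using the vanishing for annuli from \cref{thm:Vanishing}.
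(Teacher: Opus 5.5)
Your Mayer--Vietoris sequence for $X = X_0\cup\bigcup_i D_i^*$ is correct. You also rightly isolate the crux: the restriction $H^1(X,\mcO)\to\bigoplus_i H^1(D_i^*,\mcO)$ is an isomorphism exactly when the connecting map $\delta_X\colon\bigoplus_i H^0(A_i,\mcO)\to H^1(X,\mcO)$ vanishes. But your argument for this vanishing does not work.

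The Laurent splitting $f=f_+ + f_-$ is fine. It comes from $H^1(\bbP^1,\mcO)=0$, i.e.\ the GAGA comparison in \cref{prop:OverconvergentImpliesTrivial}, not from \cref{thm:Vanishing}. The globalization of $f_-$, however, is circular. Solving a Cousin problem on $X_0$ with datum $f_-$ only yields $G\in H^0(X_0,\mcO)$ such that $f_- - G$ is another function on an annulus inside $X_0$. That function has no more reason than $f_-$ to extend into $D_i^*$.

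In fact no argument using only $H^1(X_0,\mcO)=0$ plus corrections regular at the $p_i$ can succeed. When $X\cup\{p_1,\dots,p_n\}=Y$, Mayer--Vietoris for $Y=X_0\cup\bigcup_i D_i$ identifies the cokernel of $H^0(X_0,\mcO)\oplus\bigoplus_i H^0(D_i,\mcO)\to\bigoplus_i H^0(A_i,\mcO)$ with $H^1(Y,\mcO)\cong\bbC^{g}$, where $g$ is the genus of $Y$. This is nonzero in positive genus. So the real subtlety in the case $X=Y\setminus\{p_i\}$ lies here, not in $X_0$.

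Your closing identification is also off. The quotient of $H^0(A_i,\mcO)$ by functions extending inward to $D_i^*$ and outward to $\{|z|>r_i'\}\cup\{\infty\}$ is zero by your own Laurent splitting, so it is not $H^1(\bbD^*,\mcO)$. At best it describes the kernel of $H^1(X,\mcO)\to\bigoplus_i H^1(D_i^*,\mcO)$. Identifying that kernel with this local quotient is exactly the unproved globalization.

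The missing idea is to fill in the punctures. Put $X'=X\cup\{p_1,\dots,p_n\}$. Your cover of $X$ is the restriction of the cover $(X_0,D_1,\dots,D_n)$ of $X'$, with the same overlaps $A_i$. Naturality of Mayer--Vietoris therefore gives $\delta_X=\rho\circ\delta_{X'}$, where $\rho\colon H^1(X',\mcO)\to H^1(X,\mcO)$ is restriction. There are two cases:
\begin{itemize}
\item If $X'\neq Y$, then $X'$ is noncompact without punctures, and $H^1(X',\mcO)=0$ by \cref{thm:Vanishing}.
\item If $X'=Y$, then $\rho=0$ by \cref{prop:OverconvergentImpliesTrivial}, which rests on GAGA and the affineness of $Y\setminus\{p\}$.
\end{itemize}
Either way $\delta_X=0$, and your sequence yields the isomorphism. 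Your phrase about ``the punctureless set obtained by gluing'' gestures at $X'$, but the compact case $X'=Y$ needs this separate input.

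This is essentially the paper's proof. The paper applies Mayer--Vietoris directly to $X'=X\cup V$, with $V$ a disjoint union of disks around the punctures. It uses $H^1(\bbD,\mcO)=0$, $H^2(X',\mcO)=0$ and the same case distinction, so no annuli or Laurent splitting are needed.
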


\begin{proof}
    Let $X'\supset X$ be the open subset of $Y$ obtained by filling the punctures. Letting $V$ be a union of disjoint disks around each puncture, we have a cover $(X, V)$ of $X'$, and we can apply Mayer--Vietoris (\cref{prop:MayerVietoris}) to obtain the following exact sequence:
    \[ H^1(X', \mcO) \to H^1(X, \mcO) \times \bigoplus_i H^1(\bbD, \mcO) \to \bigoplus_i H^1(\bbD^*, \mcO) \to H^2(X', \mcO). \]
    We have $H^1(\bbD, \mcO) = 0$ by \cref{thm:Vanishing}. If $X' \neq Y$, then the first and last terms vanish by \cref{thm:Vanishing}, since $X'$ is noncompact and has no punctures. If $X' = Y$, then the final term still vanishes, and by \cref{prop:OverconvergentImpliesTrivial}, the initial map is the zero map. Either way, we obtain the desired isomorphism. \qedhere
\end{proof}

Let us define
\[ \mcO_{0^+} = \varinjlim_{r > 0} \Def((0,r), \bbC). \]
An element of $\mcO_{0^+}$ is precisely the germ of a definable function near $0^+$. Since $S$ is branching, every $f \in \mcO_{0^+}$ gives rise to a multivalued function with definable branches. The action of a counterclockwise loop gives a well-defined \emph{monodromy operator}
\[ M \colon \mcO_{0^+} \to \mcO_{0^+}. \]
We can likewise define
\[ \mcO_{\bbD^*, 0} = \varinjlim_{r > 0} \mcO(\bbD^*(0,r)). \]

\begin{example}
    Suppose that $S = \bbR_{\an}$. Then
    \[ \mcO_{\bbD^*, 0} \cong \bbC \{z \} [z^{-1}] \]
    is the space of convergent Laurent series, and
    \[ \mcO_{0^+} \cong \bigcup_n \bbC \{z^{1 / n} \} [z^{-1 / n}] \]
    is the space of convergent Puiseux series.
\end{example}

\begin{proposition}\label{prop:MonodromyExactSequence}
    We have an exact sequence
    \[ 0 \to \mcO_{\bbD^*, 0} \xr{i} \mcO_{0^+} \xr{\gamma} \mcO_{0^+} \xr{\pi} H^1(\bbD^*, \mcO) \to 0, \]
    where $\gamma = \id-M$ is the monodromy difference operator.
\end{proposition}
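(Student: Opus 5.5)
We compute $H^1(\bbD^*, \mcO)$ by \v{C}ech cohomology for a particularly simple cover. Cover a small punctured disk $\bbD^*(0,r)$ by two slit disks $U_1 = \bbD^*(0,r) \setminus (-r,0]$ and $U_2 = \bbD^*(0,r) \setminus [0,r)$. Their intersection has two components, the upper and lower half-disks $H_\pm$. By \cref{prop:EdmundoJonesPeatfield} we may use \v{C}ech cohomology. The plan is to show that this particular cover already computes $H^1$, and then to identify the resulting \v{C}ech complex with the complex $\mcO_{0^+} \xr{\gamma} \mcO_{0^+}$. We pass to the direct limit as $r \to 0$, which is harmless since cohomology of the germ is what the statement concerns. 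Alternatively, we first note via Mayer--Vietoris and \cref{thm:Vanishing} that $H^1(\bbD^*(0,r),\mcO)$ is independent of $r$.

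\textbf{Acyclicity of the cover.} The sets $U_i$ and $H_\pm$ are simply connected, so we want their higher cohomology to vanish. They are not boundary-connected at $0$, however: the slit disk fails boundary-connectedness along the slit. To repair this, apply a definable biholomorphism $z \mapsto z^{1/2}$ (with suitable branch) carrying $U_1$ onto a half-disk, which is boundary-connected with no punctures. Then \cref{thm:Vanishing} (or \cref{prop:H1Vanishing} together with \cref{cor:HigherStructureSheafVanishing}) gives $H^1(U_i,\mcO)=0$, and likewise for $H_\pm$. By the Leray/Mayer--Vietoris argument (\cref{prop:MayerVietoris} with $U = U_1$, $V = U_2$), we obtain exactness of
\[ 0 \to \mcO(\bbD^*) \to \mcO(U_1)\oplus\mcO(U_2) \to \mcO(H_+)\oplus \mcO(H_-) \to H^1(\bbD^*,\mcO) \to 0. \]

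\textbf{Identification with germs.} Since $S$ is branching, restriction to $(0,\varepsilon)$ identifies germs of definable holomorphic functions on the slit disk $U_1$ (near $0$) with $\mcO_{0^+}$. A definable function on $(0,\varepsilon)$ extends to a slit disk, and by \cref{prop:BranchingExtension} any definable holomorphic function on a sector near $0^+$ extends to the whole slit neighborhood. Injectivity follows from the identity theorem. Similarly $\mcO(U_2)$, $\mcO(H_+)$ and $\mcO(H_-)$ are each identified with $\mcO_{0^+}$ after rotating, using \cref{prop:BranchingExtension}, since none of these contains a punctured neighborhood. The task is then to rewrite the Mayer--Vietoris complex in these coordinates. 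We identify $\mcO(U_1)\oplus\mcO(U_2)$ with pairs of germs $(f,g)$, where $f,g$ are the analytic continuations to $(0,\varepsilon)$. The difference map sends $(f,g)$ to $(f-g)|_{H_+}$ and $(f-g)|_{H_-}$. Continuing both to $U_1$, the discrepancy on $H_-$ versus $H_+$ is measured exactly by $M$. Subtracting the image of $(f,0)$ reduces the target $\mcO_{0^+}^2$ modulo the diagonal-type image to a single copy of $\mcO_{0^+}$, with the residual map $g \mapsto g - Mg = \gamma(g)$. Kernel-wise, $(f,g)$ glues to a global function exactly when $f=g$ is monodromy invariant, and monodromy-invariant germs are precisely the germs in $\mcO_{\bbD^*,0}$ (the branches glue to a definable single-valued function on a punctured disk). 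This gives exactness at the first two terms, and the cokernel description gives surjectivity of $\pi$ with kernel $\operatorname{im}\gamma$.

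\textbf{Main obstacle.} The delicate step is the precise identification of each sectorial function space with $\mcO_{0^+}$, including definability of the continued branches on the whole slit region and not only near the positive axis. This rests on \cref{prop:BranchingExtension}. The bookkeeping of which branch lives on $H_-$ under continuation through $U_1$ versus $U_2$ must be checked so that the resulting operator is exactly $\id - M$, not $\id - M^{-1}$. The latter would still give an isomorphic sequence after reindexing, so I would fix orientations carefully and otherwise not worry.
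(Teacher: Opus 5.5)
Your argument is correct and is essentially the paper's: a two-set cover of a small punctured disk whose pieces are acyclic by \cref{thm:Vanishing}, passage to the limit $r \to 0$ using that the restrictions $H^1(D^*(0,r),\mcO) \to H^1(D^*(0,s),\mcO)$ are isomorphisms, and identification of sectorial sections with $\mcO_{0^+}$ via branching and \cref{prop:BranchingExtension}. The differences are cosmetic. You use two opposite slit disks and extract the sequence from the direct limit of the Mayer--Vietoris sequences, while the paper uses a slit disk and a thin sector around $(0,\varepsilon)$ and normalizes cocycles by hand. Note, however, that the independence of $r$ is required rather than an alternative: your identifications of $\mcO(U_i)$ and $\mcO(H_\pm)$ with $\mcO_{0^+}$ hold only at the level of germs.
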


\begin{proof}
    Every map is evident except for the map $\pi$, which we now describe. By applying Mayer--Vietoris in the same fashion as above, we see that the restriction map $H^1(D^*(0,r), \mcO) \to H^1(D^*(0,s), \mcO)$ is an isomorphism for all $r > s$. Thus, we can interpret the final term equivalently as the direct limit
    \[ \varinjlim_{r \to 0} H^1(D^*(0,r), \mcO). \]
    For any $r$, consider the following two-element cover of the punctured disk $D(0,r)^*$:
    \[
    \begin{gathered}
        U_r = D(0,r) \setminus [0,r), \\
        V_r = \{z \in D(0,r)^*: \Arg(z) \in (-\varepsilon, \varepsilon) \}.
    \end{gathered}
    \]
    \begin{figure}[ht]
    \centering
    \includegraphics[width=0.5\linewidth]{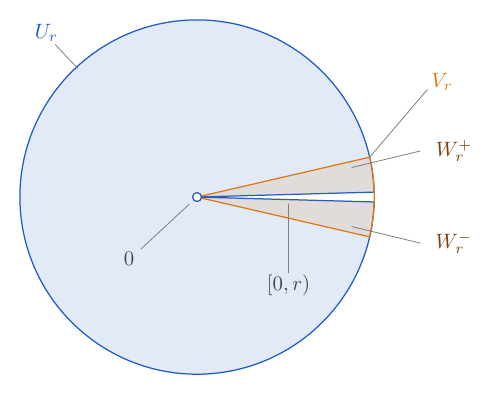}
    \caption{The cover used to prove \cref{prop:MonodromyExactSequence}.}
    \label{fig:slit-disk-cover}
    \end{figure}

    The overlap $U_r \cap V_r$ has an upper and lower component, which we denote $W^+_r, W^-_r$. Fix a germ $g \in \mcO_{0^+}$, which must extend to $V_r$ for some $r$. Then we let $\pi$ be the cocycle, for the above cover, whose transition is $0$ on $W^+_r$, and $g$ on $W^-_r$.

    If $s \in H^1(D^*(0,r), \mcO)$, then $s$ is trivial on both $U_r$ and $V_r$ by \cref{thm:Vanishing}, so it is represented by transition functions $s_-, s_+$. Since $S$ is branching, after shrinking $r$ we can assume that $s_+$ extends over $V_r$, so by subtracting a coboundary we can assume that $s_+ = 0$. This shows that $\pi$ is surjective.

    Next, we show that $\ker(\pi) = \im(\gamma)$. In one direction, suppose that $g = f - M(f)$ is a monodromy difference. Consider the cochain whose value on $V_r$ is $f$, and whose value on $U_r$ is the counterclockwise continuation: One readily observes that its coboundary is $\pi(g)$. Conversely, suppose that $g$ is the coboundary of some $a \in \mcO(V_r), b \in \mcO(U_r)$. Then $a, b$ must agree on $W^+_r$, so we can view $b$ as an analytic continuation, and the expression $a - b$ on $W^-_r$ is a monodromy difference.

    It remains to show exactness at the first two terms. But $i$ is injective by the identity theorem, and the identity $\ker(\gamma) = \im(i)$ simply says the following: $f$ is holomorphic on some punctured disk if and only if its continuation is single-valued. \qedhere
\end{proof}

Now, specialize to the structure $S = \bbR_{\an}^K$. For a tuple $(s_1, \ldots, s_m) \in \bbC^m$, consider the following Diophantine approximation condition:

\begin{enumerate}
    \item[$(\star)$] For all integer polynomials $P \in \bbZ[x_1, \ldots, x_m]$ such that $P(s) \neq 0$, we have
    \[ |P(s)| \geq e^{- A H}, \]
    where $H = H(P)$ is the height of $P$ and $A > 0$ is a fixed constant that depends on $d = \deg(P)$ and $s$.
\end{enumerate}

\begin{proposition}
    There exists an injective comparison map
    \[ \eta \colon \bbC \{z \} [z^{-1}] \to H^1(\bbD^*, \mcO). \]
    If every finite subset $A \subset K$ is contained in a field $\bbQ(s_1, \ldots, s_m)$, where $(s_i)$ satisfies $(\star)$, then $\eta$ is surjective.
\end{proposition}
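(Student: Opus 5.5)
Via \cref{prop:MonodromyExactSequence} we have $H^1(\bbD^*, \mcO) \cong \mcO_{0^+} / \gamma(\mcO_{0^+})$, so the whole problem becomes one about the monodromy difference operator $\gamma = \id - M$ on germs. By \cite[Proposition~4.5]{MillerPowerFunctions} (as in the proof of \cref{prop:BranchingExamples}), every $f \in \mcO_{0^+}$ has the form $z^{r_0} F(z^{r_1}, \ldots, z^{r_k})$ with $F$ convergent and $r_i \in K$. Expanding, $f = \sum_{\alpha} c_\alpha z^{\alpha}$, where $\alpha$ ranges over exponents $r_0 + \sum n_i r_i$, and $M$ acts diagonally: $M(z^\alpha) = e^{2\pi i \alpha} z^\alpha$. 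Hence $\gamma(z^\alpha) = (1 - e^{2\pi i\alpha}) z^\alpha$, which vanishes exactly when $\alpha \in \bbZ$.

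\textbf{Definition of $\eta$.} Given a Laurent series $\sum a_n z^n$, send it to the class of $\sum a_n z^n \log z$. Better, to stay inside $\mcO_{0^+}$ (we need not have $\log$ definable), I would define $\eta$ via the \v{C}ech description used for $\pi$: the transition function is $0$ on $W^+_r$ and $g(z) = \sum a_n z^n$ on $W^-_r$. This is $\pi$ applied to the germ $g$, which lies in $\mcO_{\bbD^*,0} \subset \mcO_{0^+}$.

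\textbf{Injectivity.} Suppose $g = h - M(h)$ with $g$ single-valued. Write $h = \sum c_\alpha z^\alpha$, grouped by classes of $\alpha$ mod $\bbZ$. The integer-exponent part of $h$ is killed by $\gamma$, while each non-integer class is mapped into itself. So the integer part of $\gamma(h)$ is zero, forcing $g = 0$. To make this rigorous I would first establish uniqueness of the generalized power series expansion, so that grouping by residue classes is legitimate for convergent expressions in finitely many $z^{r_i}$.

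\textbf{Surjectivity.} Given $f = \sum c_\alpha z^\alpha$, split it as $f_{\bbZ} + f'$, the integer-exponent part plus the rest. Then $f_{\bbZ}$ is a convergent Laurent series, and we want $h = \sum_{\alpha \notin \bbZ} c_\alpha (1 - e^{2\pi i \alpha})^{-1} z^\alpha$, which would give $f \equiv f_{\bbZ} = \eta(f_{\bbZ})$ mod $\im \gamma$. We must also check that $h$ lies in $\mcO_{0^+}$, i.e.\ that it is of the form $z^{r_0} G(z^{r_1}, \ldots)$ with $G$ convergent. Rewriting in terms of $F$, this means $G = \sum b_n w^n$ with $b_n = a_n / (1 - e^{2\pi i (r_0 + n\cdot r)})$, and we need a radius of convergence that is still positive.

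This is the main obstacle: small divisors. We need $|1 - e^{2\pi i \beta}| \ge e^{-C|n|}$ for $\beta = r_0 + n \cdot r \notin \bbZ$, which is a geometric lower bound in $|n|$. Choose $s$ with $r_0, \ldots, r_k \in \bbQ(s)$ satisfying $(\star)$. Clearing denominators, $\beta$ becomes $P(s)/Q(s)$ with controlled degree and height linear in $|n|$. Then $\operatorname{dist}(\beta, \bbZ) = |P(s) - mQ(s)|/|Q(s)|$ for the nearest integer $m$; since $|m| \lesssim |n|$, the numerator is an integer polynomial in $s$ of bounded degree and height $O(|n|)$, and it is nonzero. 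Condition $(\star)$ then gives $\operatorname{dist}(\beta,\bbZ) \ge e^{-A'|n|}$, and $|1 - e^{2\pi i\beta}| \ge 4\operatorname{dist}(\beta,\bbZ)$. Finally, the coefficients $b_n$ grow at most like $e^{A'|n|}$ times those of $F$, so $G$ converges on a smaller polydisc. Definability of $h$ on a slit neighborhood then follows exactly as in \cref{prop:BranchingExamples}.
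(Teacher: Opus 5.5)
Your proposal is correct and follows the paper's proof essentially step for step. As in the paper, $\eta$ is the restriction of $\pi$ to Laurent germs, injectivity comes from the vanishing of the integer-exponent coefficients of $\gamma(h)$ in Miller's expansion, and surjectivity reduces to the small-divisor bound $\operatorname{dist}(r\cdot I,\bbZ)\ge e^{-A|I|}$, which $(\star)$ gives when applied to $p_0+\sum_j i_j p_j-kq$. If anything your version is slightly more careful: you keep the multi-index coefficients $b_n$ rather than merging equal exponents, give the explicit bound $|1-e^{2\pi i\beta}|\ge 4\operatorname{dist}(\beta,\bbZ)$, and flag the need for uniqueness of expansions.
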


\begin{proof}
    Since $S$ contains $\bbR_{\an}$, every Laurent series is definable near zero, and the map $\eta$ is simply the restriction of the map $\pi$ constructed above.

    By \cite[Proposition~4.5]{MillerPowerFunctions}, every $\bbR_{\an}^K$-germ admits a convergent generalized power series expression
    \[f(z) = \sum_I a_I z^{r \cdot I}, r \cdot I = r_0 + r_1 i_1 + \cdots + r_k i_k,  \]
    where $r_1, \ldots, r_k > 0$. We may assume without loss of generality that no two $r \cdot I$ have the same degree, by combining coefficients if necessary. The action of monodromy on the power function $z^r$ is straightforward, and we have
    \[ \gamma(f)(z) = \sum_I a_I (1-e^{2 \pi i (r \cdot I)}) z^{r \cdot I}. \]
    In particular, for any $f$, the integer-power coefficients of $\gamma(f)$ vanish. This shows that $\eta$ is injective. Furthermore, we can write
    \[ f = f_1 + f_2, \]
    where $f_1$ is a Laurent series, and all integer powers in $f_2$ vanish. Thus, $\eta$ is surjective if and only if, whenever $f$ lacks integer powers, it equals $\gamma(g)$ for some $g$. If this holds, then we must have
    \[ g(z) = \sum_I \frac{a_I}{1-e^{2 \pi i (r \cdot I)}} z^{r \cdot I}. \]
    Since $g$ exists formally, it represents a definable function if and only if it has positive radius of convergence. But since $e^{2\pi iz}$ has controlled derivative near the origin, up to a constant the coefficients have magnitude
    \[ |a_I| d(r \cdot I, \bbZ)^{-1}. \]
    Our goal is to show that this expression grows at most exponentially in $|I|$. This is true by assumption for the $a_I$, so we reduce to showing that
    \[ \liminf_I |r \cdot I - k| \geq e^{- A |I|}, \]
    where $k$ is the nearest integer.

    Now, suppose that the $r_i$ are contained in a finitely generated field $\bbQ(s_j)$, where $(s_i)$ satisfies $(\star)$. We can write $r_i = f_i(s)$, where $f_i$ is a rational function. Fix a common denominator $q \in \bbQ[x_j]$, such that $f_i = p_i / q$, and such that $p_i, q$ are \emph{integer} polynomials. Then the desired bound for large $I$ becomes
    \[ \left| p_0(s) + \sum_j  p_j(s) \cdot i_j - q(s) \cdot k \right| \geq e^{- A |I|} |q(s)|. \]
    The expression $|q(s)|$ is a fixed nonzero constant independent of $I$, so we can disregard it. Thus, taking
    \[ P = p_0+ \sum_j i_j \cdot p_j - k \cdot q, \]
    the bound follows from $(\star)$. \qedhere
\end{proof}

\begin{proposition}
    Condition $(\star)$ holds in the following cases:
    \begin{enumerate}
        \item All the $s_i$ are algebraic numbers.
        \item All the $s_i$ are exponentials of algebraic numbers.
        \item $m = 1$ and $s_1 = \pi$.
    \end{enumerate}
\end{proposition}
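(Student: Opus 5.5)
The plan is to treat each case by quoting or adapting a known \emph{transcendence measure} (or, for algebraic numbers, a Liouville-type inequality). The key observation is that $(\star)$ is very weak. We only need a lower bound of the form $e^{-AH}$, with $A$ allowed to depend on $d=\deg P$ and on $s$. Classical measures give bounds polynomial in $H^{-1}$ for fixed $d$, such as $H^{-c(d,s)}$, and these are far stronger. So in each case the work is only to check that a published measure applies in the form needed, with $P$ ranging over integer polynomials of bounded degree and $P(s)\neq 0$.

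\textbf{Case (1), algebraic $s_i$.} Let $F=\bbQ(s_1,\ldots,s_m)$, of degree $D$, and fix $b\in\bbZ_{>0}$ with every $b s_i$ an algebraic integer. Then $\beta=b^{d}P(s)$ is a nonzero algebraic integer in $F$, so $|N_{F/\bbQ}(\beta)|\geq 1$. Each conjugate $\sigma(\beta)=b^dP(\sigma(s))$ has absolute value at most $b^d\binom{m+d}{d}H\cdot M^d$, where $M=\max(1,|\sigma(s_i)|)$ over all $\sigma$ and $i$. Dividing the norm by the other $D-1$ conjugates gives $|P(s)|\geq C(d,s)^{-1}H^{-(D-1)}$. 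This is $\geq e^{-AH}$ for a suitable $A=A(d,s)$, since $H\geq 1$.

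\textbf{Case (2), $s_i=e^{\alpha_i}$ with $\alpha_i$ algebraic.} Expanding $P$ gives $P(s)=\sum_{I}c_I e^{I\cdot\alpha}$, with $c_I\in\bbZ$ and $|I|\leq d$. I would first group together multi-indices $I$ with equal exponent $I\cdot\alpha$. This is needed because the $\alpha_i$ may be $\bbQ$-linearly dependent. After grouping we have a sum $\sum_{\beta\in B}c'_\beta e^\beta$ over distinct algebraic numbers $\beta$ in a fixed finite set $B=B(d,\alpha)$. The coefficients $c'_\beta$ are integers, not all zero since $P(s)\ne 0$, with $|c'_\beta|\leq \binom{m+d}{d}H$. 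Then I would invoke an effective Lindemann--Weierstrass theorem (Baker's or Mahler's quantitative version, with the exponent set $B$ fixed). It gives $|\sum c'_\beta e^\beta|\geq c(B)\,H^{-\kappa(B)}$ whenever the sum is nonzero, which again is much better than $e^{-AH}$.

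\textbf{Case (3), $s_1=\pi$.} Here I would cite Mahler's transcendence measure for $\pi$. It states that for a nonzero integer polynomial $P$ of degree $d$ and height $H$, $|P(\pi)|\geq H^{-c\,d}$ for an absolute constant $c$ (Mahler gives an explicit one) once $H$ is large, and hence $\geq C(d)^{-1}H^{-cd}$ for all $H$. Since $\pi$ is transcendental, $P(\pi)\neq 0$ is automatic. The polynomial bound implies $(\star)$.

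\textbf{Main obstacle.} I expect the main difficulty to be case (2): locating a quantitative Lindemann--Weierstrass statement in exactly the form needed. It must allow possibly non-real algebraic exponents and arbitrary fixed finite exponent sets, and its dependence must be only on the height of the coefficients. If a clean citation is unavailable, the fallback is to use only the weakness of $(\star)$. The exponential bound $e^{-AH}$ is of the strength obtained by running the Hermite--Lindemann auxiliary function argument with parameters depending on $d$. Alternatively, a bound via Baker-type theory for linear forms in exponentials of algebraic numbers suffices. Cases (1) and (3) should be routine.
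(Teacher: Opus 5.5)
Your proposal is correct and follows the paper's proof: Liouville's inequality in case (1), which you prove directly via the norm where the paper cites it; Mahler's quantitative Lindemann--Weierstrass theorem in case (2); and Mahler's transcendence measure for $\pi$ in case (3). In case (2), your grouping of equal exponents $I\cdot\alpha$ is equivalent to the paper's passage to a $\bbQ$-linearly independent tuple $q'$. One caution: the bound $|P(\pi)|\ge H^{-cd}$ with $c$ absolute is stronger than what the paper cites (Mahler's $|P(\pi)|\ge H^{-c^d}$), and I would not rely on it; since $(\star)$ allows the exponent to depend on $d$, the weaker bound suffices and your argument goes through unchanged.
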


\begin{proof}
    First, suppose the $s_i$ are algebraic. Then by Liouville's inequality \cite[Proposition~3.14]{WaldschmidtDiophantineApprox}, for $P(s) \neq 0$ we obtain a bound of the form
    \[ |P(s)| \geq H^{1-D} \cdot C, \]
    where $D = [ \bbQ(s_1, \ldots, s_m): \bbQ]$, and $C$ depends only on $s$ and the degree of $P$.

    Next, suppose $s_i = e^{q_i}$, where the $q_i$ are algebraic, and let $P = \sum_I a_I x^I$ be an integer polynomial. Then we can write
    \[ P(s) = \sum_I a_I e^{q \cdot I}. \]
    If the $q_i$ are $\bbQ$-linearly independent, then the Lindemann--Weierstrass theorem implies that $P(s) \neq 0$, and a stronger version due to Mahler \cite[Satz~2, pp.~132--133]{MahlerApproximationI} says that
    \[ |P(s)| \geq H^{-c d^n}, \]
    for an explicit constant $c$ depending on $q$. This is much stronger than the required exponential bound. If the $q_i$ are not $\bbQ$-linearly independent, clear denominators and choose a $\bbQ$-linearly independent tuple
    $(q'_1,\ldots,q'_r)$ of algebraic numbers such that each $q_i$ lies in the $\bbZ$-span of the
    $q'_\ell$. Writing $s'_\ell=e^{q'_\ell}$, the value $P(s)$ can be written as $R(s')$, where
    $R$ is an integer Laurent polynomial. For fixed $\deg P=d$, multiplying $R$ by a monomial
    depending only on $d$ and the relations among the $q_i$ gives an ordinary integer polynomial $P'$ with
    \[ P'(s') = (s')^B P(s). \]
    Moreover $\deg P'$ is bounded in terms of $d$ and the $q_i$, and $H(P') \leq C_d H(P)$.
    Thus, if $P(s)\neq 0$, applying the independent case to $P'$ gives the required lower bound
    for $P(s)$ after absorbing the fixed nonzero factor $(s')^B$.

    Finally, let $s = (\pi)$. Then by a further theorem of Mahler \cite[Satz~5]{MahlerApproximationII}, we have
    \[ |P(s)| \geq H^{-c^d}, \]
    where $c$ is an absolute constant. \qedhere
\end{proof}

\begin{example}
    Let $t > 0$ have the form
    \[ t = \sum_{n \geq 1} 10^{- a_n}, \]
    where the sequence $a_n$ grows very fast, for example $a_{n+1} = 10^{2 a_n}$. Then for $K = \bbQ(t)$, the map $\eta$ is not surjective: If we take the series
    \[ f(z) := \sum_{n \geq 1} z^{n t}, \]
    then the coefficients of the formal series $\gamma^{-1}(f)$ grow superexponentially. Indeed, for $N_m = 10^{a_m}$, the distance from $N_m t$ to an integer goes like $10^{a_m - a_{m+1}}$.
\end{example}

\begin{proposition}
    The set of $(t_1, \ldots, t_m) \in \bbR^m$ satisfying $(\star)$ has full measure in $\bbR^m$.
\end{proposition}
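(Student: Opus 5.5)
Fix the degree $d$ and use Borel--Cantelli over integer polynomials of degree at most $d$, ordered by height; then take the union over $d$. Since a countable intersection of full-measure sets has full measure, and since $A$ may depend on $d$ and $s$, it suffices to prove the following for each fixed $d$ and each bounded box $B \subset \bbR^m$: for almost every $t \in B$ there is a constant $A$ such that $|P(t)| \geq e^{-AH(P)}$ for every $P$ of degree at most $d$ with $P(t) \neq 0$. (Condition $(\star)$ is stated for $\bbC^m$, but we only need real tuples $t$.)

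\textbf{Step 1.} Bound the measure of the small-value set of a single polynomial. For a nonzero integer polynomial $P$ of degree at most $d$ and height $H$, we need an estimate of the form
\[ \mu\{t \in B : |P(t)| < \varepsilon\} \leq C_{d,B}\, (\varepsilon / c(P))^{1/d}, \]
where $c(P)$ is the size of some coefficient of $P$. This is a Remez / Pólya-type sublevel set inequality. The simplest route is to reduce to one variable. Choose a monomial of $P$ that is maximal for a suitable order, so that after a linear change of variables with integer coefficients bounded in terms of $d$ (e.g.\ $t_i \mapsto t_i + N^{i} t_1$) the polynomial becomes monic-like in $t_1$ with leading coefficient a nonzero integer, hence of absolute value $\geq 1$. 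On each line in the $t_1$-direction, the one-variable bound
\[ \mu\{x \in I : |Q(x)| < \varepsilon\} \leq 2d\,(\varepsilon/|\mathrm{lead}(Q)|)^{1/d} \]
then applies, and Fubini gives $\mu \leq C_{d,B}\,\varepsilon^{1/d}$. Since the leading coefficient is a nonzero integer, we do not even need $H$ in the denominator.

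\textbf{Step 2.} Apply Borel--Cantelli. There are at most $(2H+1)^{N_d}$ integer polynomials of degree at most $d$ and height at most $H$, where $N_d = \binom{m+d}{d}$. Take $\varepsilon_H = e^{-AH}$. The total measure of the bad sets at height exactly $H$ is at most
\[ C\,(2H+1)^{N_d}\, e^{-AH/d}, \]
which is summable in $H$ for every $A > 0$. Hence almost every $t \in B$ lies in only finitely many of the sets $\{|P(t)| < e^{-AH(P)}\}$. The finitely many exceptional $P$ with $P(t) \neq 0$ can then be absorbed by enlarging $A$ (depending on $t$). This is allowed, since $(\star)$ permits $A$ to depend on $s$ and $d$. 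Polynomials with $P(t) = 0$ are excluded by hypothesis.

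\textbf{Main obstacle.} The delicate point is Step 1 in several variables. The change of variables must keep $B$ inside a bounded region and must produce an integer leading coefficient uniformly over all $P$ of degree at most $d$. I would use the Kronecker substitution $t_i \mapsto t_i + M^{i-1} t_1$ with $M > d$. This makes the top-degree part in $t_1$ equal to $\sum_{|I| = \deg P} a_I M^{\text{(weight of } I)}$, and since the exponents are distinct this is a nonzero integer, because the weights $\sum_i (i-1) I_i$ are distinct base-$M$ digit expansions. If this fails, I would fall back on the cruder but classical estimate $\mu\{|P| < \varepsilon\} \ll \varepsilon^{1/d}\|P\|^{-1/d}$ for polynomials on a box, which suffices just as well.
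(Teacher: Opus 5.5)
Your overall architecture is the paper's: fix $d$, bound the measure of $\{t\in B:|P(t)|<\varepsilon\}$, count integer polynomials by height, apply Borel--Cantelli, and absorb the finitely many exceptional $P$ with $P(t)\neq 0$ by enlarging $A$. Your fallback estimate $\mu\{|P|<\varepsilon\}\ll_{d,B}(\varepsilon/\|P\|)^{1/d}$ is exactly the Brudnyi--Ganzburg (Remez-type) bound the paper uses, and it is the \emph{stronger} of your two estimates, not the cruder. With that fallback the proof goes through.

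Your primary route to Step~1, however, fails as written. Under the shear $t_1\mapsto t_1$, $t_i\mapsto t_i+M^{i-1}t_1$ ($i\geq 2$), the coefficient of $u_1^{n}$ with $n=\deg P$ is $P_n(1,M,\dots,M^{m-1})=\sum_{|I|=n}a_IM^{w(I)}$, where $w(I)=\sum_i(i-1)I_i$. The map $w$ is not injective on multi-indices of fixed total degree: $t_2^2$ and $t_1t_3$ both have weight $2$. Indeed $P=t_2^2-t_1t_3$ becomes $u_2^2+2Mu_1u_2-u_1u_3$, whose leading coefficient in $u_1$ is $2Mu_2-u_3$, not a nonzero integer. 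More fundamentally, no single linear shear can work for all $P$. If its direction is $c=(1,c_2,\dots,c_m)\in\bbZ^m$, the leading coefficient is $P_n(c)$, and the degree-one polynomial $c_2t_1-t_2$ vanishes at $c$. The ``distinct base-$M$ digits'' argument belongs to the nonlinear Kronecker substitution, not to a linear one.

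Either of two repairs is easy.
\begin{itemize}
\item Let the shear depend on $P$. Dehomogenizing a nonzero form, $P_n(1,x_2,\dots,x_m)$ is a nonzero polynomial of degree at most $d$, so it cannot vanish on all of $\{0,\dots,d\}^{m-1}$. Choosing $c$ there makes $P_n(1,c)$ a nonzero integer. Only finitely many unimodular shears occur, so the preimages of $B$ lie in one bounded box and $C_{d,B}$ is uniform in $P$.
\item Use $t_1=u_1$, $t_i=u_i+u_1^{M^{i-1}}$ with $M>d$. This is still a measure-preserving bijection sending bounded sets to bounded sets. Now the top $u_1$-exponent $\sum_i I_iM^{i-1}$ of $t^I$ genuinely is injective, so the leading coefficient is a single $a_{I^*}\in\bbZ\setminus\{0\}$. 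The price is $u_1$-degree up to $d(1+M+\dots+M^{m-1})$, giving a bound $C\varepsilon^{\alpha}$ with $\alpha=\alpha(d,m)>0$, which is all Step~2 needs.
\end{itemize}

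Compared with the paper, the repaired Step~1 is more elementary. Integrality of a leading coefficient, the one-variable root-covering bound and Fubini replace the multivariate Remez inequality and the norm equivalence $\sup_{[0,1]^m}|P|\asymp H(P)$. You lose the paper's extra factor $H^{-1/d}$, but it is inessential. The bound $e^{-AH/d}$ beats the count $(2H+1)^{N_d}$ for every $A>0$. Even a polynomial threshold $H^{-k}$ is summable for $k$ large, so your estimate also recovers the paper's stronger conclusion $|P(s)|\geq H^{-k}$ for large $H$. Your reduction to bounded boxes is also cleaner than the paper's scaling by integer multiples, which on its own only reaches the positive orthant.
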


\begin{proof}
    If $(\star)$ holds for the tuple $t$, then it holds for any integer multiple $n t$ with $n > 0$. Therefore, it is enough to show full measure after restricting to $[0,1]^m$. Fix for the moment an integer polynomial $P$ of degree $d$, and some $\delta > 0$. If we let $E = P^{-1}((-\delta, \delta)) \subset [0,1]^m$, then an estimate due to Brudnyi and Ganzburg \cite[eq.~(14), p.~354]{BrudnyiGanzburg} gives
    \[ \sup |P| \leq C \mu(E)^{-d} \cdot \delta. \]
    In other words, as $\succnapprox|P| \to \infty$, the set of points where $P$ is small shrinks in measure. By norm-equivalence on the space of degree $d$ polynomials, the left-hand side is comparable for fixed $d$ to $H(P)$. Rearranging, we obtain
    \[ \mu(E) \leq C \left( \frac{\delta}{H(P)} \right)^{1/d}. \]
    The number of polynomials with degree $d$ and height at most $N$ is $O(N^{\binom{m+d}{d}})$. Thus, the set of $s \in [0,1]^m$ with $|P(s)| < \delta$ for some such $P$ has measure at most
    \[ C \delta^{1 / d} N^{\binom{m+d}{d} - 1/d}.  \]
    Fix some $k \gg 0$, and let $X_N$ be the set of $s \in [0,1]^m$, such that for some $P$ of degree $d$ and height at most $N$, we have $|P(s)| < N^{-k}$. Then since $k$ is large, the series
    \[ \sum_N \mu(X_N) \]
    converges. By Borel--Cantelli, the set $\limsup_N X_N$ has measure zero. But if $s \notin \limsup_N X_N$, then it certainly satisfies $(\star)$. \qedhere
\end{proof}

This completes the proof of \cref{thm:computation}.

\section{Definable compactifications}\label{sec:Compactification}

Let $X$ be a Riemann surface: Our goal is to understand when $X$ definably embeds into a compact Riemann surface. We first give two examples to illustrate the possible obstructions. Then we define the central object of this section, the \emph{intrinsic closure} $\hat{X}$ of $X$. After verifying its basic properties, we use the intrinsic closure to prove \cref{thm:compactification}.

\subsection{Two obstructions}

The first example shows that monodromy around punctures obstructs definable compactification.

\begin{example}
    Let $S$ be any o-minimal structure. Take a radial cover $U_1, U_2, U_3$ of $\bbG_m$. Mimicking \cref{prop:G_mLineBundle}, we define a Riemann surface $X$ with charts $(U_i)$ and transition functions $s_{12} = s_{23} = z, s_{31} = 2 z$. We have a (non-definable) biholomorphism
    \[ \bbG_m \xr{\cong} X, z \mapsto e^{\alpha \log z}, \]
    as in the proposition, which shows that the analytic end at $0$ is a punctured disk. Suppose for contradiction that $X$ can be definably compactified via some map $X \inj Y$. The puncture corresponds to a point $y \in Y$, and $y$ has a neighborhood isomorphic to the disk. We thus obtain a definable embedding
    \[ f \colon \bbD^* \to X. \]
    If we let $V_i = f^{-1}(U_i)$, then the map $f$ gives a multivalued function, single-valued on each $V_i$, with monodromy 2 around the origin. But as we showed in \cref{prop:G_mLineBundle}, no such function is definable.
\end{example}

The next example shows that even if $X$ \emph{analytically} has no punctures, it still may not be compactifiable. The problem is that there may be points which definably ``appear'' as punctures, even though analytically they represent disk ends.

\begin{figure}[ht]
  \centering
  \includegraphics[width=\linewidth]{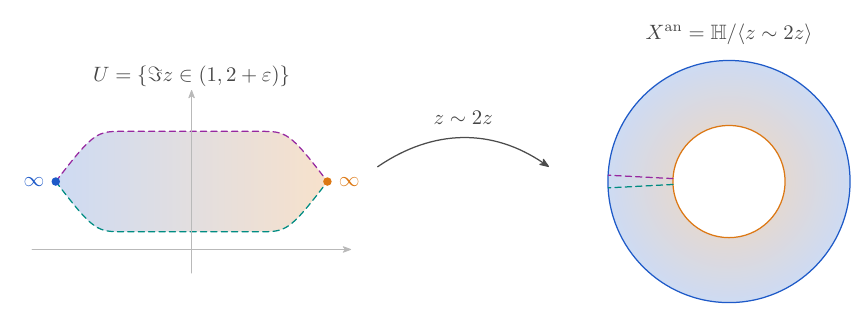}
  \caption{The idea of \cref{SecondCompactificationCounterexample}.}
  \label{fig:annulus-obstruction}
\end{figure}

\begin{example}\label{SecondCompactificationCounterexample}
    Again, let $S$ be arbitrary. Consider the strip
    \[ U = \{z \in \bbC: \Im(z) \in (1, 2+\varepsilon)\}, \]
    and glue it to itself via $z \mapsto 2z$ to obtain a definable Riemann surface $X$. If $X \inj Y$ were a definable compactification, then by \cref{prop:ctsext}, the map $U \to Y$ has finitely many limit points over each $z \in \partial U$. Aside from $\infty$, every boundary point of $U$ lies in the interior of $X$, so this would imply that $\ovl X \setminus X$ is finite. But analytically, we can view $X$ as
    \[ \bbH / \{z \sim 2 z \}, \]
    and by using the fundamental domain $\{z \in \bbH: |z| \in (1, 2+\varepsilon) \}$, we see that $X^{\an}$ is an annulus and has no punctures.
\end{example}

\subsection{Intrinsic closures}

In this section, we build a space $\hat{X}$ which captures these obstructions. Given an atlas $(U_i)$ for $X$, the idea is simply to glue the closures $\ovl{U_i}$ along continuous extensions of transition functions. But the space built in this way depends on the cover and can be poorly behaved. We identify a suitable class of covers cofinal under refinement, and then construct a well-behaved space $\hat{X}$ that is independent of choices.

\begin{definition}
    The point $x \in \partial U_i$ is a \emph{$j$-interior point} if $U_{ij}$ is a boundary neighborhood of $x$ in $U_i$.
\end{definition}

\begin{figure}[ht]
  \centering
  \includegraphics[width=\linewidth]{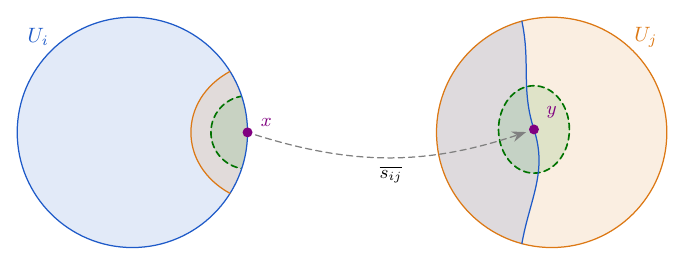}
  \caption{$j$-interior points.}
  \label{fig:j-interior}
\end{figure}

\begin{lemma}\label{lem:BoundaryInteriorIdentifications}\label{lem:ClosedExcision}
    Let $(U_i)$ be a cover of a Riemann surface $X$ by boundary-connected charts.
    \begin{enumerate}
        \item If $(x, y) \in \partial{\Gamma(s_{ij})}$ and $y \in U_j$, then $x$ is $j$-interior.
        \item Suppose $A \subset \ovl{U_i}$ is closed and contains no $j$-interior points. Then after shrinking $(U_j)$, we may assume that $A \cap U_{ij} = \emptyset$.
    \end{enumerate}
\end{lemma}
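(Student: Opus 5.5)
Here $U_{ij} = U_i \cap U_j$, viewed in the chart of $U_i$, and $s_{ij}\colon U_{ij} \to U_j$ is the transition map; closures are taken in the ambient compact Riemann surfaces (say $\bbP^1$) containing the charts. Both parts rest on \cref{prop:ctsext} and on the fact that the graph of $s_{ij}$ is closed in $U_{ij}\times U_j$ and in $U_i\times U_j$.

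\textbf{Part (1).} Let $(x,y) \in \partial\Gamma(s_{ij})$ with $y \in U_j$. Then $x \in \ovl{U_{ij}}$. Suppose first that $x \in U_i$. The inverse transition $s_{ji}$ is a homeomorphism $U_{ji}\to U_{ij}$ with $U_{ji}$ open in $U_j$. Take $(x_n, s_{ij}(x_n)) \to (x,y)$; the points $s_{ij}(x_n)$ converge to $y$ inside $U_j$. Since $s_{ij}$ and $s_{ji}$ are both restrictions of the identity of $X$, we get $x = y$ as points of $X$. Hence $x\in U_i\cap U_j$, and $(x,y)$ lies in the graph, not its boundary. So we may assume $x \in \partial U_i$.

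The point $y$ has a small definable neighborhood $W \Subset U_j$. Its preimage $s_{ij}^{-1}(W) = W \cap U_i$, taken in $X$, is open in $U_i$. I would like to show that $s_{ij}^{-1}(W)$ is a full boundary neighborhood of $x$ in $U_i$. Consider the open subset $W\cap U_i$ of the chart $W$. Its complement in $W$ is $W \setminus U_i$, and the point $y$ lies in $\partial(W\cap U_i)$ within $W$. The obstacle is to rule out that this complement accumulates at $y$ in a way incompatible with $x$ being a boundary point of $U_i$.

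The plan is to use boundary-connectedness of $U_i$ at $x$. Choose a basis of neighborhoods $N$ of $x$ with $N\cap U_i$ connected, and apply \cref{prop:ctsext} to the inclusion-type map $s_{ij}$, extended to $N\cap U_i$ into the compactified chart of $U_j$. Its limiting value at $x$ is unique, hence equals $y$. So for $N$ small, every point of $N \cap U_{ij}$ maps into $W$.

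The remaining task is to show $N\cap U_i\subset U_j$. Take $z \in N\cap U_i$ and connect it within the connected set $N\cap U_i$ to a point of $U_{ij}$. Along this path, the image in $X$ stays near $y$, because its limit at $x$ is $y$ and $N$ is small. Since $W \Subset U_j$, the first exit point of the path from $U_{ij}$ would be a point of $U_i$ whose image in $X$ lies in $\ovl W \subset U_j$, which is impossible. To make "the image stays near $y$" precise, use uniqueness of the limit together with continuity on $N\cap U_i$ viewed in $X$, shrinking $N$ as necessary. This gives $N \cap U_i = N\cap U_{ij}$, so $x$ is $j$-interior. I expect this connectivity argument to be the main step.

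\textbf{Part (2).} Let $A\subset \ovl{U_i}$ be closed with no $j$-interior points. By part (1), the closure of $\Gamma(s_{ij})$ over $A$ meets $\ovl{U_i}\times U_j$ in no points. So the set $B$ of limits of $s_{ij}$ over $A$, which by \cref{prop:ctsext} exist and are finite in number at each point, lies in $\partial U_j$. Moreover, $A \cap U_{ij}$ itself is empty unless $A\cap U_i\neq\emptyset$; points of $A$ lying in $U_i \cap U_j$ are trivially $j$-interior, so this case cannot occur. The set $\ovl{s_{ij}(U_{ij}\cap N_A)}$, for $N_A$ a small neighborhood of $A$, is therefore close to $\partial U_j$.

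Now pick a definable open $U_j' \Subset U_j$ such that $U_j'$ still covers together with the other charts, and such that it avoids $s_{ij}(U_{ij}\cap N_A)$. This is possible by compactness of $A$ and by the same compactness-shrinking used in \cref{lem:FiniteOmissionsLemma}: shrink $U_j$ slightly so that every point of $X$ is still covered by some chart. The points near $B$ removed from $U_j$ are then covered by $U_i$, since they map to $N_A\cap U_i$, which lies in $U_i$. After this shrinking, $\ovl{U_{ij}'}\cap A=\emptyset$.
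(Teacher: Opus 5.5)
Both parts have genuine gaps.

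\textbf{Part (1).} The step you flag as the main one is unjustified, and as written it is circular. \cref{prop:ctsext} needs a domain connected at $x$. Only $U_i$ is known to be connected there, not $U_{ij}$. The map $s_{ij}$ cannot be ``extended to $N\cap U_i$'' with values in the chart of $U_j$, because that is exactly the assertion $N\cap U_i\subset U_{ij}$ you are trying to prove. Likewise, ``the image in $X$ stays near $y$'' presupposes a limit at $x$ of the inclusion $N\cap U_i\to X$. \cref{prop:ctsext} does not supply this, since $X$ is not a compact Riemann surface.

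What \cref{prop:ctsext} does give, applied to the finitely many pieces of $U_{ij}$ near $x$ that are connected at $x$, is that the set $L$ of limiting values of $s_{ij}$ at $x$ is finite. That is enough. Choose a disk $W\ni y$ with $\ovl W\subset U_j$ and $\partial W\cap L=\emptyset$. By compactness of $\partial W$, the set of $p\in U_{ij}$ with $s_{ij}(p)\in\partial W$ does not accumulate at $x$. Now take $N$ small with $N\cap U_i$ connected. Closedness of $\Gamma(s_{ij})$ in $U_i\times U_j$ then shows that $s_{ij}^{-1}(W)\cap N$ is relatively closed in $N\cap U_i$. It is also open, and nonempty since it contains the approximating points $x_n$, so it equals $N\cap U_i$.

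The paper instead argues simplicially. Boundary-connectedness of $U_i$ produces a $1$-simplex of $\partial U_{ij}\cap U_i$ at $x$. The extended transition map carries it to a $1$-simplex of $U_j$, and Hausdorffness of $X$ then forces it into $U_{ij}$, a contradiction.

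\textbf{Part (2).} Here you have misread the hypothesis. Being $j$-interior is defined only for points of $\partial U_i$, so $A$ may contain points of $U_{ij}$. Removing those is the whole point of the lemma; in the proof of \cref{thm:compactification}, $A$ comes from radial intervals of $U_{ij}$ that are to be eliminated. Under your reading the stated conclusion $A\cap U_{ij}=\emptyset$ is vacuous.

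The statement you prove instead, $\ovl{U_{ij}'}\cap A=\emptyset$, is false in general. Let $X$ be the unit disk minus the closed third quadrant, $U_i$ its upper half, $U_j$ its right half, and $A=\{0\}$. Then $0$ is not $j$-interior. But the points of $(0,1)$ lie only in $U_j$, so any $U_j'\subset U_j$ with $U_i\cup U_j'=X$ meets $U_i$ arbitrarily close to $0$. Your shrinking step fails because the closure in $U_j$ of $s_{ij}(U_{ij}\cap N_A)$ contains images of $j$-interior boundary points of $U_i$ near $A$, and $U_i$ does not cover these. Also, $U_j'\Subset U_j$ cannot be arranged in general when $X$ is noncompact.

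The intended argument is short. Let $A'\subset U_j$ be the set of $w$ with $(a,w)\in\ovl{\Gamma(s_{ij})}$ for some $a\in A$. It is closed in $U_j$ because $A$ is compact. By part (1) together with closedness of the graph, it equals $s_{ij}(A\cap U_{ij})$, which lies in $U_{ji}$ and is therefore covered by $U_i$. Replacing $U_j$ by $U_j\setminus A'$ keeps a cover and makes $A\cap U_{ij}$ empty.
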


\begin{proof}
    Fix a triangulation compatible with the $U_i$ and with the transition maps, in the sense that $s_{ij}$ identifies simplices with simplices. We can assume after refining that $s_{ij}$ extends continuously to the closure of any simplex, and that the extensions to closures also identify simplices with simplices.

    \textbf{(1)}: Suppose by contradiction that $U_{ij}$ is not a boundary neighborhood. Since $U_i$ is boundary-connected, there exists a 1-simplex $\sigma \subset \partial U_{ij} \cap U_i$ meeting $x$. Choose a 2-simplex $\tau \subset U_{ij}$ which has $\sigma$ as a face: Then $\ovl{\Gamma(s_{ij})}$ identifies $\sigma$ with a face of $s_{ij}(\tau)$, therefore with a 1-simplex of $U_j$. Since $X$ is Hausdorff, we have
    \[ \ovl{\Gamma(s_{ij})} \cap (U_i \times U_j) = \Gamma(s_{ij}), \]
    and therefore $\sigma \subset U_{ij}$, which would be a contradiction.

    \textbf{(2)}: Let $A'$ be the set of $y \in U_j$ which are identified via $\ovl{\Gamma(s_{ij})}$ with $A$. By (1), we have $A' \subset U_{ij}$, and $A'$ is closed. Thus, we can simply replace $U_j$ by $U_j \setminus A'$, and the result still covers $X$. \qedhere
\end{proof}

\begin{definition}
    A \emph{boundary-tame} cover of $X$ is a cover $(U_i)$ by boundary-connected charts $U_i \subset \bbC$, whose double intersections $U_{ij}$ are also boundary-connected.
\end{definition}

Note that boundary-connectedness of $U_{ij}$ is well-defined, since we interpret it as a subset of $\bbC \subset \bbP^1$.

\begin{lemma}\label{lem:BoundaryTameRefinements}
    Every cover of $X$ has a boundary-tame refinement.
\end{lemma}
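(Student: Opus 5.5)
We will build the refinement by triangulation and open stars, following the proofs of \cref{lem:NoTripleIntersect} and \cref{lem:StarsAreBoundaryConnected}. Let $(W_k)$ be the given cover of $X$. First refine it to a cover by charts: $X$ has a finite definable atlas whose charts can be taken to be open subsets of $\bbC$, and we intersect each $W_k$ with these. Passing to the finitely many charts, we work in finitely many compact pieces. The simplest way to organize this is to fix a finite definable atlas $\phi_a \colon X_a \to \bbC$ and a triangulation of each closure $\ovl{\phi_a(X_a)} \subset \bbP^1$ compatible with the images of the $W_k \cap X_a$, the sets $\phi_a(X_a \cap X_b)$, and the transition maps. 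By \cref{thm:Triangulation}, with a further subdivision, we may assume all these data are unions of open simplices.

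We take as candidate cover the open stars $\St(v)$ of vertices $v$ lying in $X$, with $v$ in the interior of some chart image, after barycentric subdivision. Each such star lies in a single chart and in some $W_k$, since the triangulation is compatible, so these sets refine $(W_k)$. After enough subdivision (transported compatibly through the definable transition maps), every point of $X$ lies in the star of some vertex whose closed star is contained in a chart. Each star is boundary-connected in $\bbP^1$ by \cref{lem:StarsAreBoundaryConnected}.

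Double intersections need more care. Inside one chart, $\St(v)\cap\St(w)$ is either empty or equal to $\St(\{v,w\})$ when $\{v,w\}$ is an edge, and the star of a simplex is again boundary-connected by \cref{lem:StarsAreBoundaryConnected}. The main obstacle is that the two stars may be taken in different charts. Their intersection, viewed in the chart of $U_i$, is then $U_i \cap \phi_a\phi_b^{-1}(U_j)$, which need not be a union of stars. To handle this, I would first fix a single triangulation of the abstract space: $X$ embeds definably in some $\bbR^n$ by \cite[Theorem~10.1.8]{vandendriestame}, so we can triangulate $X$ (more precisely, a compact definable space containing it) compatibly with all the $X_a$ and $W_k$. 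We then subdivide until every closed star of a vertex of $X$ is contained in some chart. Stars and their intersections are now computed in this global triangulation. When we transport them to a chart via the definable homeomorphism $\phi_a$, the image of a star is a star in the transported triangulation, whose simplices are curvilinear but definably homeomorphic to standard ones.

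It remains to check that boundary-connectedness, which is taken in $\bbP^1$ and involves boundary points outside $X$, is still given by the simplicial criterion of \cref{lem:SimplicialCharacterizationofBoundaryConnectedness} in the transported triangulation of $\ovl{\phi_a(X_a)}$. To arrange this, we choose the global triangulation so that it is the pullback of a triangulation of $\ovl{\phi_a(X_a)}$ near the closure of each star. This is the delicate step. I would handle it by taking stars small enough that each closed star $\ovl{\St(v)} \cap X$ sits inside a chart $X_a$ with $\ovl{\phi_a(\St(v))}\subset \phi_a(X_a)$, so that no boundary point of the star lies on $\partial\phi_a(X_a)$. Boundary points of $U_i$ and $U_{ij}$ in $\bbP^1$ are then genuine simplicial boundary points inside the chart image. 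With this arranged, \cref{lem:StarsAreBoundaryConnected} applies verbatim to both $\St(v)$ and $\St(\{v,w\})$, and the cover is boundary-tame.
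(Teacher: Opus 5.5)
\textbf{There is a genuine gap, and it sits in the step you call delicate.} That step carries the whole content of the lemma.

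Both of your requirements make each star relatively compact in $X$. This holds for the requirement that closed stars lie in a chart, and for the stronger one that $\ovl{\phi_a(\St(v))}\subset\phi_a(X_a)$ (closure in $\bbP^1$). Finitely many relatively compact sets cannot cover $X$ unless $X$ is compact, and no amount of subdivision changes this. But the lemma is needed precisely for noncompact $X$, for instance $\bbD$ or the surface of \cref{SecondCompactificationCounterexample}. The points of $\hat\partial X$ come from boundary points of the charts lying on $\partial\phi_a(X_a)$, and that is exactly where boundary-connectedness is used later.

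If you drop the relative-compactness condition, a triangulation of the closure of $X$ in $\bbR^n$ gives no control at those points. The chart $\phi_a$ need not extend to a homeomorphism from the closure of $X_a$ in $\bbR^n$ onto $\ovl{\phi_a(X_a)}\subset\bbP^1$. For example, let the chart image be the slit disk $\bbD\setminus[0,1)$, and suppose the embedding into $\bbR^n$ opens the slit into two arcs, as $z\mapsto\sqrt z$ does. Then a star in the upstairs triangulation can abut the same slit point from both sides. It is boundary-connected upstairs but not in $\bbP^1$, and nothing in your construction rules this out. So \cref{lem:SimplicialCharacterizationofBoundaryConnectedness} cannot be applied to the transported triangulation.

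\textbf{The missing idea is to stay inside the chart closures, as the paper does.}
\begin{enumerate}
\item Triangulate each $\ovl{U_i}\subset\bbP^1$ compatibly with the closed graphs $\ovl{\Gamma(s_{ij})}$. Then the boundary identifications induced by the transition maps send simplices to simplices.
\item Subdivide until no vertex is adjacent to two distinct points $x_1\neq x_2$ with $(x_1,y),(x_2,y)\in\ovl{\Gamma(s_{ij})}$.
\end{enumerate}
With this, the cross-chart intersection $U_i^x\cap s_{ji}(U_j^y\cap U_{ji})$ equals the star $\St_{U_i}(\{x,x'\})$ of a single simplex in the chart's own triangulation of $\ovl{U_i}$. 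So \cref{lem:StarsAreBoundaryConnected} applies directly in $\bbP^1$.

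Your first paragraph starts in this direction. However, the compatibility must include the boundary identifications $\ovl{\Gamma(s_{ij})}$, not just the transition maps on the open overlaps. The cross-chart obstacle you identified is resolved by that compatibility together with the subdivision condition, not by passing to a global triangulation of a compactification in $\bbR^n$.
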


\begin{proof}
    Fix a cover $(U_i)$ of $X$, and refine so that the $(U_i)$ are charts. Fix triangulations of $\ovl{U_i}$ which are compatible with $s_{ij}$ and $\Gamma(s_{ij})$, in the sense of identifying simplices with simplices, and subdivide so that the following holds: If $(x_1, y), (x_2, y) \in \ovl{\Gamma(s_{ij})}$ and $x_1 \neq x_2$, then there is no vertex adjacent to both $x_1$ and $x_2$. Finally, take the refinement
    \[ \{ U_i^x := \St_{U_i}(x): i \in I, x \in (\partial U_i)_0 \}. \]
    Every $U_i^x$ is boundary-connected by \cref{lem:StarsAreBoundaryConnected}. Thus, we only need to show that the set
    \[ U_{ij}^{xy} := U_i^x \cap s_{ji}(U_j^y \cap U_{ji})  \subset U_i \]
    is boundary-connected. By design, there exists at most one $x'$ adjacent to $x$ with $(x', y) \in \ovl{\Gamma(s_{ij})}$, and we have
    \[ U_{ij}^{xy} = \St_{U_i}(\{x, x' \}). \]
    Again using \cref{lem:StarsAreBoundaryConnected}, we conclude. \qedhere
\end{proof}

\begin{definition}
    Let $ \mcU = (U_i)$ be a boundary-tame cover. By \cref{prop:ctsext} and the cocycle condition, each $s_{ij}$ extends to a continuous homeomorphism
    \[ \ovl{s_{ij}} \colon \ovl{U_{ij}} \to \ovl{U_{ji}}. \]
    The \emph{intrinsic closure} is the space
    \[ \hat{X} = \coprod_i \ovl{U_i} / R, \]
    where $R$ identifies $x$ with $\ovl{s_{ij}}(x)$.
    The \emph{intrinsic boundary} is
    \[ \hat \partial X := \hat{X} \setminus X. \]
\end{definition}

\begin{figure}[ht]
  \centering
  \includegraphics[width=0.6\linewidth]{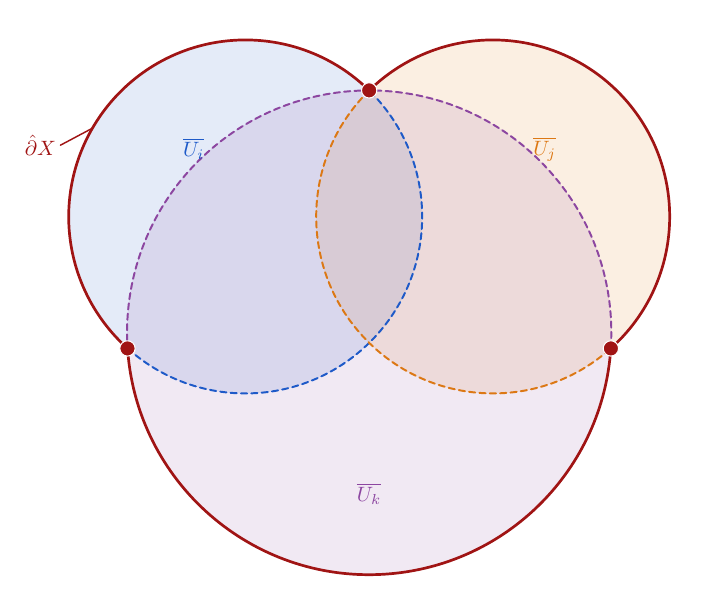}
  \caption{Not all boundary points of a given chart will land in the intrinsic boundary $\hat \partial X$.}
  \label{fig:intrinsic-closure}
\end{figure}

\begin{proposition}\label{prop:IntrinsicClosureProperties}
    \leavevmode\par
    \begin{enumerate}
        \item The canonical map $i \colon X \to \hat{X}$ is an open embedding.
        \item The space $\hat{X}$ is compact Hausdorff.
        \item The space $\hat{X}$ is independent of the cover $\mcU$.
        \item The subspace $X$ is boundary-connected.
        \item Intrinsic closure is functorial for maps between Riemann surfaces.
    \end{enumerate}
\end{proposition}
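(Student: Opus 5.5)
I will prove the five properties in order, working throughout with a fixed boundary-tame cover $\mcU = (U_i)$ and the glued space $\hat X = \coprod_i \ovl{U_i}/R$. First I have to check that $R$ is an equivalence relation. Symmetry comes from $\ovl{s_{ji}} = \ovl{s_{ij}}^{-1}$. Transitivity comes from the cocycle condition passed to closures. Here I use that $U_{ijk}$ is dense in $\ovl{U_{ij}} \cap \ovl{U_{ik}}$ near the relevant points, which should follow from boundary-connectedness of the $U_{ij}$ together with \cref{prop:BoundaryAgreement}. Failing that, I will pass to the equivalence relation generated by $R$. The same argument shows $\hat X$ is a definable space, since it has finitely many definable charts $\ovl{U_i}$ glued along definable homeomorphisms.

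\textbf{Property (1).} The map $X = \coprod U_i/\sim \to \hat X$ is continuous and injective on each chart. To see it is injective overall, take $x \in U_i$ identified with $y \in U_j$. By \cref{lem:BoundaryInteriorIdentifications}(1), $x$ is $j$-interior, and Hausdorffness of $X$ gives $\ovl{\Gamma(s_{ij})} \cap (U_i \times U_j) = \Gamma(s_{ij})$, so $y = s_{ij}(x)$. Openness follows because the preimage of $U_i$ in each $\ovl{U_k}$ is $U_{ki}$, which is open.

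\textbf{Property (2).} Compactness is immediate: $\hat X$ is a finite union of images of the compact sets $\ovl{U_i}$. For Hausdorffness, the quotient of a compact Hausdorff space by a closed equivalence relation is Hausdorff, so I need $R$ to be closed in $\coprod \ovl{U_i} \times \coprod \ovl{U_i}$. This holds because $R$ is the finite union of the closed graphs $\Gamma(\ovl{s_{ij}})$ (and their compositions). The quotient map is also closed, since $R$ is closed and the spaces are compact.

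\textbf{Property (3): independence of the cover.} Given a boundary-tame refinement $\mcV$ of $\mcU$, the inclusions $\ovl{V_a} \subset \ovl{U_{i(a)}}$ induce a continuous map $\hat X_{\mcV} \to \hat X_{\mcU}$. It is surjective because the images of the $\ovl{V_a}$ cover $\ovl{U_i}$, using finiteness of the cover and density of $X$. A continuous bijection from a compact space to a Hausdorff space is a homeomorphism, so the crux is injectivity. I expect this to be the main obstacle. Suppose $p \in \ovl{V_a}$ and $q \in \ovl{V_b}$ map to the same point of $\ovl{U_i}$. I must show they are already identified in $\hat X_{\mcV}$. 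I would argue via \cref{prop:ctsext}. Since $V_{ab}$, or the chart after passing through $U_i$, is connected at the common boundary point, the limit of the transition function there is unique. So a sequence in $V_a$ converging to $p$ can be moved into $V_b$ through a boundary-connected neighborhood in $U_i$: by \cref{lem:ConnectednessAtZero}, $U_i$ is a single radial interval near the point, and it is covered by finitely many radial subintervals $V_c$ with consecutive overlaps near the point. This chains $p$ to $q$ through finitely many identifications. Any two boundary-tame covers have a common boundary-tame refinement by \cref{lem:BoundaryTameRefinements}, and that finishes the independence.

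\textbf{Property (4).} Take $z \in \hat\partial X$ and choose a chart $\ovl{U_i}$ containing it. Neighborhoods of $z$ in $\hat X$ are unions of neighborhoods in the finitely many $\ovl{U_k}$ identified with $z$. Boundary-connectedness of each $U_k$, together with \cref{lem:BoundaryInteriorIdentifications}(2), lets me shrink the cover so that only one chart contains $z$ on its boundary, and then boundary-connectedness of $X$ at $z$ is inherited from $U_i$. An alternative route is to use the simplicial criterion \cref{lem:SimplicialCharacterizationofBoundaryConnectedness} on a triangulation of $\hat X$.

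\textbf{Property (5).} Given a definable holomorphic map $f \colon X \to X'$, I refine to boundary-tame covers with $f(U_i) \subset U'_{j(i)}$. By \cref{prop:ctsext}, each $f|_{U_i}$ extends continuously to $\ovl{U_i} \to \ovl{U'_{j(i)}} \subset \hat{X'}$, which I regard as a map into a compact Riemann surface containing the chart. These extensions respect $R$ by continuity, so they descend to $\hat f \colon \hat X \to \hat{X'}$. Compatibility with composition follows from uniqueness of continuous extensions from the dense subset $X$.
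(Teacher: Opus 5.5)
\textbf{Parts (1)--(3).} Falling back to the equivalence relation generated by $R$ is necessary, because $R$ itself is not transitive. Cover $\bbC^*$ by four sectors of opening slightly more than $\pi/2$ that overlap only consecutively. This cover is boundary-tame, and the copies of $0$ in $\ovl{U_1}$ and $\ovl{U_3}$ are linked through $\ovl{U_2}$, yet $U_{13}=\emptyset$. So your density claim fails at $0$ (take $i=2$, $j=1$, $k=3$). \cref{prop:BoundaryAgreement} only tells you such failures occur at finitely many boundary points.

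Once you pass to the generated relation $\bigcup_n R^n$, the phrase ``finite union of closed graphs (and their compositions)'' no longer proves (2): an increasing union of closed relations need not be closed. The missing idea is a uniform bound on chain length, which is the real content of the paper's proof of (2). Take a chain $z_1 \to \cdots \to z_m$. If one of the two overlaps adjacent to an intermediate $z_k$ is a boundary neighborhood of $z_k$, then the two overlaps meet near $z_k$, and the cocycle condition lets you delete $z_k$. Hence in a minimal chain every intermediate point lies in the finite set $E$ of points where some $U_{ij}$ fails to be a boundary neighborhood. Chains therefore have length at most $|E|+2$, and the generated relation is a finite union of finite composites of closed relations on a compact Hausdorff space, hence closed.

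The same issue affects (1). You only treat a direct identification of two interior points, but $x\in U_i$ and $y\in U_j$ may be linked through boundary points of other charts. To conclude $\ovl{s_{ij}}(x)=y$ you need the induction of \cref{lem:IdentificationChain}: each intermediate point is $i$-interior by \cref{lem:BoundaryInteriorIdentifications}, so its link can be skipped. (Also, the preimage of $U_i$ in $\ovl{U_k}$ is not just $U_{ki}$; it also contains the boundary points of $U_k$ that $\ovl{s_{ki}}$ sends into $U_i$. It is still open.)

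Your (3) needs Hausdorffness of the target, so it inherits this gap. Its injectivity step also only compares points lying over the same point of one $\ovl{U_i}$. You still have to handle points related through some $\ovl{s_{ii'}}$. The paper reduces that case to the one-chart case by passing through a member of the refinement that meets the overlap near the point.

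\textbf{Part (4).} The argument fails at the step where you shrink the cover so that only one chart has $z$ on its boundary. \cref{lem:ClosedExcision} only removes closed sets containing no $j$-interior points. In the four-sector cover above, $0$ stays in all four closures however you shrink. No choice of cover achieves it in general either. A chart that alone has $z$ on its boundary is a boundary chart for $z$. By \cref{prop:EmbeddingConditions}, applied to the twisted punctured disk at the start of \cref{sec:Compactification}, some points of $\hat\partial X$ have no boundary chart. So boundary-connectedness at $z$ comes from how several charts are glued, not from a single $U_i$.

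The paper argues by contradiction. Suppose $V\cap X=V_1\sqcup\cdots\sqcup V_n$ with each $V_i$ accumulating at $y$. Complete $(V_i)$ to a cover by adding sets whose closures avoid $y$, and pass to a boundary-tame refinement, which computes the same $\hat X$ by (3). In that space the limit points $y_i$ coming from different $V_i$ are never identified, contradicting that they all equal $y$.

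Your opening sentence points to a workable direct route. The finitely many representatives of $z$ are linked by direct identifications, and each has small neighborhoods with connected trace on its chart. A direct identification $z_l\mapsto z_{l'}$ forces $z_l\in\ovl{U_{k_l k_{l'}}}$, so consecutive traces overlap. You would still have to show that these connected unions form a neighborhood basis of $z$ in $\hat X$.

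Your (5) matches the paper's argument.
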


We will require the following lemma, which says that the closures $\ovl{s_{ij}}$ ``almost'' satisfy the cocycle condition.

\begin{lemma}\label{lem:IdentificationChain}
    Let $(U_i)$ be a boundary-tame cover, and suppose that $x \in U_i, y \in \ovl{U_j}$. If $x \sim y$ in $\hat{X}$, then we have $\ovl{s_{ij}}(x) = y$.
\end{lemma}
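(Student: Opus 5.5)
The relation $R$ is generated by the elementary identifications $z \sim \ovl{s_{kl}}(z)$ for $z \in \ovl{U_{kl}}$. So $x \sim y$ means there is a finite chain
\[ x = z_0 \in \ovl{U_{i_0}},\ z_1 \in \ovl{U_{i_1}},\ \ldots,\ z_n = y \in \ovl{U_{i_n}}, \]
with $i_0 = i$, $i_n = j$, and $z_{m+1} = \ovl{s_{i_m i_{m+1}}}(z_m)$ for each $m$. (Identifications within a single chart are trivial, since $\ovl{s_{ii}} = \id$.) I will argue by induction on the length $n$ of the chain. The key point is that the starting point $x$ lies in the genuine interior $U_i$ and not on its boundary.

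First consider one step. Suppose $x \in U_i$ and $z = \ovl{s_{ik}}(x) \in \ovl{U_k}$. Then $x \in \ovl{U_{ik}} \cap U_i$. I claim that in fact $x \in U_{ik}$ unless $z \in \partial U_k$.
\begin{itemize}
\item If $z \in U_k$, then by Hausdorffness of $X$ we have $\ovl{\Gamma(s_{ik})} \cap (U_i \times U_k) = \Gamma(s_{ik})$, as in the proof of \cref{lem:BoundaryInteriorIdentifications}(1). Hence $x \in U_{ik}$ and $z = s_{ik}(x)$ is a genuine point of $X$.
\item If $z \in \partial U_k$, we use the symmetric form of \cref{lem:BoundaryInteriorIdentifications}(1): with the roles of $i$ and $k$ swapped, $(z, x) \in \partial\Gamma(s_{ki})$ and $x \in U_i$, so $z$ is $i$-interior in $U_k$.
\end{itemize}

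For the induction I track the "current" point $w_m \in X$ represented by the chain. As long as the $z_m$ lie in the open charts, $w_m$ stays the same point of $X$, and the cocycle condition on genuine overlaps gives $z_m = s_{i i_m}(x)$. The delicate case is when the chain passes through a boundary point $z_m \in \partial U_{i_m}$. By the previous paragraph, $z_m$ is then $i_{m-1}$-interior, so $U_{i_{m-1} i_m}$ is a boundary neighborhood of $z_m$.

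To continue, I would use the continuous extensions on the closures. By \cref{prop:ctsext} and boundary-connectedness of the triple intersections, I would establish a closure-level cocycle identity
\[ \ovl{s_{kl}} \circ \ovl{s_{ik}} = \ovl{s_{il}} \]
at points where all three maps are defined. I would prove this by taking limits along $U_{ikl}$, which is dense near such points because the relevant overlaps are boundary neighborhoods. This gives $z_m = \ovl{s_{i i_m}}(x)$ at every stage, and at the end $y = \ovl{s_{ij}}(x)$.

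The main obstacle is exactly that the triple overlap $U_{ikl}$ may fail to accumulate at the boundary point in question, so the closure-level cocycle identity is not automatic. To handle this, I would rely on the fact that $x \in U_i$ is an honest point of $X$. Any boundary point $z$ of a chart $U_k$ identified with $x$ is a limit of points $s_{ik}(x_n)$ with $x_n \to x$ in $U_i$. Since $s_{ik}(x_n)$ converges in $\ovl{U_k}$ while $x_n$ converges in $X$, this forces $z$ to be an actual point of $X$ seen from $U_k$'s closure. I would then make this rigorous using Hausdorffness of $X$ together with the triangulation argument of \cref{lem:BoundaryInteriorIdentifications}, arguing that the sequence $x_n$ eventually lies in $U_{ikl}$. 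If that fails, the fallback is to refine the triangulations so that the extended transition maps identify closed simplices with closed simplices and check the identity simplex by simplex.
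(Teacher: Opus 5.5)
Your route is the paper's. You induct on the length of the chain. You use \cref{lem:BoundaryInteriorIdentifications}(1) with the roles of the charts reversed: if $x\in U_i$ and $z=\ovl{s_{ik}}(x)\in\partial U_k$, then $z$ is $i$-interior. Then you collapse two steps into one via the cocycle condition on a triple overlap. There is no missing idea. However, your own observation that ``the relevant overlaps are boundary neighborhoods'' already disposes of what you call the main obstacle. The repairs you propose in the last paragraph are unnecessary, and one of them is incorrect.

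Run the induction relative to $x$, as the paper does. Suppose $z:=z_m=\ovl{s_{ik}}(x)$ with $k=i_m$, and $z_{m+1}=\ovl{s_{kl}}(z)$ with $l=i_{m+1}$.

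\begin{itemize}
\item If $z\in U_k$, then $z=s_{ik}(x)\in U_{ki}$. If $z\in\partial U_k$, then $z$ is $i$-interior. Either way there is a neighborhood $N$ of $z$ with $U_k\cap N\subset U_{ki}$.
\item Since $U_{kl}\subset U_k$ and $z\in\ovl{U_{kl}}$, every point of $U_{kl}\cap N$ lies in $U_{ki}$. So the triple overlap accumulates at $z$ automatically.
\item Pick $w_n\in U_{kl}\cap N$ with $w_n\to z$. By the cocycle condition, $x_n:=s_{ki}(w_n)\in U_{il}$ and $s_{il}(x_n)=s_{kl}(w_n)$. Moreover $x_n\to\ovl{s_{ki}}(z)=x$ and $s_{kl}(w_n)\to z_{m+1}$.
\item Continuity of $\ovl{s_{il}}$ on $\ovl{U_{il}}$ (\cref{prop:ctsext}, as $U_{il}$ is boundary-connected) gives $\ovl{s_{il}}(x)=z_{m+1}$. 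That is the whole inductive step.
\end{itemize}

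Several things in your write-up need correcting.

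\begin{itemize}
\item In your tracking paragraph you apply the one-step analysis to $z_{m-1}$ and conclude that $z_m$ is $i_{m-1}$-interior. That requires $z_{m-1}\in U_{i_{m-1}}$, which fails once the chain has passed through a boundary point. You should instead apply it to $x$ via the inductive identity $z_m=\ovl{s_{ii_m}}(x)$, so that $z_m$ is $i$-interior. The set that is a boundary neighborhood of $z_m$ is then $U_{i_m i}\subset U_{i_m}$, not $U_{i_{m-1}i_m}$.
\item Do not try to prove $\ovl{s_{kl}}\circ\ovl{s_{ik}}=\ovl{s_{il}}$ at all points where the three maps are defined. It is only justified where an overlap through the middle point is a boundary neighborhood of it. This is why the lemma assumes $x\in U_i$, and why the proof of \cref{prop:IntrinsicClosureProperties}(2) only shortens chains at such points.
\item An arbitrary sequence $x_n\to x$ with $s_{ik}(x_n)\to z$ need not eventually lie in $U_{ikl}$, because $U_{kl}$ may be a thin radial sector at $z$. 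You must choose the sequence inside $U_{kl}$ and pull it back, as above.
\item No triangulation or simplex-by-simplex fallback is needed.
\end{itemize}
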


\begin{proof}
    We prove the following by induction: If
    \[ x \xr{s_{ik_1}} z_1 \to \cdots \to z_m  \]
    is a chain of identifications with $z_i \in \ovl{U_{k_i}}$, then we have $\ovl{s_{ik_m}}(x) = z_m$. Indeed, by \cref{lem:BoundaryInteriorIdentifications}, $U_{k_1 i}$ is a boundary neighborhood of $z_1$. Since $z_1$ is identified to $z_2$, we must have $z_1 \in \partial U_{k_1 k_2}$. It follows that $U_{k_1 i} \cap U_{k_1 k_2}$ is nonempty near $z_1$, and by the cocycle condition we must have
    \[ \ovl{s_{i k_2}}(x) = z_2. \]
    This shortens the chain and we conclude by induction. \qedhere
\end{proof}

\begin{proof}[Proof of \cref{prop:IntrinsicClosureProperties}]
    \leavevmode\par
    \textbf{(1)}: Suppose that $x \in U_i, y \in U_j$ are identified in $\hat{X}$. By \cref{lem:IdentificationChain}, we have $\ovl{s_{ij}}(x) = y$. But since $X$ is Hausdorff, this implies that $s_{ij}(x) = y$, so $x$ and $y$ are already identified in $X$.

    \textbf{(2)}: The relation $R: x \sim \ovl{s_{ij}}(x)$ is closed, so it suffices to show that its iteration stabilizes after finitely many steps. Thus, suppose we have a chain
    \[ z_1 \to z_2 \to \cdots \to z_m, \]
    where $z_k \in \ovl{U_{i_k}}$. If some $z_k$ lies in the interior $U_{i_k}$, then by \cref{lem:IdentificationChain}, we reduce to a chain $z_1 \to z_k \to z_m$ of length at most 3. Otherwise, we may assume that every $z_k$ lies along the boundary. If either $U_{i_k, i_{k-1}}$ or $U_{i_k, i_{k+1}}$ forms a boundary neighborhood of $z_k$, then they must meet near $z_k$. We thus have
    \[ \ovl{s_{i_{k-1}, i_{k+1}}}(z_{k-1}) = z_{k+1}, \]
    and we can shorten the chain by omitting $z_k$. But by \cref{prop:BoundaryAgreement}, for every $U_{ij}$ there are only finitely many boundary points where $U_{ij}$ is not a boundary neighborhood. Taking the union gives an explicit finite set $E \subset \coprod \ovl{U_i}$, and any chain of minimal length has $z_k \in E$ for $1 < k < m$. Removing duplicates, the chain has length at most $|E|+2$.

    \textbf{(3)}: By \cref{lem:BoundaryTameRefinements}, we reduce to showing the following: If $\mcV$ is a boundary-tame cover and $\mcU$ is a boundary-tame refinement, then the canonical map
    \[ \hat{X}^{\mcU} \to \hat{X}^{\mcV} \]
    is an isomorphism. It is clearly surjective, and both spaces are compact Hausdorff, so we only need to show injectivity. Choose open sets $U_i \subset V_j, U_{i'} \subset V_{j'}$: It is enough to show that if $x \in \ovl{U_i}, x' \in \ovl{U_{i'}}$ are identified via $\ovl{s_{j, j'}^{\mcV}}$, then they are identified in $\hat{X}^{\mcU}$. Since $\mcU$ forms a cover near $x$, there exists some $U_a$ with $U_a \cap V_{j, j'} \neq \emptyset$ near $x$. By the same token, there is some $U_b \subset V_{j'}$ which meets the image of $U_a$.

    Therefore, we reduce to the case when $j = j'$. If $U_i \cap U_{i'}$ is nonempty near $x$, this case is trivial. But since $V_j$ is connected near $x$, they must be linked by a chain of such overlaps and we conclude.

    \textbf{(4)}: If $X$ is not boundary-connected, then there exists a boundary point $y \in \hat \partial X$, a neighborhood $V \ni y$, and a decomposition
    \[ V \cap X = V_1 \cup \cdots \cup V_n, \]
    with $y$ corresponding to a point $y_i \in \partial V_i$. We can complete $(V_i)$ to a cover of $X$ by adjoining open sets whose closures avoid $y$ in $\hat{X}$. If $(U_i)$ is a boundary-tame refinement of $(V_i)$, then it defines the same intrinsic closure $\hat{X}$. But in this space, the points $y_i$ are not identified with each other, giving a contradiction.

    \textbf{(5)}: Let $f \colon X \to X'$ be a map of Riemann surfaces, let $(V_i)$ be a boundary-tame cover of $X'$, and let $(U_j)$ be a boundary-tame cover of $X$ refining $(f^{-1}(V_i))$. Then there are natural maps
    \[ U_j \to V_{i(j)}, \]
    and by \cref{prop:ctsext} they extend to closures. This is evidently compatible with transitions, so we obtain a natural map
    \[  \hat{f} \colon \hat{X} \to \hat{X'}. \]
    Since $X, X'$ are dense in their intrinsic closures, the map $\hat{f}$ is the unique continuous extension of $f$. From this it follows that $f \mapsto \hat{f}$ is functorial. \qedhere
\end{proof}

\subsection{Proof of the compactification theorem (Theorem~C)}

Now, assume that $S$ is unary-analytic and contains $\bbR_{\an}$. If $y \in \hat \partial X$, a \emph{boundary chart} for $y$ is a boundary neighborhood isomorphic to some open $U \subset \bbC$. The technical heart of \cref{thm:compactification} is the following:

\begin{proposition}\label{prop:EmbeddingConditions}
    The following are equivalent:
    \begin{enumerate}
        \item There exists an open embedding $X \inj Y$ into a compact Riemann surface, such that $\hat{X} \to Y$ is a closed embedding.
        \item There exists any open embedding $X \inj Y$.
        \item Every $y \in \hat \partial X$ has a boundary chart.
    \end{enumerate}
\end{proposition}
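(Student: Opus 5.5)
I will prove the cycle of implications $(1) \Rightarrow (2) \Rightarrow (3) \Rightarrow (1)$. The first implication is trivial.

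\textbf{$(2) \Rightarrow (3)$.} Suppose $X \inj Y$ is an open embedding, definable as always. By functoriality (\cref{prop:IntrinsicClosureProperties}(5)), the inclusion extends to a continuous map $\hat{\iota} \colon \hat{X} \to \hat{Y} = Y$. Fix $y \in \hat\partial X$ and put $p = \hat\iota(y) \in \partial X \subset Y$. The map $\hat\iota$ is proper with finite fibres over $p$; this follows from \cref{prop:ctsext} applied chartwise, since each boundary-tame chart has a unique limiting value at each of its boundary points. Hence we can choose a small disk chart $D \ni p$ and a neighborhood $W$ of $y$ in $\hat X$ such that $W \cap \hat\iota^{-1}(p) = \{y\}$. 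Then $W \cap X$ maps injectively, being a restriction of the embedding, into $D \subset \bbC$. This gives a boundary chart for $y$. One must check that $W\cap X$ is a boundary neighborhood in the sense used; \cref{prop:IntrinsicClosureProperties}(4) lets us take $W$ with $W\cap X$ connected.

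\textbf{$(3) \Rightarrow (1)$.} This is the substantive direction. I would build $Y$ by gluing. By compactness of $\hat\partial X$, choose finitely many boundary charts $\phi_k \colon W_k \cap X \to U_k \subset \bbC$, where $W_k$ is a neighborhood of $y_k$ in $\hat X$. By \cref{prop:ctsext} each $\phi_k$ extends continuously to $\ovl{W_k}$. The extension is injective: the image of a boundary point is determined by its intrinsic identification class, and \cref{lem:IdentificationChain} together with the construction of $\hat X$ identifies points exactly when the extended transition maps do. So $\phi_k$ embeds $W_k$ homeomorphically onto $\ovl{U_k}$ locally. On overlaps $W_k \cap W_l$, the transition $\phi_l\circ\phi_k^{-1}$ is holomorphic on the interior and extends to the boundary curve. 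Unary-analyticity and \cref{prop:GenericHolExtension} show that it extends holomorphically past all but finitely many boundary points. We then remove those finitely many points from overlaps using \cref{lem:FiniteOmissionsLemma}, keeping them covered by a single chart, and we arrange no triple intersections via \cref{lem:NoTripleIntersect}. With this done, \cref{lem:CoverExtension} produces open sets $V_k \supset \ovl{U_k}$ in $\bbC$ and holomorphic gluing maps on $V_k\cap V_l$ satisfying the cocycle condition, where the cocycle condition is vacuous since there are no triple overlaps. Gluing $X$ with the $V_k$ along these maps gives a definable Riemann surface $X'$ containing $\hat X$ as a compact subset. We shrink it to a relatively compact neighborhood of $\hat X$ whose boundary consists of finitely many smooth closed curves, and ensure Hausdorffness by shrinking the $V_k$.

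Finally, cap off. $X'$ is a finite-type bordered surface, so its ends are annuli near the boundary curves. We glue a disk to each boundary circle analytically and get a compact Riemann surface $Y$. Since $\ovl{X} = \hat X$ is compactly contained in $X'$ and $S \supset \bbR_{\an}$, the embedding restricted to $X$ is definable. Here I use the same argument as in \cref{prop:OverconvergentImpliesTrivial}: an analytic object restricted to a relatively compact definable subset is definable. The resulting $\hat X \to Y$ is then a closed embedding by compactness.

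I expect the main obstacle to be the injectivity and extension step for the boundary charts. One must show that the boundary chart's continuous extension really separates distinct points of $\hat\partial X$. One must also show that overlap transitions extend across all but finitely many boundary points in a way compatible enough to make the glued space Hausdorff. The triangulation lemmas of \cref{sec:DefinableTopology} should handle most of this, but care is needed at vertices where several charts meet.
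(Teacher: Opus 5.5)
Your $(2)\Rightarrow(3)$ is fine and matches the paper. Injectivity of $X\cap\hat\iota^{-1}(D)\to D$ comes straight from $\iota$ being an embedding, so the finite-fibre step is unnecessary; it is also not something \cref{prop:ctsext} gives you. Capping off analytic boundary curves instead of invoking Stout's theorem is a harmless variation. So is gluing $X$ to thickened charts directly rather than first embedding a collar as in \cref{lem:CollarEmbeddingSuffices}.

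The problem is $(3)\Rightarrow(1)$. Your claim that the continuous extension of a boundary chart to $\ovl{W_k}$ is injective is false as stated. Take $X=\bbD\setminus[0,1)$ and the boundary chart at the tip given by the inclusion $D(0,\varepsilon)\setminus[0,\varepsilon)\inj\bbC$. The extension sends the two sides of the slit, which are distinct points of $\hat X$, to the same points of $\bbC$. \cref{lem:IdentificationChain} only controls identifications involving interior points, so it says nothing here. You would at least have to refine to a boundary-tame boundary cover first (as in \cref{prop:BoundaryCoverhasTameRefinement}) and then actually prove injectivity.

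The more serious gap is the gluing itself. \cref{lem:CoverExtension} works inside an ambient compact space $Z\supset X$. Taking $Z=\hat X$, it only yields subsets of $\hat X$. It therefore cannot produce thickenings $V_k$ in $\bbC$ past $\hat\partial X$, nor control what the extended transitions $\tilde s_{kl}$ do there.

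The real danger is that $\tilde s_{kl}$, defined on a neighborhood of $\ovl{U_{kl}}$ in $\bbC$, identifies points of $\ovl{U_k}$ with points of $\ovl{U_l}$ that are distinct in $\hat X$. If that happens, the glued space is non-Hausdorff or $\hat X\to X'$ fails to be injective. Ruling this out near $\hat\partial X$ is precisely where hypothesis (3) has to be used. It cannot be achieved by ``shrinking the $V_k$'' while keeping $V_k\supset\ovl{U_k}$. In fact $V_k$ must thicken only the part of $\ovl{U_k}$ lying over $\hat\partial X$: thickening across arcs of $\partial U_k$ that lie in $X$ already makes the gluing with $X$ non-Hausdorff.

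The paper's proof is mostly devoted to exactly this step. It first arranges that $(B_i)$ is boundary-tame, that the $\ovl{B_i'}$ have no triple intersections, and that the transitions are overconvergent. It then shows that near each $x\in\ovl{B_{ij}}\cap\hat\partial X$ one has $\Gamma(\tilde s_{ij})\cap(\ovl{B_i}\times\ovl{B_j})\subset\Gamma(\ovl{s_{ij}})$, using the radial-interval description and the boundary-chart property. Next it chooses neighborhoods $\Theta_{ij}$, and by a tube-lemma argument takes $V_i\supset B_i^\partial$ with $V_i\cap\tilde s_{ij}^{-1}(V_j)\Subset\Theta_{ij}$. That is what makes the glued surface Hausdorff and $\hat\partial X\to X'$ injective.

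You correctly flag this as the main obstacle, but the proposal does not supply it, so $(3)\Rightarrow(1)$ is not established.
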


The implication $(1 \to 2)$ is trivial, and the implication $(2 \to 3)$ is relatively straightforward: Suppose that an embedding $f \colon X \inj Y$ exists. By functoriality, there is a corresponding map
\[ \hat{f} \colon \hat{X} \to \hat{Y}, \]
and manifestly $\hat{Y} = Y$, since $Y$ is compact. If $x \in \hat{X}$, and $\hat{f}(x) = y$, then let $U$ be a chart for $Y$ near $y$. The inverse image $(\hat{f})^{-1}(U)$ is a neighborhood of $x$, and since the original map $f$ is an open embedding, we have
\[ X \cap (\hat{f})^{-1}(U) \subset U. \]
This gives a boundary chart.

It remains only to prove $(3 \to 1)$. The idea of the proof is very simple: First, fix a cover of $X$ near the boundary $\hat \partial X$. Using results from \cref{sec:DefinableTopology}, pass to a cover whose transition functions are overconvergent, and then use the extended transition functions to glue charts past the boundary. The choice of neighborhoods, however, requires some care.

For the rest of the section, keep the standing assumption that every $y \in \hat \partial X$ has a boundary chart.

\begin{figure}[ht]
  \centering
  \includegraphics[width=\linewidth]{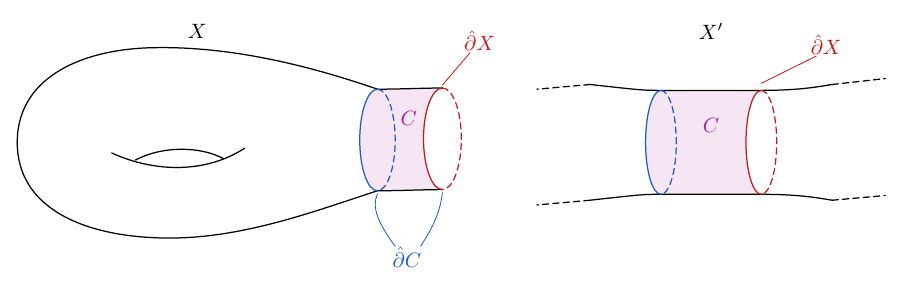}
  \caption{The idea of \cref{lem:CollarEmbeddingSuffices}.}
  \label{fig:boundary-collar}
\end{figure}

\begin{lemma}\label{lem:CollarEmbeddingSuffices}
    Let $C \subset X$ be a boundary collar.
    \begin{enumerate}
        \item The boundary $\hat \partial X$ is naturally identified with a clopen subset of $\hat \partial C$.
        \item Suppose that there exists a Riemann surface $X'$, and an embedding $C \inj X'$ with the following property: $\hat \partial X \subset \hat{C}$ is mapped into $X'$, and the map is a closed embedding. Then condition (1) of \cref{prop:EmbeddingConditions} follows.
    \end{enumerate}
\end{lemma}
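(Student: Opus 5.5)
For part (1), I will use functoriality of intrinsic closure (\cref{prop:IntrinsicClosureProperties}(5)) applied to the inclusion $C \inj X$. This gives a continuous map $\hat{C} \to \hat{X}$ extending the inclusion. Write $C = V \cap X$ with $V$ a definable neighborhood of $\partial X$. I will compute both spaces with one boundary-tame cover. First choose a boundary-tame cover of $X$ refining $\{C, X \setminus \ovl{W}\}$, where $W$ is a smaller collar with $\ovl W \cap X\subset C$. Then the charts $U_i \subset C$ are exactly those whose closures reach $\hat\partial X$. Restricting to these charts gives a boundary-tame cover of $C$; refining if necessary, \cref{lem:BoundaryTameRefinements} ensures the result does not depend on the choice, by \cref{prop:IntrinsicClosureProperties}(3).

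In $\hat{C} = \coprod_{U_i\subset C}\ovl{U_i}/R$, a boundary point of some $\ovl{U_i}$ maps either into $X \setminus C$ or into $\hat\partial X$. Points over $\hat\partial X$ have no charts of $X$ outside $C$ near them. By \cref{lem:IdentificationChain}, the identifications among them in $\hat X$ are exactly those made in $\hat C$, since all relevant charts belong to the cover of $C$. Hence $\hat\partial X$ is homeomorphic to the subset $\hat\partial C \cap (\text{preimage of } \hat\partial X)$. This set is closed as a preimage of a closed set. It is open in $\hat\partial C$ because its complement is the preimage of $\ovl{X\setminus W}$-type points lying in $X$, namely $\partial C\cap X$, which is closed and disjoint from the preimage of $\hat\partial X$. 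So it is clopen.

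For part (2), I will glue. Let $\iota\colon C \inj X'$ with $\hat\iota$ restricted to $\hat\partial X$ a closed embedding into $X'$. Take an open neighborhood $N \subset X'$ of $\iota(\hat\partial X)$ such that $N \cap \ovl{\iota(C)}$ avoids the image of the complementary clopen piece $\hat\partial C \setminus \hat\partial X$. This is possible by compactness and the clopen decomposition from (1). Shrinking $N$, arrange $N\cap\iota(C)\subset \iota(C')$ for a smaller collar $C'$, so that $N\setminus \iota(C)$ together with $X$ overlaps only along $N\cap \iota(C)$. Then define $Y_0 = X \cup_{N\cap\iota(C)} N$. I will check Hausdorffness using the closed-embedding hypothesis: a point of $\partial_{N}(\iota C)$ that is the limit of points of $X$ must come from $\hat\partial X$, and these lie in $N$. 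This gives a Riemann surface $Y_0 \supset X$ in which $\hat X \to Y_0$ is a closed embedding. Its image $\hat{X}$ is compact, so $X$ is relatively compact in $Y_0$.

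Finally I will compactify $Y_0$. Shrink $Y_0$ to a relatively compact definable neighborhood of $\hat X$ whose boundary consists of finitely many real-analytic circles; this uses triangulation, \cref{thm:Triangulation}. Then cap each boundary circle with a disk, which is definable since $S\supseteq \bbR_{\an}$. The result is a compact Riemann surface $Y$ into which $\hat X$ still closed-embeds. I expect the main obstacle to be the Hausdorff verification in the gluing step, together with checking definability of $N$ and of the gluing, which need finitely many definable charts.
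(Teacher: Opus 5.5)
Your outline matches the paper's: functoriality of $C \inj X$ and a clopen splitting $\hat\partial C = D \sqcup E$ for (1), where $D$ lies over $\hat\partial X$ and $E$ over $\partial C \cap X$; and gluing $X$ to a neighborhood of $\hat\iota(D)$ in $X'$ for (2). However, the shrinking step in (2) fails as stated.

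You assert that, by compactness and the clopen decomposition, one can choose $N \supset \hat\iota(D)$ with $N \cap \iota(C) \subset \iota(C')$ for a smaller collar $C'$. The splitting $D \sqcup E$ lives in $\hat C$, and nothing in the hypotheses keeps $\hat\iota(E)$ away from $\hat\iota(D)$. For example, let $X = \bbD$, $C = \{1/2 < |z| < 1\}$, $X' = \bbC^*/2^{\bbZ}$, and let $\iota$ be the quotient map. Then $\iota$ is a definable open embedding and $\hat\iota$ embeds $D = \{|z| = 1\}$ as a circle. But $\hat\iota$ also sends $E = \{|z| = 1/2\}$ onto the same circle. So every such $N$ contains $\iota$ of points of $C$ near $|z| = 1/2$. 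Gluing $X$ to $N$ along $\iota^{-1}(N)$ is then not Hausdorff: a point $x \in X$ with $|x| = 1/2$ cannot be separated from $\hat\iota(x) \in N$.

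The repair is to cut down the gluing region on the $\hat C$ side, as the paper does when it restricts $U$ so that $\ovl U \cap \hat\partial C \subset \hat\partial X$.
\begin{itemize}
\item Choose an open $O \supset D$ in $\hat C$ with $\ovl O \cap E = \emptyset$.
\item Shrink $N$ to miss the compact set $\iota(\ovl O \setminus O) \subset \iota(C)$. This set is disjoint from $\hat\iota(D)$ because $\iota$ is an open embedding.
\item Glue along $U = \iota^{-1}(N) \cap O$.
\end{itemize}
Now suppose $u_n \in U$, $u_n \to x \in X$ and $\iota(u_n) \to w \in N$. The limit of $(u_n)$ in $\hat C$ lies in $\ovl O$, hence in $C$. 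So $x \in C$ and $w = \iota(x)$, and $w \in N$ forces $x \in O$, i.e.\ $x \in U$. Thus the graph of $\iota|_U$ is closed in $X \times N$, and the glued surface is Hausdorff.

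Three smaller points.
\begin{itemize}
\item In (1), the charts of your cover lying in $C$ cover $\ovl W \cap X$ but need not cover $C$, so they are not a cover of $C$ as claimed. Pass to the smaller collar $C_0$ that they do cover. Its boundary copy $D_0$ maps onto $D$, so injectivity of $D_0 \to \hat\partial X$ gives injectivity of $D \to \hat\partial X$.
\item \cref{lem:IdentificationChain} concerns an interior point $x \in U_i$, so it does not apply here. What you actually use is that any chain of identifications starting over $\hat\partial X$ stays over $\hat\partial X$, and hence only involves charts contained in $C$.
\item Your capping step is a legitimate, more elementary substitute for the paper's appeal to Stout's theorem. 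But triangulation only gives piecewise-definable boundary circles, not real-analytic ones. Before gluing in disks, uniformize a one-sided ring-domain collar of each circle onto a round annulus (or smooth the circles first). Definability of $X \inj Y$ then follows, as in the paper, from relative compactness of $X$ in $Y_0$ and $S \supseteq \bbR_{\an}$.
\end{itemize}
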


\begin{proof}
    \textbf{(1)}: By functoriality, there is a natural map $\hat{i} \colon \hat{C} \to \hat{X}$, and we let
    \[ D = \hat{i}^{-1}(\hat \partial X). \]
    It follows that $D$ is closed. Meanwhile, since $C$ is a boundary collar, there exists an open neighborhood $C'$ of $\hat \partial X$ with $C' \cap X = C$, and we have
    \[ \hat{i}^{-1}(C') = C \cup D. \]
    It follows that $D$ is relatively open in $\hat \partial C$, and therefore clopen. Furthermore, the map $D \to \hat \partial X$ is an isomorphism, because $C$ is a boundary collar.

    \textbf{(2)}: Let $f \colon C \to X'$ be the embedding, and choose a neighborhood $U'$ for the image of $\partial X$ in $X'$. If we let $U = U' \cap C$, then by part (1) we can restrict and assume that
    \[  \ovl{U} \cap \hat \partial C \subset \hat \partial X. \]
    Define a space $X''$ with charts $X, U'$, glued in the natural way along $U$. By the condition on $\ovl{U}$, $X''$ is Hausdorff, and therefore a Riemann surface. We thus obtain a definable embedding $X \inj X''$ with the following properties:
    \begin{enumerate}[label=(\alph*)]
        \item Under functoriality, $\hat{X}$ lands in $X''$.
        \item The map $\hat \partial X \to X''$ is a closed embedding.
    \end{enumerate}
    The Riemann surface $X''$ has finite type, so by Stout's theorem there exists an embedding $X'' \inj Y$ into a compact Riemann surface. By condition (a), $X$ is relatively compact in $X''$; since $S$ contains $\bbR_{\an}$, the composition
    \[ X \inj X'' \inj Y \]
    is $S$-definable. Condition (b) shows that the map $\hat{X} \to Y$ is injective, and therefore a closed embedding. \qedhere
\end{proof}

\begin{remark}[Notation]
    For the rest of the section, let $(A_i)$ denote a boundary cover of $X$ by charts. Let $(A_i')$ be some open cover of $\hat \partial X$ with
    \[ A_i' \cap X = A_i, \]
    and let $A_i^{\partial} = A_i' \cap \hat \partial X$ be the corresponding open cover of the boundary itself. We adopt the corresponding notation for other boundary covers as well.
\end{remark}

\begin{proposition}\label{prop:BoundaryCoverhasTameRefinement}
    After refinement, we can assume that the $\ovl{A_i'}$ have no triple intersections, and that $(A_i)$ is boundary-tame.
\end{proposition}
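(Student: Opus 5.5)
We work inside the intrinsic closure: take $Z = \hat{X}$, which is compact Hausdorff and contains $X$ as a dense, boundary-connected open subset by \cref{prop:IntrinsicClosureProperties}. The standing assumption supplies a boundary chart for each $y \in \hat \partial X$. The plan is to combine the two-ply refinement of \cref{lem:NoTripleIntersect} with the star refinement used in \cref{lem:BoundaryTameRefinements}, arranging that a single triangulation does both jobs.

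\textbf{Step 1.} First make the boundary cover consist of charts. For each $y \in \hat \partial X$ choose a boundary chart $W_y$, that is, an open neighborhood $W_y'$ of $y$ in $\hat X$ with $W_y' \cap X$ isomorphic to an open subset of $\bbC$. By compactness of $\hat \partial X$, finitely many of these cover it. Intersecting with the given $(A_i')$, we may assume each $A_i'$ lies inside some $W_y'$, so each $A_i$ is a chart.

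\textbf{Step 2.} Now triangulate $\hat X$ using \cref{thm:Triangulation}, compatibly with $X$, $\hat\partial X$, all the $A_i'$, and the closures of the graphs of the transition maps $s_{ij}$. As in the proof of \cref{lem:BoundaryTameRefinements}, we also require that the continuous extensions $\ovl{s_{ij}}$ carry simplices to simplices. We then subdivide until two conditions hold:
\begin{enumerate}
    \item no 2-simplex has all its vertices in $\hat \partial X$;
    \item the non-adjacency condition of \cref{lem:BoundaryTameRefinements} holds, i.e., distinct points identified with the same point never share a neighbouring vertex.
\end{enumerate}
Since $X$ is boundary-connected in $\hat X$, the transition maps between these charts are induced by the identity of $\hat X$. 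So we may work directly with open stars in $\hat X$ and avoid gluing chart closures by hand.

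\textbf{Step 3.} The refinement is the cover of $\hat\partial X$ by the stars $\St_{\hat X}(x)$, for $x \in (\hat \partial X)_0$. Each star lies in some $A_i'$ because the triangulation is compatible with the $A_i'$; this uses that the star of a boundary vertex in a fine enough subdivision is contained in any open set containing the vertex. By condition (1), exactly as in part (1) of \cref{lem:NoTripleIntersect}, these stars have no triple intersections. Shrinking each star slightly, so that its closure stays inside the original star, gives closures $\ovl{A_i'}$ without triple intersections, and the shrunken sets still cover the compact set $\hat\partial X$.

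\textbf{Step 4.} It remains to check boundary-tameness. The restriction of each star to $X$ is $\St_X(x)$. Its boundary-connectedness in $\hat X$ follows from \cref{lem:StarsAreBoundaryConnected} together with boundary-connectedness of $X$, using the simplicial criterion of \cref{lem:SimplicialCharacterizationofBoundaryConnectedness}. A double intersection of two vertex stars is the star of the edge $\{x,x'\}$, or is empty, so it is boundary-connected by the same lemma.

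The main obstacle is that boundary-connectedness must hold in $\bbC\subset\bbP^1$ via the chart, not merely in $\hat X$. At points of $\ovl{A_i}$ lying in $X$ rather than in $\hat\partial X$, the chart embedding might fold the boundary. I would handle this as in \cref{lem:BoundaryTameRefinements}. The triangulation is made compatible with the closures of the chart maps, which extend continuously by \cref{prop:ctsext}. The non-adjacency condition in (2) then ensures that two distinct boundary points of a star are never identified. Consequently the closure of the star in the chart is homeomorphic to its closure in $\hat X$, and the punctured-star criterion transfers. The shrinking in Step 3 should also be done simplicially, by passing to a barycentric subdivision and taking stars there, so that these properties are preserved.
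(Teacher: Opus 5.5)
Your proposal is essentially the paper's argument. The paper also takes the star refinement from \cref{lem:BoundaryTameRefinements} and gets boundary-tameness from \cref{lem:StarsAreBoundaryConnected} together with the non-adjacency subdivision. It also excludes triple intersections by subdividing until no $2$-simplex has all its vertices on the boundary. Triangulating $\hat X$ once, instead of the separate chart closures $\ovl{A_i}$, is only a change of bookkeeping. It makes the triple-intersection count cleaner and makes the chart-folding issue explicit.

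\textbf{The step that fails as written is the closing ``simplicial shrinking.''} Let $[a,b]$ be an edge of $\hat\partial X$ with barycentre $m_{ab}$. In the barycentric subdivision, $m_{ab}$ is a vertex and lies in the open star of no original vertex. So the stars of the original vertices no longer cover $\hat\partial X$. If you instead take the stars of all boundary vertices of the subdivision, the closed stars of $a$, $m_{ab}$ and $b$ all contain $m_{ab}$, so triple intersections of closures reappear. The same thing already happens for the unshrunk stars: along a boundary path $a,b,c$, all three closed stars contain $b$. The paper's one-line proof only addresses the open stars, so you are right that something must be done here.

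\textbf{A shrinking that works uses barycentric coordinates.} Let $t_x$ be the barycentric coordinate of the vertex $x$, so that $\St_{\hat X}(x)=\{t_x>0\}$. Fix $0<\epsilon<1/2$ and put $N_x=\{t_x>\epsilon\}$.
\begin{enumerate}
    \item These sets still cover $\hat\partial X$, because $t_a+t_b=1$ on a boundary edge $[a,b]$.
    \item Their closures $\{t_x\ge\epsilon\}$ lie inside the open stars, so they have no triple intersections.
    \item On each simplex containing $x$, consider the map $t_x\mapsto \epsilon+(1-\epsilon)t_x$, keeping the point of the opposite face fixed. This is a homeomorphism from the closed star onto $\ovl{N_x}$, and it maps each open simplex of $\St_{\hat X}(x)$ into itself. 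Hence it carries $\St_X(x)$ onto $N_x\cap X$.
    \item Consequently $N_x\cap X$ inherits boundary-connectedness from the unshrunk star. The extended chart is still injective on $\ovl{N_x}\subset\ovl{\St_{\hat X}(x)}$.
    \item For $N_x\cap N_y=\{t_x>\epsilon,\ t_y>\epsilon\}$, a direct local check gives boundary-connectedness, since the set is convex in each simplex.
\end{enumerate}
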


\begin{proof}
    As in \cref{lem:BoundaryTameRefinements}, we take the refinement
    \[ \{ A_i^x := \St_{A_i} (x): i \in I, x \in (\partial A_i)_0 \}. \]
    Since the original cover $(A_i)$ was a boundary cover, one observes that the refined cover $(A_i^x)$ is as well. A triple intersection would be a 2-simplex all of whose vertices lie along $\partial A_i$; after subdividing once, there are none. \qedhere
\end{proof}

Let $s_{ij} \colon A_{ij} \to A_{ji}$ be the transition functions. Since the $A_{ij}$ are boundary-connected, it follows that each $s_{ij}$ is overconvergent except at finitely many boundary points. By \cref{lem:FiniteOmissionsLemma}, there exists a refined boundary cover $(B_i)$ making the $s_{ij}$ overconvergent. The sets $(B_i')$ have no triple intersections, and taking a further refinement as in \cref{prop:BoundaryCoverhasTameRefinement}, we can also assume that $(B_i)$ is boundary-tame.

Let $\Omega_{ij} \supset \ovl{B_{ij}}$ be a domain of overconvergence, and let $\tilde s_{ij}$ be the extension of the transition function. Then $\tilde s_{ij}(\Omega_{ij})$ is open and contains $\ovl{B_{ji}}$. This shows, along with the cocycle condition, that we can choose the domains $\Omega_{ij}$ to match, and require the $\tilde s_{ij}$ to be mutually inverse biholomorphisms.

\begin{lemma}
    If $x \in \ovl{B_{ij}} \cap \hat \partial X$, then there is a neighborhood $U_x$ of $x$ on which we have
    \[ \Gamma(\tilde s_{ij}) \cap (\ovl{B_i} \times \ovl{B_j}) \subset \Gamma(\ovl{s_{ij}}). \]
    In other words, all identifications are from $\ovl{B_{ij}}$ to $\ovl{B_{ji}}$.
\end{lemma}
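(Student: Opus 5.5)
Here $x$ lies on the intrinsic boundary, and we regard $\ovl{B_i}$ (closure in $\hat{X}$) as embedded in $\bbC$ via the chart $B_i$; $\ovl{B_j}$ likewise. The plan is to compare the holomorphic extension $\tilde s_{ij}$ with the closure relation $\ovl{\Gamma(s_{ij})}$ near $(x, \ovl{s_{ij}}(x))$, using the identity theorem and boundary-connectedness. First I will show that near $x$ the set $\ovl{B_i} \cap \Omega_{ij}$ agrees with $\ovl{B_{ij}}$. Since $(B_i)$ is boundary-tame, $B_i$ is boundary-connected at $x$. Because $\ovl{B_i'}$ has no triple intersections and $x \in \ovl{B_{ij}} \cap \hat \partial X$, \cref{prop:BoundaryAgreement} shows that $B_{ij} = B_i$ near all but finitely many points of $\partial B_{ij} \cap \partial B_i$. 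Shrinking the cover with \cref{lem:FiniteOmissionsLemma}, I can assume $x$ is not one of these exceptional points. If that is not possible, I would instead use \cref{lem:ConnectednessAtZero} to put $B_i$ and $B_{ij}$ in radial-sector form at $x$. In either case I choose a small disk $U_x$ around $x$, inside $\Omega_{ij}$, with $U_x \cap B_i$ connected.

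Next I take a pair $(a,b) \in \Gamma(\tilde s_{ij})$ with $a \in \ovl{B_i} \cap U_x$ and $b \in \ovl{B_j}$. The claim is $b = \ovl{s_{ij}}(a)$. There are two cases.

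\emph{Case $a \in B_{ij}$ (interior).} Then $\tilde s_{ij}(a) = s_{ij}(a)$, so the pair is already in $\Gamma(s_{ij})$.

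\emph{Case $a \in B_i \setminus B_{ij}$, or $a$ on the boundary.} I approximate $a$ by points $a_n \in B_i \cap U_x$. By continuity $\tilde s_{ij}(a_n) \to b$. The goal is to show that, after shrinking $U_x$, the points $\tilde s_{ij}(a_n)$ lie in $B_j$ exactly when $a_n \in B_{ij}$. Granting this, the pair $(a,b)$ is a limit of points of $\Gamma(s_{ij})$, so it lies in $\ovl{\Gamma(s_{ij})} = \Gamma(\ovl{s_{ij}})$ by \cref{prop:ctsext}.

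The key input is that $\tilde s_{ij}$ is a biholomorphism $\Omega_{ij} \to \Omega_{ji}$ with inverse $\tilde s_{ji}$. Suppose $c \in B_i \cap U_x$ and $\tilde s_{ij}(c) \in B_j$. The two charts then give points $c \in B_i$ and $\tilde s_{ij}(c) \in B_j$ of $X$, and I want to show they are the same point of $X$. For this I join $c$ to a point of $B_{ij}$ by a path in the connected set $B_i \cap U_x$. Along the initial portion of the path lying in $B_{ij}$, the two charts agree. By Hausdorffness of $X$, \cref{lem:IdentificationChain}, and \cref{lem:BoundaryInteriorIdentifications}(1), the set of path points that are identified in $X$ is both open and closed. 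So the whole path is identified, and $c \in B_{ij}$.

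It follows that $\tilde s_{ij}^{-1}(B_j) \cap B_i \cap U_x = B_{ij} \cap U_x$. Taking closures then gives the lemma.

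The main obstacle is the open-closed argument, specifically closedness at points where the path leaves $B_{ij}$ while $\tilde s_{ij}$ still lands in $B_j$. There I would use \cref{lem:BoundaryInteriorIdentifications}(1): a limit of identified pairs whose second coordinate lies in $B_j$ forces the first coordinate to be $j$-interior, hence in $B_{ij}$. I would also use the no-triple-intersection property, so that only the two charts $B_i, B_j$ are involved near $x$.
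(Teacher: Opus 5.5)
Your proposal has a genuine gap at its central step, the claim $B_i \cap \tilde s_{ij}^{-1}(B_j) \cap U_x = B_{ij} \cap U_x$. Write $\beta_j = \tilde s_{ij}^{-1}(B_j)$.

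\textbf{Why the path argument fails.} Hausdorffness of $X$ only shows that $B_{ij}$ is relatively closed in $B_i \cap \beta_j$, so $B_{ij}$ is a union of components of $B_i \cap \beta_j$. Your path $\gamma$ runs in $B_i \cap U_x$, not in $\beta_j$, and in general it must leave $\beta_j$. At the exit time $t^*$ you have $\tilde s_{ij}(\gamma(t^*)) \in \partial B_j$. \cref{lem:BoundaryInteriorIdentifications}(1) then only says that this point of $\partial B_j$ is $i$-interior, which is exactly what happens along the inner edge of any overlap. So the set of ``identified'' parameters is not closed. Taken literally, your argument would put every point of the path in $B_{ij}$, which is absurd.

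Nothing you say excludes a second component of $B_i \cap \beta_j$ near $x$, for instance from the radial intervals $B_i$ and $\beta_j$ wrapping around $x$. Running the open--closed argument inside $B_i \cap \beta_j$ instead would need exactly the connectedness you are trying to prove.

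The same problem affects your final ``taking closures'' step. The lemma concerns $\ovl{B_i} \cap \ovl{\beta_j}$, which can be strictly larger than $\ovl{B_i \cap \beta_j}$ (two sectors can touch along a ray outside $\ovl{B_{ij}}$). Approximating $a \in \partial B_i$ by $a_n$ with $\tilde s_{ij}(a_n) \in B_j$ presupposes $a \in \ovl{B_i \cap \beta_j}$.

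Your preliminary claim that $\ovl{B_i} \cap \Omega_{ij}$ agrees with $\ovl{B_{ij}}$ near $x$ is also false at the endpoints of the overlap on $\hat\partial X$. \cref{lem:FiniteOmissionsLemma} cannot remove these points, since shrinking just creates new endpoints. Only your radial-sector fallback is viable.

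\textbf{The missing idea.} You never use the hypothesis that makes the lemma true. The $B_k^\partial$ cover $\hat\partial X$ and the $\ovl{B_k'}$ have no triple intersections, so either $x \in B_i^\partial$ or $y = \ovl{s_{ij}}(x) \in B_j^\partial$. By symmetry you may assume the former. Then $B_i$ is a boundary neighborhood of $x$, so the two boundary arcs of the radial interval $B_i$ at $x$ consist of intrinsic boundary points.

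Since $\tilde s_{ij}$ is a biholomorphism near $x$ and $B_j$ is connected at $y$, $\beta_j$ is also a radial interval at $x$, and $B_{ij}$ is a component of $B_i \cap \beta_j$. Suppose $\beta_j \supsetneq B_{ij}$. Then $\beta_j$ contains a boundary arc of $B_{ij}$ that is also a boundary arc of $B_i$. On that arc $\tilde s_{ij} = \ovl{s_{ij}}$ would identify points of $\hat\partial X$ with points of $B_j \subset X$. This is impossible, because $X \to \hat X$ is an open embedding (\cref{prop:IntrinsicClosureProperties}(1)) and $\hat\partial X = \hat X \setminus X$.

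Hence $\beta_j = B_{ij}$ near $x$, which is stronger than the equality you aimed for. It gives directly $\ovl{B_i} \cap \tilde s_{ij}^{-1}(\ovl{B_j}) = \ovl{B_{ij}}$ near $x$. On this set $\tilde s_{ij} = \ovl{s_{ij}}$, which is the lemma.
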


\begin{proof}
    Since $(B_i)$ is a boundary cover with no triple intersections, we have either $x \in B_i^\partial$ or $y \in B_j^{\partial}$, with $y = \ovl{s_{ij}}(x)$. If we are in the second case, it suffices to produce a corresponding neighborhood $V_y \ni y$ and then pull it back. Thus, we may assume without loss of generality that $x \in B_i^\partial$.

    Since the cover is boundary-tame, $B_i$, $B_{ij}$ are connected at $x$, and $B_j, B_{ji}$ are connected at $y$. Furthermore, since $\tilde s_{ij}$ is a biholomorphism near $x$, the set
    \[  \beta_j := (\tilde s_{ij})^{-1}(B_j) \]
    is connected at $x$. It follows that $B_i, B_{ij}$ and $\beta_j$ can all be expressed as radial intervals near $x$, and we have $B_{ij} \subset B_i \cap \beta_j$. Since $X$ is Hausdorff, we have
    \[ \partial B_{ij} \subset \partial B_i \cup (\ovl{s_{ij}})^{-1}( \partial B_j), \]
    and we note that
    \[ (\ovl{s_{ij}})^{-1}(\partial B_j) \cap \partial B_{ij} = (\tilde s_{ij})^{-1}(\partial B_j) \cap \partial B_{ij}. \]
    It follows that $B_{ij}$ is one component of the intersection $B_i \cap \beta_j$. If $\beta_j$ is strictly larger than $B_{ij}$, then since $\beta_j$ is connected at $x$, it must contain one of the boundary arcs of $B_{ij}$, which is therefore a boundary arc of $\partial B_i$. But this contradicts the assumption that $B_i$ is a boundary chart. The desired result now follows by taking closures. \qedhere
\end{proof}

\begin{corollary}
    There exist open neighborhoods $\Theta_{ij}$ of $\ovl{B_{ij}} \cap \hat \partial X$ with the following property:
    \[  (\ovl{\Theta_{i,j}} \times \bbC) \cap \Gamma(\tilde s_{ij}) \cap (\ovl{B_i} \times \ovl{B_j}) \subset \Gamma(\ovl{s_{ij}}). \]
    We can require that $\Theta_{ij} \subset \Omega_{ij}$, and that they are compatibly identified by $\tilde s_{ij}$.
\end{corollary}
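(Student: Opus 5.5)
The corollary follows from the preceding lemma by a compactness argument over the compact set $K_{ij} := \ovl{B_{ij}} \cap \hat \partial X$. The approach has four steps.

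\textbf{Step 1: cover $K_{ij}$ by good neighborhoods.} For each $x \in K_{ij}$, the lemma gives a neighborhood $U_x$ on which
\[ \Gamma(\tilde s_{ij}) \cap (\ovl{B_i} \times \ovl{B_j}) \subset \Gamma(\ovl{s_{ij}}). \]
I would shrink each $U_x$ to a definable open $U_x'$ with $\ovl{U_x'} \subset U_x \cap \Omega_{ij}$. By compactness of $K_{ij}$ (a closed subset of the compact Hausdorff space $\hat{X}$), finitely many $U_{x_1}', \ldots, U_{x_k}'$ cover $K_{ij}$. Set
\[ \Theta_{ij}^0 := \bigcup_\ell U_{x_\ell}'. \]

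\textbf{Step 2: the closure is controlled.} We have $\ovl{\Theta_{ij}^0} \subset \bigcup_\ell \ovl{U_{x_\ell}'} \subset \bigcup_\ell U_{x_\ell}$. So any point of $(\ovl{\Theta^0_{ij}} \times \bbC) \cap \Gamma(\tilde s_{ij}) \cap (\ovl{B_i} \times \ovl{B_j})$ has first coordinate in some $U_{x_\ell}$. The lemma then places it in $\Gamma(\ovl{s_{ij}})$. Since the union is finite, $\Theta_{ij}^0$ is definable, and by construction $\Theta^0_{ij} \subset \Omega_{ij}$.

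\textbf{Step 3: the symmetric side.} Run the same construction for the pair $(j,i)$, obtaining $\Theta^0_{ji} \subset \Omega_{ji}$ around $K_{ji}$. The maps $\tilde s_{ij}$ and $\tilde s_{ji}$ were chosen to be mutually inverse biholomorphisms $\Omega_{ij} \to \Omega_{ji}$, and $\tilde s_{ij}$ carries $K_{ij}$ onto $K_{ji}$ via $\ovl{s_{ij}}$.

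\textbf{Step 4: make the pair compatible.} Define
\[ \Theta_{ij} := \Theta^0_{ij} \cap \tilde s_{ji}(\Theta^0_{ji}), \qquad \Theta_{ji} := \tilde s_{ij}(\Theta_{ij}). \]
Both sets are open neighborhoods of the respective compacta, lie inside the $\Omega$'s, and are interchanged by $\tilde s_{ij}$. The displayed inclusion is preserved under shrinking, since $\ovl{\Theta_{ij}} \subset \ovl{\Theta^0_{ij}}$. For $\Theta_{ji}$, it holds because $\Theta_{ji} \subset \Theta^0_{ji}$, and $\Gamma(\tilde s_{ji})$ is the transpose of $\Gamma(\tilde s_{ij})$, just as $\Gamma(\ovl{s_{ji}})$ is the transpose of $\Gamma(\ovl{s_{ij}})$.

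The only delicate point is making sure the closure $\ovl{\Theta_{ij}}$ stays inside the region where the lemma applies. This is why Step 1 shrinks to $U_x'$ with $\ovl{U_x'} \subset U_x$ before taking the finite union, rather than using the $U_x$ directly.
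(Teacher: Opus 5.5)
Your proposal is correct and follows essentially the paper's route: take a union of the neighborhoods $U_x$ from the preceding lemma, shrink so that the closure stays where the lemma applies, and then shrink again, here by intersecting with $\tilde s_{ji}(\Theta^0_{ji})$, so that $\Theta_{ij} \subset \Omega_{ij}$ and $\tilde s_{ij}$ identifies $\Theta_{ij}$ with $\Theta_{ji}$. You just spell out the compactness argument and the explicit symmetrization that the paper's terse ``refining we can assume it holds on the closure'' and ``by a further refinement'' leave implicit.
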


\begin{proof}
    Take $\Theta_{ij}$ at first to be the union of the neighborhoods $U_x$ from the previous lemma. Then the desired intersection property holds for $\Theta_{i,j} \times \bbC$, and refining we can assume it holds on the closure as well. By a further refinement, we can require the final two properties to hold as well. \qedhere
\end{proof}

\begin{lemma}
    There exist open sets $V_i \supset B_i^\partial$ which satisfy the following intersection property for $i \neq j$:
    \[ V_i \cap (\tilde{s}_{ij})^{-1}(V_j) \Subset \Theta_{ij}. \]
    We can also choose the $V_i$ to have no triple intersections.
\end{lemma}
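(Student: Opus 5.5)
The plan is to obtain the $V_i$ by shrinking, using compactness of the closed pieces $B_i^\partial$ and $\ovl{B_{ij}} \cap \hat\partial X$. Here each $V_i$ is an open neighborhood of $B_i^\partial$ inside the chart domain where $B_i$ lives, so that $\tilde s_{ij}$ makes sense on $V_i \cap \Omega_{ij}$. Start from $V_i^0 = B_i'$, or rather its image in the chart $\ovl{B_i} \subset \bbC$ together with a small neighborhood of $\ovl{B_i^\partial}$. By \cref{prop:BoundaryCoverhasTameRefinement} the closures $\ovl{B_i'}$ have no triple intersections. Any $V_i$ obtained by shrinking inside $B_i'$ therefore inherits this property, at least when the triple-intersection condition is read via the identifications $\tilde s_{ij}$ near the boundary. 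So the last clause is automatic once we only ever shrink.

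For the intersection property, fix $i \neq j$ and set $K_{ij} = \ovl{B_{ij}} \cap \hat\partial X$, a compact set lying in the open set $\Theta_{ij}$. The key claim is the following. Let $x \in \ovl{B_i^\partial}$, and suppose $x$ is a limit of points $z_n \in V_i$ with $\tilde s_{ij}(z_n) \in V_j$ for every choice of shrinking. Then $x \in K_{ij}$. To prove it, observe that if $V_i \to \ovl{B_i^\partial}$ and $V_j \to \ovl{B_j^\partial}$ as the neighborhoods shrink, then any such limit point $x$ satisfies $\tilde s_{ij}(x) \in \ovl{B_j}$. The preceding corollary then gives $(x, \tilde s_{ij}(x)) \in \Gamma(\ovl{s_{ij}})$ whenever $x \in \ovl{\Theta_{ij}}$, and hence $x \in \ovl{B_{ij}}$. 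I would make this precise with definable families: take $V_i^\varepsilon$ to be the $\varepsilon$-neighborhood of $B_i^\partial$ within the chart, and consider the definable set $W_{ij}^\varepsilon = V_i^\varepsilon \cap \tilde s_{ij}^{-1}(V_j^\varepsilon) \cap \Omega_{ij}$. As $\varepsilon \to 0$, the Hausdorff limit of $W_{ij}^\varepsilon$ is contained in $\ovl{B_i^\partial} \cap \tilde s_{ij}^{-1}(\ovl{B_j^\partial})$. Near $K_{ij}$ this limit lies in $K_{ij} \subset \Theta_{ij}$, so for small $\varepsilon$ we get $\ovl{W_{ij}^\varepsilon} \subset \Theta_{ij}$, which is exactly $V_i \cap \tilde s_{ij}^{-1}(V_j) \Subset \Theta_{ij}$. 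Finitely many pairs $(i,j)$ mean we can take one uniform $\varepsilon$.

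The main obstacle is that $\tilde s_{ij}$ is only defined on $\Omega_{ij}$. Points of $V_i$ far from $\ovl{B_{ij}}$ could still be sent by the extension into $V_j$ near some unrelated boundary point, where the lemma controlling identifications no longer applies. To handle this I would first shrink $\Omega_{ij}$ (and $\Theta_{ij}$ with it) to a small neighborhood of $\ovl{B_{ij}}$, interpreting $\tilde s_{ij}^{-1}(V_j)$ as a subset of $\Omega_{ij}$. The limit points produced above then lie in $\ovl{\Omega_{ij}} \cap \ovl{B_i} \cap \tilde s_{ij}^{-1}(\ovl{B_j})$, and the corollary, applied on $\ovl{\Theta_{ij}}$ after arranging $\Omega_{ij} \subset \Theta_{ij}$ up to compact shrinking, forces them into $\Gamma(\ovl{s_{ij}})$. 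If this arrangement fails, the fallback is to shrink each $V_i$ away from $\ovl{\Omega_{ij}} \setminus \Theta_{ij}$. That set is closed and disjoint from $K_{ij}$, and it meets $B_i^\partial$ only away from $K_{ij}$, where no identifications with $B_j$ occur. Removing it from $V_i$ therefore keeps $V_i \supset B_i^\partial$.
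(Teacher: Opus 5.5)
This is essentially the paper's argument. The paper fixes $\Psi_{ij}$ with $\ovl{B_{ij}}\cap\hat\partial X\subset\Psi_{ij}\Subset\Theta_{ij}$ and applies the tube lemma to the compact set $\ovl{B_i^\partial}\times\ovl{B_j^\partial}$ against the closed graph of $\tilde s_{ij}$ over $\ovl{\Theta_{ij}}\setminus\Psi_{ij}$. That is your $\varepsilon$-shrinking limit argument with $\tilde s_{ij}$ restricted to a domain compactly inside $\Theta_{ij}$ (your first fix, which always works), and in both versions the preceding corollary pins the limit points to $\ovl{B_{ij}}\cap\hat\partial X$. Record the conclusion as compact containment in that restricted domain, since this is what the later Hausdorff argument uses.

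Drop the ``fallback''. It is unnecessary, and it is wrong as stated: $\ovl{\Omega_{ij}}\setminus\Theta_{ij}$ can contain points of $B_i^\partial$, so deleting it from $V_i$ breaks $V_i\supset B_i^\partial$. Also, the no-triple-intersection clause is not inherited by ``shrinking inside $B_i'$'', because your $V_i$ extend past $\ovl{B_i}$. It does follow from the same limit argument: a limit point of $V_i\cap\tilde s_{ij}^{-1}(V_j)\cap\tilde s_{ik}^{-1}(V_k)$ would give a point of $\ovl{B_i'}\cap\ovl{B_j'}\cap\ovl{B_k'}=\emptyset$.
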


\begin{proof}
    For all $i \neq j$, choose open sets $ \Psi_{i,j}$ with
    \[ \ovl{B_{ij}} \cap \hat \partial X \subset \Psi_{ij} \Subset \Theta_{ij} \]
    and define the closed set
    \[ \Gamma_{i,j}  = \{(x, \tilde s_{ij}(x)): x \in \ovl{\Theta_{ij}} \setminus \Psi_{i, j} \}. \]
    Since the $\ovl{B_i'}$ have no triple intersections, it follows that $\ovl{B_i^\partial} \cap \ovl{B_j^\partial} \subset \ovl{B_{ij}}$. Indeed, this is obvious if $x \in B_i^\partial$ or $x \in B_j^\partial$. But since $(B_i')$ has no triple intersections, any $x$ that meets both $\ovl{B_i^\partial}$ and $\ovl{B_j^\partial}$ must be contained in the interior of one of them. Therefore, we have
    \[ \ovl{B_i^\partial} \cap \ovl{B_j^\partial} \subset \ovl{B_{ij}} \cap \hat \partial X. \]
    It follows by the lemma we just proved that the compact product $\ovl{B_i^\partial} \times \ovl{B_j^\partial}$ is disjoint from $\Gamma_{i,j}$. Thus, by the tube lemma there exist open neighborhoods $V_i, V_j$ of $\ovl{B_i^\partial}, \ovl{B_j^\partial}$ with $V_i \cap V_j$ disjoint from $\Gamma_{i,j}$. Concretely, this means that
    \[ V_i \cap (\tilde s_{ij})^{-1}(V_j) \subset \Psi_{i,j}. \]
    Repeating finitely many times for each $i \neq j$ and taking a common refinement with no triple intersections, we conclude. \qedhere
\end{proof}

Define the space $X'$ with charts $(V_i)$ and overlaps
\[ V_{ij} := V_i \cap \tilde{s}_{ij}^{-1}(V_j). \]
To fix notation, let $t_{i,j} \colon V_{i,j} \to V_{j, i}$ be the transition function, which is just the restriction of $\tilde{s}_{ij}$. The cocycle condition is automatic since there are no triple intersections. By construction, we have
\[ \Gamma(t_{ij}) = \Gamma(\tilde{s}_{ij}) \cap (V_i \times V_j), \]
so $\Gamma(t_{ij})$ is closed in $\Omega_{ij}$. Since $V_{ij} \Subset \Omega_{ij}$, $\Gamma(t_{ij})$ is closed in $V_i \times V_j$, and $X'$ is Hausdorff.

Thus, $X'$ is a Riemann surface. If we let $U_i = V_i \cap X$, and let $C = \cup_i U_i$, then $C$ is a boundary collar, and the inclusions $U_i \inj V_i$ give a relatively compact embedding $f \colon C \to X'$.

It remains to show that the map $\hat{f} \colon \hat \partial X \to X'$ is injective, equivalently an embedding. By construction the transition maps $\tilde s_{ij}$ reduce along $\hat \partial X$ to the original maps $\ovl{s_{ij}}$, so this is true by the definition of the intrinsic closure. This completes the proof of \cref{prop:EmbeddingConditions}. \qed

\begin{proof}[Proof of \cref{thm:compactification}]
    Let $S$ be a branching structure containing $\bbR_{\an}$, and let $X$ be a definable Riemann surface with no definable punctures. Recall what this means: The complement $\hat X \setminus X$ has no isolated points. By \cref{prop:EmbeddingConditions}, it remains only to show, for every $y \in \hat \partial X$, that there exists a boundary chart near $y$. To show this, we give an argument analogous to the one we gave for \cref{prop:H1Vanishing}.

    Fix any cover of $X$. By \cref{lem:BoundaryTameRefinements}, it has a refinement $(U_i)$ where the $U_i$ are boundary-connected charts. Pick some $y \in \hat \partial X$: We will show by descending induction on the number of cover elements that there exists a single chart near $y$. At each stage we have a cover by boundary-connected charts, and we reduce the number of charts by one.

    Choose some $i$ with $y \in \ovl{U_i}$: If $U_i$ is not a boundary chart, then by boundary-connectedness of $X$ we have $y \in \ovl{U_{ij}}$ for some $j$. Restrict near $y$: By \cref{lem:ConnectednessAtZero}, if we identify $y$ with $0$ then we can write $U_i = (f, g)$ as a radial interval. The overlap $U_{ij}$ is a finite union of radial intervals, and applying \cref{lem:ClosedExcision}, we can refine and eliminate all intervals, except those meeting the boundary arcs $f, g$. If $U_{ij} = U_i$, then we can eliminate $U_i$ from the cover near $y$.

    Thus, we can write $U_{ij}$ as a union of at most two proper subintervals, each one meeting $f$ or $g$. Suppose first that
    \[ U_{ij} = (f, h_1) \cup (h_2, g) \]
    has both intervals as endpoints. If we express $U_j$ in a similar fashion, then by \cref{lem:BoundaryInteriorIdentifications}, the same conclusion holds in this chart: The overlap $U_{ji}$ contains both boundary arcs of $U_j$. It follows that $(U_i, U_j)$ forms a cover near $y$. All nearby points in $\partial U_i$ are $j$-interior, and vice versa, so $y \in \hat \partial X$ is isolated, violating our assumption on $X$.

    Finally, we reduce to the case where $U_{ij}$ is a single radial interval. Suppose it has the form
    \[ U_{ij} = (f, h),  \]
    with $f < h <g$. Since $S$ is branching, it follows after shrinking near $0$ that the transition function $s_{ij}$ has a definable holomorphic extension $\tilde{s}_{ij}$ to $U_i$. Restricting both sets near zero, we can then define a holomorphic function
    \[ t \colon U_i \cup U_j \to \bbC, \]
    by $t = \id$ on $U_j$ and $t = \tilde{s}_{ij}$ on $U_i$. By o-minimality, the function $t$ has bounded argument, and replacing $t$ by $t(z)^{1 / n}$, we can make its argument as small as desired. By \cref{lem:UnivalenceCriterion}, the restriction $t|_{U_i}$ is univalent, with image a radial interval $(a,b)$. Suppose without loss of generality that $t(U_{ij})$ is the radial interval $(c,b)$: Applying \cref{lem:UnivalenceCriterion} again, $t|_{U_j}$ is univalent, and its image must be a radial interval $(c,d)$. It follows that $t$ as a whole is univalent, a biholomorphism onto $(a,d)$. Therefore, we can take $V := U_i \cup U_j$ to be a single chart near $y$. Since $U_i \cup U_j$ is connected near $y$, we conclude. \qedhere
\end{proof}

\begin{proof}[Proof of \cref{cor:TameEmbeddability}]
    Let $Y$ be a compact Riemann surface, and let $U \subset Y$ be a definable open subset. Let $p_1, \ldots, p_n$ be the punctures of $U$, and let
    \[ V = U \cup \{p_i \}. \]
    By \cref{thm:compactification}, it suffices to show that $V$ has no definable punctures. Since intrinsic compactifications are functorial, the inclusion extends to a continuous map $\hat{i} \colon \hat{V} \to Y$. Given any $v \in \hat \partial V$, we obtain a boundary chart near $v$ by taking a neighborhood $W$ of $y:= \hat{i}(v)$ and intersecting with $V$. The point $y$ is not isolated in $\partial V$, so there exists a curve germ $C \subset \partial V$ limiting to $y$. Applying \cref{lem:ConnectednessAtZero}, take a boundary-tame cover of $W \cap V$ by radial intervals $(W_i)$, and assume that $W_1$ has $C$ as a boundary arc. Refining, we may assume that $C \subset \partial W_1$ is not in the closure of any $W_{1i}$. By \cref{prop:IntrinsicClosureProperties}, $(\ovl{W_i})$ represents $\hat{V}$, and it follows that the map $C \to \hat \partial{V}$ is injective. It follows that $v$ is not isolated. \qedhere
\end{proof}

\printbibliography

\end{document}